\documentclass[12pt]{amsart}

\setlength{\textwidth}{16cm}
\setlength{\evensidemargin}{0cm}
\setlength{\oddsidemargin}{0cm}
\setlength{\topmargin}{0cm}
\setlength{\textheight}{23cm}

\usepackage{amssymb}
\usepackage{amsthm}
\theoremstyle{plain}

\usepackage{overpic}

\usepackage{tikz}

\usetikzlibrary{arrows,backgrounds,decorations}
\usetikzlibrary{positioning,automata}

\usepackage{fancyhdr,enumerate}

\pagestyle{headings}

\newtheorem{thm}{Theorem}[section]
\newtheorem{cor}[thm]{Corollary}
\newtheorem{lem}[thm]{Lemma}
\newtheorem{prop}[thm]{Proposition}

\theoremstyle{definition}
\newtheorem{dfn}[thm]{Definition}
\newtheorem*{SN*}{Standing Notation}

\newtheorem*{SA*}{Standing Assumption}

\newtheorem{ntn}[thm]{Notation}

\theoremstyle{remark}
\newtheorem{rmk}[thm]{Remark}

\newtheorem{example}[thm]{Example}
\newtheorem{examples}[thm]{Examples}

\numberwithin{equation}{section}

\newcommand{\C}{\mathbb{C}}

\newcommand{\N}{\mathbb{N}}
\newcommand{\Q}{\mathbb{Q}}
\newcommand{\T}{\mathbb{T}}
\newcommand{\Z}{\mathbb{Z}}

\newcommand{\GG}{\mathcal{G}}

\newcommand{\KK}{\mathcal{K}}

\newcommand{\UU}{\mathcal{U}}

\newcommand{\G}{\Gamma}

\def\id{\operatorname{id}}

\def\Aut{\operatorname{Aut}}
\def\Stab{\operatorname{Stab}}
\def\clsp{\operatorname{\overline{span}}}
\def\sp{\operatorname{span}}
\def\BS{\operatorname{BS}}
\def\SL{\operatorname{SL}}

\begin{document}

\title[$C^*$-algebras associated to graphs of groups]{$C^*$-algebras 
associated to graphs of groups}

\thanks{This research was supported by the Australian Research Council and University of Wollongong
	Research Support Scheme.}

\keywords{$C^*$-algebra, crossed product, groupoid, graph of groups, Bass--Serre theory}
\subjclass[2010]{Primary: {46L05}; Secondary: {20E08}}

\author[Brownlowe, Mundey, Pask]{Nathan Brownlowe, Alexander Mundey, David Pask}
\address[N. Brownlowe, A. Mundey, D. Pask]{School of Mathematics and Applied Statistics\\
University of Wollongong\\
NSW  2522\\
Australia}
\email{nathanb@uow.edu.au, alex.mundey@gmail.com, dpask@uow.edu.au}
\author[Spielberg]{Jack Spielberg}
\address[J. Spielberg]{School of Mathematical and Statistical Sciences, Arizona State University, Tempe AZ 85287-1804, USA}
\email{jack.spielberg@asu.edu}
\author[Thomas]{Anne Thomas}
\address[A. Thomas]{School of Mathematics and Statistics F07, University of Sydney, NSW 2006, Australia}
\email{anne.thomas@sydney.edu.au}

\begin{abstract}
To a large class of graphs of groups we associate a $C^*$-algebra universal for generators and relations. We show that this $C^*$-algebra is stably isomorphic to the crossed product induced from the action of the fundamental group of the graph of groups on the boundary of its Bass--Serre tree. We characterise when this action is minimal, and find a sufficient condition under which it is locally contractive. In the case of generalised Baumslag--Solitar graphs of groups (graphs of groups in which every group is infinite cyclic) we also characterise topological freeness of this action. We are then able to establish a dichotomy for simple $C^*$-algebras associated to generalised Baumslag--Solitar graphs of groups: they are either a Kirchberg algebra, or a stable Bunce--Deddens algebra.    
\end{abstract}

\maketitle

\setcounter{tocdepth}{1}

\tableofcontents

\section{Introduction}\label{sec: intro}


Actions of groups on trees, and the induced actions on tree boundaries, have given rise to many interesting $C^*$-algebras via the crossed product construction. Such examples include certain Cuntz--Krieger algebras considered in \cite{Sp} and \cite{Rob}, and the generalised Bunce--Deddens algebras considered in \cite{Orfanos} (see also Proposition~\ref{prop: classic odometer} of this paper).  

Actions of groups on trees of course play a fundamental role in many fields outside of $C^*$-algebras. A part of Serre's extensive contribution to the theory was the introduction of graphs of groups in \cite{Se}. The theory of graphs of groups was further developed by Bass in~\cite{Bass}, and is now known as Bass--Serre theory. Roughly speaking, a {\em graph of groups} consists of a graph $\G$ together with a group for each vertex and edge of $\G$, and monomorphisms from each edge group to the adjacent vertex groups.  Any group action on a tree (satisfying some mild hypotheses) induces a graph of groups, while any graph of groups has a canonical associated group, called the fundamental group, and a tree, called the Bass--Serre tree, such that the fundamental group acts on the Bass--Serre tree.  The so-called Fundamental Theorem of Bass--Serre Theory says that these processes are mutually inverse, so that graphs of groups ``encode" group actions on trees; see Theorem~\ref{thm: BS theorem} for a more precise statement.  

The original motivation for Bass--Serre theory was to study rank one reductive groups over nonarchimedean local fields, such as $\SL_2(\Q_p)$, by considering the action of such groups on their associated Bruhat--Tits tree (see for instance Chapter II of~\cite{Se}).  Graphs of groups are now fundamental tools in geometric group theory and low-dimensional topology.  The fundamental group of a graph of groups generalises two basic constructions in combinatorial group theory, namely free products with amalgamation and HNN extensions.  These constructions correspond in topology to taking a connected sum and adding a handle, respectively.  Classes of groups which are studied using Bass--Serre theory include lattices in automorphism groups of trees (see~\cite{BL} and its references), fundamental groups of $3$-manifolds (see, for instance,~\cite{dlHP}) and generalised Baumslag--Solitar groups (see Section~\ref{sec: GBS} below).

While the action of a group on the boundary of a tree is modelled $C^*$-algebraically via a crossed product, the data present in a graph of groups is more naturally modelled via a {\em combinatorial $C^*$-algebra}, which is a $C^*$-algebra universal for generators and relations encoding an underlying combinatorial object. Combinatorial $C^*$-algebras are a fruitful source of examples in $C^*$-algebra theory. The starting point was Cuntz and Krieger's work in \cite{CK}, and the theory has rapidly progressed in many directions, including a generalisation to directed graphs (see \cite{CBMS} for an overview). Much of the work on combinatorial $C^*$-algebras has been concerned with more general {\em oriented} situations.  Unoriented examples have received relatively little attention.  In \cite{CLM} the authors treat the case of a finite graph, giving a combinatorial analysis and relating the $K$-theory of the $C^*$-algebra to the first Betti number of the graph.  Iyudu \cite{I} and Ivankov-Iyudu \cite{II} have extended this to infinite graphs. Graphs of groups are a natural generalisation to a large class of new examples.  The first work to consciously apply Bass--Serre theory to the study of boundary crossed product $C^*$-algebras was done by Okayasu \cite{O}, who extended the results of \cite{Sp} and \cite{Rob} to finite graphs of finite groups, explicitly working with the Bass--Serre tree and the associated action on its boundary.

The main aim of this paper is to significantly extend the work of \cite{O} to build a Bass--Serre theory in the $C^*$-algebraic setting. We give a different construction  to that of \cite{O} that generalises to a much larger class of graphs of groups.  We work with two $C^*$-algebras: we first construct our graph of groups $C^*$-algebra, which is universal for generators and relations, and then we construct a boundary action crossed product $C^*$-algebra.  This is much like the approach of \cite{O}; however that approach relies heavily on the choice of a maximal tree for the graph of groups, which makes the universal $C^*$-algebra description somewhat complicated (see \cite[Definition 3.1]{O}).  In the special case of a (finite) graph, the paper \cite{CLM} gives a very simple and natural presentation, which inspired that of this paper (see Definition \ref{def: a G, Sigma family}).  Our methods apply to graphs of {\em countable} groups, in the sense that all vertex groups are countable. We also assume that the underlying graph is locally finite and that edge groups have finite index in their adjacent vertex groups; these assumptions ensure that the Bass--Serre tree is locally finite. We note that we do allow graphs of groups where the underlying graph is infinite and where the vertex groups are infinite groups. We also assume an extra condition (which we refer to as nonsingularity) on our graph of groups which ensures that the Bass--Serre tree has no finite ends.

In general, the graph of groups algebra contains as a distinguished subalgebra a certain directed graph $C^*$-algebra (see Theorem~\ref{thm: associated directed graph}).  When the examples of \cite{Sp} and \cite{Rob} are realised as graph of groups algebras, the subalgebra equals the whole algebra.  We give examples to show that in general the containment is proper.  One of the advantages of the graph of groups presentation is that it permits an easy description of a gauge action, analogous to the situation for directed graph algebras. The gauge action is needed for the existence of the directed graph subalgebra mentioned above.

Our main result, Theorem~\ref{thm: main theorem}, shows that the $C^*$-algebras obtained by our two constructions are related by stable isomorphism.  (Thus we find that in the case of a finite graph of finite groups, Okayasu's algebra from \cite[Theorem 4.4]{O} is isomorphic to a certain corner inside our graph of groups algebra.)  For the proof of this theorem we realise the graph of groups algebra as the $C^*$-algebra of a certain \'etale groupoid obtained from the fundamental groupoid of Higgins \cite{H}.  This groupoid could be taken as an alternate point of departure for the results of the paper.  We chose to emphasize the generators and relations as a natural generalisation of the theory of directed graph $C^*$-algebras, and the boundary crossed product as a direct generalisation of previous work.

After proving our main theorem, we consider structural properties of these $C^*$-algebras.  In line with the philosophy of the theory of directed graph $C^*$-algebras, we characterise various $C^*$-properties by means of combinatorial properties of the underlying graph of groups.  It is proved in \cite{AS} (see also \cite{KT}) that for an action of a discrete group $\Lambda$ on a compact Hausdorff space $Y$, the reduced crossed product $C(Y) \times_r \Lambda$ is simple if and only if the action of $\Lambda$ on $Y$ is minimal and topologically free. In this case, the reduced crossed product is purely infinite if the action is locally contractive (\cite{LS,A2}). In addition, the full  and the reduced crossed products coincide if either one is nuclear, and this occurs if and only if the action is amenable (\cite{A}).  Thus we investigate how minimality, topological freeness, and local contractivity of the boundary action are reflected in the graph of groups (amenability follows generally from known results).  Since the associated $C^*$-properties are invariant under stable isomorphism, our main theorem establishes these for the graph of groups algebra.

We are able to completely characterise minimality and local contractivity for locally finite nonsingular graphs of countable groups.  However we are able to characterise topological freeness only in two special cases: in the case of trivial groups, and generalised Baumslag--Solitar graphs of groups (graphs of groups where all vertex and edge groups are infinite cyclic). Our work on generalised Baumslag--Solitar graphs of groups culminates in a dichotomy which has a striking similarity to one present in the theory of directed graph $C^*$-algebras: a simple generalised Baumslag--Solitar graph of groups $C^*$-algebra is either a Kirchberg algebra, or a stable Bunce--Deddens algebra.

In general, topological freeness seems to be a subtle property to characterise, and it has been considered by other authors.  Topological freeness appears as a crucial hypothesis in the paper \cite{dlHP} of de la Harpe and Pr\'eaux, where it is referred to as {\em slenderness}.  In conjunction with some properties of the action on the tree, they use it to deduce minimality of the boundary action.  This contrasts with our methods, where we deduce minimality from properties of the underlying graph of groups, and do not assume topological freeness.  We note that our Theorem \ref{thm: gbs top free} generalises their Lemma 20(iii) on topological freeness for Baumslag--Solitar groups.  Topological freeness is also relevant in the paper \cite{BAP} of Broise--Alamichel and Paulin.  Their Remarque 4.2 gives a proof of topological freeness under several hypotheses.  These include finiteness of all vertex groups, as well as existence of a Patterson-Sullivan measure of positive finite dimension and a restriction on the elliptic elements of the group.  Our results on this topic intersect with theirs only for the case of graphs of trivial groups where the underlying graph $\G$ has finite Betti number greater than one (see Remark~\ref{rem:BroiseStuff} for more details).


\subsection*{Acknowledgements} The first author would like to thank the University of Glasgow and Arizona State University for their hospitality during research visits while this paper was written. The third author would like to thank Arizona State University for their hospitality during his research visit while this paper was written. The fourth and fifth author would like to thank the University of Wollongong for their warm hospitality during research visits while this paper was written. The fifth author would also like to thank Stuart White for helpful conversations.

\section{Background}\label{sec: background}

This section recalls background material and establishes our notation.  
In Section~\ref{subsec: Graphs} we recall the necessary concepts from graph theory and then in Section~\ref{subsec: Bass--Serre Theory} we recall the main definitions and results of Bass--Serre Theory.  In Section~\ref{subsec: boundaries of trees} we discuss boundaries of trees.  Section~\ref{subsec: groupoid} then discusses the groupoid approach to graphs of groups and the action on the relevant boundary of the fundamental groupoid and the fundamental group.  We finish by giving some background on $C^*$-algebras in Section~\ref{subsec: cstar background}.

\subsection{Graphs and trees}\label{subsec: Graphs}

The notion of graph that we use comes from Serre \cite{Se}.  

\begin{dfn}\label{dfn: graph}
A {\em graph} $\Gamma$ is a quadruple $(\Gamma^0,\Gamma^1,r,s)$, consisting of 
countable sets of \emph{vertices} $\Gamma^0$ and \emph{edges} $\Gamma^1$, together with \emph{range} and \emph{source maps} $r,s:\Gamma^1\to \Gamma^0$, and an ``edge-reversing" map $e\mapsto \overline{e}$ from $\Gamma^1$ to $\Gamma^1$ 
so that for all $e \in \G^1$,
\[
\overline{e}\not= e,\quad\overline{\overline{e}}=e\quad\text{and}\quad s(e)=r(\overline{e}).
\]
\end{dfn}

\noindent Such a graph $\G$ can be viewed as an undirected graph in which each geometric edge is replaced by a pair of edges $e$ and $\overline e$.
For a graph $\Gamma$ and an edge $e\in\Gamma^1$ we may draw the edge 
$\overline{e}$ with a dashed line, as in the top row of the following figure, or we may omit the edge $\overline{e}$, as in the bottom row:
\[
\begin{tikzpicture}[scale=2]

\node (0_0) at (0,0) [circle] {};
\node (1_0) at (1,0) [circle] {};
\node (3_0) at (3,0) [circle] {};
\node (0_1) at (0,1) [circle] {};
\node (1_1) at (1,1) [circle] {};
\node (3_1) at (3,1) [circle] {};

\foreach \y in {0,1} \filldraw [blue] (0_\y) circle (1pt);
\foreach \y in {0,1} \filldraw [red] (1_\y) circle (1pt);
\foreach \y in {0,1} \filldraw [red] (3_\y) circle (1pt);

\draw[-stealth,thick] (1_1) .. controls (.75,1.25) and (.25,1.25) .. (0_1) node[pos=0.5, inner sep=0.5pt, anchor=south] {$e$};
\draw[-stealth,thick,dashed] (0_1) .. controls (.25,.75) and (.75,.75) .. (1_1) node[pos=0.5, inner sep=0.5pt, anchor=south] {$\overline{e}$};

\draw[-stealth,thick] (1_0)-- (0_0) node[pos=0.5, inner sep=0.5pt, anchor=south] {$e$};

\draw[thick,dashed] (3_1) .. controls (3.2,1.35) and (3.5,1.35) .. (3.5,1) node[pos=1, inner sep=0.5pt, anchor=west] {$\overline{e}$};
\draw[-stealth,thick,dashed] (3.5,1) .. controls (3.5,.65) and (3.2,.65) .. (3_1);

\draw[-stealth,thick] (3.75,1) .. controls (3.75,1.5) and (3.1,1.5) .. (3_1) node[pos=0, inner sep=0.5pt, anchor=west] {$e$};
\draw[thick] (3.75,1) .. controls (3.75,.5) and (3.1,.5) .. (3_1);

\draw[-stealth,thick] (3.75,0) .. controls (3.75,.5) and (3.1,.5) .. (3_0) node[pos=0, inner sep=0.5pt, anchor=west] {$e$};
\draw[thick] (3.75,0) .. controls (3.75,-.5) and (3.1,-.5) .. (3_0);

\end{tikzpicture}
\]

For $x \in \G^0$ the \emph{valence} of $x$ is the cardinality of the set $r^{-1}(x)$ (equivalently, the cardinality of the set $s^{-1}(x)$).  A graph $\G$ is \emph{locally finite} if $|r^{-1}(x)| < \infty$ for all $x \in \G^0$, that is, each vertex has finite valence.  We only consider locally finite graphs in this paper.

A \emph{path (of length $n$)} in $\Gamma$ is either a vertex $x \in \G^0$ (when $n = 0$) or, if $n > 0$, a sequence of edges $e_1 e_2 \dots e_n$ with $s(e_i ) = 
r (e_{i+1} )$ for $1 \le i \le n-1$.  For example:

\[
\begin{tikzpicture}[scale=2]

\node (0_0) at (0,0) [circle] {};
\node (1_0) at (1,0) [circle] {};
\node (2_0) at (2,0) [circle] {};
\node (3h_0) at (3.5,0) [circle] {};
\node (4h_0) at (4.5,0) [circle] {};

\foreach \x in {0,1,2,3.5,4.5} \filldraw [blue] (\x,0) circle (1pt);

\draw[-stealth,thick] (1_0) -- (0_0) node[pos=.5, inner sep=.5pt, anchor=south] {$e_1$};
\draw[-stealth,thick] (2_0) -- (1_0) node[pos=.5, inner sep=.5pt, anchor=south] {$e_2$};
\draw[-stealth,thick] (4h_0) -- (3h_0) node[pos=.5, inner sep=.5pt, anchor=south] {$e_n$};
\draw[-stealth,thick] (2.4,0) -- (2_0);
\draw[thick,dotted] (3.1,0) -- (2.4,0);
\draw[thick] (3h_0) -- (3.1,0);

\draw (0,.2) node {$r(e_1)$};
\draw (4.5,.2) node {$s(e_n)$};

\end{tikzpicture}
\]

\noindent Note that we are using the ``Australian" convention for paths in a graph, which is more functorial and sits well with the operator-algebraic methods we will use in this paper;  the same reasoning applied in \cite{CBMS}. For a path $\mu = e_1 e_2 \cdots e_n$ we define the {\em reversal} of $\mu$ to be $\overline{\mu} = \overline{e_n} \cdots \overline{e_2} \, \overline{e_1}$.

A path $x$ of length $0$ has \emph{range} and \emph{source} equal to the vertex $x$, and a path $e_1 e_2 \dots e_n$ with $n \geq 1$ has \emph{range} $r(e_1)$ and \emph{source} $s(e_n)$.  A \emph{cycle} is a path with range equal to its source.   A cycle $e_1 e_2 \dots e_n$ is \emph{minimal} if $n=0$, or the vertices $r(e_i)$ are pairwise distinct.   A path $e_1 e_2 \dots e_n$ is \emph{reduced} if either $n = 0$, or $e_{i+1} \neq \overline{e}_i$ for $1 \leq i \leq n-1$, that is, there is no immediate back-tracking in the path.  We write $\G^*$ (respectively, $x\G^*$, $\G^*y$ and $x \G^* y$) for the set of reduced paths in $\G$ (respectively, the reduced paths in $\G$ with range $x$, with source $y$, and with range $x$ and source $y$). A vertex of a graph is \emph{singular} if it has valence one; that is, it is the range (equivalently, source) of a unique edge.  We say that a graph is \emph{nonsingular} if it has no singular vertices. A graph $\Gamma$ is {\em connected} if 
for every $x \neq y \in \Gamma^0$ there is a path $e_1 e_2 \dots e_n$ with range $x$ and source $y$.  We only consider 
connected graphs in this paper. 

 A graph $\Gamma$ is a \emph{tree} if it is connected and for every vertex $x \in \G^0$, the only reduced path which starts and ends at $x$ is of length $0$.  Equivalently, $\G$ is a tree if for every $x \neq y \in \Gamma^0$, there is a unique reduced path $e_1 e_2 \dots e_n$ with range $x$ and source $y$.  If $\G$ is a tree, $x \in \G^0$ and $e \in \G^1$, then we say that \emph{$e$ points towards $x$} if the unique reduced path with range $x$ and source $r(e)$ does not contain the edge $e$.  In other words, $x$ is closer in $\G$ to $r(e)$ than to $s(e)$.

Every graph $\G$ has at least one maximal subtree $T$, for which  $T^0=\Gamma^0$ and 
$T^1\subseteq \Gamma^1$.  For such a $T$, a \emph{path in $T$} is a path in $\Gamma$ in which every edge belongs to $T^1$. Note that from Section~\ref{sec: C stars associated to gog} onwards we only consider nonsingular trees in this paper.

\subsection{Bass--Serre Theory}\label{subsec: Bass--Serre Theory}  

In this section we recall the theory of graphs of groups, also known as Bass--Serre Theory.  We generally follow Bass' work~\cite{Bass}, although our notation and terminology differ in places since we use directed graph $C^*$-algebra conventions where possible.  We start with the definition of a graph of groups~$\GG$.  We then discuss the fundamental group~$\pi_1(\GG,v)$ and the Bass--Serre tree $X_{\GG,v}$ of a graph of groups $\GG$, and describe the action of $\pi_1(\GG,v)$ on $X_{\GG,v}$.  

\begin{dfn}\label{def: graph of groups}
A {\em graph of groups} $\GG=(\Gamma,G)$ consists of a connected graph 
$\Gamma$ together with:
\begin{enumerate}
\item a \emph{vertex group} $G_x$ for each $x\in\Gamma^0$;
\item an \emph{edge group}  $G_e$ for each $e\in \Gamma^1$, such that $G_e=G_{\overline{e}}$ for all $e \in \G^1$; and 
\item a monomorphism $\alpha_e:G_e\to G_{r(e)}$, for each $e \in \G^1$.
\end{enumerate}
\end{dfn}
\noindent   For each $x\in\Gamma^0$ we denote by $1_x$ the identity element of the vertex group $G_x$, and we write $1$ for $1_x$ if the vertex $x$ is clear.

\begin{examples}\label{egs: graphs of groups}  The two basic examples of graphs of groups are edges of groups and loops of groups, which we now define.  We will follow these examples throughout this section.
\begin{enumerate}
\item[(E1)] Let $\Gamma$ be the graph 
\[
\begin{tikzpicture}[scale=2]

\node (0_0) at (0,0) [circle] {};
\node (1_0) at (1,0) [circle] {};

\filldraw [blue] (0,0) circle (1pt);
\filldraw [red] (1,0) circle (1pt);

\draw[-stealth,thick] (1_0) -- (0_0) node[pos=.5, inner sep=.5pt, anchor=south] {$e$};

\draw (0,.15) node {$x$};
\draw (1,.15) node {$y$};

\end{tikzpicture}
\]
\noindent An \emph{edge of groups} is a graph of groups $\GG= (\Gamma,G)$, and can be depicted as
\[
\begin{tikzpicture}[scale=2]

\node (0_0) at (0,0) [circle] {};
\node (1_0) at (1,0) [circle] {};

\filldraw [blue] (0,0) circle (1pt);
\filldraw [red] (1,0) circle (1pt);

\draw[-stealth,thick] (1_0) -- (0_0) node[pos=.5, inner sep=.5pt, anchor=south] {$G_e$};

\draw (0,.2) node {$G_x$};
\draw (1,.2) node {$G_y$};

\end{tikzpicture}
\]
\noindent The monomorphisms are $\alpha_e:G_e \to G_x$ and $\alpha_{\overline{e}}:G_{\overline{e}} = G_e \to G_y$. 

\item[(E2)] Let $\Gamma$ be the graph 
\[
\begin{tikzpicture}[scale=2]

\node (0_0) at (0,0) [circle] {};

\filldraw [red] (0,0) circle (1pt);

\draw[-stealth,thick] (.75,0) .. controls (.75,.5) and (.1,.5) .. (0_0) node[pos=0, inner sep=0.5pt, anchor=west] {$e$};
\draw[thick] (.75,0) .. controls (.75,-.5) and (.1,-.5) .. (0_0);

\draw (-.15,0) node {$x$};

\end{tikzpicture}
\]

\noindent A \emph{loop of groups} is a graph of groups $\GG= (\Gamma,G)$, and can be depicted as
\[
\begin{tikzpicture}[scale=2]

\node (0_0) at (0,0) [circle] {};

\filldraw [red] (0,0) circle (1pt);

\draw[-stealth,thick] (.75,0) .. controls (.75,.5) and (.1,.5) .. (0_0) node[pos=0, inner sep=0.5pt, anchor=west] {$G_e$};
\draw[thick] (.75,0) .. controls (.75,-.5) and (.1,-.5) .. (0_0);

\draw (-.2,0) node {$G_x$};

\end{tikzpicture}
\]

\noindent The monomorphisms are $\alpha_e:G_e \to G_x$ and $\alpha_{\overline{e}}:G_{\overline{e}} = G_e \to G_x$.

\end{enumerate}
\end{examples}

We say that a graph of groups $\GG = (\G, G)$ is {\em locally finite} if 
\begin{enumerate}[(a)]
\item the underlying graph $\G$ is locally finite; and 
\item $[G_{r(e)}:\alpha_e(G_e)]<\infty$ for all $e\in\Gamma^1$.
\end{enumerate}
Condition (b) here is saying that each edge group has finite index image in the adjacent vertex groups (recall that $\alpha_{\overline e}$ is a monomorphism from $G_{\overline e} = G_e$ to $G_{r(\overline e)} = G_{s(e)}$).
A graph of groups $\GG = (\G,G)$ is \emph{nonsingular} if for all $e\in\Gamma^1$ such that $r^{-1}(r(e))=\{e\}$, we have 
$[G_{r(e)}:\alpha_e(G_e)]>1$.  That is, if $e$ is the unique edge with range $r(e)$, then $\alpha_e(G_e)$ must be a proper subgroup of $G_{r(e)}$.
We only consider locally finite nonsingular graphs of groups in this paper.

Throughout this work, we will need various kinds of words and paths associated to a graph of groups $\GG = (\G, G)$. 

\begin{dfn}\label{def: words and paths}  For each $e \in \Gamma^1$, fix a transversal $\Sigma_e$ for $G_{r (e)} / 
\alpha_e (G_e )$, with $1_{r (e)} \in  \Sigma_e$.
\begin{enumerate}[(i)]
\item A {\em $\GG$-word (of length $n$)} is a sequence of the form
\[
g_1 \quad\text{ or }\quad g_1e_1g_2e_2\dots g_ne_n\quad\text{ or }\quad g_1e_1g_2e_2\dots g_ne_ng_{n+1},
\]
where $s(e_i ) = r (e_{i+1} )$ for $1 
\le i \le n-1$, $g_j\in G_{r(e_j)}$ for $1\le j\le n$, and $g_{n+1}\in 
G_{s(e_n)}$.  (In the case $n = 0$,  $g_1$ is just required to be an element of some vertex group $G_x$.)
For example, a $\GG$-word of the form $g_1 e_1 g_2 e_2\dots g_n e_n g_{n+1}$ can be pictured as follows:
\[
\begin{tikzpicture}[scale=2]

\node (0_0) at (0,0) [circle] {};
\node (1_0) at (1,0) [circle] {};
\node (2_0) at (2,0) [circle] {};
\node (3h_0) at (3.5,0) [circle] {};
\node (4h_0) at (4.5,0) [circle] {};

\foreach \x in {0,1,2,3.5,4.5} \filldraw [blue] (\x,0) circle (1pt);

\draw[-stealth,thick] (1_0) -- (0_0) node[pos=.5, inner sep=.5pt, anchor=south] {$e_1$};
\draw[-stealth,thick] (2_0) -- (1_0) node[pos=.5, inner sep=.5pt, anchor=south] {$e_2$};
\draw[-stealth,thick] (4h_0) -- (3h_0) node[pos=.5, inner sep=.5pt, anchor=south] {$e_n$};
\draw[-stealth,thick] (2.4,0) -- (2_0);
\draw[thick,dotted] (3.1,0) -- (2.4,0);
\draw[thick] (3h_0) -- (3.1,0);

\draw[thick,dotted,blue] (0_0) .. controls (.3,.8) and (-.3,.8) .. (0_0) node[pos=.5, inner sep=0.5pt, anchor=south] {$g_1$};

\draw[thick,dotted,blue] (1_0) .. controls (1.3,.8) and (.7,.8) .. (1_0) node[pos=.5, inner sep=0.5pt, anchor=south] {$g_2$};

\draw[thick,dotted,blue] (2_0) .. controls (2.3,.8) and (1.7,.8) .. (2_0) node[pos=.5, inner sep=0.5pt, anchor=south] {$g_3$};

\draw[thick,dotted,blue] (3h_0) .. controls (3.8,.8) and (3.2,.8) .. (3h_0) node[pos=.5, inner sep=0.5pt, anchor=south] {$g_n$};

\draw[thick,dotted,blue] (4h_0) .. controls (4.8,.8) and (4.2,.8) .. (4h_0) node[pos=.5, inner sep=0.5pt, anchor=south] {$g_{n+1}$};

\end{tikzpicture}
\]

\noindent If $g_1 \in G_x$ is a $\GG$-word of length $0$ then we define this $\GG$-word to have \emph{range} and \emph{source} the vertex $x$.  For $n > 0$  the $\GG$-words $g_1e_1g_2e_2\dots g_ne_n$ and $g_1e_1g_2e_2\dots g_ne_ng_{n+1}$ are both defined to have \emph{range} $r(e_1)$ and \emph{source} $s(e_n)$.  We denote by $|\mu|$ the length of a $\GG$-word $\mu$.

\item A {\em reduced $\GG$-word (of length $n$)} is a $\GG$-word of the form
\[
g_1 \quad\text{ or }\quad g_1e_1g_2e_2\dots g_ne_n \quad\text{ or }\quad g_1e_1g_2e_2\dots g_ne_ng_{n+1},
\]
where if $n > 0$ we have $g_j\in \Sigma_{e_j}$ for $1\le j\le n$ and 
$e_i=\overline{e_{i+1}}\Longrightarrow g_{i+1}\not= 1_{r(e_{i+1})}$, and $g_{n+1}$ is free to be any element of 
$G_{s(e_n)}$.   (In the case $n = 0$,  $g_1$ is just required to be an element of some vertex group $G_x$; in contrast to \cite[Section~1.7]{Bass}, we refer to the trivial $\GG$-words $1_x$ as reduced.)

\item A {\em $\GG$-loop based at $x\in\Gamma^0$} is a $\GG$-word of the 
form $g_1 \in G_x$ or $g_1e_1g_2e_2\dots g_ne_ng_{n+1}$ with $r(e_1)=s(e_n)=x$. A $\GG$-loop 
is called \emph{reduced} if it is also a reduced $\GG$-word.

\item A {\em $\GG$-path} is a reduced $\GG$-word of the form $g_1 = 1$ or 
$g_1e_1g_2e_2\dots g_ne_n$; so we insist that $\GG$-paths of length $n > 0$ must end in edges. We 
denote the collection of $\GG$-paths of length $n$ by $\GG^n$, and $\GG^*:=\cup_{n=0}^\infty \GG^n$. For $x\in\Gamma^0$ we denote 
by $x\GG^n$ the collection of $\GG$-paths of length $n$ with range~$x$, and by $x\GG^*$ the collection of all $\GG$-paths with range $x$.

\end{enumerate}
\end{dfn}

In order to define the fundamental group of a graph of groups $\GG = (\G, G)$, we will need the following auxiliary group, which is defined via a presentation.

\begin{dfn}\label{def: path group}
Let $\GG = (\G,G)$ be a graph of groups.  The \emph{path group}, denoted $\pi(\GG)$, has generating set  
\begin{equation}\label{E:PathGenerators}
\G^1 \sqcup \left(\bigsqcup_{x \in \G^0} G_x \right),
\end{equation}
that is, the edge set of the graph $\G$ together with the elements of the vertex groups of $\GG$, and defining relations the relations in the vertex groups, together with: 
\begin{enumerate}
\item[(R1)] $\overline{e} = e^{-1}$ for all $e \in \G^1$; and  
\item[(R2)] $e \alpha_{\overline{e}}(g) \overline{e} = \alpha_e(g)$ for all $e \in \G^1$ and all $g \in G_e = G_{\overline e}$.
\end{enumerate}  
\end{dfn}

The relation (R2) in the definition of the path group can be thought of as identifying the ``loop'' $\alpha_e(g)$ with the ``loop'' obtained (reading from right to left, as with composition of morphisms) by following the edge $\overline{e}$ (from $r(e)$ to $s(e)$), then $\alpha_{\overline{e}}(g)$, and then ``returning'' along $e$:
\[
\begin{tikzpicture}[scale=2]

\node (0_0) at (0,0) [circle] {};
\node (1_0) at (1,0) [circle] {};

\foreach \x in {0,1} \filldraw [blue] (\x,0) circle (1pt);

\draw[-stealth,thick] (1_0) -- (0_0) node[pos=.5, inner sep=.5pt, anchor=south] {$e$};

\draw[thick,dotted,blue] (0_0) .. controls (.3,.8) and (-.3,.8) .. (0_0) node[pos=.5, inner sep=0.5pt, anchor=south] {$\alpha_e(g)$};

\draw[thick,dotted,blue] (1_0) .. controls (1.3,.8) and (.7,.8) .. (1_0) node[pos=.5, inner sep=0.5pt, anchor=south] {$\alpha_{\overline{e}}(g)$};

\end{tikzpicture}
\]

We now observe that (reduced) $\GG$-words, and consequently (reduced) $\GG$-loops and $\GG$-paths, naturally map to elements of the path group $\pi(\GG)$.  More precisely, if the sequence
\[
g_1 \quad\text{ or }\quad g_1e_1g_2e_2\dots g_ne_n\quad\text{ or }\quad g_1e_1g_2e_2\dots g_ne_ng_{n+1}
\]
is a $\GG$-word, then by abuse of notation we associate to this $\GG$-word the element $g_1$ or $g_1e_1g_2e_2\dots g_ne_n$ or $g_1e_1g_2e_2\dots g_ne_ng_{n+1}$, respectively, of the path group $\pi(\GG)$.  

\begin{dfn}\label{def: path subset}
For $x, y \in \G^0$, define $\pi[x,y] \subseteq \pi(\GG)$ to be the set of images in the path group $\pi(\GG)$ of the $\GG$-words which have range $x$ and source $y$.  
\end{dfn}

Notice that, due to the relations (R1) and (R2), two different $\GG$-words can map to the same element of $\pi[x,y]$.  For instance the $\GG$-words $g_1 e_1 g_2 $ and $g_1 e_1 g_2 e_2 1 \overline{e_2}$ have the same image $g_1 e_1 g_2$ in $\pi[x,y]$, where $g_1 \in G_x$ and $g_2 \in G_y$.   By Theorem~1.8 of~\cite{Bass}, for any choice of the transversals $\Sigma_e$, the image of a nontrivial reduced $\GG$-word in $\pi(\GG)$ is nontrivial.  Thus by definition the image of a nontrivial $\GG$-path in $\pi(\GG)$ is nontrivial.  By Corollary~1.13 of~\cite{Bass}, for all $x$, $y \in \G^0$, every element of $\pi[x,y]$ is represented by a unique reduced $\GG$-word with range $x$ and source $y$; again, this result holds for any choice of transversals $\Sigma_e$.

We now choose a base vertex $v \in \G^0$ and consider the set $\pi[v,v]$, which by definition is the set of images of $\GG$-loops based at $v$.  By Theorem~1.8 and Corollary~1.13 of~\cite{Bass}, the image in $\pi[v,v]$ of a nontrivial reduced $\GG$-loop based at $v$ is nontrivial, and every nontrivial element of $\pi[v,v]$ is represented by a unique reduced $\GG$-loop based at $v$.  Now let $\gamma, \gamma' \in \pi[v,v]$.  Then $\gamma\gamma' \in \pi[v,v]$.  Also if 
\[
\gamma = g_1 \quad\text{ or }\quad \gamma = g_1e_1g_2e_2\dots g_ne_ng_{n+1},
\] 
then $\gamma^{-1}$ in the group $\pi(\GG)$ is given by
\[
\gamma^{-1} = g_1^{-1}  \quad\text{ or }\quad \gamma^{-1} = g_{n+1}^{-1}\overline{e}_n g_{n-1}^{-1} \overline{e}_{n-1} \dots g_2^{-1}\overline{e}_1 g_{1}^{-1},
\]
 respectively, and so $\gamma^{-1}$ is also in the set $\pi[v,v]$.  This allows us to make the following definition.

\begin{dfn}\label{def: fundamental group}  Let $\GG = (\G, G)$ be a graph of groups, and choose a base vertex $v \in \G^0$.  The \emph{fundamental group of $\GG$ based at $v$}, denoted $\pi_1(\GG,v)$, is the subgroup $\pi[v,v]$ of the path group $\pi(\GG)$.
\end{dfn}

An alternative way of defining the fundamental group of a graph of groups is via a presentation, as follows. 

\begin{dfn}\label{def: fund group tree}
Let $\GG = (\G, G)$ be a graph of groups and choose a maximal subtree $T$ of the graph $\G$.  The \emph{fundamental group of $\GG$ relative to $T$}, denoted $\pi_1(\GG,T)$, has the same generating set \eqref{E:PathGenerators} as the path group $\pi(\GG)$, and defining relations the relations in the vertex groups of $\GG$, the relations (R1) and (R2) above, and the additional relation:
\begin{enumerate}
\item[(R3)] $e = 1$ for all $e \in T^1$.
\end{enumerate}
\end{dfn}

\noindent That is, the fundamental group $\pi_1(\GG,T)$ is the quotient of the path group $\pi (\GG)$ obtained by killing all of the edges in the chosen maximal subtree $T \subset \G$.  

For each choice of base vertex $v$ and maximal subtree $T \subset \G$, the induced projection $\pi(\GG) \to \pi_1(\GG,T)$ restricts to an isomorphism of fundamental groups $\pi_1(\GG,v) \to \pi_1(\GG,T)$ (see~\cite[Proposition 1.20]{Bass}).  Thus in particular, up to isomorphism the fundamental group of a graph of groups is independent of the choice of base vertex or of maximal subtree.  When the base vertex $v$ is clear, we may drop it from the notation and write $\pi_1(\GG)$ for the fundamental group of $\GG$ based at $v$.  Notice that if all the vertex and edge groups in the graph of groups $\GG$ are trivial, then $\pi_1 ( \GG , T ) \cong \pi_1(\GG,v)$ is the usual fundamental group of the (geometric realisation of the) graph $\Gamma$.

\begin{ntn} \label{ntn: general notation}

We use $\varepsilon$ to denote the inverse isomorphism from $\pi_1(\GG,T)$ to $\pi_1(\GG,v)$, for $T \subseteq \G$ any maximal subtree and any $v \in \G^0$. In order to define this map $\varepsilon$, for each $x \neq y\in \Gamma^0$ we first define 
\begin{equation}\label{E: [x,y]_T}
[x,y]_T:=e_1e_2\dots e_n
\end{equation}
to be the unique reduced path in $T$ such that $r(e_1)=x$ and $s(e_n)=y$.  Note that the path $[x,y]_T = e_1 e_2 \dots e_n$ has the same image in the path group $\pi(\GG)$ as the reduced $\GG$-path 
\begin{equation}\label{E: [x,y]}
[x,y]:= 1e_1 1 e_2 \dots 1 e_n.
\end{equation}
Thus in particular, the image of $[x,y]_T$ in the path group is an element of $\pi[x,y]$. We also define $[x,x]$ to be the trivial element of $G_x$, and so $[x,x]$ has trivial image in $\pi_1(\GG,x)$. 

Now the group $\pi_1(\GG,T)$ is generated by the edge set $\Gamma^1$ together with the vertex groups of $\GG$, and so it suffices to specify $\varepsilon$ on these generators.
For each $e\in \Gamma^1$ we define 
\begin{equation}\label{E: ep(e)}
\varepsilon(e):=[v,r(e)]e[s(e),v]\in\pi_1(\GG,v).
\end{equation}
Note that $\varepsilon(e)$ is the identity element of $\pi_1(\GG,v)$ whenever 
$e\in T^1$.  Also, for each $x\in\Gamma^0$ and $g\in G_x$, we define 
\begin{equation}\label{E: ep(g)}
\varepsilon(g):=[v,x]g[x,v]\in\pi_1(\GG,v).
\end{equation}
If we need to specify the vertex $x$ we will write $\varepsilon(x,g)$ for $\varepsilon(g)$.  Since $\varepsilon$ is an isomorphism of groups, the elements $\varepsilon(e)$ and $\varepsilon(g)$ generate $\pi_1(\GG,v)$. Note that as a straightforward consequence of (R1) and (R2) we have 
\begin{equation}\label{eq: formerly appx (B)}
\varepsilon(\alpha_e(g))\varepsilon(e)=\varepsilon(e)\varepsilon(\alpha_{\overline{e}}(g)),\quad\text{for each $e\in \Gamma^1$, $g\in G_e$.}
\end{equation}
\end{ntn}

\begin{rmk}\label{rem:EpsilonNotation}
We reiterate that even though $\varepsilon(e)=[v,r(e)]e[s(e),v]$ is not a $\GG$-word (because there is no group element between the last edge of $[v,r(e)]$ and $e$), it is a fundamental group element. Indeed, inside the path group, the image of the $\GG$-loop based at $v$ given by $[v,r(e)] 1e [s(e),v]$ is the fundamental group element represented by all of the products
\[
[v,r(e)] 1e [s(e),v] = [v,r(e)] e [s(e),v] = [v,r(e)]_T e [s(e),v]_T = [v,r(e)]_T 1e [s(e),v]_T.
\]	
\end{rmk}	

\begin{ntn}\label{ntn:UnderlineArrow}
For each $e\in \Gamma^1$ with $r(e) \not= v$ we denote by $\underleftarrow{e}$ the 
source-most edge in $[v,r(e)]_T$.  That is, if $[v,r(e)]_T = e_1 e_2 \dots e_n$ then $\underleftarrow{e} = e_n$.  By definition, $\underleftarrow{e}$ will always be an edge of the tree $T$, and so if $e \not \in T^1$ then $\underleftarrow{e} \neq e,\overline{e}$.  In the following figure, we assume all edges are in the tree $T$.  In the case shown on top, we have $\underleftarrow{e}$ different from both $e$ and $\overline{e}$, while on the bottom we have $\underleftarrow{e} = \overline{e}$.  The case $\underleftarrow{e} = e$ never occurs.
\[
\begin{tikzpicture}[scale=2]

\node (0_0) at (0,0) [circle] {};
\node (1_0) at (1,0) [circle] {};
\node (2h_0) at (2.5,0) [circle] {};
\node (3h_0) at (3.5,0) [circle] {};
\node (4h_0) at (4.5,0) [circle] {};
\node (5h_0) at (5.5,0) [circle] {};

\foreach \x in {0,1,2.5,3.5,4.5,5.5} \filldraw [blue] (\x,0) circle (1pt);

\draw[-stealth,thick] (1_0) -- (0_0);
\draw[-stealth,thick] (1.4,0) -- (1_0);
\draw[thick,dotted] (2.1,0) -- (1.4,0);
\draw[thick] (2h_0) -- (2.1,0);
\draw[-stealth,thick] (3h_0) -- (2h_0);
\draw[-stealth,thick] (4h_0) -- (3h_0) node[pos=.5, inner sep=.5pt, anchor=south] {$\underleftarrow{e}$};
\draw[-stealth,thick] (5h_0) -- (4h_0) node[pos=.5, inner sep=.5pt, anchor=south] {$e$};
\draw[-stealth,thick,dashed] (4h_0) .. controls (4.8,-.3) and (5.2,-.3) .. (5h_0) node[pos=.5, inner sep=.5pt, anchor=north] {$\overline{e}$};

\draw (0,.15) node {$v$};

\end{tikzpicture}
\]

\[
\begin{tikzpicture}[scale=2]

\node (0_0) at (0,0) [circle] {};
\node (1_0) at (1,0) [circle] {};
\node (2h_0) at (2.5,0) [circle] {};
\node (3h_0) at (3.5,0) [circle] {};
\node (4h_0) at (4.5,0) [circle] {};

\foreach \x in {0,1,2.5,3.5,4.5} \filldraw [blue] (\x,0) circle (1pt);

\draw[-stealth,thick] (1_0) -- (0_0);
\draw[-stealth,thick] (1.4,0) -- (1_0);
\draw[thick,dotted] (2.1,0) -- (1.4,0);
\draw[thick] (2h_0) -- (2.1,0);
\draw[-stealth,thick] (3h_0) -- (2h_0);
\draw[-stealth,thick] (3h_0) -- (4h_0) node[pos=.5, inner sep=.5pt, anchor=south] {$e$};
\draw[-stealth,thick,dashed] (4h_0) .. controls (4.2,-.3) and (3.8,-.3) .. (3h_0) node[pos=.5, inner sep=.5pt, anchor=north] {$\underleftarrow{e} = \overline{e}$};

\draw (0,.15) node {$v$};

\end{tikzpicture}
\]

\end{ntn} 

\begin{examples}\label{egs: fund gps}  We describe the fundamental groups of the graphs of groups from Examples~\ref{egs: graphs of groups}, using the version $\pi_1(\GG,T)$, since this is given by a presentation.  In both cases, the fundamental group obtained is a standard construction in combinatorial group theory.
\begin{enumerate}
\item[(E1)] The only maximal subtree $T$ of the graph $\G$ is $T = \G$, and so in $\pi_1(\GG,T)$ we have $e = \overline{e} = 1$.  Thus $\pi_1(\GG,T)$ is generated by $G_x \sqcup G_y$, and has defining relations the relations in $G_x$ and in $G_y$, together with $\alpha_{\overline{e}}(g) = \alpha_e(g)$ for all $g \in G_e$.  Hence the fundamental group of this edge of groups is isomorphic to the free product of $G_x$ and $G_y$ amalgamated over $G_e$.  That is,
\[\pi_1(\GG) \cong G_x *_{G_e} G_y. \]
\item[(E2)] The only maximal subtree $T$ of the graph $\G$ is $T = \{ x \}$, and so this time $e$ is nontrivial in $\pi_1(\GG,T)$.  The group $\pi_1(\GG,T)$ is generated by $\{e,\overline{e} \} \sqcup G_x$, and has defining relations the relations in $G_x$, together with $\overline{e} = e^{-1}$ and $e\alpha_{\overline{e}}(g)e^{-1} = \alpha_e(g)$ for all $g \in G_e$.  Hence the fundamental group of this loop of groups is isomorphic to the corresponding HNN extension of $G_x$.  That is,
\[\pi_1(\GG) \cong G_x *_{G_e}. \]
\end{enumerate}
\end{examples}

In the next definition, we will give the vertex and edge sets of a certain graph.  By Theorem~1.17 of~\cite{Bass}, this graph is actually a tree, so the following terminology is justified.  

\begin{dfn}\label{def: BS Tree}  Let $\GG = (\G,G)$ be a graph of groups, and choose a base vertex $v \in \G^0$.  The \emph{Bass--Serre tree}   $X_{\GG,v}$ of the graph of groups $\GG$, also known as its \emph{universal cover}, has vertex set
\[
X_{\GG,v}^0 = \bigsqcup_{x \in \G^0} \pi[v,x]/G_x = \{ \gamma G_x \mid \gamma \in \pi[v,x], x \in \G^0 \} .
\]
For $\gamma \in \pi[v,x]$ and $\gamma' \in \pi[v,x']$, there is an edge $f \in X_{\GG,v}^1$ with $r(f) = \gamma G_x$ and $s(f) = \gamma' G_{x'}$ if and only if $\gamma^{-1} \gamma' \in G_x  e G_{x'}$, where $e \in \G^1$ with $r(e) = x$ and $s(e) = x'$.
\end{dfn}

As discussed in Remark~1.18 of \cite{Bass}, the Bass--Serre tree $X_{\GG,v}$ is the tree associated to the inverse system
\begin{equation}\label{E: inverse}
v\GG^0 \overset{q_1}{\longleftarrow} v\GG^1 \overset{q_2}{\longleftarrow} \dots \overset{q_{n-1}}{\longleftarrow} v\GG^{n-1} \overset{q_n}{\longleftarrow} v\GG^n \overset{q_{n+1}}{\longleftarrow} \dots,
\end{equation}
where for each $n > 0$ the function $q_n:v\GG^n \to v\GG^{n-1}$ is given by $q_n (g_1e_1g_2e_2\dots g_ne_n) = g_1e_1g_2e_2\dots g_{n-1}e_{n-1}$. In the identification of this inverse system with the tree $X_{\GG,v}$, the set $v\GG^0$ corresponds to the base vertex $1G_v$ of $X_{\GG,v}$, and for $n > 0$ the set $v\GG^n$ corresponds to the vertices of $X_{\GG,v}$ at distance $n$ from $1G_v$. Thus each vertex at distance $n > 0$ from $1G_v$ has a unique representative $\GG$-path of the form $g_1e_1g_2e_2\dots g_ne_n$ where $r(e_1) = v$. With this description, it is easy to verify that the graph of groups $\GG$ is locally finite (respectively, nonsingular) if and only if the Bass--Serre tree $X_{\GG,v}$ is locally finite (respectively, nonsingular).

\begin{examples}  We describe the Bass--Serre trees of the graphs of groups from Examples~\ref{egs: graphs of groups}, using the inverse system~\eqref{E: inverse}.  
\begin{enumerate}
\item[(E1)] Choose $v = x$.  Let $n_x = [G_x: \alpha_e(G_e)]$ and $n_y = [G_y:\alpha_{\overline{e}}(G_e)]$, so that the transversals $\Sigma_e$ and $\Sigma_{\overline e}$ have respectively $n_x$ and $n_y$ elements.  Since $\GG$ is locally finite and nonsingular, we have $1 < n_x, n_y < \infty$. We have $|v\GG^1|=|\{ge:g\in \Sigma_e\}|=n_x$, and so the base vertex $1G_v$ of $X_{\GG,v}$ has valence $n_x$. For $geg_2e_2\in x\GG^2$ we must have $e_2=\overline e$ and $g_2 \neq 1_{y}$, and so there are $n_y - 1$ choices for $g_2$.  Thus each vertex in $v\GG^1$ is the range of one edge with source $1G_v$, and $n_y - 1$ other edges, and so each vertex in $v\GG^1$ has valence $n_y$.  Continuing in this way, we see that the vertices in $v\GG^k$ have valence $n_x$ or $n_y$ as $k$ is odd or even, respectively, and so the vertices in the tree $X_{\GG,v}$ alternate between having valence $n_x$ and valence $n_y$.  That is, $X_{\GG,v}$ is the $(n_x,n_y)$-biregular tree.

\item[(E2)] The only possible base vertex is $v = x$.  Let $m_e = [G_x: \alpha_e(G_e)]$ and $m_{\overline e} = [G_x: \alpha_{\overline e}(G_e)]$, so that the transversals $\Sigma_e$ and $\Sigma_{\overline e}$ have respectively $m_e$ and $m_{\overline{e}}$ elements. Since $\GG$ is locally finite, we have $1 \le m_e, m_{\overline{e}} < \infty$. We have $|v\GG^1|=|\{ge:g\in \Sigma_e\}\cup\{g\overline{e}:g\in\Sigma_{\overline{e}}\}|=m_e+m_{\overline{e}}$, and so the base vertex $1G_v$ of $X_{\GG,v}$ has valence $m:=m_e+m_{\overline{e}}$. For $geg_2e_2\in v\GG^2$, there are $m_e$ choices for $g_2$ when $e_2=e$, and $m_{\overline{e}}$ choices for $g_2$ when $e_2=\overline{e}$. So each vertex of the form $ge\in v\GG^1$ is the range of one edge with source $1G_v$, and $m_e + (m_{\overline{e}}-1)$ other edges; that is, it has valence $m$. A similar argument shows that every vertex of the form $g\overline{e}\in v\GG^2$ also has valence $m$. Continuing with the same reasoning, we see that every vertex in the tree $X_{\GG,v}$ has valence $m$, and so $X_{\GG,v}$ is the $m$-regular tree.
\end{enumerate}
\end{examples}

The fundamental group $\pi_1(\GG,v)$ acts on the Bass--Serre tree $X_{\GG,v}$ as follows.  Let $\gamma \in \pi_1(\GG,v) = \pi[v,v]$.  Then for all $\gamma' \in \pi[v,x]$, we have $\gamma\gamma' \in \pi[v,x]$.  Hence there is a natural left action of $\pi_1(\GG,v)$ on the vertex set of $X_{\GG,v}$.  Explicitly, $\gamma \cdot \gamma'G_x = \gamma\gamma' G_x$.  It can be verified that this action extends to an action on the edges of the tree $X_{\GG,v}$.  

\begin{rmk}\label{rmk: action}
In terms of the inverse system~\eqref{E: inverse}, the action of $\pi_1(\GG,v)$ on $X_{\GG,v}$ is more difficult to describe explicitly.  Let $\gamma \in \pi_1(\GG,v)$ be nontrivial.  Then $\gamma$ is represented by a unique reduced $\GG$-loop $g_1$ or $g_1e_1g_2e_2\dots g_me_m g_{m+1}$ based at $v$.  

We first consider the action on $v\GG^0$, which we identify with the base vertex $1G_v$ of $X_{\GG,v}$.  If $\gamma$ is represented by $g_1 \in G_v$ then $\gamma \cdot 1G_v = g_1G_v = 1G_v$ so $\gamma$ fixes $1G_v$.  Otherwise, 
\[
\gamma \cdot 1 G_v = g_1e_1g_2e_2\dots g_me_m g_{m+1} G_v = g_1e_1g_2e_2\dots g_me_m G_v
\]
since $g_{m+1} \in G_v$, and so $\gamma$ takes the base vertex $1G_v$ to the vertex $g_1e_1g_2e_2\dots g_me_m\in v\GG^m$.  

For $n > 0$, let $g_1'e_1'g_2'e_2'\dots g_n'e_n' \in v\GG^n$.  The concatenation 
\[
g_1g_1'e_1'g_2'e_2'\dots g_n'e_n' \quad\text{ or }\quad g_1e_1g_2e_2\dots g_me_m g_{m+1}g_1'e_1'g_2'e_2'\dots g_n'e_n'
\]
will not in general be a $\GG$-path, but by repeated application of relations in the vertex groups of $\GG$ together with relations (R1) and (R2), it can be transformed into a reduced $\GG$-word of the form 
\[
g_1''\quad\text{or}\quad g_1'' e_1'' \dots g_k'' e_k''\quad\text{or}\quad g_1'' e_1'' \dots g_k'' e_k'' g_{k+1}'',
\]
with range $v$. For the case of a single group element $g_1''$, we take the $\GG$-path $1_v\in v\GG^0$. A word of the form $g_1'' e_1'' \dots g_k'' e_k''$ is already a $\GG$-path and so is in $v\GG^k$.  In the remaining case, we remove the final group element $g_{k+1}''$ to obtain an element of $v\GG^k$. Hence in all cases the image can be determined.
\end{rmk}



If we change base vertex from $v \in \G^0$ to $v' \in \G^0$, then there is a $(\pi_1(\GG,v),\pi_1(\GG,v'))$-equivariant isomorphism of Bass--Serre  trees $X_{\GG,v} \to X_{\GG,v'}$ (see 1.22 of \cite{Bass}).  When the base vertex $v$ is clear, we may just write $X_\GG$ for the Bass--Serre tree.

We conclude this section with a theorem sometimes known as the Fundamental Theorem of Bass--Serre Theory.  Roughly speaking, this states that graphs of groups encode group actions on trees.  This theorem is important for the later sections of this paper when we wish to compare dynamical properties of the boundary action with combinatorial properties of the corresponding graph of groups. We do not define all of the terms in this result, and refer the reader to~\cite{Bass}. We do however explain how a group acting on a tree induces a graph of groups.  The action of a group $G$ on a tree $X$ is said to be \emph{without inversions} if $g e \neq \overline e$, for all $g \in G$ and $e \in X^1$. (Note that this is a mild restriction, because an action always induces an action without inversion on a tree obtained by subdividing edges of the original tree.) If $G$ acts on $X$ without inversions, then there is a well-defined quotient graph $\G := G\backslash X$, and the $G$-action induces a graph of groups $\GG = (G,\G)$ as follows.  Let $p: X \to G\backslash X$ be the natural projection.  For each $x \in \G^0$ choose an element $x' \in p^{-1}(x)$, and for each $e \in \G^1$ choose an element $e' \in p^{-1}(e)$.  Then define the vertex group $G_x$ to be $\Stab_G(x')$, the stabiliser in $G$ of $x'$, and the edge group $G_e$ to be $\Stab_G(e')$.  For $e \in \G^1$, the monomorphism $\alpha_e$ is defined as follows.  Suppose $r(e) = x$.  Then by definition of the quotient map, there is $g = g_{e',x'} \in G$ so that $g \cdot r(e') = x'$.  Now if a group element fixes the edge $e'$ it fixes the vertex $r(e')$, so
$$ g G_e g^{-1} = g\Stab_G(e')g^{-1} \leq g \Stab_G(r(e')) g^{-1} = \Stab_G(x') = G_x = G_{r(e)},$$
and we may define $\alpha_e:G_e \to G_{r(e)}$ by $h \mapsto ghg^{-1}$.

\begin{thm}\label{thm: BS theorem}    Let $\GG = (\G,G)$ be a graph of groups, and choose a base vertex $v \in \G^0$.  Then the action of the fundamental group $\pi_1(\GG,v)$ on the Bass--Serre tree $X_{\GG,v}$ induces a graph of groups isomorphic to $\GG$.  Conversely, if $G$ is a group acting without inversions on a tree $X$ with quotient graph $\G = G\backslash X$, and $\GG = (\G,G)$ is a graph of groups induced by this action, then for all $v \in \G^0$ there is an isomorphism of groups $\pi_1(\GG,v) \cong G$ and an equivariant isomorphism of trees $X_{\GG,v} \cong X$.
\end{thm}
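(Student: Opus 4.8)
The plan is to prove the two directions separately, with the uniqueness of reduced $\GG$-word representatives (Theorem~1.8 and Corollary~1.13 of \cite{Bass}) and the tree property of $X_{\GG,v}$ (Theorem~1.17 of \cite{Bass}) as the main tools.

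\emph{Forward direction.} Let $p\colon X_{\GG,v}\to \pi_1(\GG,v)\backslash X_{\GG,v}$ be the quotient map. Since the action is $\gamma\cdot\gamma'G_x=\gamma\gamma'G_x$ and $\pi[v,x]\,G_x\,\pi[x,v]\subseteq\pi[v,v]$, any two vertices lying in the same summand $\pi[v,x]/G_x$ share a single $\pi_1(\GG,v)$-orbit, while vertices in different summands cannot be identified because the action preserves the $x$-label; the same reasoning applied to the edge condition $\gamma^{-1}\gamma'\in G_xeG_{x'}$ handles edges. Hence $p$ induces a graph isomorphism $\pi_1(\GG,v)\backslash X_{\GG,v}\cong\G$ compatible with $r$, $s$ and edge reversal. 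Choosing the orbit representatives $[v,x]G_x$, a direct computation using that $\gamma$ stabilises $[v,x]G_x$ if and only if $[v,x]^{-1}\gamma[v,x]\in G_x$ gives
\[
\Stab_{\pi_1(\GG,v)}\big([v,x]G_x\big)=[v,x]\,G_x\,[x,v]=\varepsilon(x,G_x),
\]
which is isomorphic to $G_x$ since $\varepsilon$ is injective; an analogous computation for a chosen edge lift yields a stabiliser isomorphic to $G_e$. It then remains to check that the conjugating elements $g_{e',x'}$ defining the induced monomorphisms agree, under $\varepsilon$, with the original $\alpha_e$; this is the most delicate bookkeeping, but it reduces to the identity $\varepsilon(\alpha_e(g))\varepsilon(e)=\varepsilon(e)\varepsilon(\alpha_{\overline{e}}(g))$ recorded in Notation~\ref{ntn: general notation}.

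\emph{Converse direction.} Given $G$ acting without inversions on $X$ and the induced $\GG$, I would build the two isomorphisms together. Define $\phi$ on the generators of $\pi(\GG)$ by sending $g\in G_x=\Stab_G(x')$ to itself and each edge $e$ to the chosen transporter $g_{e',x'}\in G$; checking the vertex-group relations, (R1), and (R2) (the last being exactly $\alpha_e(h)=ghg^{-1}$) shows $\phi$ descends to a homomorphism $\pi(\GG)\to G$ restricting to $\phi\colon\pi_1(\GG,v)\to G$. Next define an equivariant graph morphism $\Phi\colon X_{\GG,v}\to X$ by $\gamma G_x\mapsto\phi(\gamma)\cdot x'$; this is well defined since $G_x$ fixes $x'$, and I would verify it is a local isomorphism (a bijection on the star of each vertex) via the index count $[G_{r(e)}:\alpha_e(G_e)]$. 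Because a local isomorphism carries the edges at any source vertex bijectively onto the full star at its image, its image is a clopen subgraph of the tree $X$, hence all of $X$; and since a reduced path between distinct vertices of $X_{\GG,v}$ maps to a non-backtracking path in the tree $X$, which cannot close up, $\Phi$ is injective. Thus $\Phi$ is an isomorphism. Surjectivity of $\phi$ then follows because $G$, acting on the connected graph $X$, is generated by the vertex stabilisers and the transporters---precisely the image of $\phi$---while injectivity follows because a nontrivial reduced $\GG$-loop corresponds to a reduced edge path of positive length in $X_{\GG,v}$, which $\Phi$ sends to a non-backtracking path of the same length in $X$, necessarily ending away from its start, so $\phi(\gamma)\cdot v'\neq v'$.

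\emph{Main obstacle.} I expect the heaviest part to be matching the monomorphisms in the forward direction and, dually, simultaneously establishing that $\phi$ is well defined and bijective in the converse. Both hinge on keeping the transporters $g_{e',x'}$ consistent with the tree-path terms $[v,x]$ built into $\varepsilon$, and the compatibility identity $\varepsilon(\alpha_e(g))\varepsilon(e)=\varepsilon(e)\varepsilon(\alpha_{\overline{e}}(g))$ is the algebraic crux that makes both directions fit.
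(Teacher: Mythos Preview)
The paper does not actually prove this theorem. It is stated in the background section as the ``Fundamental Theorem of Bass--Serre Theory,'' with the remark that ``We do not define all of the terms in this result, and refer the reader to~\cite{Bass}.'' There is no proof in the paper; the theorem is quoted from the literature.

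Your proposal is therefore not competing with any argument in the paper, but it is a serviceable sketch of the standard proof one finds in Bass or Serre. A couple of minor points of care: in the converse direction, the ``clopen subgraph'' language is out of place for combinatorial graphs---what you really want is that a local isomorphism of graphs with connected target is surjective, and that a local isomorphism into a tree (a simply connected graph) is a graph isomorphism; this is the graph-theoretic analogue of the covering-space argument. Also, surjectivity of $\phi$ does not quite follow from ``$G$ is generated by vertex stabilisers and transporters'' without first knowing the action is transitive on a fundamental domain's worth of vertices, which is exactly what the equivariant isomorphism $\Phi$ gives you; so the order of dependencies is that $\Phi$ bijective $\Rightarrow$ $\phi$ bijective, and you should make that explicit rather than presenting them as parallel. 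Apart from these refinements, the plan is sound and matches the classical argument.
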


\begin{rmk}\label{rem:LatticeStuff}
As mentioned in the introduction, one key application of graphs of groups is to the study of lattices in automorphism groups of trees (a reference for this theory is~\cite{BL}).  Recall that if $G$ is a locally compact group, a \emph{lattice} in $G$ is a discrete subgroup $\Lambda < G$ so that $G/\Lambda$ admits a $G$-invariant measure of finite volume.  A lattice $\Lambda < G$ is \emph{uniform} if $G/\Lambda$ is compact, and otherwise is \emph{nonuniform}.

Now let $T$ be a locally finite tree. Then its automorphism group $\Aut(T)$ is naturally a locally compact group, when equipped with the compact-open topology.  In this topology, if $\Lambda$ is a subgroup of $\Aut(T)$ then $\Lambda$ is discrete if and only if $\Lambda$ acts on $T$ with finite vertex stabilisers, $\Lambda$ is a uniform lattice in $\Aut(T)$ if and only if $\Lambda$ is discrete and the quotient graph $\Lambda \backslash T$ is finite, and $\Lambda$ is a nonuniform lattice in $\Aut(T)$ if and only if $\Lambda$ is discrete, the quotient graph $\Lambda \backslash T$ is infinite, and the series $\Sigma \frac{1}{|\Lambda_v|}$ converges, where this sum is taken over a set of representatives of the $\Lambda$-orbits on $T^0$ (see Chapter 1 of~\cite{BL}).  Converting these to statements about a graph of groups, say $\GG_\Lambda = (\G,G)$, induced by the $\Lambda$-action, we see that $\Lambda$ is discrete exactly when $\GG_\Lambda$ is a graph of finite groups, $\Lambda$ is a uniform lattice exactly when $\GG_\Lambda$ is a finite graph of finite groups, and $\Lambda$ is a nonuniform lattice exactly when $\GG_\Lambda$ is an infinite graph of finite groups such that the series $\Sigma \frac{1}{|G_x|}$ converges, where $x$ now runs over all vertices of $\G = \Lambda \backslash T$.   The tree $T$ is said to be a \emph{uniform tree} if $\Aut(T)$ admits at least one uniform lattice, which is equivalent to $T$ being the Bass--Serre tree for some finite graph of finite groups.
\end{rmk}

\subsection{Boundaries of trees}\label{subsec: boundaries of trees}

We first give general definitions concerning boundaries of trees, then discuss in Section~\ref{subsubsec: BS tree} the situation where the tree $X$ is the Bass--Serre tree for a graph of groups $\GG$.

Let $X$ be a locally finite, nonsingular tree, and choose a base vertex $x \in X^0$.    We now allow infinite paths, and so a \textit{path} in $X$ is either a finite path of length $n \geq 0$ as defined in Section~\ref{subsec: Graphs}, or an infinite sequence of edges $e_1 e_2 \dots$ such that $s(e_i) = r(e_{i+1})$ for $i \geq 1$; this path is \textit{reduced} if $e_{i+1} \not= \overline{e_i}$ for all $i \geq 1$, and it has \emph{range} $r(e_1)$.  We let $|\alpha|$ denote the length of a finite reduced path $\alpha$, and for $n \geq 0$ write $xX^n$ for the set of reduced paths of length $n$ with range $x$ (thus $xX^0 = \{ x\}$).   Since $X$ is locally finite, the set $xX^n$ is finite for each $n$.   We denote by $x X^*$ the set of all finite reduced paths in $X$ with range~$x$, that is, $xX^* = \cup_{n=0}^\infty \, xX^n$.   

\begin{dfn}\label{def: boundary}  The \textit{boundary (from $x$)} of $X$ is the set of infinite reduced paths with range $x$, and is denoted $x\partial X$.  
\end{dfn}

For a finite reduced path $\mu \in xX^*$, we define the \emph{cylinder set} $Z(\mu)$ to be the elements of $x\partial X$ that extend $\mu$.   Since $X$ is nonsingular, the set $Z(\mu)$ is nonempty for all such $\mu$.  The collection $\{ Z(\mu) : \mu \in xX^* \}$ is a base for a totally disconnected compact Hausdorff topology on $x\partial X$, coinciding with the cone topology as described below.  

\begin{rmk} \label{rmk: visualboundary}  As we now explain, Definition~\ref{def: boundary} of the boundary of the tree $X$ is consistent with the definition of the visual boundary of a tree in geometric group theory.  We follow Chapter II.8 of the reference~\cite{BH}, which considers the more general setting of CAT(0) spaces.  Equip the tree $X$ with its usual metric $d$, in which each edge has length one.  A \emph{geodesic ray} is then a map $c:[0,\infty) \to X$ such that for all $0 < t, t' <\infty$, we have $|t - t'| = d(c(t), c(t'))$.  Two geodesic rays $c, c'$ are said to be \emph{equivalent} if the function $t \mapsto d(c(t), c'(t))$ is bounded.  Note that since $X$ is a tree, geodesic rays $c$ and $c'$ are equivalent if and only if their images eventually coincide.  

The \emph{visual boundary} of $X$, often denoted $\partial_\infty X$ or just $\partial X$, is the set of equivalence classes of geodesic rays.   The visual boundary is sometimes also called the \emph{ideal boundary} or \emph{Gromov boundary}, and it coincides with the set of \emph{ends} of the tree.  If $c$ is a geodesic ray we write $c(\infty)$ for the boundary point that it represents. 
  It is a basic result that given any $\xi \in \partial_\infty X$, and any base vertex $x \in X^0$, there is a unique geodesic ray $c$ such that $c(0) = x$ and $c(\infty) = \xi$.  That is, for all $x \in X^0$, every point on the boundary is represented by a unique geodesic ray from $x$.  
  
  The visual boundary $\partial_\infty X$ can be equipped with the \emph{cone topology}.  In this topology, if $\xi  \in \partial_\infty X$ is represented by the geodesic ray $c$ with $c(0) = x$, a basic neighbourhood of $\xi$ has the form:
\[
U(\xi,n) := \{ \xi' \in \partial_\infty X \mid \mbox{ $\xi' = c'(\infty)$ for a geodesic ray $c'$ such that $c|_{[0,n]} = c'|_{[0,n]}$} \},
\]
where $n \in \mathbb{N}$.  
\end{rmk}

\subsubsection{The boundary of the Bass--Serre tree}\label{subsubsec: BS tree}
We now consider the special case that the tree $X$ is the Bass--Serre tree $X_{\GG,v}$ for a locally finite, nonsingular graph of groups $\GG = (\G, G)$.  For each $n$, the set of reduced paths $(1G_v)X_{\GG,v}^n$ of length $n$ which have range its base vertex $1G_v$ can be identified with the set of $\GG$-paths $v\GG^n$.  We can then further identify the boundary $(1G_v)\partial X_{\GG,v}$ with the set of \emph{infinite reduced $\GG$-words with range $v$}, which is the set of all infinite sequences $g_1e_1g_2e_2\dots$ such that each initial finite subsequence of the form $g_1 e_1 \dots g_n e_n$ is an element of $v\GG^n$. (More generally, an \emph{infinite reduced $\GG$-word} is an infinite sequence $g_1 e_1 g_2 e_2 \dots$ such that each initial finite subsequence $g_1 e_1 \dots g_n e_n$ is in $\GG^*$. The range map $r$ extends to infinite reduced words in the obvious way: $r(g_1e_1g_2e_2\dots):=r(e_1)$.)

The action of the fundamental group $\pi_1(\GG,v)$ on the tree $X_{\GG,v}$ can be described via the sets $v\GG^n$ (see Remark~\ref{rmk: action}). Note that this action does not in general fix the base vertex $vX^0$, since the stabiliser of $1G_v$ is the vertex group $G_v$ of the graph of groups $\GG$, and $G_v$ is in general a proper subgroup of $\pi_1(\GG,v)$.  Hence this action does not in general take an element of $v\GG^n$ to an element of $v\GG^n$.  However, as we now describe, there is an induced action of $\pi_1(\GG,v)$ on the boundary $(1G_v)\partial X_{\GG,v}$.  

Let $\gamma$ be a nontrivial element of $\pi_1(\GG,v)$ and let $\xi \in (1G_v)\partial X_{\GG,v}$.  Then $\gamma$ is represented by a unique reduced $\GG$-loop based at $v$, and $\xi$ corresponds to a unique infinite reduced $\GG$-path with range $v$. The infinite $\GG$-word consisting of the concatenation of the reduced $\GG$-loop $\gamma$ with the reduced $\GG$-path $\xi$ will still have range $v$, but will not in general be reduced.  However by possibly infinitely many applications of the relations in the vertex groups of $\GG$ and relations (R1) and (R2), this concatenation can be transformed into an infinite reduced $\GG$-path with range $v$, say $\xi'$.  It can be verified that this procedure of concatenation and then reduction taking $\xi$ to $\xi'$ does indeed define an action of the group $\pi_1(\GG,v)$ on the set $(1G_v)\partial X_{\GG,v}$.  Moreover, the image of a cylinder set $Z(\mu)$ under this action is a union of cylinder sets, and so the fundamental group $\pi_1(\GG,v)$ acts on the boundary $(1G_v)\partial X_{\GG,v}$ by homeomorphisms.

\begin{SN*}
From here on we denote the boundary of the Bass--Serre tree discussed above by $v\partial X_\GG$. In most cases we do not use any special notation to denote the action of fundamental group elements on boundary points or cylinder sets; for instance, the action of $\gamma\in \pi_1(\GG,v)$ on $\xi\in v\partial X_\GG$ is denoted $\gamma\xi$, and the action of $\gamma$ on a cylinder set $Z(\mu)$ is denoted $\gamma Z(\mu)$.
\end{SN*}

\subsection{The fundamental groupoid}\label{subsec: groupoid}

It will be very useful to extend the notions described above by considering the \textit{fundamental groupoid} of $\GG$, which we denote $F(\GG)$. The groupoid approach will simplify calculations involving the action of $\pi_1(\GG)$ on $v \partial X_\GG$. (See \cite{H} for the fundamental groupoid of a graph of groups, and also \cite{Ren} for the general theory of groupoids).

The fundamental groupoid $F(\GG)$ is given by generators and relations as follows.  The generating set is the same as for the path group $\pi(\GG)$ of Definition \ref{def: path group}.  The relations are the same as for $\pi(\GG)$ except that relation  (R1) is omitted. The objects are identified with $\Gamma^0$ as $F(\GG)^0 = \{ 1_x : x \in \Gamma^0 \}$.  For $x \in \Gamma^0$ the range and source maps on $G_x$ are given by $r(G_x) = x = s(G_x)$.  It follows from (R2) that $e \overline{e} = 1_{r(e)}$, so that $e$ and $\overline{e}$ are inverse elements in the sense of groupoids.  The theorem of \cite{H} implies that $F(\GG)$ may be identified with the set of all reduced $\GG$-words; in fact a reduced $\GG$-word is precisely what is termed the \textit{normal form} of an element of the fundamental groupoid in \cite{H}. The isotropy at $x \in \Gamma^0$ is $\pi_1(\GG,x)$, that is, the group of all reduced $\GG$-words with source and range equal to $x$.
 
For $x, y \in \G^0$, the collection of cosets $x F(\GG) y / G_y$ may be identified with the set of all $\GG$-paths having range $x$ and source $y$.  We define the set $W_\GG$ and its subset $x W_\GG$ as follows: $$W_\GG := \bigsqcup_{x,y \in \Gamma^0} x F(\GG) y / G_y \quad \mbox{and} \quad x W_\GG = \bigsqcup_{y \in \Gamma^0} x F(\GG) y / G_y.$$  Then $x W_\GG$ may be identified with the set of all $\GG$-paths with range $x$, and so $x W_\GG$ may be identified with the Bass--Serre tree based at $x$, as given in Definition \ref{def: BS Tree}.  The set $W_\GG$ is then a bundle of trees, i.e. a forest, fibred over $\Gamma^0$. 

The boundary of the tree $x W_\GG$ will be denoted $x \partial W_\GG$, and it may be identified with the set of all infinite reduced $\GG$-paths having range $x$. We let $\partial W_\GG$ denote the disjoint union of the boundaries of the trees in the forest $W_\GG$, that is, $\partial W_\GG = \bigsqcup_{x \in \Gamma^0} x \partial W_\GG$.  Then $\partial W_\GG$  is a locally compact Hausdorff space, and is compact if and only if $\Gamma^0$ is finite.  

Now the fundamental groupoid $F(\GG)$ acts on the bundle of trees $W_\GG$, and hence also acts on $\partial W_\GG$ (cf. \cite{KMRW}, p. 912).  The action is written the same as just before Remark~\ref{rmk: action}, that is, $\gamma \cdot \gamma' G_x = \gamma\gamma' G_x$, but $\gamma$ and $\gamma'$ are now any reduced $\GG$-words such that $s(\gamma) = r(\gamma')$.  Since the range map $r : \partial W_\GG \to F(\GG)^0$ is continuous and open, the fibred product groupoid $F(\GG) * \partial W_\GG$ (see \cite{MRW} for details) is again a Hausdorff \'etale groupoid, with unit space $\partial W_\GG$.  

We use the following lemma concerning the action of $F(\GG)$ on $\partial W_\GG$.

\begin{lem} \label{lem: actionmoves1} Let $e,f \in \G^1$ with $s(f) = r(e)$ and let $g \in \Sigma_{r(e)}$ and $h \in \Sigma_{r(f)}$.  Let $\mu$ be any $\GG$-path with range $s(e)$.   
\begin{enumerate}
\item  \label{lem: actionmoves1.1} If $g e \not= 1 \overline{f}$, then $h f Z(g e \mu) = Z(h f g e \mu)$.
\item  \label{lem: actionmoves1.2} If $|\mu| \ge 1$, then $1 \overline{e} Z(1 e \mu) = Z(\mu)$.
\item  \label{lem: actionmoves1.3} $1 e Z(1 \overline{e}) = r(e) \partial W_\GG \setminus Z(1 e).$ 
\item  \label{lem: actionmoves1.4} If $f \not= \overline{e}$, then $
 h f 1 e Z(1 \overline{e})  = r(f) \partial W_\GG \setminus Z(h f 1 e)$.
\end{enumerate}
\end{lem}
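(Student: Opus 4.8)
The plan is to read off each image directly from the ``concatenate-then-reduce'' description of the action of $F(\GG)$ on $\partial W_\GG$ given in Section~\ref{subsec: groupoid}. In every case the groupoid element being applied has length one or two, so only finitely many reductions occur, all localised at the junction where the prepended word meets the boundary path; the bookkeeping therefore stays finite. The single algebraic fact driving every cancellation is the groupoid relation $e\overline{e}=1_{r(e)}$ (equivalently $\overline{e}e=1_{s(e)}$), which lets a prepended edge annihilate a matching reversed edge.

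For part~(\ref{lem: actionmoves1.1}) I would first note that the hypothesis $ge\neq 1\overline{f}$ is exactly the condition that $hfge\mu$ has no backtracking at the $f$--$g$ junction, hence is a reduced $\GG$-path. Given $\xi=ge\mu\cdots\in Z(ge\mu)$, prepending $hf$ yields $hfge\mu\cdots$, which is already reduced, so $hf\cdot\xi\in Z(hfge\mu)$. For the reverse inclusion I would apply the inverse $(hf)^{-1}=\overline{f}h^{-1}$ to an arbitrary $\eta\in Z(hfge\mu)$ and use $\overline{f}f=1_{s(f)}=1_{r(e)}$ to recover an element of $Z(ge\mu)$ mapping onto $\eta$. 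Part~(\ref{lem: actionmoves1.2}) is the same calculation run backwards: prepending $1\overline{e}$ to $\xi=1e\mu\cdots$ and cancelling $\overline{e}e$ leaves $\mu\cdots$, and surjectivity onto $Z(\mu)$ holds because $|\mu|\ge 1$ guarantees that prepending $1e$ to any $\eta\in Z(\mu)$ recreates a reduced path (the junction condition for $1e\eta$ depends only on the first edge of $\mu$, which is the first edge of $\eta$).

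Part~(\ref{lem: actionmoves1.3}) is where the real content lies, and I expect it to be the main obstacle, precisely because the $|\mu|\ge 1$ hypothesis of~(\ref{lem: actionmoves1.2}) fails, so prepending $1e$ can now create backtracking. Concretely, for $\xi=1\overline{e}\,g_2e_2\cdots\in Z(1\overline{e})$ the reduction $1e\cdot\xi=g_2e_2\cdots$ uses $e\overline{e}=1_{r(e)}$, and the \emph{reducedness of $\xi$} forces $g_2e_2\neq 1e$ (if $e_2=e=\overline{\overline{e}}$ then $g_2\neq 1$), so the image avoids $Z(1e)$; one also checks the result has range $r(e)$. Conversely, any $\eta\in r(e)\partial W_\GG\setminus Z(1e)$ satisfies exactly the non-backtracking condition needed for $1\overline{e}\,\eta$ to be a reduced element of $Z(1\overline{e})$, and $1e\cdot(1\overline{e}\,\eta)=\eta$; this yields the reverse containment and pins the image to the full complement $r(e)\partial W_\GG\setminus Z(1e)$. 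The subtlety is that the non-backtracking clause in the definition of reduced $\GG$-word must be invoked in both directions.

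Finally, part~(\ref{lem: actionmoves1.4}) I would deduce formally, with no new computation. By associativity of the action, $hf1e\cdot Z(1\overline{e})=hf\cdot\bigl(1e\cdot Z(1\overline{e})\bigr)=hf\cdot\bigl(r(e)\partial W_\GG\setminus Z(1e)\bigr)$ by part~(\ref{lem: actionmoves1.3}). Since $hf$ acts as a bijection from $r(e)\partial W_\GG=s(f)\partial W_\GG$ onto $r(f)\partial W_\GG$, it carries complements to complements: it sends the whole fibre to $r(f)\partial W_\GG$, and it sends $Z(1e)$ to $Z(hf1e)$ by part~(\ref{lem: actionmoves1.1}) applied with $g=1_{r(e)}$ and $\mu$ the trivial path, where the hypothesis $ge\neq 1\overline{f}$ becomes $e\neq\overline{f}$, i.e. exactly $f\neq\overline{e}$ (which also guarantees that $hf1e$ is itself reduced). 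Combining these gives $r(f)\partial W_\GG\setminus Z(hf1e)$, as required.
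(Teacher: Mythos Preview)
Your argument is correct and follows essentially the same route as the paper's own proof: parts~(\ref{lem: actionmoves1.1}) and~(\ref{lem: actionmoves1.2}) are handled by direct inspection of the concatenate-then-reduce description (the paper simply calls these ``clear''), part~(\ref{lem: actionmoves1.3}) is proved by observing that the reducedness constraint on $\xi = 1\overline{e}\,g'e'\cdots$ is exactly $g'e' \neq 1e$, and part~(\ref{lem: actionmoves1.4}) is deduced from~(\ref{lem: actionmoves1.3}) together with the fact that $hf$ acts as a bijection $s(f)\partial W_\GG \to r(f)\partial W_\GG$ carrying $Z(1e)$ to $Z(hf1e)$. Your write-up is more explicit than the paper's, but there is no genuine difference in method.
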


\begin{proof}
Parts \eqref{lem: actionmoves1.1} and \eqref{lem: actionmoves1.2} are clear.  For part \eqref{lem: actionmoves1.3}, note that an infinite $\GG$-path in $Z(1 \overline{e})$ has the form $\xi = 1 \overline{e} g' e' \dots$ with the only restriction being $g' e' \not= 1 e$.  Thus removing the initial $1 \overline{e}$ from such $\xi$ leaves all infinite $\GG$-paths in $s(\overline{e}) \partial W_\GG$ not beginning with $1 e$, i.e. the paths in $r(e) \partial W_\GG \setminus Z(1 e)$.  Part \eqref{lem: actionmoves1.4} follows from \eqref{lem: actionmoves1.3} and the fact that pre-appending $h f$ to reduced infinite $\GG$-paths in $s(f) \partial W_\GG$ is a bijection.
\end{proof}

We now prove a lemma making explicit how the generators of $\pi_1(\GG,v)$ act on certain cylinder sets in $v\partial X_\GG$. Note that $v \partial X_\GG$ and $v \partial W_\GG$ denote the same space. The generators $\varepsilon(x,g)$ and $\varepsilon(e)$ in the following result are as defined in Notation~\ref{ntn: general notation} above, using a (fixed) choice of a maximal tree $T$ in $\G$.

\begin{lem} \label{lem: actiondetails}  Let $v,x \in \G^0$, $g \in G_x$ and $e \in \G^1$. 
\begin{enumerate}
\item \label{lem: actiondetails.1} If $\mu$ is a $\GG$-path with $r(\mu) = x$ and $|\mu| \ge 1$, then $$
\varepsilon(x,g) Z([v,x]\mu)
= Z([v,x] g \mu).$$  
\item \label{lem: actiondetails.2} If $x \not= v$ let $f$ be the rangemost edge of $[x,v]$.  (Thus $f$ is the unique edge in $\Gamma^1$ such that $r(f) = x$ and $\underleftarrow{f} = \overline{f}$.)  Then
$$\varepsilon(x,g) Z([v,x])
= \begin{cases}
 Z([v,x] g f)^c &\text{ if } x \not= v \text{ and } g \notin \alpha_f(G_f) \\
 Z([v,x]) &\text{ if } x \not= v \text{ and } g \in\alpha_f(G_f) \\
 v \partial W_\GG, &\text{ if } x = v.
 \end{cases}$$
\item \label{lem: actiondetails.3} If $\mu$ is a $\GG$-path with $r(\mu) = s(e)$, $|\mu| \geq 1$ and $\mu \neq 1\overline{e}$, then $$\varepsilon(e) Z([v,s(e)] \mu) = Z([v,s(e)] 1 e \mu).$$
\item \label{lem: actiondetails.4}  If $e \in T^1$ then $$\varepsilon(e) Z([v,s(e)] 1 \overline{e}) = 
 Z([v,s(e)] 1 \overline{e}) = Z([v,r(e)].$$  If $e \notin T^1$ then $$\varepsilon(e) Z([v,s(e)] 1 \overline{e}) = Z([v,r(e)] 1 e)^c.$$
\item \label{lem: actiondetails.5}  If $e \in T^1$ or $s(e) = v$ then 
$$\varepsilon(e) Z([v,s(e)])
= Z([v,s(e)]).$$  If $e \notin T^1$ and $s(e) \neq v$ then $$\varepsilon(e) Z([v,s(e)]) = 
 Z([v,r(e)] 1 e 1 f)^c,$$   where $f \in \G^1$ is defined by $\overline{f} = \underleftarrow{(\overline{e})}$.
\end{enumerate}

\end{lem}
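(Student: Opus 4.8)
The plan is to compute the action of each generator by factoring it into groupoid letters and feeding the pieces through Lemma~\ref{lem: actionmoves1} one at a time. Writing $\varepsilon(x,g) = [v,x]\,g\,[x,v]$ and $\varepsilon(e) = [v,r(e)]\,e\,[s(e),v]$, and using that the boundary action is a left action of $F(\GG)$, every formula factors as \emph{strip the source-side tree path, apply the middle letter, then prepend the range-side tree path}. First I would record two auxiliary facts, each an easy induction on the length of $[v,x]_T$: a \textbf{stripping identity} $[x,v]\,Z([v,x]\mu) = Z(\mu)$ whenever $|\mu|\ge 1$, obtained by iterating Lemma~\ref{lem: actionmoves1}\eqref{lem: actionmoves1.2}; and a \textbf{prepending identity} $[v,x]\,Z(\nu) = Z([v,x]\nu)$ whenever prepending introduces no back-tracking (the first edge of $\nu$ differs from the reverse of the source-most edge of $[v,x]_T$), obtained by iterating Lemma~\ref{lem: actionmoves1}\eqref{lem: actionmoves1.1}.

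For parts \eqref{lem: actiondetails.1} and \eqref{lem: actiondetails.3} the hypotheses ($|\mu|\ge1$, and additionally $\mu\ne 1\overline e$ in \eqref{lem: actiondetails.3}) guarantee that stripping, the middle letter, and prepending all avoid the ``flip'' cases \eqref{lem: actionmoves1.3}--\eqref{lem: actionmoves1.4} of Lemma~\ref{lem: actionmoves1}. Indeed, the stripping identity removes $[v,x]$ (resp.\ $[v,s(e)]$) and leaves $Z(\mu)$; the group letter $g$ carries $Z(\mu)$ to the single reduced cylinder $Z(g\mu)$ directly from the definition of the action, while the edge letter $e$ carries $Z(\mu)$ to $Z(1e\mu)$ by Lemma~\ref{lem: actionmoves1}\eqref{lem: actionmoves1.1} precisely because $\mu\ne 1\overline e$; and the prepending identity reassembles a single cylinder. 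So these two parts reduce cleanly, with no complementation.

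For the remaining parts \eqref{lem: actiondetails.2}, \eqref{lem: actiondetails.4}, \eqref{lem: actiondetails.5} one acts on the \emph{minimal} cylinders $Z([v,x])$, $Z([v,s(e)])$ (and on $Z([v,s(e)]1\overline e)$), where stripping runs all the way down to the source-most tree edge and the back-tracking cases \eqref{lem: actionmoves1.3}--\eqref{lem: actionmoves1.4} of Lemma~\ref{lem: actionmoves1} are triggered, producing complements of the form $y\partial W_\GG\setminus Z(\cdots)$. Several subcases are free: when $e\in T^1$ we have $\varepsilon(e)=\id$ by Notation~\ref{ntn: general notation}, so it fixes every cylinder, and the reduced-path identity $[v,s(e)]\,1\overline e = [v,r(e)]$ reconciles the two descriptions of the fixed set in \eqref{lem: actiondetails.4}; likewise $s(e)=v$ makes $Z([v,s(e)])$ all of $v\partial W_\GG$, which is preserved. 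In the genuinely non-trivial subcases ($e\notin T^1$, and in \eqref{lem: actiondetails.2} the alternative $g\notin\alpha_f(G_f)$) the computation is: strip via \eqref{lem: actionmoves1.2} to the last tree edge, flip via \eqref{lem: actionmoves1.3} to a complement, apply the middle letter (which for $g\notin\alpha_f(G_f)$ shifts the excluded cylinder), then prepend $[v,r(e)]$ (resp.\ $[v,x]$). The source-most edges are identified through Notation~\ref{ntn:UnderlineArrow}: in \eqref{lem: actiondetails.2} the edge $f$ with $\underleftarrow{f}=\overline{f}$, and in \eqref{lem: actiondetails.5} the edge $f$ with $\overline f=\underleftarrow{(\overline e)}$.

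The main obstacle is the bookkeeping at this turn-around vertex, namely prepending a tree path to a \emph{complement}. The clean device is that the boundary action of each groupoid letter is a homeomorphism $s(\cdot)\partial W_\GG\to r(\cdot)\partial W_\GG$, hence carries complements to complements; combined with the prepending identity $[v,r(e)]\,Z(1e1f)=Z([v,r(e)]1e1f)$ (valid because $e\notin T^1$ forces $\underleftarrow{e}\ne\overline e$, so no further back-tracking occurs), this converts $[v,r(e)]\cdot\bigl(r(e)\partial W_\GG\setminus Z(1e1f)\bigr)$ into $Z([v,r(e)]1e1f)^c$, and similarly for the complement formulas in \eqref{lem: actiondetails.2} and \eqref{lem: actiondetails.4}. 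The only real care required is verifying at each flip that the edge being prepended is genuinely distinct from the reverse of the source-most tree edge, which is exactly the content of the $e\in T^1$ versus $e\notin T^1$ dichotomy encoded in Notation~\ref{ntn:UnderlineArrow}.
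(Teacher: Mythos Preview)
Your proposal is correct and follows essentially the same approach as the paper: factor each generator as a groupoid word, then feed the letters through Lemma~\ref{lem: actionmoves1} one at a time, with the iterated applications of parts \eqref{lem: actionmoves1.1}--\eqref{lem: actionmoves1.2} packaged as your ``stripping'' and ``prepending'' identities and the flip cases \eqref{lem: actionmoves1.3}--\eqref{lem: actionmoves1.4} handling the minimal cylinders. The only place you are a bit terse is the subcase $g\in\alpha_f(G_f)$ of part~\eqref{lem: actiondetails.2}: there the prepending step hits back-tracking (since $[v,x]$ ends in $\overline{f}$ and the excluded cylinder is $Z(1f)$), so one must invoke the flip \eqref{lem: actionmoves1.3} a second time rather than the straight prepending identity---the paper makes this explicit, and your complement-preservation device does cover it once unwound.
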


\begin{proof}
Part \eqref{lem: actiondetails.1} follows from Lemma \ref{lem: actionmoves1}\eqref{lem: actionmoves1.1}.  For part \eqref{lem: actiondetails.2}, first suppose that $x \not= v$, and let $f$ be as in the statement.  In the fundamental groupoid we compute
\begin{align*}
[v,x] g [x,v] Z([v,x])
&=  [v,x] g f [s(f),v] Z([v,s(f)] 1 \overline{f}) \\
&=  [v,x] g f Z(1 \overline{f}), \mbox{ by Lemma \ref{lem: actionmoves1}\eqref{lem: actionmoves1.1},} \\
&=  [v,x] g \left(r(f) \partial W_\GG \setminus Z(1 f) \right), \mbox{ by Lemma \ref{lem: actionmoves1}\eqref{lem: actionmoves1.3}.} \end{align*}
If $g \not\in \alpha_f(G_f)$, then this equals $Z([v,x] g f)^c$, by Lemma \ref{lem: actionmoves1}\eqref{lem: actionmoves1.4}, while if $g \in \alpha_f(G_f)$ we have
\begin{align*}
[v,x] (r(f) \partial W_\GG \setminus Z(1f)) 
&= [v,s(f)] 1 \overline{f} (r(f) \partial W_\GG \setminus Z(1 f)) \\
&= [v,s(f)] Z(1 \overline{f}), \mbox{ by Lemma \ref{lem: actionmoves1}\eqref{lem: actionmoves1.3},} \\
&= Z([v,s(f)] 1 \overline{f}) \\
&= Z([v,r(f)]).
\end{align*}
If $x = v$ then $Z([v,x]) = Z([v,v]) = v \partial W_\GG$.  Since $\varepsilon(v,g)$ is a homeomorphism of $v \partial W_\GG$, we have that $\varepsilon(v,g) Z([v,v]) = v \partial W_\GG$.
For part \eqref{lem: actiondetails.3}, we have
\[
[v,r(e)] 1 e [s(e),v] Z([v,s(e)] \mu)
= [v,r(e)] 1 e Z(\mu)
= Z([v,r(e)] 1 e \mu)
\]
by Lemma \ref{lem: actionmoves1}\eqref{lem: actionmoves1.2} then  \ref{lem: actionmoves1}\eqref{lem: actionmoves1.1}.  

For part \eqref{lem: actiondetails.4}, if $e \in T^1$ then $\varepsilon(e)$ is trivial, and
\[
\varepsilon(e) Z([v,s(e)] 1 \overline{e})
= Z([v,s(e)] 1 \overline{e}) = Z([v,r(e)]) 
\]
as required.  If $e \not\in T^1$, then 
\begin{align*}
[v,r(e)]1e[s(e),v]Z([v,s(e)]1\overline{e})&=  [v,r(e)] 1 e Z(1 \overline{e}) \\
&=  [v,r(e)] (r(e) \partial W_GG \setminus Z(1 e)), \text{ by Lemma \ref{lem: actionmoves1}\eqref{lem: actionmoves1.3},} \\
&= Z([v,r(e)] 1 e)^c, \text{ by Lemma \ref{lem: actionmoves1}\eqref{lem: actionmoves1.1}.}
\end{align*}

For part \eqref{lem: actiondetails.5}, the result is immediate if $e \in T^1$.  Now assume that $e \not\in T^1$.  If $s(e) = v$, then $[v,s(e)] = v$ and so $Z([v,s(e)]) = v \partial W_\GG$.  Since $\varepsilon(e) \in \pi_1(\GG,v)$ acts homeomorphically on this space, we get
\[
[v,r(e)] 1 e [s(e),v] Z([v,s(e)])= v \partial W_\GG = Z([v,s(e)]). \]
If $s(e) \not= v$, then defining $f$ by $\overline{f} = \underleftarrow{(\overline{e})}$, we have $[v,s(e)] = [v,s(f)] 1 \overline{f}$.  Then
\begin{align*}
[v,r(e)] 1 e [s(e),v] Z([v,s(e)])
&= [v,r(e)] 1 e 1 f [s(f),v] Z([v,s(f)] 1 \overline{f}) \\
&= [v,r(e)] 1 e (s(e) \partial W_\GG \setminus Z(1 f) ) \\
&=  Z([v,r(e)] 1 e 1 f)^c. \qedhere
\end{align*}
\end{proof}

\subsection{$C^*$-algebra background} \label{subsec: cstar background}

We present some facts from $C^*$-algebra theory that are essential for the results in the paper.  We mention references for more details.


\subsubsection{Bounded linear operators on Hilbert space}\label{subsubsec:BLO}
$C^*$-algebras are the abstract characterisation of norm-closed self-adjoint subalgebras of $B(H)$, the algebra of all bounded linear operators on a complex Hilbert space $H$.  Here ``self-adjoint'' means ``closed under the operator adjoint $T \mapsto T^*$''.  Thus a $C^*$-algebra is a complete normed algebra equipped with an involution, whose norm satisfies $\| x^* x \| = \| x \|^2$.  The beginning of the subject is the proof that this identity does in fact characterise closed self-adjoint subalgebras of $B(H)$.  A homomorphism in the category of $C^*$-algebras is an algebra homomorphism that preserves the involution.  It is a fact that a nonzero homomorphism is a bounded map of norm one.

Certain types of operators in $B(H)$ can be characterised generally in $C^*$-algebras.  A {\em unitary} element satisfies $u^* u = u u^* = 1$ and an {\em isometry} satisfies $s^* s = 1$ (only possible in unital $C^*$-algebras).  A {\em projection} satisfies $p^2 = p = p^*$.  A {\em partial isometry} is an element $s$ such that $s^* s$ is a projection.  If $s$ is a partial isometry then so is $s^*$.  The projections $s^*s$ and $ss^*$ are called the {\em initial} and {\em final} projections of $s$.  A {\em partial unitary} is a partial isometry whose initial and final projections are equal; thus a partial unitary $u$ in a $C^*$-algebra $A$ is a unitary in the {\em corner algebra} $pAp$, where $p := u^* u$.  Details about such elements can be read in \cite{CBMS}.

\subsubsection{Fundamental examples}\label{subsubsec:FundamentalExamples}
Two fundamental examples of $C^*$-algebras are the algebra $\KK(H)$ of compact operators on a Hilbert space $H$, and the algebra $C_0(X)$ of continuous complex-valued functions vanishing at infinity on a locally compact Hausdorff space $X$.  We briefly discuss these examples.  

The algebra of compact operators may be defined as the norm closure of the algebra of finite-rank operators on $H$.  It is a {\em simple} $C^*$-algebra, in that it has no nontrivial closed 2-sided ideals.  The algebra $\KK(H)$ is determined up to isomorphism solely by the dimension of $H$, i.e. the cardinality of an orthonormal basis.  In this paper we will consider only {\em (topologically) separable} $C^*$-algebras. The algebra $\KK(H)$ is separable if and only if $H$ is separable, i.e. if and only if $H$ has a countable orthonormal basis.  We will write $\KK$ for the algebra of compact operators on a separable infinite dimensional Hilbert space.  

It is useful to realise $\KK$ as the inductive limit of finite-dimensional matrix algebras $M_n(\C)$.  For any $C^*$-algebra $A$, the tensor product $A \otimes \KK$ is then the inductive limit of matrix algebras  $M_n(A)$ over $A$.  The algebra $A \otimes \KK$ is called the {\em stabilisation} of $A$.  Two $C^*$-algebras are called {\em stably isomorphic} if their stabilisations are isomorphic.  Stably isomorphic $C^*$-algebras have many properties in common (some to be described below), such as simplicity, nuclearity, pure infiniteness, and isomorphic $K$-theory.  We will use these facts freely throughout the paper.

The algebra $C_0(X)$ is a $C^*$-algebra when equipped with the supremum norm, and with involution given by pointwise complex conjugation.  These are precisely the {\em commutative} $C^*$-algebras.  The algebra $C_0(X)$ is a unital algebra if and only if $X$ is compact.

\subsubsection{Crossed products}\label{subsubsec:CrossedProducts}
Let $\Lambda$ be a discrete group, $A$ a unital $C^*$-algebra, and $\alpha : \Lambda \to \Aut(A)$ a group homomorphism; that is, an {\em action} of $\Lambda$ on $A$. The triple $(A,\Lambda,\alpha)$ is referred to as a {\em $C^*$-dynamical system}.  A {\em covariant representation} of a $C^*$-dynamical system $(A,\Lambda,\alpha)$ in a unital $C^*$-algebra $B$ consists of a unital $*$-homomorphism $j_A : A \to B$ and a group homomorphism $j_\Lambda : \Lambda \to \UU(B)$ such that
\[
j_\Lambda(t) j_A(x) j_\Lambda(t^{-1}) = j_A(\alpha_t (x)),\quad\text{ for all $t \in \Lambda$ and $x\in A$.}
\]
(Here, $\UU(B)$ is the group of unitary elements of $B$.)  A covariant representation $(i_A, i_\Lambda)$ in a $C^*$-algebra $C$ is {\em universal} if for every covariant representation $(j_A,j_\Lambda)$ in a $C^*$-algebra $B$, there is a unique homomorphism $\pi : C \to B$ such that $\pi \circ i_B = j_B$ and $\pi \circ i_\Lambda = j_\Lambda$.  It is a fundamental result that a universal covariant representation exists, and is unique. We denote this unique covariant representation by $(i_A,i_\Lambda)$. The $C^*$-algebra $C$ is generated as a $C^*$-algebra by $i_A(A) \cup i_\Lambda(\Lambda)$, is denoted $A \rtimes_\alpha \Lambda$, and is called the {\em (full) crossed product of $A$ by (the action $\alpha$ of) $\Lambda$}.  

There is a related construction called the {\em reduced crossed product}, which is denoted $A \rtimes_{\alpha,r} \Lambda$. Let $\pi_0 : A \to B(H_0)$ be an injective unital homomorphism, for some Hilbert space $H_0$.  Let $H = \ell^2(\Lambda) \otimes H_0$, and define a covariant pair $j_A$, $j_\Lambda$ in $B(H)$ by 
\[
j_A(x) (\delta_t \otimes \eta) = \delta_t \otimes \pi_0(\alpha_{t^{-1}}(x)) \eta\quad\text{and}\quad j_\Lambda(s)(\delta_t \otimes \eta) = \delta_{st} \otimes \eta, 
\]
where $\{ \delta_t : t \in \Lambda \}$ is the standard orthonormal basis for $\ell^2(\Lambda)$.  It is easily checked that this is a covariant pair.  The reduced crossed product is defined to be the $C^*$-subalgebra of $B(H)$ generated by $j_A(A) \cup j_\Lambda(\Lambda)$; it is independent of the choice of $\pi_0$.  By the universal property, there is a surjective homomorphism $\pi : A \rtimes_\alpha \Lambda \to A \rtimes_{\alpha,r} \Lambda$.  It is known that if $\Lambda$ is an amenable group then $\pi$ is an isomorphism.  There is a notion of amenability for actions of groups on $C^*$-algebras; amenable groups always act amenably, but nonamenable groups may have some amenable actions.  It is a theorem that if the action is amenable then $\pi$ is an isomorphism (see \cite{A}).

Examples of crossed products come from actions on compact Hausdorff spaces $X$. Actions on $C(X)$ correspond to actions on $X$:  if $\phi : \Lambda \to \text{Homeo}(X)$ is a homomorphism, then there is an action $\alpha:\Lambda\to \Aut(A)$ given by $\alpha_t(f) = f \circ \phi_{t^{-1}}$. It is known that if $X$ is second countable and $\Lambda$ is countable, then the crossed product $C(X)\rtimes_\alpha\Lambda$ is separable.

\subsubsection{Other important properties}\label{subsubsec:OtherStuff}
There are several other properties that a $C^*$-algebra might have that are important for our results, but are more subtle.  We refer to \cite{BO}, \cite{Bla2}, and \cite{Ror} for details.  {\em Nuclearity} is the analogue of amenability for $C^*$-algebras.  We will not define it here, but we mention that the class of nuclear $C^*$-algebras contains all commutative $C^*$-algebras and finite dimensional $C^*$-algebras, and is closed under the formation of ideals, quotients, extensions, tensor products, inductive limits, and crossed products by amenable actions.  {\em Pure infiniteness} is an example of the high degree of infiniteness possible in $C^*$-algebras.  We give one of many equivalent formulations for the case of simple algebras.  A simple $C^*$-algebra is {\em purely infinite} if for any nonzero elements $a$, $b$ there are elements $x$, $y$ such that $b = xay$.  A more esoteric property is referred to as {\em UCT}, for Universal Coefficient Theorem.  It is an open question whether all nuclear $C^*$-algebras satisfy the UCT.  It is known that if a countable group acts amenably on a commutative $C^*$-algebra then the crossed product algebra satisfies the UCT (see \cite{T}).

The combination of these properties has a striking consequence, the Kirchberg--Phillips classification theorem.  A simple separable nuclear purely infinite $C^*$-algebra is called a {\em Kirchberg algebra}.  The theorem states that Kirchberg algebras satisfying the UCT are classifed up to stable isomorphism by their $K$-theory (see \cite{Ror}).

\section{The graph of groups $C^*$-algebra}\label{sec: C stars associated to gog}

We now introduce our main object of study, the graph of groups $C^*$-algebra $C^*(\GG)$.  In Section~\ref{subsec: gog algebra}, we define a family of partial isometries and partial unitaries called a $\GG$-family, and use this family to define $C^*(\GG)$.  Then in Section~\ref{subsec: directed graph} we identify a natural directed graph whose associated directed graph $C^*$-algebra sits faithfully inside $C^*(\GG)$.  We show that when the edge groups are trivial these algebras are equal, and use this result to see that $C^*(\GG)$ recovers some known classes of $C^*$-algebras.  We conclude with examples where $C^*(\GG)$ is strictly larger than the associated directed graph algebra. 

\subsection{$\GG$-families and the graph of groups $C^*$-algebra}\label{subsec: gog algebra}

We begin with the definition of a $\GG$-family, then define the graph of groups $C^*$-algebra $C^*(\GG)$.  In Remark~\ref{rem: G-family} we build a concrete $\GG$-family.

\begin{SA*}
Throughout the rest of this paper we will consider locally finite nonsingular graphs of groups $\GG=(\Gamma,G)$ in which $G_x$ is countable for each $x\in \Gamma^0$. We refer to a graph of groups with countable vertex groups as a {\em graph of countable groups}.
\end{SA*}

\begin{dfn}\label{def: a G, Sigma family}
For each $e \in \G^1$, choose a transversal $\Sigma_e$ for $G_{r(e)}/\alpha_e(G_e)$ so that $1 \in \Sigma_e$. A {\em $(\GG,\Sigma)$-family} is a collection of partial isometries 
$S_e$ for each $e\in\Gamma^1$ 
and representations $g\mapsto U_{x,g}$ of $G_x$ by partial unitaries for each 
$x\in\Gamma^0$ satisfying the relations:
\begin{itemize}
\item[(G1)] $U_{x,1}U_{y,1}=0$ for each $x,y \in\Gamma^0$ with 
$x \not= y $;
\item[(G2)] $U_{r(e),\alpha_e(g)}S_e=S_e U_{s(e),\alpha_{\overline{e}}(g)}$ 
for each $e\in\Gamma^1$ and $g\in G_e$;
\item[(G3)] 
$U_{s(e),1}=S_e^*S_e+S_{\overline{e}}S_{\overline{e}}^*$ for each 
$e\in\Gamma^1$;  and
\item[(G4)] $$S_e^*S_e=\sum_{\substack{ r(f)=s(e),\, h\in \Sigma_f \\ hf\not= 
1 \overline{e}}} U_{s(e),h}S_fS_f^*U_{s(e),h}^*$$ for each 
$e\in\Gamma^1$. 
\end{itemize}
\end{dfn}

Relation (G1) ensures that the representations of the vertex groups are mutually orthogonal. Relation (G2) is an analogue of the path group relation (R2). Relations (G3) and (G4) ensure that each collection $\{U_{x,h}S_f:hf\in x\GG^1\}$ consists of partial isometries with mutually orthogonal range projections. We also emphasise that (G3) implies that $S_eS_{\overline{e}}=0$ for all edges $e\in\Gamma^1$. This comes despite $s(e)=r(\overline{e})$, which will feel foreign to those used to Cuntz--Krieger families of directed graphs.  

\begin{rmk}\label{rem: G-family}
Relation (G4) is independent of the choice of transversals. Given a second choice of transversals $\{\Sigma_e':e\in\Gamma^1\}$, edges $e,f\in\Gamma^1$ with $r(f)=s(e)$, and $h'\in\Sigma_f'$, we write $h'=h\alpha_f(g)$ for some $h\in\Sigma_f$ and $g\in G_f$. Then (G2) gives 
\begin{align*}
U_{s(e),h'}S_fS_f^*U_{s(e),h'}^*&=U_{s(e),h}U_{s(e),\alpha_f(g)}S_fS_f^*U_{s(e),\alpha_f(g)}^*U_{s(e),h}^*\\
&=
U_{s(e),h}S_fU_{s(e),\alpha_{\overline{f}}(g)}U_{s(e),\alpha_{\overline{f}}(g)}^*S_f^*U_{s(e),h}^*\\
&=
U_{s(e),h}S_fS_f^*U_{s(e),h}^*.
\end{align*}
We see that (G4) is satisfied for one choice of transversals exactly when it is satisfied for all choices.\end{rmk}

Given Remark~\ref{rem: G-family}, from now on we call a family of partial isometries and partial unitaries as in Definition~\ref{def: a G, Sigma family} a {\em $\GG$-family}.  We can now define the graph of groups algebra.

\begin{dfn}\label{def: Gog C*}
Let $\GG=(G,\Gamma)$ be a locally finite nonsingular graph of countable groups. The {\em graph of 
groups algebra $C^*(\GG)$} is 
the universal $C^*$-algebra generated by a $\GG$-family, in the sense that $C^*(\GG)$ is generated by a $\GG$-family $\{u_x, s_e : x \in \Gamma^0, e \in \Gamma^1 \}$ such that if $B$ is a $C^*$-algebra, and if $\{U_x, S_e : x \in \Gamma^0, e \in \Gamma^1 \}$ is a $\GG$-family in $B$, there is a unique $*$-homomorphism from $C^*(\GG)$ to $B$ such that $u_x \mapsto U_x$ and $s_e \mapsto S_e$. (We refer to \cite{Bla} for the existence and uniqueness of $C^*(\GG)$.)
\end{dfn}

\begin{rmk}\label{rmk: concrete G family}
We can use the regular representations of the fundamental transformation groupoid $F(\GG) * \partial W_\GG$ as discussed in Section~\ref{subsec: groupoid} to build a concrete $\GG$-family. Let $v \in \Gamma^0$ and let $\xi \in v \partial W_\GG$.  Put $H_\xi = \ell^2(F(\GG) \xi)$ and for $\gamma \in F(\GG)$ let $\delta_{\gamma\xi}$ be the point-mass function.  We define a $\GG$-family in $B(H_\xi)$ by, for each $\gamma \in F(\GG)$, $e \in \G^1$, $x \in \G^0$ and $g \in G_x$, defining 
\begin{align*}
	S_e \delta_{\gamma \xi} &=
	\begin{cases}
		\delta_{1 e \gamma \xi} &\text{ if } \gamma \xi \in s(e) \partial W_\GG \setminus Z(1 \overline{e}) \\
		0 &\text{ otherwise,}
	\end{cases} \\
	U_{x,g}\delta_{\gamma \xi} &=
	\begin{cases}
		\delta_{g \gamma \xi} &\text{ if } r(\gamma) = x \\
		0 &\text{ otherwise.}
	\end{cases}
\end{align*}

We verify that this is indeed a $\GG$-family.  First, it follows immediately from the above formula for $U_{x,g}$ that (G1) holds.  Next note that
\[
S_e^* \delta_{\gamma \xi} =
\begin{cases}
\delta_{\xi'} &\text{ if } \gamma \xi = 1 e \xi' \text{ for some } \xi' \in s(e) \partial W_\GG \setminus Z(1 \overline{e}) \\
0 &\text{ if } r(\gamma) \not= r(e), \text{ or } \gamma \xi \in Z(1 \overline{e}).
\end{cases}
\]
It follows easily that $S_e S_e^*$ is the projection onto $\overline{\text{span}} \{\delta_{\gamma \xi} : \gamma \xi \in Z(1 e) \}$.  Observe that $U_{x,1}$ is the projection onto $\overline{\text{span}} \{\delta_{\gamma \xi} : r(\gamma) = x \}$, and hence $S_e^* S_e = U_{x,1} - S_{\overline{e}} S_{\overline{e}}^*$.  Therefore (G3) holds.  

Next we note that $U_{r(e),\alpha_e(g)} S_e \delta_{\gamma \xi} \not= 0$ if and only if $\gamma \xi \in s(e) \partial W_\GG \setminus Z(1 \overline{e})$, and in this case, 
\[
U_{r(e),\alpha_e(g)} S_e \delta_{\gamma \xi} = \delta_{\alpha_e(g) e \gamma \xi} = \delta_{1 e \alpha_{\overline{e}}(g) \gamma \xi}.
\]
We also note that $U_{s(e),\alpha_{\overline{e}}(g)} \delta_{\gamma \xi} = \delta_{\alpha_{\overline{e}}(g) \gamma \xi}$ if and only if $r(\gamma) = s(e)$.  Since $\alpha_{\overline{e}}(g) \gamma \xi \in Z(1 \overline{e})$ if and only if $\gamma \xi \in Z(1 \overline{e})$, it follows that $S_e U_{s(e),\alpha_{\overline{e}}(g)} \delta_{\gamma \xi} = 0$ if and only if $U_{r(e),\alpha_e(g)} S_e \delta_{\gamma \xi} = 0$, and if nonzero, they are equal.  Therefore (G2) holds.  Finally, we see from the above that $(U_{r(e),h} S_f)(U_{r(e),h} S_f)^*$ is the projection onto $\overline{\text{span}} \{\delta_{\gamma \xi} : \gamma \xi \in Z(h f) \}$.  Now (G4) follows from this observation, together with (G3).
\end{rmk}

\subsection{Relationship with directed graph $C^*$-algebras}\label{subsec: directed graph}

The main result of this section is Theorem~\ref{thm: associated directed graph} below, which identifies a natural directed graph built from $\GG$-paths, and whose associated directed graph $C^*$-algebra sits faithfully inside $C^*(\GG)$.   We also provide a class of graphs of groups for which this directed graph $C^*$-algebra is all of $C^*(\GG)$, use this result to see that $C^*(\GG)$ recovers some previously-studied algebras, and discuss two examples where $C^*(\GG)$ is strictly larger than the associated directed graph algebra.  We refer the reader to \cite[Page~6]{CBMS} for the Cuntz--Krieger relations (CK1) and (CK2) for a directed graph, and to \cite[Proposition~1.21]{CBMS} for the notion of a directed graph $C^*$-algebra. 

We start with some notation. 

\begin{ntn}\label{ntn: S mu}  
Let $\{U_{x},S_e: x \in\Gamma^0,e\in\Gamma^1\}$ be a $\GG$-family. For each $\GG$-path $\mu=g_1e_1\dots g_n e_n$ we define 
\[
S_\mu := U_{r(e_1),g_1}S_{e_1}\dots U_{r(e_n),g_n}S_{e_n}. 
\]
\end{ntn}

\noindent Each $S_\mu$ is a partial isometry, because for each $1 < i \le n$, the final projection of $U_{r(e_i),g_i} S_{e_i}$ is a subprojection of the initial projection of $U_{r(e_{i-1}),g_{i-1}} S_{e_{i-1}}$, by (G4). 

\begin{thm}\label{thm: associated directed graph}
	Let $\GG=(\Gamma,G)$ be a locally finite nonsingular graph of countable groups. Then $$E_\GG=(E_\GG^0:=\GG^1,E_\GG^1:=\GG^2,r_E,s_E)$$ where $s_E:\GG^2\to\GG^1$ is given by $s_E(g_1e_1g_2e_2)=g_2e_2$ and $r_E:\GG^2\to\GG^1$ is given by $r_E(g_1e_1g_2e_2)=g_1e_1$, is a row-finite directed graph with no sources. 
	
	Now let $\{p_\nu:\nu\in E_\GG^0\}$ and $\{t_\mu:\mu\in E_\GG^1\}$ be the Cuntz--Krieger $E_\GG$-family generating $C^*(E_\GG)$. There is an embedding $$\phi:C^*(E_\GG)\hookrightarrow C^*(\GG)$$ satisfying 
	\[
	\phi(p_\nu)=s_\nu s_\nu^*\quad\text{and}\quad \phi(t_\mu)=s_\mu s_{s_E(\mu)}^*,
	\]  	
	for all $\nu\in E_\GG^0$, $\mu\in E_\GG^1$. Moreover, if $G_e=\{1\}$ for each $e\in\Gamma^1$, then $\phi$ maps onto $C^*(\GG)$.
\end{thm}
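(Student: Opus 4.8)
The plan is to show that when every edge group is trivial, the image $A := \phi(C^*(E_\GG))$ already contains each generator $u_{x,g}$ and $s_e$ of $C^*(\GG)$, which forces $A = C^*(\GG)$. Throughout one uses that $G_e = \{1\}$ makes each transversal $\Sigma_f$ equal to all of $G_{r(f)}$, and (by local finiteness) makes each vertex group finite; in particular every set $x\GG^1$ and every fibre $r_E^{-1}(\nu)$ is finite, so all sums below are finite.

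First I would recover the path isometries $s_{ge}$ for every $ge \in \GG^1$. Recall that $\phi(t_{ge\,g_2e_2}) = s_{ge\,g_2e_2}\, s_{g_2e_2}^* = u_{r(e),g}\, s_e\, p_{g_2e_2}$, where $p_{g_2e_2} := s_{g_2e_2} s_{g_2e_2}^*$. Summing over the fibre $r_E^{-1}(ge) = \{ge\,g_2e_2 \in \GG^2\}$ and invoking (G4) — which says precisely that $\sum p_{g_2e_2} = s_e^* s_e$, the sum ranging over those $g_2e_2 \in s(e)\GG^1$ with $g_2e_2 \neq 1\overline{e}$ — gives
\[
\sum_{\mu \in r_E^{-1}(ge)} \phi(t_\mu) = u_{r(e),g}\, s_e\, s_e^* s_e = u_{r(e),g}\, s_e = s_{ge}.
\]
Hence $s_{ge} \in A$ for all $ge \in \GG^1$; taking $g = 1_{r(e)}$ and using $u_{r(e),1} s_e = s_e$ yields $s_e \in A$ for each edge $e$. (Nonsingularity, i.e. the ``no sources'' property of $E_\GG$, guarantees the fibre is nonempty, so this really recovers the nonzero $s_e$.)

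Next I would recover the partial unitaries $u_{x,g}$. From (G3) applied to any edge with source $x$, together with (G4), one obtains $u_{x,1} = \sum_{hf \in x\GG^1} p_{hf}$, a finite sum of range projections. Since $u_{x,1}$ is the initial projection of the partial unitary $u_{x,g}$, we have $u_{x,g} = u_{x,g}\, u_{x,1} = \sum_{hf \in x\GG^1} u_{x,g}\, p_{hf}$, and for each summand, using that $gh \in \Sigma_f = G_x$, we compute
\[
u_{x,g}\, p_{hf} = u_{x,gh}\, s_f s_f^*\, u_{x,h}^* = s_{(gh)f}\, s_{hf}^*,
\]
which lies in $A$ because both $s_{(gh)f}$ and $s_{hf}$ do, by the previous step. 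Thus $u_{x,g} \in A$ for all $x \in \G^0$ and $g \in G_x$. Together with $s_e \in A$, this shows $A$ contains a generating $\GG$-family, so $\phi$ is onto, completing the proof.

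The step carrying the real content is the recovery of the $u_{x,g}$, specifically the identity $u_{x,g}\, p_{hf} = s_{(gh)f}\, s_{hf}^*$, and this is exactly where triviality of the edge groups is indispensable. In general, writing $gh = g'\alpha_f(k)$ with $g' \in \Sigma_f$ and $k \in G_f$, relation (G2) rewrites $u_{x,gh}\, s_f$ as $s_{g'f}\, u_{s(f),\alpha_{\overline f}(k)}$, and the residual vertex-group factor $u_{s(f),\alpha_{\overline f}(k)}$ obstructs membership in $A$. When $G_f = \{1\}$ this factor is trivial and $\Sigma_f$ is the whole group, so the rewriting collapses to the clean product above; this is the precise mechanism by which the containment $\phi(C^*(E_\GG)) \subseteq C^*(\GG)$ becomes an equality, and it is what I expect to flag as the crux of the argument.
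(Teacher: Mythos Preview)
Your argument for the surjectivity assertion is correct and is essentially the same as the paper's: both recover $s_{ge}$ by summing $\phi(t_\mu)$ over $r_E^{-1}(ge)$ and invoking (G4), then recover $u_{x,g}$ via the decomposition $u_{x,1}=\sum_{hf\in x\GG^1} s_{hf}s_{hf}^*$ and the identity $u_{x,g}\,s_{hf}s_{hf}^* = s_{(gh)f}s_{hf}^*$, which uses $\Sigma_f = G_{r(f)}$. Your additional remarks---that trivial edge groups force the vertex groups to be finite, and your explanation of how a nontrivial $G_f$ would leave a residual factor $u_{s(f),\alpha_{\overline f}(k)}$ obstructing the computation---are not in the paper's proof but are correct and helpful commentary.
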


To prove the injectivity of $\phi$ we will use the gauge-invariant uniqueness theorem for directed graph $C^*$-algebras \cite[Proposition 2.1]{CBMS}. To do this we need a gauge action on $C^*(\GG)$, by which we mean a strongly continuous action by automorphisms of $C^*(\GG)$ of the one-dimensional torus $\mathbb{T} = \{ z \in \mathbb{C} : |z| = 1\}$, regarded as a multiplicative locally compact group. That there is such an action is a direct consequence of the universal definition of $C^* ( \GG )$. We state this result without proof as Proposition~\ref{prop:gauge action}, as  it is by now a standard argument (see the proof of \cite[Proposition 2.1]{CBMS}, for instance).

\begin{prop} \label{prop:gauge action}
Let $\GG=(G,\Gamma)$ be a locally finite nonsingular graph of countable groups and $\{ u_x , s_e : x \in \Gamma^0 , e \in \Gamma^1 \}$ be the universal $\GG$-family generating $C^* ( \GG )$. There is a strongly continuous action $\gamma : \mathbb{T} \to \operatorname{Aut} C^* ( \GG )$
such that
\[
\gamma_z ( u_x ) = u_x \quad\mbox{and}\quad \gamma_z ( s_e ) = z s_e  \quad\text{ for all } x \in \Gamma^0 , e \in \Gamma^1, z \in \mathbb{T}.
\]
\end{prop}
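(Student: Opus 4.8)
The plan is to exhibit, for each $z \in \mathbb{T}$, a $\GG$-family in $C^*(\GG)$ whose elements match the prescribed formulas, and then invoke the universal property of $C^*(\GG)$ to produce the automorphism $\gamma_z$. First I would fix $z \in \mathbb{T}$ and consider the candidate family $\{ u_x, \, z s_e : x \in \Gamma^0, e \in \Gamma^1 \}$ inside $C^*(\GG)$, where $\{u_x, s_e\}$ is the universal $\GG$-family generating $C^*(\GG)$. The key step is to verify that this new family again satisfies relations (G1)--(G4) from Definition~\ref{def: a G, Sigma family}. Relations (G1) and (G2) involve only the partial unitaries $u_x$ (which are unchanged) or a single edge element on each side of an equation: in (G2), $u_{r(e),\alpha_e(g)}(z s_e) = z\, u_{r(e),\alpha_e(g)} s_e = z\, s_e u_{s(e),\alpha_{\overline e}(g)} = (z s_e) u_{s(e),\alpha_{\overline e}(g)}$, so the scalar $z$ passes freely across and the relation is preserved. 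For (G3) and (G4) the relevant expressions are $S_e^* S_e$ and products of the form $(U_{x,h} S_f)(U_{x,h} S_f)^*$; in each such term the edge generators appear in adjoint pairs, so the factor $z$ is cancelled by $\overline z$ (using $z \overline z = 1$ since $|z| = 1$), and both relations hold verbatim for the rescaled family.

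Having checked that $\{u_x, z s_e\}$ is a $\GG$-family, the universal property from Definition~\ref{def: Gog C*} yields a unique $*$-homomorphism $\gamma_z : C^*(\GG) \to C^*(\GG)$ with $\gamma_z(u_x) = u_x$ and $\gamma_z(s_e) = z s_e$. Applying the same argument to $\overline z$ gives $\gamma_{\overline z}$, and because $\gamma_z \circ \gamma_{\overline z}$ and $\gamma_{\overline z} \circ \gamma_z$ each agree with the identity on the generators $u_x$ and $s_e$, uniqueness forces them both to equal $\mathrm{id}_{C^*(\GG)}$. Thus each $\gamma_z$ is an automorphism, and the same uniqueness principle applied to $\gamma_z \circ \gamma_w$ versus $\gamma_{zw}$ on generators shows that $z \mapsto \gamma_z$ is a group homomorphism from $\mathbb{T}$ into $\operatorname{Aut} C^*(\GG)$.

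It remains to establish strong continuity, i.e.\ that $z \mapsto \gamma_z(a)$ is norm-continuous for each fixed $a \in C^*(\GG)$. The standard argument is to first check continuity on the dense $*$-subalgebra generated by the $\GG$-family: on a finite product of generators and their adjoints, $\gamma_z$ acts by multiplying by a fixed monomial in $z$ and $\overline z$, which is manifestly continuous in $z$; hence $z \mapsto \gamma_z(a)$ is continuous for $a$ in the dense span of such words. An $\varepsilon/3$ approximation argument then extends continuity to all of $C^*(\GG)$, using that each $\gamma_z$ is a $*$-homomorphism and therefore norm-decreasing.

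I do not expect any genuine obstacle here---this is a routine application of the universal property, and the only point requiring care is the bookkeeping in (G3) and (G4) to confirm that the edge generators occur in balanced adjoint pairs so that the $z$-factors cancel. For this reason the paper states the result without proof, and I would likewise relegate the verification to a brief remark referencing the analogous argument in \cite[Proposition 2.1]{CBMS}.
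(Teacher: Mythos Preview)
Your proof is correct and follows exactly the standard argument the paper has in mind: the paper states the result without proof, remarking only that it is a direct consequence of the universal property and referring to \cite[Proposition 2.1]{CBMS} for the analogous argument. You have simply written out those routine details, and your final paragraph already anticipates this.
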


\begin{rmk}\label{rem:CondExp}
	Given a strongly continuous action $\sigma_z$ of $\T$ by automorphisms of a $C^*$-algebra $A$ there is an associated conditional expectation $\phi : A \to A^0$, where $A^0$ is the algebra of fixed-points of the action, defined by $\phi(a) = \int_\T \sigma_z(a) \, dz$. The key properties of $\phi$ are that it is a positive norm-one linear idempotent with range $A^0$, and that it is {\em faithful}, in that if $\phi(a^* a) = 0$ then $a = 0$.  See \cite{BO} for details.
\end{rmk}

\begin{proof}[Proof of Theorem~\ref{thm: associated directed graph}]
	The directed graph $E_\GG$ is row-finite because $\GG$ is locally finite, and has no sources because $\GG$ is nonsingular. 
	
	We now claim that the elements
	\[
	P_\nu:=s_\nu s_\nu^*\quad \text{and}\quad T_\mu:=s_\mu s_{s_E(\mu)}^*,
	\]
	for $\nu\in E_\GG^0$, $\mu\in E_\GG^1$, satisfy relations (CK1) and (CK2) from \cite[Page~6]{CBMS}; that is, they form a Cuntz--Krieger $E_\GG$-family in $C^*(\GG)$. For each $\mu=g_1e_1g_2e_2\in E_\GG^1$ an application of (G4) shows that $s_\mu^*s_\mu=s_{e_2}^*s_{e_2}$. It follows that
	\[
	T_\mu^* T_\mu = s_{s_E(\mu)}s_\mu^* s_\mu s_{s_E(\mu)}^* = s_{s_E(\mu)}s_{e_2}^*s_{e_2}s_{s_E(\mu)}^*=s_{s_E(\mu)}s_{s_E(\mu)}^*=P_{s_E(\mu)},
	\]
	and so (CK1) holds. For each $\nu=g_1e_1\in\GG^1$ we use (G4) to get
	\begin{align*}
		\sum_{r_E(\mu)=\nu}T_\mu T_\mu^* = \sum_{r_E(\mu)=\nu}s_\mu s_{s_E(\mu)}^*s_{s_E(\mu)}s_\mu^* = \sum_{r_E(\mu)=\nu} s_\mu s_\mu^*
		&= s_\nu \left(\sum_{\substack{\nu'\in s(e_1)\GG^1 \\ \nu'\not= 1\overline{e_1}}}s_{\nu'} s_{\nu'}^* \right)s_\nu^*\\
		&= s_\nu s_{e_1}^* s_{e_1} s_\nu\\
		& = s_\nu s_\nu ^*\\
		& = P_\nu,
	\end{align*}
	and so (CK2) holds. By the universal property of $C^*(E_\GG)$, the claim holds and hence we get a homomorphism $\phi:C^*(E_\GG)\to C^*(\GG)$ satisfying $\phi(p_\nu)=s_\nu s_\nu^*$ for each $\nu\in E_\GG^0$, and $\phi(t_\mu)=s_\mu s_{s_E(\mu)}^*$ for each $\mu\in E_\GG^1$. 
	
	To show that $\phi$ is an embedding, we use the gauge-invariant uniqueness theorem for directed graph $C^*$-algebras (see \cite[Theorem~2.2]{CBMS}). We know from Proposition~\ref{prop:gauge action} that there is a gauge action $\gamma$ of $C^*(\GG)$, and it satisfies $\gamma_z(P_\nu)=P_\nu$ for each $\nu\in E_\GG^0$, and $\gamma_z(T_\mu)=zT_\mu$ for each $\mu\in E_\GG^1$. We have discussed a concrete $\GG$-family in Remark~\ref{rmk: concrete G family}; since in this family each projection $S_\nu S_\nu^*$ is nonzero, it follows that each $P_\nu=s_\nu s_\nu^*$ is nonzero. So we can apply the gauge-invariant uniqueness theorem to see that $\phi$ is an embedding.  
	
	For the last assertion we assume each edge group $G_e$ is trivial, and we claim that each $s_e$ and each $u_{x,g}$ is in the image of $\phi$. Let $e\in\Gamma^1$ and $g\in G_{r(e)}$, and we observe that for all edges $f\in\Gamma^1$ the transversal $\Sigma_f$ is just the vertex group $G_{r(f)}$. We then use (G4) to get
	\[
	s_{ge}=s_{ge} s_e^* s_e = \sum_{\substack{ r(f)=s(e),\, h\in G_{s(e)} \\ hf\not= 
			1 \overline{e}}} s_{ge} s_{hf}s_{hf}^* = \sum_{\substack{ r(f)=s(e),\, h\in G_{s(e)} \\ hf\not= 
			1 \overline{e}}} T_{gehf} = \phi\left(\sum_{\substack{ r(f)=s(e),\, h\in G_{s(e)} \\ hf\not= 
			1 \overline{e}}} t_{gehf}\right). 
	\]
	Taking $g=1$ shows that each $s_e$ is in the image of $\phi$. For each $x\in \Gamma^0$ and $g\in G_x$ we use (G3) and (G4) to get
	\[
	u_{x,g}=u_{x,g}\left(\sum_{\substack{r(f)=x \\ h\in G_x}}s_{hf}s_{hf}^*\right)= \sum_{\substack{r(f)=x \\ h\in G_x}}s_{(gh)f}s_{hf}^*.
	\]
	Since each $s_{(gh)f}, s_{hf}$ is in the image of $\phi$, it follows that $u_{x,g}$ is in the image of $\phi$. The claim holds, and hence $\phi$ is an isomorphism onto $C^*(\GG)$.
\end{proof}

\begin{rmk}\label{rem:identifyingCstarEsubGG}
From this point we identify $C^*(E_\GG)$ with the $C^*$-subalgebra of $C^*(\GG)$ generated by $\{s_\nu s_\nu^*:\nu\in\GG^1\}\cup \{s_\mu s_{s_E(\mu)}^*:\mu\in E_\GG^1\}$.
\end{rmk}

\begin{rmk}\label{rmk: known constructions}

	We can use Theorem~\ref{thm: associated directed graph} to see that our graph of groups $C^*$-algebras recover some known classes of $C^*$-algebras. 

	\begin{enumerate}

		\item[(1)] When all the vertex groups are trivial, and $\Gamma$ is a finite graph, the $C^*$-algebra $C^*(E_\GG)$, and hence by Theorem~\ref{thm: associated directed graph} the $C^*$-algebra $C^*(\GG)$, is the Cuntz--Krieger algebra associated to $\Gamma$ by Cornelissen, Lorscheid and Marcolli in \cite{CLM}.
		
		\item[(2)] Suppose that a group $G$ is the free product of finitely many finite groups $G_1,\dots,G_n$.  Then $G$ is naturally the fundamental group of a locally finite nonsingular graph of groups with trivial edge groups, as follows.  Let $\G$ be the star graph with $n+1$ vertices $x,x_1,\ldots,x_n$, and edge set $\{e_1,\dots,e_n,\overline{e_1},\dots,\overline{e_n}\}$ such that $r(e_i) = x_i$ and $s(e_i) = x$ for $1 \leq i \leq n$.  Let $G_x$ and all edge groups be trivial and let $G_{x_i} = G_i$.  Then $\pi_1(\GG,x) \cong G$.  This construction is a special case of the graphs of groups considered in, for example,~\cite{AMS}.   The $C^*$-algebra construction is a special case of results of \cite{Rob2,Sp,O}.

\end{enumerate}

\end{rmk}

In order to describe further examples, we will need the following lemma.

\begin{lem} \label{lem: generators of directed graph subalgebra}
Let $\GG$ be a nonsingular locally finite graph of countable groups, and let $E_\GG$ be the associated directed graph defined in the statement of Theorem \ref{thm: associated directed graph}. We have
\[
C^*(E_\GG)=\clsp\{s_\mu s_\nu^* : \mu, \nu \in \GG^*,\ s(\mu) = s(\nu) \}
\]
and
\[
C^*(\GG)= \clsp\{ s_\mu u_{s(\mu),g} s_\nu^* : \mu, \nu \in \GG^*,\ s(\mu) = s(\nu),\ g \in G_{s(\mu)} \}.
\]
\end{lem}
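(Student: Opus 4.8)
The plan is to prove the two identities separately, deducing the first from the theory of directed graph $C^*$-algebras together with relations (G3) and (G4), and the second by verifying directly that the right-hand side is a closed $*$-subalgebra of $C^*(\GG)$ containing the generators. For the first identity, recall the standard fact that $C^*(E_\GG) = \clsp\{t_\alpha t_\beta^* : \alpha,\beta \text{ paths in } E_\GG,\ s_E(\alpha) = s_E(\beta)\}$ (see \cite{CBMS}). I would first record the bijection between paths of length $k$ in $E_\GG$ and $\GG$-paths of length $k+1$: a path $\mu_1\cdots\mu_k$ in $E_\GG$ (with each $\mu_i \in \GG^2$ and $s_E(\mu_i) = r_E(\mu_{i+1})$) corresponds to the overlapping concatenation $\tilde\alpha \in \GG^{k+1}$, while a vertex $\nu \in \GG^1$ corresponds to itself. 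Using the embedding $\phi$ of Theorem~\ref{thm: associated directed graph} and the identification of Remark~\ref{rem:identifyingCstarEsubGG}, a short telescoping computation — in which each internal factor $s_{s_E(\mu_i)}^* s_{\mu_{i+1}}$ collapses to the second length-one segment of $\mu_{i+1}$ by (G4) — shows $\phi(t_\alpha t_\beta^*) = s_{\tilde\alpha} s_{\tilde\beta}^*$ with $\tilde\alpha,\tilde\beta \in \GG^*$ of length $\ge 1$ and $s(\tilde\alpha) = s(\tilde\beta)$. This gives $C^*(E_\GG) \subseteq \clsp\{s_\mu s_\nu^*\}$.

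For the reverse inclusion I would show that each generator $s_\mu s_\nu^*$ (with $s(\mu)=s(\nu)$) lies in $C^*(E_\GG)$, arguing by the lengths of $\mu,\nu$. When $|\mu|,|\nu| \ge 1$, I use (G4) to rewrite $s_\nu^* s_\nu = \sum_{\rho} s_\rho s_\rho^*$ as a finite sum over $\rho \in s(\nu)\GG^1$ avoiding the backtrack $1\overline{e}$, so that $s_\mu s_\nu^* = \sum_\rho s_{\mu\rho} s_{\nu\rho}^*$; the final length-one segments of $\mu\rho$ and $\nu\rho$ now agree, and each term is exactly $\phi$ of a spanning element for $E_\GG$. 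The length-zero cases reduce to this: $s_{1_x}s_{1_x}^* = u_{x,1} = \sum_{\rho \in x\GG^1} s_\rho s_\rho^*$ by (G3) and (G4) exactly as in the proof of Theorem~\ref{thm: associated directed graph}, while for $|\nu|\ge 1$ one has $s_{1_x} s_\nu^* = s_\nu^*$ because the range projection of $s_\nu^*$ is dominated by $u_{x,1}$, and this is again handled by the same (G4)-expansion.

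For the second identity, the inclusion into $C^*(\GG)$ is immediate since each spanning element is a product of generators. For the reverse inclusion I would show the right-hand side $R$ is a closed $*$-subalgebra containing every generator, whence $C^*(\GG) \subseteq R$. Each $u_{x,g}$ arises as $s_{1_x} u_{x,g} s_{1_x}^*$ and each $s_e$ as $s_{1e}\, u_{s(e),1}\, s_{1_{s(e)}}^*$ (using $s_e u_{s(e),1} = s_e$), so $R$ contains the generating $\GG$-family; and $R$ is $*$-closed because $(s_\mu u_{s(\mu),g} s_\nu^*)^* = s_\nu u_{s(\nu),g^{-1}} s_\mu^*$, which uses $s(\mu)=s(\nu)$. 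The crux is closure under multiplication, which reduces to evaluating the middle factor $u_{s(\mu),g}\, s_\nu^* s_\alpha\, u_{s(\alpha),h}$ of a product of two spanning elements.

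The hard part, and the main obstacle, is the reduction of $s_\nu^* s_\alpha$ for two reduced $\GG$-paths $\nu,\alpha$. Here I would exploit that reduced $\GG$-paths correspond to geodesics in the Bass--Serre tree, so that in the concrete family of Remark~\ref{rmk: concrete G family} the cylinder sets $Z(\nu)$ and $Z(\alpha)$ are either disjoint or nested; accordingly $s_\nu^* s_\alpha$ collapses to $0$, to $s_{\alpha'}$ when $\alpha = \nu\alpha'$, to $s_{\nu'}^*$ when $\nu = \alpha\nu'$, or to an initial projection when $\nu = \alpha$. The remaining bookkeeping is to normalise the leftover interior group element back into a $\GG$-path: a factor such as $u_{s(\mu),g} s_{\alpha'}$ is rewritten by absorbing $g$ into the leading group element of $\alpha'$ and then repeatedly applying (G2) together with the transversal decomposition $G_{r(c)} = \bigsqcup_{\sigma\in\Sigma_c} \sigma\,\alpha_c(G_c)$ to push the non-transversal part through the edge generators, producing a genuine spanning element $s_{\mu''} u_{\cdot,\cdot} s_{\beta}^*$. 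The fiddly point to get right is the backtracking case governed by (G3), where $s_e s_{\overline{e}} = 0$ and $u_{s(e),1} = s_e^* s_e + s_{\overline{e}}s_{\overline{e}}^*$; this makes the nesting/disjointness dichotomy less automatic than in an ordinary directed graph algebra and must be checked directly.
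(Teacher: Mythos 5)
Your proposal is correct, and at its core it runs along the same lines as the paper's proof: both identities are obtained by showing that the relevant span is a $*$-algebra containing the generators, and the two essential tools are the same in both arguments, namely the (G4)-orthogonality of distinct length-one segments and the (G2) ``pass-through'' of vertex-group elements along a path. The organisational differences are minor. For the first identity the paper does not route through the standard spanning family $\{t_\alpha t_\beta^*\}$ of $C^*(E_\GG)$ and the path correspondence; instead it shows directly that each $s_{ge}$, $ge\in\GG^1$, lies in $C^*(E_\GG)$ (your (G4)-expansion read in reverse) and then establishes the multiplication rule \eqref{eq:M}, which is precisely your four-case dichotomy $s_\nu^*s_\alpha \in \{s_{\alpha'},\ s_{\nu'}^*,\ s_e^*s_e,\ 0\}$. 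That rule is then reused verbatim for the second identity, so the paper proves the dichotomy once rather than having it appear in two guises.

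One caution about your justification of the dichotomy, which is the crux of your second part. At this stage of the paper the regular representation of Remark~\ref{rmk: concrete G family} is not known to be faithful, so an identity verified in that one $\GG$-family does not transfer to the universal algebra $C^*(\GG)$; only non-vanishing transfers. The Bass--Serre tree picture is therefore motivation, not proof. You do flag that the dichotomy ``must be checked directly,'' and that check is short and purely relation-theoretic, exactly as in \eqref{eq:M}: by (G4), $s_{ge}^*s_{hf}=0$ for distinct $ge,hf$ with $r(e)=r(f)$, and $s_{ge}^*s_{ge}s_{hf}=s_{hf}$ whenever $gehf\in\GG^2$, with (G3) (that is, $s_es_{\overline{e}}=0$) disposing of backtracking terms; iterating gives all four cases. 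With that substitution your argument is complete. One further point that both you and the paper treat lightly: in the product of two spanning elements, the equality case $\nu=\alpha$ leaves the middle factor $u_{s(\nu),g}\,s_e^*s_e\,u_{s(\nu),h}$, and since $u_{s(\nu),g}$ need not commute with $s_e^*s_e$ (conjugation by $u_{s(\nu),g}$ moves the missing subprojection $s_{\overline{e}}s_{\overline{e}}^*$ to $s_{\sigma\overline{e}}s_{\sigma\overline{e}}^*$, where $\sigma$ is the transversal representative of $g$), this is not a pass-through; one expands $s_e^*s_e$ by (G4) once more and absorbs transversal representatives, which again lands in the spanning set.
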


\begin{proof}  To prove the first assertion, first let $\mu = ge \in \GG^1$. Then
	\[
	s_\mu = u_{r(e),g} s_e = \sum_{\substack{r(f) = s(e),\, h \in \Sigma_f \\ hf \not= 1 \overline{e}}} u_{r(e),g} s_e u_{r(f),h} s_f s_f^* u_{r(f),h}^*
	= \sum_{\substack{\nu \in E_\GG^1 \\ s_E(\nu) = ge}} s_\nu s_{s_E(\nu)}^*\in C^*(E_\GG).
	\]
Now let $g e$, $h f \in \GG^1$ with $r(e) = r(f)$.  By (G4) we have that $s_{ge} s_{ge}^* s_{hf} s_{hf}^* = 0$ if $g e \not= hf$, and hence also $s_{ge}^* s_{hf} = 0$ if $ge \not= hf$.  Then by (G4) again we have $s_{ge}^* s_{ge} s_{hf} = s_{hf}$.  Iterating these identities proves that for $\mu$, $\nu \in \GG^*$, 
	\begin{equation}\label{eq:M}
s_\mu^* s_\nu  = 
 \begin{cases}
  s_{\nu'} &\text{ if } \nu = \mu \nu'\text{ for some }\nu'\in\GG^* \\
  s_{\mu'}^* &\text{ if } \mu = \nu \mu'\text{ for some }\mu'\in\GG^* \\
  s_e^* s_e &\text{ if } \mu = \nu \text{ and } \mu = \mu_1 s_e \\
  0 &\text{ otherwise.}
 \end{cases}
\end{equation}
It follows that $\sp\{s_\mu s_\nu^* : \mu, \nu \in \GG^*,\ s(\mu) = s(\nu) \}$ is a $*$-subalgebra of $C^*(E_\GG)$. Since the generators of $C^*(E_\GG)$ are in $\{s_\mu s_\nu^* : \mu, \nu \in \GG^*,\ s(\mu) = s(\nu) \}$, the first assertion holds. 

For the second assertion, we first use Equation~\eqref{eq:M} to see that for $\mu_1,\mu_2,\nu_1,\nu_2\in\GG^*$, $g_1\in G_{s(\mu_1)}$, $g_2\in G_{s(\mu_2)}$ we have
\[
s_{\mu_1} u_{s(\mu_1),g_1} s_{\nu_1}^* s_{\mu_2} u_{s(\mu_2),g_2} s_{\nu_2}^*
= \begin{cases}
  s_{\mu_1} u_{s(\mu_1),g_1} s_{\mu_2'} u_{s(\mu_2),g_2} s_{\nu_2}^* &\text{ if } \mu_2 = \nu_1 \mu_2'\text{ for some }\mu_2'\in\GG^* \\
  s_{\mu_1} u_{s(\mu_1),g_1} s_{\nu_1'} u_{s(\mu_2),g_2} s_{\nu_2}^* &\text{ if } \nu_1 = \mu_2 \nu_1'\text{ for some }\nu_1'\in\GG^* \\
  0 &\text{ otherwise.}
 \end{cases}
\]
By (G2) we have $u_{s(\mu_1),g_1} s_{\mu_2'} = s_{\mu_2''} u_{s(\mu_2'),g_1'}$ for some $\mu_2'' \in \GG^*$ and $g_1' \in G_{s(\mu_2')}$.  Then 
\begin{align*}
s_{\mu_1} u_{s(\mu_1),g_1} s_{\mu_2'} u_{s(\mu_2),g_2} s_{\nu_2}^* &= s_{\mu_1 \mu_2''} u_{s(\mu_2'),g_1' g_2} s_{\nu_2}^*\\
& \in \{ s_\mu u_{s(\mu),g} s_\nu^* : \mu, \nu \in \GG^*,\ s(\mu) = s(\nu),\ g \in G_{s(\mu)} \}.  
\end{align*}
A similar calculation shows that $s_{\mu_1} u_{s(\mu_1),g_1} s_{\nu_1'} u_{s(\mu_2),g_2} s_{\nu_2}^*$ is in this set. It follows that $\sp\{ s_\mu u_{s(\mu),g} s_\nu^* : \mu, \nu \in \GG^*,\ s(\mu) = s(\nu),\ g \in G_{s(\mu)} \}$ is a $*$-subalgebra of $C^*(\GG)$. Since the generators of $C^*(\GG)$ are in $\{ s_\mu u_{s(\mu),g} s_\nu^* : \mu, \nu \in \GG^*,\ s(\mu) = s(\nu),\ g \in G_{s(\mu)} \}$, the second assertion holds.
\end{proof}

In the remainder of this section we describe two examples of graphs of groups $\GG$ so that $C^*(\GG)$ is strictly larger than the associated directed graph algebra $C^*(E_\GG)$.  Example~\ref{subsubsec: an HNN example} is a loop of finite groups, and Example~\ref{subsubsec: BS graphs of groups} is a loop of groups whose fundamental group is a Baumslag--Solitar group.  See Examples~\ref{egs: graphs of groups}(2) above for the definition of a loop of groups; we continue notation from this example.

\begin{example} \label{subsubsec: an HNN example} Let $n \geq 2$ be an integer, and $\GG$ the loop of groups
	\[
	\begin{tikzpicture}[scale=2]
	
	\node (0_0) at (0,0) [circle] {};
	
	\filldraw [red] (0,0) circle (1pt);
	
	\draw[-stealth,thick] (.75,0) .. controls (.75,.5) and (.1,.5) .. (0_0) node[pos=0, inner sep=0.5pt, anchor=west] {$G_e:=\Z / n\Z$};
	\draw[thick] (.75,0) .. controls (.75,-.5) and (.1,-.5) .. (0_0);
	
	\draw (-1,0) node {$\Z / n\Z \oplus \Z / n\Z=:G_x$};
	
	\end{tikzpicture}
	\]	
 where $\alpha_e(1) = (1,0)$ and $\alpha_{\overline{e}}(1) = (0,1)$.  (Thus in $\pi(\GG)$ we have $(1,0) e = e (0,1)$.)  We let $\Sigma_e = \{ (0,i) : 0 \le i < n \}$ and $\Sigma_{\overline{e}} = \{ (i,0) : 0 \le i < n \}$.  

Define $u := u_{x,(1,0)}\in C^*(\GG)$ and $v := u_{x,(0,1)}\in C^*(\GG)$.  Then by Lemma \ref{lem: generators of directed graph subalgebra}, $C^*(E_\GG)$ is generated by the partial isometries $\{ u^i s_{\overline{e}} : 0 \le i < n \} \cup \{ v^i s_e : 0 \le i < n \}$.  We claim that $C^*(E_\GG) \not= C^*(\GG)$.  We prove this by showing that dist$(v, C^*(E_\GG)) = 1$ (and a similar proof works for $u$).  For this we show that $\| v - \sum c_i s_{\mu_i} s_{\nu_i}^* \| \ge 1$ for every finite sum $\sum c_is_{\mu_i}s_{\nu_i}^*\in C^*(E_\GG)$, $c_i\in\C$.  

Note that since $v$ is fixed by the gauge action of $C^*(\GG)$, it is fixed by the associated conditional expectation on $C^*(\GG)$ (see Remark~\ref{rem:CondExp}).  Since $s_\mu s_\nu^*$ is in the kernel of the expectation whenever $|\mu| \not= |\nu|$, we may assume that $|\mu_i| = |\nu_i|$ for all $i$.  Now fix a finite sum as above with $|\mu_i| = |\nu_i|$ for all $i$.  Let $m$ be an even integer with $m \ge |\mu_i|$, $|\nu_i|$ for all $i$. We will use the fact that if $\beta$ is a $\GG$-path with $|\beta| < m$, then $s_\beta = \sum \{s_\beta s_\gamma s_\gamma^* : \gamma \in \GG^{m - |\beta|},\ \beta \gamma \text{ is reduced} \}$.  To see this, note that (G4) implies that $s_\beta = \sum \{ s_\beta u_h s_f s_f^* u_h^* : hf \not= 1 \overline{e} \}$, where $e$ is the sourcemost edge of $\beta$.  This proves the fact if $m - |\beta| = 1$, and the general case follows by induction.  

Now if $|\mu| = |\nu| < m$ and $s(\mu) = s(\nu)$, we have
	\[
	s_\mu s_\nu^*
	= \sum_{\beta_1,\beta_2 \in s(\mu)\GG^{m-|\nu|}} s_\mu s_{\beta_1} s_{\beta_1}^* s_{\beta_2} s_{\beta_2}^* s_{\nu}^*
	= \sum_{\beta \in s(\mu)\GG^{m-|\nu|}} s_{\mu \beta} s_{\nu \beta}^*,
	\]
	since $s_{\beta_1}^* s_{\beta_2} = 0$ if $\beta_1 \not= \beta_2$.  (This follows since (G4) also implies that $(u_h s_f)^*(u_{h'} s_{f'}) = 0$ if $(h,f) \not= (h',f')$.)  Applying this to all $\mu_i$ and $\nu_i$, we may assume that $|\mu_i| = |\nu_i| = m$ for all $i$.  
	
	Let $\xi \in Z(( (1,0) \overline{e} (0,1) e)^{m/2} (0,0) e)$, $H_\xi = \ell^2(F(\GG)\xi)$, and $U$, $V$, $S_e$, $S_{\overline{e}}$ be the $\GG$-family in $B(H_\xi)$ defined in Remark \ref{rmk: concrete G family}.  Since $\| v - \sum c_i s_{\mu_i} s_{\nu_i}^* \| \ge \| V - \sum c_i S_{\mu_i} S_{\nu_i}^* \|$, it suffices to estimate the difference with this $\GG$-family.  Note that
	\begin{align*}
	(0,1) \cdot (1,0) \overline{e} (0,1) e
	&= (1,0) (0,1) \overline{e} (0,1) e\\
	&= (1,0) \overline{e} (1,0) (0,1) e\\
	&= (1,0) \overline{e} (0,1) (1,0) e\\
	&= (1,0) \overline{e} (0,1) e (0,1).
	\end{align*}
	Therefore $(0,1) \xi \in Z( ((1,0) \overline{e} (0,1) e)^{m/2} (0,1) e)$. We also have
	\[
	S_{\nu_i}^* \delta_\xi =
	\begin{cases}
	\delta_{\xi'} &\text{ if } \nu_i = ((1,0) \overline{e} (0,1) e)^{m/2} \\
	0 & \text{ if } \nu_i \not= ((1,0) \overline{e} (0,1) e)^{m/2},
	\end{cases}
	\]
	for some $\xi' \in Z( (0,0) e)$. Therefore
	\[
	S_{\mu_i} S_{\nu_i}^* \delta_\xi =
	\begin{cases}
	\delta_{\xi''} &\text{ if } \nu_i = ((1,0) \overline{e} (0,1) e)^{m/2} \text{ and the sourcemost edge of } \mu_i \text{ is } \overline{e} \\
	0 &\text{ otherwise,}
	\end{cases}
	\]
	for some $\xi'' \in Z(\beta 1 e)$, where $\beta \in \GG^n$ with sourcemost edge not equal to $\overline{e}$.  In all cases we have that $\langle V \delta_\xi, S_{\mu_i} S_{\nu_i}^* \delta_\xi \rangle = 0$ for all $i$, and hence
	\[
	\| V - \sum_i c_i S_{\mu_i} S_{\nu_i}^* \| \ge \| (V - \sum_i c_i S_{\mu_i} S_{\nu_i}^*) \delta_\xi \| \ge \| V \delta_\xi \| = 1.
	\]

\end{example}

\begin{example} \label{subsubsec: BS graphs of groups}
	
Let $m$ and $n$ be positive integers, and $\GG$ the loop of groups
\[
	\begin{tikzpicture}[scale=2]
	
	\node (0_0) at (0,0) [circle] {};
	
	\filldraw [red] (0,0) circle (1pt);
	
	\draw[-stealth,thick] (.75,0) .. controls (.75,.5) and (.1,.5) .. (0_0) node[pos=0, inner sep=0.5pt, anchor=west] {$G_e:=\Z$};
	\draw[thick] (.75,0) .. controls (.75,-.5) and (.1,-.5) .. (0_0);
	
	\draw (-0.7,0) node {$\langle a\rangle =\Z=:G_x$};
	
	\end{tikzpicture}
	\] 
where $\alpha_e$ and $\alpha_{\overline{e}}$ send the generator of $G_e$ to $a^n$ and $a^m$, respectively.  The fundamental group of $\GG$ is the \emph{Baumslag--Solitar group} $\BS(m,n) = \langle a, e \mid ea^m e^{-1} = a^n \rangle$.  We choose $\Sigma_e = \{ a^i : 0 \le i < n \}$ and $\Sigma_{\overline{e}} = \{ a^i : 0 \le i < m \}$.  

In $C^*(\GG)$, write $u$ for $u_{x,a}$.  The relation (G2) then becomes $u^n s_e = s_e u^m$.  The directed graph $C^*$-algebra of Theorem \ref{thm: associated directed graph} is generated by $\{u^i s_e : 0 \le i < n \} \cup \{ u^i s_{\overline{e}} : 0 \le i < m \}$, by Lemma \ref{lem: generators of directed graph subalgebra}.  We show that $C^*(E_\GG) \not= C^*(\GG)$.

First, consider the case that $m \not= n$, say for definiteness $n>m$.   Let $\xi = a^{n-1} e (a^{n-m} e)^\infty$ be in $ \partial X_\GG$.  Then $a \xi = (1 e)^\infty$.  We use the regular representation $\pi_\xi$ of $C^*(\GG)$ on $H_\xi = \ell^2(F(\GG)\xi)$, described in Remark \ref{rmk: concrete G family}.  Thus $\pi_\xi(s_e) \delta_{\gamma\xi} = \delta_{1 e \gamma \xi}$ if $\gamma \xi \not \in Z(1 \overline{e})$, and is zero otherwise, and $\pi_\xi(u) \delta_{\gamma\xi} = \delta_{u \gamma \xi}$.  We obtain a vector functional $f \in C^*(\GG)^*$ by $f(b) = \langle \pi_\xi(b) \delta_\xi, \pi_\xi(u) \delta_\xi \rangle$.  Then $f(u) = 1$.  We claim that $C^*(E_\GG)$ is contained in the kernel of $f$, which will show that $u \not\in C^*(E_\GG)$.  It follows from Lemma \ref{lem: generators of directed graph subalgebra} that the set $\{ s_\mu s_\nu^* : \mu,\ \nu \in \GG^*,\ |\mu|, |\nu| \ge 1 \}$ spans a dense subset of $C^*(E_\GG)$.  Thus it suffices to show that $f(s_\mu s_\nu^*) = 0$ for all $\GG$-paths $\mu$ and $\nu$.
	
	We have that $f(s_\mu s_\nu^*) = \langle \pi_\xi(s_\nu^*) \delta_\xi, \pi_\xi(s_\mu^*) \pi_\xi(u) \delta_\xi \rangle$.  We may describe $s_\mu$ and $s_\nu$ as follows. Let $w_1 = s_e$ and $w_{-1} = s_{\overline{e}}$.  Then we may let $s_\mu = u^{i_1} w_{\beta_1} \dots u^{i_p} w_{\beta_p}$ and $s_\nu = u^{j_1} w_{\varepsilon_1} \dots u^{j_q} w_{\varepsilon_q}$, where $\beta_i$, $\varepsilon_j \in \{ +1, -1 \}$, and $\beta_\ell = 1$ forces $0 \le i_\ell < m$, $\beta_\ell = -1$ forces $0 \le i_\ell < n$ (and similarly for $\nu$).  Now 
	\begin{align*}
	\pi_\xi(s_\mu^* u) \delta_\xi = \pi_\xi(u^{-i_p}) \pi_\xi(w_{\beta_p}^*) \cdots \pi_\xi(u^{-i_1}) \pi_\xi(w_{\beta_1}^*) \delta_{(1 e)^\infty}
	&=
	\begin{cases}
	\pi_\xi(u) \delta_\xi & \text{if $\mu=e^p$}\\
	0 & \text{otherwise.}
	\end{cases}\\
	&=
	\begin{cases}
		\delta_{(1e)^\infty} & \text{if $\mu=e^p$}\\
		0 & \text{otherwise.}
	\end{cases}
	\end{align*}
	Similarly, 
	\[
	\pi_\xi(s_\nu^*) \delta_\xi =
	\begin{cases}
	\delta_{(a^{n-m} e)^\infty} & \text{if $\nu = a^{n-1} e (a^{n-m} e)^{q-1}$}\\
	0 & \text{otherwise.}
	\end{cases}
	\]
Since $n>m\implies \langle \delta_{(1e)^\infty}, \delta_{(a^{n-m} e)^\infty} \rangle=0$, we have $f(s_\mu s_\nu^*) = 0$ in all cases. The claim follows.
	
	If $m = n > 1$, then $u^m$ is a central element of $C^*(\GG)$.  It is easily seen that the directed graph $E_\GG$ is strongly connected, and hence $C^*(E_\GG)$ has trivial centre.  However it follows from considering the regular representations of Remark \ref{rmk: concrete G family} that $u^m$ is not a scalar multiple of the identity.  Thus again we have $u \not\in C^*(E_\GG)$.
	
\end{example}

\section{A $C^*$-algebraic Bass--Serre Theorem}\label{thm: main thm}

The action of the fundamental group $\pi_1(\GG,v)$ of a graph of groups $\GG$ on the boundary $v\partial X_\GG$ of the Bass--Serre tree $X_{\GG,v}$ induces a full crossed product $C^*$-algebra, in the sense of Section~\ref{subsubsec:CrossedProducts}. We denote this action by $\tau$. In this section we prove our main theorem, which says that the graph of groups $C^*$-algebra $C^*(\GG)$ is stably isomorphic to the crossed product $C(v\partial X_\GG)\rtimes_\tau\pi_1(\GG,v)$. Before stating the theorem, we remind the reader 
that much of the notation appearing below is defined in Notation~\ref{ntn: general notation}~and~\ref{ntn:UnderlineArrow}. We do introduce some more notation here: for every $x,y\in\Gamma^0$ we denote 
by 
$\theta_{x,y}$ the rank-one operator on $\ell^2(\Gamma^0)$ given by 
$\theta_{x,y}(f)=\langle f , \delta_y \rangle\delta_x$, where $\delta_x,\delta_y$ are 
point-mass functions. We denote the compact operators on $\ell^2(\Gamma^0)$ by 
$\KK(\ell^2(\Gamma^0))$, and we note that 
$\KK(\ell^2(\Gamma^0))=\clsp\{\theta_{x,y}:x,y\in\Gamma^0\}$. We denote the universal covariant representation of $(C(v\partial X_\GG),\pi_1(\GG,v),\tau)$ by $(i_A,i_\pi)$. For a $\GG$-path $\mu\in \GG^*$ we denote by $\chi_{Z(\mu)}$ the function that is $1$ on $Z(\mu)$ and is 0 on the complement $Z(\mu)^c$.

\begin{thm}\label{thm: main theorem}
Let $\GG=(\Gamma,G)$ be a locally finite nonsingular graph of countable groups. There is an 
isomorphism 
\[
\Phi:C^*(\GG)\to \KK(\ell^2(\Gamma^0))\otimes 
\big(C(v\partial X_\GG)\rtimes_\tau\pi_1(\GG,v)\big),
\]
satisfying $\Phi(u_{x,g})=\theta_{x,x}\otimes i_{\pi}(\varepsilon(g))$ for 
each $x\in 
\Gamma^0$, $g\in G_x$, and 
\[
\Phi(s_e)=
\begin{cases}
\theta_{r(e),s(e)}\otimes i_A(\chi_{Z([v,r(e)]1e)})i_{\pi}(\varepsilon(e))
& \text{if $\underleftarrow{e}\not=\overline{e}$}\\
\theta_{r(e),s(e)}\otimes i_A(\chi_{Z([v,r(e)])^c})
& \text{if $\underleftarrow{e}=\overline{e}$}
\end{cases}
\]
for each $e\in\Gamma^1$.
\end{thm}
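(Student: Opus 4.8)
The plan is to build the isomorphism $\Phi$ by defining an appropriate $\GG$-family inside the target algebra $\KK(\ell^2(\Gamma^0))\otimes\big(C(v\partial X_\GG)\rtimes_\tau\pi_1(\GG,v)\big)$ and invoking the universal property of $C^*(\GG)$, then constructing an inverse. First I would define candidate operators $U_{x,g}:=\theta_{x,x}\otimes i_\pi(\varepsilon(g))$ and $S_e$ by the two-case formula in the statement, and verify directly that $\{U_{x,g},S_e\}$ is a $\GG$-family, i.e.\ satisfies (G1)--(G4). Relation (G1) is immediate from $\theta_{x,x}\theta_{y,y}=0$ for $x\neq y$. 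Relation (G2) should follow from the commutation identity~\eqref{eq: formerly appx (B)}, namely $\varepsilon(\alpha_e(g))\varepsilon(e)=\varepsilon(e)\varepsilon(\alpha_{\overline e}(g))$, combined with how $\varepsilon(e)$ moves the relevant cylinder sets. The real computational content lies in (G3) and (G4), where I would compute $S_e^*S_e$, $S_eS_e^*$, $S_{\overline e}S_{\overline e}^*$ explicitly using the covariance relation $i_\pi(\gamma)i_A(f)i_\pi(\gamma)^*=i_A(f\circ\gamma^{-1})$ together with the precise description of how the generators $\varepsilon(e)$, $\varepsilon(g)$ act on cylinder sets, which is exactly what Lemma~\ref{lem: actiondetails} provides. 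This is where the careful case split on whether $\underleftarrow{e}=\overline e$ and whether $e\in T^1$ becomes essential, and it is the step I expect to carry the bulk of the calculation.

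\textbf{Surjectivity and the inverse homomorphism.} Once the $\GG$-family is verified, the universal property yields a $*$-homomorphism $\Phi:C^*(\GG)\to\KK(\ell^2(\Gamma^0))\otimes\big(C(v\partial X_\GG)\rtimes_\tau\pi_1(\GG,v)\big)$ with the stated values. To show $\Phi$ is onto, I would check that the image contains enough elements to generate the target: the tensor factors $\theta_{x,y}$, the image $i_A(C(v\partial X_\GG))$ of the function algebra (using that the characteristic functions $\chi_{Z(\mu)}$ of cylinder sets generate $C(v\partial X_\GG)$, and these should be reachable from products $S_\mu S_\mu^*$ via Lemma~\ref{lem: generators of directed graph subalgebra}), and the unitaries $i_\pi(\gamma)$ for $\gamma\in\pi_1(\GG,v)$ (reachable because the $\varepsilon(e)$ and $\varepsilon(g)$ generate $\pi_1(\GG,v)$, as noted in Notation~\ref{ntn: general notation}). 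The spanning description $C^*(\GG)=\clsp\{s_\mu u_{s(\mu),g}s_\nu^*\}$ from Lemma~\ref{lem: generators of directed graph subalgebra} is the right tool for organising this argument.

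\textbf{Injectivity.} For injectivity I would not attempt a direct kernel computation but instead either construct an explicit inverse $*$-homomorphism or invoke a gauge/uniqueness argument. The cleaner route is to exhibit a covariant representation of $(C(v\partial X_\GG),\pi_1(\GG,v),\tau)$ and the matrix units inside $C^*(\GG)$ (or its stabilisation), giving a homomorphism in the reverse direction, and then check on generators that the two composites are the respective identities; this reduces injectivity to a bookkeeping verification on the spanning sets. Alternatively, since a gauge action is already available (Proposition~\ref{prop:gauge action}) and the target carries a compatible circle action (arising from the grading of $\pi_1(\GG,v)$ by $\GG$-word length, under which $i_\pi(\varepsilon(e))$ has degree one), one can aim to apply a gauge-invariant uniqueness principle: verify that $\Phi$ is equivariant for these two actions and that it is injective on the fixed-point (degree-zero) subalgebra, the latter being essentially the statement that $\Phi$ is faithful on the directed-graph subalgebra $C^*(E_\GG)$ of Theorem~\ref{thm: associated directed graph}. \textbf{The main obstacle} I anticipate is precisely the verification of (G4): matching the sum over $\{hf\neq 1\overline e\}$ of range projections on the groupoid side with the corresponding sum of translated cylinder-set characteristic functions on the crossed-product side, keeping the three cases of Lemma~\ref{lem: actiondetails} (and the $\underleftarrow{e}=\overline e$ versus $\underleftarrow{e}\neq\overline e$ dichotomy in the definition of $S_e$) consistent throughout.
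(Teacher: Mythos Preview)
Your proposal is essentially correct and matches the paper's approach: the paper proves Proposition~\ref{prop: G family in tensor} (verifying (G1)--(G4) for the candidate family, with (G3) and (G4) handled via a case analysis on $\underleftarrow{e}$ and $e\in T^1$ exactly as you anticipate), then constructs the inverse homomorphism $\Psi$ by exhibiting matrix units $t_{[x,y]}$ and a covariant representation of $(C(v\partial X_\GG),\pi_1(\GG,v),\tau)$ inside $C^*(\GG)$, and finishes by checking $\Psi$ is surjective and $\Phi\circ\Psi=\id$ on generators.

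Two small remarks. First, the paper's endgame is slightly more economical than checking both composites: proving $\Psi$ surjective and $\Phi\circ\Psi=\id$ already forces $\Psi$ to be bijective with $\Phi=\Psi^{-1}$, so only one composite needs explicit verification. Second, your alternative route (b) via gauge-invariant uniqueness does not work as stated: there is no natural $\T$-action on the crossed product under which $i_\pi(\varepsilon(e))$ has degree one, because $\varepsilon(e)$ is the identity of $\pi_1(\GG,v)$ whenever $e\in T^1$ (indeed, in the case $\underleftarrow{e}=\overline{e}$ the element $\Phi(s_e)$ has no $i_\pi$ factor at all). The ``length'' of a $\GG$-loop is not a group homomorphism $\pi_1(\GG,v)\to\Z$, so the crossed product does not inherit a compatible gauge action in any obvious way. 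Your primary route (a) is the right one.
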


The proof follows a standard approach---we use universal properties to construct mutually inverse homomorphisms between $C^*(\GG)$ and $\KK(\ell^2(\Gamma^0))\otimes 
\big(C(v\partial X_\GG) \rtimes_\tau \pi_1(\GG,v)\big)$---although it is long and heavy on calculations. We start by proving the existence 
of $\Phi$ by building a $\GG$-family in $\KK(\ell^2(\Gamma^0))\otimes 
\big(C(v\partial X_\GG) \rtimes_\tau \pi_1(\GG,v)\big)$, in Proposition~\ref{prop: G family in tensor}.  Next, in Proposition~\ref{prop: the Psi map}, we construct a homomorphism $\Psi: \KK(\ell^2(\Gamma^0))\otimes 
\big(C(v\partial X_\GG) \rtimes_\tau \pi_1(\GG,v)\big) \to C^*(\GG)$.  We show in Lemma~\ref{lem: Psi surjective} that $\Psi$ is surjective, and then complete the proof of Theorem~\ref{thm: main theorem} by proving that $\Phi \circ \Psi = \id$.

\begin{prop}\label{prop: G family in tensor}
Let $\GG=(\Gamma,G)$ be a locally finite nonsingular graph of countable groups. For 
each $x\in 
\Gamma^0$ and $g\in G_x$ define 
$$U_{x,g}:=\theta_{x,x}\otimes i_{\pi}(\varepsilon(g)),$$ and for each $e\in\Gamma^1$ 
define
\[
S_e:=
\begin{cases}
\theta_{r(e),s(e)}\otimes i_A(\chi_{Z([v,r(e)]1e)})i_{\pi}(\varepsilon(e))
& \text{if $\underleftarrow{e}\not=\overline{e}$}\\
\theta_{r(e),s(e)}\otimes i_A(\chi_{Z([v,r(e)])^c})
& \text{if $\underleftarrow{e}=\overline{e}$}.
\end{cases}
\]
Then the collection  $\{U_{x},S_e : x \in \G^0, e \in \G^1\}$ is a $\GG$-family in   
$\KK(\ell^2(\Gamma^0))\otimes 
\big(C(v\partial X_\GG ) \rtimes_\tau \pi_1(\GG,v)\big)$.
\end{prop}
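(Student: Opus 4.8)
The plan is to verify the four defining relations (G1)--(G4) directly, working leg-by-leg in the tensor product. Throughout I would use the matrix-unit identities $\theta_{x,y}\theta_{z,w}=\delta_{y,z}\,\theta_{x,w}$ and $\theta_{x,y}^{*}=\theta_{y,x}$ to handle the $\KK(\ell^2(\Gamma^0))$ leg, and the covariance relation $i_\pi(\gamma)\,i_A(\chi_{Z(\mu)})\,i_\pi(\gamma)^{*}=i_A(\chi_{\gamma Z(\mu)})$ to handle the crossed-product leg. Since $g\mapsto\varepsilon(x,g)$ is a group homomorphism, $g\mapsto U_{x,g}=\theta_{x,x}\otimes i_\pi(\varepsilon(g))$ is automatically a representation of $G_x$ by partial unitaries, so the only content is (G1)--(G4). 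Two preliminary facts will be used repeatedly: $\varepsilon(e)^{-1}=\varepsilon(\overline e)$ (immediate from (R1) and the definition of $\varepsilon$), and the cylinder-set identities of Lemma~\ref{lem: actiondetails}, which describe precisely how $\varepsilon(x,g)$ and $\varepsilon(e)$ move the relevant cylinders.

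Relation (G1) is immediate, as $U_{x,1}=\theta_{x,x}\otimes 1$ and $\theta_{x,x}\theta_{y,y}=0$ for $x\neq y$. For (G2) I would expand both products; after cancelling the $\theta$-factors the identity reduces, via \eqref{eq: formerly appx (B)} (which replaces $\varepsilon(e)\varepsilon(\alpha_{\overline e}(g))$ by $\varepsilon(\alpha_e(g))\varepsilon(e)$), to the statement that $i_\pi(\varepsilon(\alpha_e(g)))$ commutes with the projection $i_A(\chi_{Z([v,r(e)]1e)})$ when $\underleftarrow e\neq\overline e$, and with $i_A(\chi_{Z([v,r(e)])^{c}})$ when $\underleftarrow e=\overline e$. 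By covariance this commutation is equivalent to $\varepsilon(\alpha_e(g))$ fixing the relevant cylinder set, which is exactly the conclusion of Lemma~\ref{lem: actiondetails}\eqref{lem: actiondetails.1} (respectively \eqref{lem: actiondetails.2}), because $\alpha_e(g)\in\alpha_e(G_e)$.

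For (G3) and (G4) I would first compute, for each edge, that $S_e^{*}S_e=\theta_{s(e),s(e)}\otimes i_A(\chi_{L_e})$ and $S_fS_f^{*}=\theta_{r(f),r(f)}\otimes i_A(\chi_{W_f})$ are genuine projections (this also shows each $S_e$ is a partial isometry), reading off $L_e$ and $W_f$ from Lemma~\ref{lem: actiondetails}\eqref{lem: actiondetails.3}--\eqref{lem: actiondetails.4} by a case analysis on whether $e\in T^1$ and on which side of $e$ the basepoint $v$ lies. The upshot is $L_e=Z([v,s(e)]1\overline e)^{c}$ except when $e\in T^1$ points towards $v$, where $L_e=Z([v,s(e)])$; similarly $W_f=Z([v,r(f)]1f)$ unless $\underleftarrow f=\overline f$, where $W_f=Z([v,r(f)])^{c}$. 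Relation (G3) then follows by checking in each case that the sets for $S_e^{*}S_e$ and $S_{\overline e}S_{\overline e}^{*}$ are complementary in $v\partial X_\GG$, using that for $e\in T^1$ exactly one of $\underleftarrow e=\overline e$ and $\underleftarrow{\overline e}=e$ holds (and neither when $e\notin T^1$).

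Relation (G4) is the crux and the main obstacle. Conjugating $S_fS_f^{*}$ by $U_{s(e),h}$ and using covariance gives $U_{s(e),h}S_fS_f^{*}U_{s(e),h}^{*}=\theta_{y,y}\otimes i_A(\chi_{\varepsilon(y,h)W_f})$ with $y=s(e)$. By Lemma~\ref{lem: actiondetails}\eqref{lem: actiondetails.1}, $\varepsilon(y,h)W_f=Z([v,y]hf)$ for every $hf\in y\GG^1$ except the single backtracking term $hf=1f_0$, where $f_0$ is the unique edge with $r(f_0)=y$ and $\underleftarrow{f_0}=\overline{f_0}$; for that term Lemma~\ref{lem: actiondetails}\eqref{lem: actiondetails.2} gives $Z([v,y])^{c}$. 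Since the reduced length-one continuations furnish the standard partition $Z([v,y])=\bigsqcup_{hf\neq 1f_0}Z([v,y]hf)$, summing over all of $y\GG^1$ reconstitutes $v\partial X_\GG$ (hence $U_{y,1}$), and removing the excluded term $hf=1\overline e$ leaves exactly $\chi_{L_e}$. The delicate bookkeeping is to locate $\overline e$ relative to $f_0$ in each of the three cases for $e$, so that deleting the $1\overline e$ term from the partition reproduces the set $L_e$ found in the (G3) step; disjointness of the cylinders then promotes this set identity to the operator identity (G4).
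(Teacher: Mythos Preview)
Your proposal is correct and follows essentially the same route as the paper. The paper likewise verifies (G1) from the matrix units, reduces (G2) to the covariance identity together with \eqref{eq: formerly appx (B)} and Lemma~\ref{lem: actiondetails}, isolates the computation of $S_e^*S_e$ and $S_eS_e^*$ into a separate lemma (your $L_e$ and $W_f$), reads (G3) off from complementarity, and then checks (G4) by the same case split on the position of $\overline{e}$ relative to the distinguished tree edge $f_0$ at $y=s(e)$ (with a separate treatment when $y=v$, where no $f_0$ exists). The only cosmetic difference is that for (G4) you first sum over all $hf\in y\GG^1$ to obtain $U_{y,1}$ and then subtract the $1\overline{e}$ term, whereas the paper computes the restricted sum directly; the two arguments are equivalent.
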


We start the proof of this Proposition by explicitly calculating the initial and final projections of each $S_e$.  (Since $S_e$ is the tensor product of two partial isometries, it is clear that it is also a partial isometry.)

\begin{lem} \label{lem: initial and final projections}

Let $e \in \Gamma^1$, and let $S_e$ be defined as in Proposition \ref{prop: G family in tensor}.  Then
\begin{align*}
S_e^* S_e
&= \begin{cases}
 \theta_{s(e),s(e)} \otimes i_A( \chi_{ Z([v,s(e)] 1 \overline{e})^c} ) &\text{ if } e \not\in T^1 \text{ or } \underleftarrow{e} = \overline{e} \\
 \theta_{s(e),s(e)} \otimes i_A( \chi_{Z([v,s(e)])} &\text{ if } e \in T^1 \text{ and } \underleftarrow{e} \not= \overline{e},
 \end{cases} \\
S_e S_e^*
&= \begin{cases}
 \theta_{r(e),r(e)} \otimes i_A( \chi_{ Z([v,r(e)] 1 e)} ) &\text{ if } e \not\in T^1, \text{ or } e \in T^1 \text{ and } \underleftarrow{e} \not= \overline{e} \\
 \theta_{r(e),r(e)} \otimes i_A( \chi_{Z([v,r(e)])^c} ) &\text{ if }  \underleftarrow{e} = \overline{e}.
 \end{cases}
\end{align*}

\end{lem}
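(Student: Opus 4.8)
The plan is to compute $S_e^*S_e$ and $S_eS_e^*$ straight from the two defining formulas for $S_e$, organising everything around three mutually exclusive cases: (a) $e\notin T^1$; (b) $e\in T^1$ and $\underleftarrow{e}=\overline{e}$; and (c) $e\in T^1$ and $\underleftarrow{e}\neq\overline{e}$. Since $\underleftarrow{e}$ always lies in $T^1$, the equality $\underleftarrow{e}=\overline{e}$ forces $e\in T^1$, so these three cases are exhaustive, and $S_e$ is given by the first defining formula in cases (a) and (c) and by the second in case (b). The computational tools are just three: the multiplication rule $\theta_{x,y}\theta_{z,w}=\theta_{x,w}$ when $y=z$ and $0$ otherwise, together with $\theta_{x,y}^*=\theta_{y,x}$; the facts that each $i_A(\chi_Z)$ is a projection and each $i_\pi(\gamma)$ is a unitary satisfying the covariance relation $i_\pi(\gamma)^* i_A(\chi_Z) i_\pi(\gamma)=i_A(\chi_{\gamma^{-1}Z})$; and Lemma~\ref{lem: actiondetails}, to evaluate the relevant boundary action.

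I would first dispose of the final projections $S_eS_e^*$, which need no dynamical input. In every case $S_e=\theta_{r(e),s(e)}\otimes q\,i_\pi(\gamma)$ with $q=i_A(\chi_Z)$ a projection and $\gamma$ a (possibly trivial) group element, so unitarity of $i_\pi(\gamma)$ and idempotency of $q$ give $S_eS_e^*=\theta_{r(e),r(e)}\otimes q$; reading off $Z$ from the two defining formulas produces exactly the claimed expressions. For the initial projections one has $S_e^*S_e=\theta_{s(e),s(e)}\otimes i_\pi(\gamma)^* q\,i_\pi(\gamma)$, and I treat the three cases separately. In case (c) the edge $e$ lies in $T^1$, so $\varepsilon(e)$ is the identity of $\pi_1(\GG,v)$ (as recorded in Notation~\ref{ntn: general notation}) and the group factor drops out; it then remains to invoke the tree identity $[v,s(e)]=[v,r(e)]1e$, which holds precisely because $\underleftarrow{e}\neq\overline{e}$ means that appending $e$ to $[v,r(e)]_T$ yields a reduced path, necessarily equal to $[v,s(e)]_T$. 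In case (b) the operator $S_e$ carries no group element, and the identity $[v,r(e)]=[v,s(e)]1\overline{e}$ (valid because $\underleftarrow{e}=\overline{e}$ says $[v,r(e)]_T$ ends in $\overline{e}$) rewrites $Z([v,r(e)])^c$ as $Z([v,s(e)]1\overline{e})^c$.

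The crux is case (a). Here $\varepsilon(e)$ is nontrivial, so by the covariance relation $S_e^*S_e=\theta_{s(e),s(e)}\otimes i_A(\chi_{\varepsilon(e)^{-1}Z([v,r(e)]1e)})$, and I must identify the set $\varepsilon(e)^{-1}Z([v,r(e)]1e)$. The key observation is that, directly from the definitions in Notation~\ref{ntn: general notation} together with relations (R1) and (R2), one has $\varepsilon(e)^{-1}=\varepsilon(\overline{e})$. Using $r(e)=s(\overline{e})$ and $e=\overline{\overline{e}}$, the set in question becomes $\varepsilon(\overline{e})\,Z([v,s(\overline{e})]1\overline{\overline{e}})$, which is exactly the input of Lemma~\ref{lem: actiondetails}\eqref{lem: actiondetails.4} applied to the edge $\overline{e}$. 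Since $e\notin T^1\iff\overline{e}\notin T^1$, the second branch of that part applies and yields $\varepsilon(\overline{e})Z([v,r(e)]1e)=Z([v,r(\overline{e})]1\overline{e})^c=Z([v,s(e)]1\overline{e})^c$, as required; one checks en route that the hypothesis $\underleftarrow{e}\neq\overline{e}$ forced by $e\notin T^1$ makes $[v,r(e)]1e$ a genuine reduced path, so the cylinder set is well defined.

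I expect the main obstacle to be bookkeeping rather than analysis: correctly matching each defining case of $S_e$ to the appropriate case of Lemma~\ref{lem: actiondetails}, carefully tracking edge reversals when replacing $\varepsilon(e)^{-1}$ by $\varepsilon(\overline{e})$ and swapping the roles of $r(e)$ and $s(e)$, and verifying the two tree identities that identify $Z([v,r(e)]1e)$ with $Z([v,s(e)])$ in case (c) and $Z([v,r(e)])$ with $Z([v,s(e)]1\overline{e})$ in case (b). Once these identifications are in place, the grouping of cases (a) and (b) into a single line of the statement (both giving $Z([v,s(e)]1\overline{e})^c$) and of cases (a) and (c) for $S_eS_e^*$ (both giving $Z([v,r(e)]1e)$) falls out immediately.
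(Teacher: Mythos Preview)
Your proposal is correct and follows essentially the same approach as the paper's proof: both split into the same three cases, both invoke the triviality of $\varepsilon(e)$ when $e\in T^1$, and both handle the key case $e\notin T^1$ by rewriting $\varepsilon(e)^{-1}$ as $\varepsilon(\overline{e})$ and applying Lemma~\ref{lem: actiondetails}\eqref{lem: actiondetails.4} to the edge $\overline{e}$. The only difference is organisational---you compute all the final projections $S_eS_e^*$ first and then the initial projections, whereas the paper proceeds case by case computing both projections together---and you make explicit the tree identities $[v,s(e)]=[v,r(e)]1e$ and $[v,r(e)]=[v,s(e)]1\overline{e}$ that the paper uses implicitly.
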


\begin{proof}
First suppose that $e \not\in T^1$.  We have $S_e = \theta_{r(e),s(e)}\otimes i_A(\chi_{Z([v,r(e)]1e)})i_{\pi}(\varepsilon(e))$.  Thus
\begin{align*}
S_e^* S_e
&= \theta_{s(e),s(e)} \otimes i_\pi(\varepsilon(e))^* i_A(\chi_{Z([v,r(e)] 1 e)} ) i_\pi(\varepsilon(e)) \\
&= \theta_{s(e),s(e)} \otimes i_A( \tau_{\varepsilon(\overline{e})} (\chi_{Z([v,r(e)] 1 e)})) \\
&= \theta_{s(e),s(e)} \otimes i_A(\chi_{\varepsilon(\overline{e}) Z([v,r(e)] 1 e)}). 
\end{align*}
Moreover, Lemma \ref{lem: actiondetails}\eqref{lem: actiondetails.4} implies
\[
\varepsilon(\overline{e})Z([v,r(e)] 1 e)
= Z([v,s(e)] 1 \overline{e})^c,
\]
so that $S_e^* S_e$ is as required.  
It is easy to see that in this case, 
\[
S_e S_e^*
= \theta_{r(e),r(e)} \otimes i_A( \chi_{Z([v,r(e)] 1 e)} ).
\]

Now suppose that $e \in T^1$ and $\underleftarrow{e} \not= \overline{e}$.  Then $[v,s(e)] = [v,r(e)] 1 e$, and $\varepsilon(e)$ is trivial.  Then we have
\[
S_e^* S_e
=\theta_{s(e),s(e)} \otimes i_A( \chi_{Z([v,r(e)] 1 e)} )=\theta_{s(e),s(e)} \otimes i_A( \chi_{Z([v,s(e)])} )
\]
and
\[
S_e S_e^*
= \theta_{r(e),r(e)} \otimes i_A( \chi_{Z([v,r(e)] 1 e)} ).
\]

Finally, suppose that $\underleftarrow{e} = \overline{e}$.  Then $S_e = \theta_{r(e),s(e)}\otimes i_A(\chi_{Z([v,r(e)])^c})$. The verification of the lemma in this case is similar to the previous case.
\end{proof}

\begin{proof}[Proof of Proposition~\ref{prop: G family in tensor}] 
For $x,y\in\Gamma^0$ with $x\not=y$ we have 
$U_{x,1}U_{y,1}=0$ since $\theta_{x,x}\theta_{y,y}=0$, so (G1) holds.

Next, note that for $\gamma \in \pi_1(\GG,v)$ and $\mu \in v\GG^*$, since $\tau_{\varepsilon(\gamma)}(\chi_{Z(\mu)}) = \chi_{\varepsilon(\gamma) Z(\mu)}$, covariance gives
\[
i_\pi(\varepsilon(\gamma)) i_A(\chi_{Z(\mu)})
= i_A(\tau_{\varepsilon(\gamma)}(\chi_{Z(\mu)})) i_\pi(\varepsilon(\gamma))
= i_A(\chi_{\varepsilon(\gamma) Z(\mu)}) i_\pi(\varepsilon(\gamma)).
\]
Now, for (G2) we first fix $e\in \Gamma^1$ with $\underleftarrow{e}\not=\overline{e}$ and 
$g\in G_e$.  We have
\begin{align*}
U_{r(e),\alpha_e(g)} S_e
&= \theta_{r(e),s(e)} \otimes i_\pi(\varepsilon(\alpha_e(g))) i_A(\chi_{Z([v,r(e)]1 e)}) i_\pi(\varepsilon(e)) \\
&= \theta_{r(e),s(e)} \otimes i_A(\chi_{\varepsilon(\alpha_e(g)) Z([v,r(e)] 1 e)}) i_\pi(\varepsilon(\alpha_e(g) e)) \\
&= \theta_{r(e),s(e)} \otimes i_A(\chi_{Z([v,r(e)] \alpha_e(g) e)}) i_\pi(\varepsilon(e \alpha_{\overline{e}}(g)))\quad\text{(by Lemma \ref{lem: actiondetails}\eqref{lem: actiondetails.1} and \eqref{eq: formerly appx (B)})} \\
&= \theta_{r(e),s(e)} \otimes i_A(\chi_{Z([v,r(e)] 1 e)}) i_\pi(\varepsilon(e)) i_\pi(\varepsilon(\alpha_{\overline{e}}(g))) \\
&= S_e U_{s(e),\alpha_{\overline{e}}(g)}.
\end{align*}
If $\underleftarrow{e} = \overline{e}$, the same calculation, but without the final factor $i_\pi(\varepsilon(e))$, gives the result.

Lemma \ref{lem: initial and final projections} implies (G3) directly. We need a number of cases to check (G4). First consider the case that $e\in\Gamma^1$ with $s(e)=v$.  In this case, it is not possible that $e \in T^1$ and $\underleftarrow{e} \not= \overline{e}$.  Thus by Lemma \ref{lem: initial and final projections} we have that
\[
S_e^* S_e
= \theta_{s(e),s(e)} \otimes i_A( \chi_{Z(1 \overline{e})^c}) \quad \mbox{and} \quad 
S_{\overline{e}} S_{\overline{e}}^*
= \theta_{s(e),s(e)} \otimes i_A( \chi_{Z(1 \overline{e})} ).
\]
We can use the covariance of $(i_A,i_\pi)$ and Lemma~\ref{lem: actiondetails} to see that for each $h\in\Sigma_{\overline{e}}$ we have  
 \[
U_{s(e),h} S_{\overline{e}} S_{\overline{e}}^* U_{s(e),h}^* = \theta_{s(e),s(e)} \otimes i_A( \chi_{Z(h \overline{e})} ). \]
For edges $f$ so that $r(f) = s(e)$ and $f \not= \overline{e}$, we have
\[
U_{s(e),h} S_f S_f^* U_{s(e),h}^* = \theta_{s(e),s(e)} \otimes i_A( \chi_{Z(h f)}),
\]
for each $h\in\Sigma_f$. Since
\[
Z(1 \overline{e})^c
= \bigcup_{\substack{r(f) = s(e),\, h \in \Sigma_f \\ hf \not= 1 \overline{e}}} Z(h f),
\]
it follows that (G4) holds in this case.

For edges $e$ with $s(e)\not=v$ we have three cases, two of which may be treated together. If either $e\not\in 
T^1$ or $e\in T^1$ with $\underleftarrow{e}=\overline{e}$, then we have
\[
S_e^*S_e=\theta_{s(e),s(e)}\otimes 
i_A(\chi_{Z([v,s(e)]1\overline{e})^c}).
\]

\noindent
If we denote by $f_0$ the unique edge in $T^1$ such that $r(f_0)=s(e)$ and 
$\underleftarrow{f_0}=\overline{f_0}$, then
\begin{align*}
&\{h f : r(f) = s(e),\, h \in \Sigma_f,\ h f \not= 1 \overline{e}\}\\
&\hspace{4cm}= \{h f_0 : h \in \Sigma_{f_0} \}
\cup \{h f : r(f) = s(e),\, f \not= f_0,\, h \in \Sigma_f,\, h f \not= 1 \overline{e}\}.
\end{align*}
Since $\underleftarrow{f_0} = \overline{f_0}$ and $f_0 \in T^1$, Lemma \ref{lem: initial and final projections} gives
\[
S_{f_0} S_{f_0}^* = \theta_{s(e),s(e)} \otimes i_A(\chi_{Z([v,s(e)])^c}). 
\]
Lemma \ref{lem: actiondetails}\eqref{lem: actiondetails.2} now gives
\[
U_{s(e),h} S_{f_0} S_{f_0}^* U_{s(e),h}^*=  \theta_{s(e),s(e)} \otimes i_A(\chi_{\varepsilon(h) Z([v,s(e)])^c}) = \theta_{s(e),s(e)} \otimes i_A(\chi_{Z([v,s(e)] h f_0)}),
\]
for each $h\in\Sigma_{f_0}$. For $f \not= f_0$ with $r(f) = s(e)$, we have
\[
S_f S_f^*
= \theta_{s(e),s(e)} \otimes i_A(\chi_{Z([v,s(e)] 1 f)}), 
\]
and so
\[
U_{s(e),h} S_f S_f^* U_{s(e),h}^*
= \theta_{s(e),s(e)} \otimes i_A(\chi_{Z([v,s(e)] h f)}),
\]
for each $h\in\Sigma_f$. Since
\[
 Z([v,s(e)] 1 \overline{e})^c
= Z([v,s(e)])^c \cup \bigcup_{\substack{ r(f) = s(e),\, h \in \Sigma_f \\ hf \not= 1 \overline{e},\, 1 f_0}} Z([v,s(e)] h f),
\]
it follows that (G4) holds in these cases.

The final case is when $e\in T^1$ with $\underleftarrow{e}\not=\overline{e}$. Then 
$[v,r(e)]1e=[v,s(e)]$ and 
\[
S_e^*S_e=\theta_{s(e),s(e)}\otimes i_A(\chi_{Z([v,s(e)])}).
\]
The edge $\overline{e}$ is the unique edge with range $s(e)$ and 
$\underleftarrow{(\overline{e})}=e$. We have
\begin{align*}
&\{h f : r(f) = s(e),\, h \in \Sigma_f,\, h f \not= 1 \overline{e}\}\\
&\hspace{3cm}= \{h \overline{e} : h \in \Sigma_{\overline{e}},\, h \not= 1 \}
\cup \{h f : r(f) = s(e),\  f \not= \overline{e},\, h \in \Sigma_f,\, h f \not= 1 \overline{e}\}.
\end{align*}
From Lemma \ref{lem: initial and final projections} we get
\[
S_{\overline{e}} S_{\overline{e}}^* = \theta_{s(e),s(e)} \otimes i_A( \chi_{Z([v,s(e)])^c}), \]
and hence Lemma \ref{lem: actiondetails}\eqref{lem: actiondetails.2} gives
\[
U_{s(e),h} S_{\overline{e}} S_{\overline{e}}^* U_{s(e),h}^*= \theta_{s(e),s(e)} \otimes i_A(\chi_{\varepsilon(h) Z([v,s(e)])^c})=  \theta_{s(e),s(e)} \otimes i_A(\chi_{\varepsilon(h) Z([v,s(e)] h \overline{e})}),
\]
for each $1\not=h\in\Sigma_{\overline{e}}$. For $r(f) = s(e)$, $f \not= \overline{e}$, and $h \in \Sigma_f$, we have 
\[
U_{s(e),h} S_f S_f^* U_{s(e),h}^*= \theta_{s(e),s(e)} \otimes i_A(\chi_{\varepsilon(h) Z([v,r(f)] 1 f)})= \theta_{s(e),s(e)} \otimes i_A(\chi_{Z([v,r(f)] h f)}).
\]
Since
\[
 Z([v,s(e)]
= \bigcup_{h \in \Sigma_{\overline{e}},\, h \not= 1} Z([v,s(e)] h \overline{e})
\cup \bigcup_{\substack{ r(f) = s(e),\, h \in \Sigma_f \\ f \not= \overline{e}}} Z([v,s(e)] h f)
\]
it follows that  (G4) holds in this case.  This completes the proof of Proposition~\ref{prop: G family in tensor}.
\end{proof}

To prove that $\Phi$ is an isomorphism, we construct in Proposition~\ref{prop: the Psi map} a homomorphism $\Psi$ from 
$\KK(\ell^2(\Gamma^0))\otimes 
\big(C ( v\partial X_\GG )\rtimes_\tau\pi_1 (\GG,v)\big)$ to $C^*(\GG)$, which we will prove is the inverse of $\Phi$. Before doing this we
give a technical lemma.   

\begin{lem}\label{lem: treeboundary}

Let $X$ be a locally finite nonsingular tree, and let $v \in X^0$.  Then $C(v \partial X)$ is the universal $C^*$-algebra generated by a family $\{ p_\mu : \mu \in v X^* \}$ subject to the following relations:

\begin{enumerate}

\item \label{lem: treeboundary.1} the $p_\mu$ are commuting projections; and

\item \label{lem: treeboundary.2} for all $\mu \in v X^*$ we have
$\displaystyle
p_\mu = \displaystyle \sum_{\substack{r(f) = s(\mu) \\ \mu f \text{ reduced}}} p_{\mu f}.
$

\end{enumerate}

\end{lem}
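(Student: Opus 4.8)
The plan is to identify the abstract universal $C^*$-algebra with $C(v\partial X)$ via Gelfand duality. Write $A$ for the universal $C^*$-algebra generated by a family $\{p_\mu : \mu \in vX^*\}$ subject to relations (1) and (2); since the generators are projections they have norm at most $1$ in any representation, so $A$ exists in the sense of \cite{Bla}. Relation (2) expresses each $p_\mu$ as a sum of commuting projections, which forces those summands to be pairwise orthogonal and in particular gives $p_\mu \ge p_{\mu f}$ for each admissible $f$; iterating down any reduced path shows $p_\mu \le p_v$ for all $\mu$, so $p_v$ is a unit for $A$. As $A$ is generated by the mutually commuting projections $p_\mu$, it is unital and commutative, hence $A \cong C(\Omega)$ for its (compact, totally disconnected) character space $\Omega$, and under the Gelfand transform each $p_\mu$ becomes the indicator $\hat p_\mu$ of the clopen set $\{\omega \in \Omega : \omega(p_\mu) = 1\}$.

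The heart of the argument is to produce a homeomorphism $\Theta : \Omega \to v\partial X$ with $\hat p_\mu = \chi_{Z(\mu)} \circ \Theta$. Each character $\omega$ is unital, so $\omega(p_v)=1$, and $\omega(p_\mu) \in \{0,1\}$ for every $\mu$. Applying $\omega$ to relation (2) gives $\omega(p_\mu) = \sum_f \omega(p_{\mu f})$, a finite sum (by local finiteness) of $\{0,1\}$-values. Starting from $\omega(p_v)=1$, this forces at each stage exactly one admissible edge $f$ with $\omega(p_{\mu f})=1$; nonsingularity guarantees that the set of admissible $f$ is nonempty (equivalently, that $Z(\mu) \neq \emptyset$), so the process never terminates and yields a well-defined infinite reduced path $\Theta(\omega) =: \xi_\omega \in v\partial X$. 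A short induction using $p_\mu \ge p_{\mu f}$ shows that $\omega(p_\mu)=1$ precisely when $\mu$ is a prefix of $\xi_\omega$, i.e. when $\xi_\omega \in Z(\mu)$; conversely every $\xi \in v\partial X$ arises from the character sending $p_\mu$ to $1$ when $\xi \in Z(\mu)$ and to $0$ otherwise, so $\Theta$ is a bijection. Since $\Theta^{-1}(Z(\mu)) = \{\omega : \omega(p_\mu)=1\}$ is clopen and the $Z(\mu)$ form a subbase for $v\partial X$, the map $\Theta$ is continuous; as $\Omega$ is compact and $v\partial X$ is Hausdorff, $\Theta$ is a homeomorphism.

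Composing the Gelfand isomorphism $A \cong C(\Omega)$ with $(\Theta^{-1})^* : C(\Omega) \to C(v\partial X)$ then produces an isomorphism $A \to C(v\partial X)$ carrying $p_\mu$ to $\chi_{Z(\mu)}$, since $\hat p_\mu = \chi_{Z(\mu)} \circ \Theta = \Theta^*(\chi_{Z(\mu)})$. This is exactly the assertion that $C(v\partial X)$, equipped with the family $\{\chi_{Z(\mu)}\}$, is universal for relations (1) and (2); in particular it recovers the facts (used implicitly) that the $\chi_{Z(\mu)}$ satisfy the two relations and generate $C(v\partial X)$.

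I expect the main obstacle to be the construction and analysis of $\Theta$, namely proving that each character determines a genuine boundary point. This is where nonsingularity is essential (it prevents a branch from dying), local finiteness ensures the defining sums are finite, and the $\{0,1\}$-valuedness of $\omega(p_\mu)$ is what forces a unique continuation at each step; everything else (commutativity, unitality, and passing from a continuous bijection of compact Hausdorff spaces to a homeomorphism) is routine. An alternative, more hands-on route avoids the spectrum entirely: given any target family $\{P_\mu\}$ satisfying (1) and (2), define $\pi$ on $\sp\{\chi_{Z(\mu)}\}$ by $\chi_{Z(\mu)} \mapsto P_\mu$, use relation (2) to rewrite a finite combination over a common family of pairwise disjoint cylinders, and note that on such a family $\|\sum_\nu d_\nu \chi_{Z(\nu)}\|_\infty = \max_\nu |d_\nu| \ge \|\sum_\nu d_\nu P_\nu\|$ (the $P_\nu$ being pairwise orthogonal projections), so $\pi$ is norm-decreasing and extends to the desired homomorphism. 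The Gelfand approach is cleaner, so I would present that.
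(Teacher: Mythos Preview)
Your Gelfand-duality argument is correct and genuinely different from the paper's proof. The paper does not pass to the spectrum at all: instead it works linearly, setting up the free complex vector space $V$ on $vX^*$, defining the linear map $L:V\to C(v\partial X)$ by $w_\mu\mapsto \chi_{Z(\mu)}$, and proving that $\ker L$ is exactly the span of the elements $w_\mu-\sum_f w_{\mu f}$ encoding relation~(2). This yields a linear isomorphism $V/M\to \sp\{\chi_{Z(\mu)}\}$, which is then upgraded to a $*$-isomorphism and extended by continuity using that $\sp\{\chi_{Z(\mu)}\}$ is an increasing union of finite-dimensional $C^*$-algebras. Your approach is more conceptual and makes the roles of local finiteness (finite sums in relation~(2)) and nonsingularity (the inductive extension of a character never dies) completely transparent; the paper's approach is more elementary in that it avoids Gelfand theory entirely, trading it for a direct kernel computation and an AF-style density argument. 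Your sketched alternative at the end---refining to a common level and using the norm estimate on orthogonal projections---is a third route, closer in spirit to the paper's but organised around norms rather than kernels.
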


\begin{proof} 
	Let $A$ denote the universal $C^*$-algebra in the statement.  We first observe that the characteristic functions $\{ \chi_{Z(\mu)} : \mu \in v X^* \}$ satisfy the relations in the statement. Thus there is a $*$-homomorphism $A \to C(v \partial X)$ such that $p_\mu \mapsto \chi_{Z(\mu)}$.  Let $V$ be the complex vector space with basis $v X^*$.  Let $\{ w_\mu : \mu \in vX^* \}$ be the basis elements.  Define a linear map $L : V \to C(v \partial X)$ by $L(w_\mu) = \chi_{Z(\mu)}$.   Then the range of $L$ is a dense $*$-subalgebra in $C(v \partial X)$.  Let
	\[
	E = \bigl\{ w_\mu - \sum \{ w_{\mu f} : f \in s(\mu)X^1,\ \mu f \text{ is reduced} \} : \mu \in v X^* \bigr\}
	\]
	and let $M = \text{span}\, E$.  Then $M$ is contained in the kernel of $L$. We claim that in fact, $M = \text{ker\,} L$.  For the proof, let $z = \sum_{\mu \in vX^*} c_\mu w_\mu \in \ker L$, where only finitely many $c_\mu$ are nonzero.  Let $n = \max \{ |\mu| : c_\mu \not= 0 \}$.  Let $\mu \in v X^*$ with $|\mu| < n$. Then we have
	\[
		w_\mu
		= \displaystyle
		\sum_{\substack{f \in s(\mu) X^1 \\ \mu f \text{ reduced}}} w_{\mu f} + (w_\mu - \sum_{\substack{f \in  s(\mu) X^1 \\ \mu f \text{ reduced}}} w_{\mu f}) 
		\in \Big(\sum_{\substack{f \in s(\mu) X^1 \\ \mu f \text{ reduced}}} w_{\mu f}\Big) + M.
	\]
	Applying this inductively, we find that for a path $\mu$ with $|\mu| < n$, we have that
	\[
	w_\mu \in \Big(\sum_{\substack {\beta \in \mu X^* \\ |\beta| = n}} w_\beta
	\Big) + M.
	\]
	Then we have
	\[
		z
		= \sum_\mu c_\mu w_\mu \\
		\in  \Big(\sum_\mu c_\mu \sum_{\substack {\beta \in \mu X^* \\ |\beta| = n}} w_\beta\Big) + M \\
		= \Big(\sum_{\beta \in v X^n} \Big( \sum_{\{ \mu: \beta \in \mu X^*\}} c_\mu \Big) w_\beta\Big) + M.
	\]
	In particular, since $L(z) = 0$, we have that
	\[
	\sum_{\beta \in v X^n} \Big( \sum_{\{ \mu: \beta \in \mu X^*\}} c_\mu \Big) \chi_{Z(\beta)} = 0.
	\]
	Since the sets $Z(\beta)$ for $\beta \in vX^n$ are pairwise disjoint, it follows that for each $\beta \in vX^n$ we have $\sum_{\{ \mu: \beta \in \mu X^*\}} c_\mu = 0$.  Now the previous calculation shows that $z \in M$, finishing the proof that $M = \text{ker}\, L$.
	
	It follows now that $L$ descends to a (linear) isomorphism
	\[
	L_0 : V / M \to \text{span} \{\chi_{Z(\mu)} : \mu \in v X^* \}.
	\]
	By the universal property of $V$, there is a linear map $K : V / M \to A$ defined by $K(w_\mu + M) = p_\mu$.  Then $K \circ L_0^{-1} : \text{span} \{\chi_{Z(\mu)} : \mu \in v X^* \} \to A$ is a linear map.  It is a $*$-homomorphism since these characteristic functions have the same multiplication relations as the corresponding generators of $A$.  Since $\text{span} \{\chi_{Z(\mu)} : \mu \in v X^* \}$ is an increasing union of finite dimensional $C^*$-algebras, this extends to a $*$-homomorphism of $C(v \partial X)$ onto $A$, inverse to the canonical map of $A$ onto $C(v \partial X)$.
\end{proof}

For the remainder of the proof of Theorem~\ref{thm: main theorem}, we use the following notation.

\begin{ntn}\label{notation: t's etc}
	Let $\{U_x, S_e : x \in \Gamma^0, e \in\Gamma^1\}$ be a $\GG$-family. For each $e \in \Gamma^1$ we define
	\[
	T_e:=S_e + S_{\overline{e}}^*. 
	\]
	For each $\GG$-path $\mu=g_1e_1\dots g_n e_n$ we define
	\[
	T_{\mu}:=U_{r(e_1),g_1}T_{e_1}\dots U_{r(e_n),g_n}T_{e_n},
	\]
	and we let $T_x = U_{x,1}$ for each $x\in\Gamma^0$. Each $T_\mu$ is also a partial isometry, and we have $T_\mu^*T_\mu=U_{s(\mu),1}$ and $T_\mu T_\mu^*=U_{r(\mu),1}$.
	For the universal $\GG$-family $\{u_x,s_e:x\in\Gamma^0,e\in\Gamma^1\}$, each $t_e$ and $t_\mu$ is defined analogously. 
\end{ntn}

\begin{prop}\label{prop: the Psi map}
Let $\GG=(\Gamma,G)$ be a locally finite nonsingular graph of countable groups. Then there is a homomorphism
\[
\Psi: \KK(\ell^2 (\Gamma^0) )\otimes 
\big( C(v\partial X_\GG ) \rtimes_\tau \pi_1(\GG,v)\big)\to C^*(\GG)
\]

\noindent
satisfying
\begin{align*}
\Psi(\theta_{x,y} \otimes 1) &= t_{[x,y]},\\
\Psi(\theta_{v,v} \otimes i_A(\chi_{Z(\mu)})) &= s_\mu s_\mu^*,\\
\Psi(\theta_{v,v} \otimes i_{\pi} ( \varepsilon(g,x)) ) &= t_{[v,x]} u_{x,g} t_{[x,v]} \text{ and}\\
\Psi(\theta_{v,v} \otimes i_{\pi} ( \varepsilon(e)) ) &= t_{[v,r(e)]} t_e t_{[s(e),v]},
\end{align*}

\noindent
for each $x,y\in\Gamma^0$, $\mu \in 
v\GG^*$, $g\in G_x$ and $e\in\Gamma^1$.
\end{prop}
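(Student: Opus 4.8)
The plan is to realise $\Psi$ as the amplification, by a system of matrix units, of a single homomorphism out of the corner $u_{v,1}C^*(\GG)u_{v,1}$. Concretely, I would first build a covariant representation $(\pi_A,\pi_\pi)$ of the dynamical system $(C(v\partial X_\GG),\pi_1(\GG,v),\tau)$ \emph{inside this corner}, use the universal property of the full crossed product to obtain a unital homomorphism $\rho:C(v\partial X_\GG)\rtimes_\tau\pi_1(\GG,v)\to u_{v,1}C^*(\GG)u_{v,1}$, and then define
\[
\Psi(\theta_{x,y}\otimes b):=t_{[x,v]}\,\rho(b)\,t_{[v,y]}.
\]
This recovers the four listed formulas: taking $b$ to be the relevant generator and using $\Psi(\theta_{v,v}\otimes b)=u_{v,1}\rho(b)u_{v,1}=\rho(b)$; in particular $\Psi(\theta_{x,y}\otimes 1)=t_{[x,v]}t_{[v,y]}=t_{[x,y]}$.

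For the two easy ingredients I would proceed as follows. First, the family $\{t_{[x,y]}\}$ is a system of matrix units: from $t_\mu^*t_\mu=u_{s(\mu),1}$ and $t_\mu t_\mu^*=u_{r(\mu),1}$ (Notation~\ref{notation: t's etc}) together with (G1) one gets $t_{[v,y]}t_{[z,v]}=\delta_{y,z}u_{v,1}$, and cancelling backtracking at $v$ via $t_et_{\overline e}=u_{r(e),1}$ and $t_et_f=t_{ef}$ (valid for $ef$ reduced, since $1e1f\in\GG^2$) gives the multiplicativity $t_{[x,v]}t_{[v,y]}=t_{[x,y]}$. Hence $\theta_{x,y}\mapsto t_{[x,y]}$ is a homomorphism $\KK(\ell^2(\Gamma^0))\to C^*(\GG)$. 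Second, by (G4) the projections $\{s_\mu s_\mu^*:\mu\in v\GG^*\}$ are commuting and satisfy the defining relation of Lemma~\ref{lem: treeboundary}, so there is a homomorphism $\pi_A:C(v\partial X_\GG)\to u_{v,1}C^*(\GG)u_{v,1}$ with $\pi_A(\chi_{Z(\mu)})=s_\mu s_\mu^*$.

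Next I would define $\pi_\pi$ on the generators of $\pi_1(\GG,v)$ coming from Definition~\ref{def: fund group tree} by $\pi_\pi(\varepsilon(x,g))=t_{[v,x]}u_{x,g}t_{[x,v]}$ and $\pi_\pi(\varepsilon(e))=t_{[v,r(e)]}t_e t_{[s(e),v]}$, and check these are unitaries in the corner (both their initial and final projections equal $u_{v,1}$). Respecting the presentation is then short: the vertex-group relations hold because $g\mapsto u_{x,g}$ is a representation and $t_{[x,v]}t_{[v,x]}=u_{x,1}$; relation (R1) follows from $t_{\overline e}=t_e^*$; relation (R3) from $t_{[v,r(e)]}t_e t_{[s(e),v]}=u_{v,1}$ for $e\in T^1$ (splitting on whether $\underleftarrow e=\overline e$); and (R2), in the form \eqref{eq: formerly appx (B)}, reduces to $u_{r(e),\alpha_e(g)}t_e=t_eu_{s(e),\alpha_{\overline e}(g)}$, which is exactly (G2) applied to both $e$ and $\overline e$.

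The main obstacle is covariance: for each generator $\gamma$ and each $\mu\in v\GG^*$,
\[
\pi_\pi(\gamma)\,\pi_A(\chi_{Z(\mu)})\,\pi_\pi(\gamma)^*=\pi_A(\chi_{\gamma Z(\mu)}).
\]
Multiplicativity in $\gamma$ reduces this to the two families of generators, and expanding the left-hand side via (G1)--(G4) and Notation~\ref{notation: t's etc} should collapse it to precisely the cylinder-set identities recorded in Lemma~\ref{lem: actiondetails}, verified case by case, with the complement cases $Z(\cdot)^c$ realised inside $C^*(\GG)$ as $u_{v,1}-s_\cdot s_\cdot^*$. This casework --- structurally the mirror image of the verification of (G2)--(G4) in Proposition~\ref{prop: G family in tensor}, but carried out in $C^*(\GG)$ rather than in the tensor product, and again governed by whether $e\in T^1$ and whether $\underleftarrow e=\overline e$ --- is the long and delicate part of the argument. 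Once $(\pi_A,\pi_\pi)$ is covariant, $\rho$ exists and lands in the corner, a direct computation shows $\Psi$ is a $*$-homomorphism on the algebraic tensor product (using $t_{[v,y]}t_{[z,v]}=\delta_{y,z}u_{v,1}$ and $\rho(a)u_{v,1}=\rho(a)$), and since $\KK(\ell^2(\Gamma^0))$ is nuclear this extends to the completion, giving $\Psi$ with the asserted values on generators.
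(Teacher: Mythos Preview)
Your proposal is correct and follows essentially the same architecture as the paper: build a covariant pair in the corner $u_{v,1}C^*(\GG)u_{v,1}$ (using Lemma~\ref{lem: treeboundary} for $\pi_A$ and the elements $t_{[v,x]}u_{x,g}t_{[x,v]}$, $t_{[v,r(e)]}t_et_{[s(e),v]}$ for $\pi_\pi$), invoke the universal property of the crossed product, and amplify via the matrix units $t_{[x,y]}$. The only substantive difference is in the covariance verification: rather than matching directly against the case list in Lemma~\ref{lem: actiondetails}, the paper isolates a short companion lemma (Lemma~\ref{lem: actionmoves2}) whose three parts are designed to parallel the first three parts of Lemma~\ref{lem: actionmoves1}, so that the reduction of $\gamma Z(\mu)$ to a cylinder or its complement, and the reduction of $t_{[v,x]}u_{x,g}t_{[x,v]}s_\mu s_\mu^*(\cdots)^*$ to $s_\nu s_\nu^*$ or $u_{v,1}-s_\nu s_\nu^*$, proceed in lockstep, one edge at a time; this makes the ``long and delicate'' part mechanical. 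A minor packaging difference: the paper avoids the nuclearity appeal by extending $\rho$ to a map into $M(C^*(\GG))$ via the strictly convergent sums $\sum_x t_{[x,v]}\rho(\cdot)t_{[v,x]}$, checks its range commutes with that of $j_\KK$, and then takes $\Psi=j_\KK\otimes(j_A\times j_\pi)$.
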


We will use the following lemma in the proof of this result.

\begin{lem}\label{lem: actionmoves2}
Let $e,f \in \G^1$ with $s(f) = r(e)$, $g \in \Sigma_{e}$, $h \in \Sigma_{f}$ and $\mu\in r(e)\GG^*$. 
\begin{enumerate}
\item 
 \label{lem: actionmoves2.1} If  $g e \not= 1 \overline{f}$, then $ u_{r(f),h} t_f u_{r(e),g} s_e = u_{r(f),h} s_f u_{r(e),g} s_e$. 
 \item \label{lem: actionmoves2.3} $t_f s_{\overline{f}} = u_{r(f),1} - s_f s_f^*$.
\item \label{lem: actionmoves2.2} If $\mu = g e \mu'$ for some $\mu'$ and $g e \not= 1 f$, then $t_f s_{\overline{f}} s_\mu = s_\mu$. 

\end{enumerate}
\end{lem}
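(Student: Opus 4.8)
The plan is to prove all three parts by substituting the definition $t_f = s_f + s_{\overline{f}}^*$ and reducing everything to the orthogonality and summation relations (G3) and (G4); no machinery beyond the defining relations of a $\GG$-family is needed. The one piece of ``partial-isometry arithmetic'' I will use repeatedly is that for a partial isometry $s$ one has $s = (ss^*)s$ and $s^* = s^*(ss^*)$, so that orthogonality of two range projections $ss^* \perp tt^*$ immediately yields $s^*t = 0$. For part (1), I would first rewrite $u_{r(e),g}s_e = s_{ge}$ and expand, obtaining $u_{r(f),h}t_f u_{r(e),g}s_e = u_{r(f),h}s_f s_{ge} + u_{r(f),h}s_{\overline{f}}^* s_{ge}$; the assertion is exactly that the second summand vanishes. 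Here both $1\overline{f}$ and $ge$ lie in $r(e)\GG^1$, since $r(\overline{f}) = s(f) = r(e) = r(ge)$. By the remark following Definition~\ref{def: a G, Sigma family}, relations (G3) and (G4) guarantee that the range projections $\{\,u_{r(e),k}s_d s_d^* u_{r(e),k}^* : kd \in r(e)\GG^1\,\}$ are mutually orthogonal, so $s_{\overline{f}}s_{\overline{f}}^* \perp s_{ge}s_{ge}^*$ whenever $ge \neq 1\overline{f}$, which forces $s_{\overline{f}}^* s_{ge} = 0$. This is precisely the hypothesis $ge \neq 1\overline{f}$, so part (1) follows.

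Part (2) is a direct computation: $t_f s_{\overline{f}} = s_f s_{\overline{f}} + s_{\overline{f}}^* s_{\overline{f}}$. The first term vanishes because (G3) implies $s_f s_{\overline{f}} = 0$ (as noted after Definition~\ref{def: a G, Sigma family}), and applying (G3) to the edge $\overline{f}$ gives $u_{r(f),1} = u_{s(\overline{f}),1} = s_{\overline{f}}^* s_{\overline{f}} + s_f s_f^*$, so that $s_{\overline{f}}^* s_{\overline{f}} = u_{r(f),1} - s_f s_f^*$, which is the claim. Part (3) then follows by combining the two previous parts: using part (2), $t_f s_{\overline{f}} s_\mu = (u_{r(f),1} - s_f s_f^*)s_\mu = u_{r(f),1}s_\mu - s_f(s_f^* s_\mu)$. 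The projection $u_{r(f),1}$ fixes $s_\mu$, while $s_f^* s_\mu = 0$ since $\mu$ begins with the length-one $\GG$-path $ge \neq 1f$ and so the same orthogonality invoked in part (1) (now applied to $1f$ versus $ge$) annihilates the second term, leaving $s_\mu$.

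The main obstacle is not conceptual but purely one of bookkeeping: correctly tracking which vertex component each initial and final projection lives in, so that the sources and ranges of the various $s_e$ and $s_{\overline{e}}$ are matched up properly before any product is formed, and correctly identifying which pairs of length-one $\GG$-paths the (G4)-orthogonality is being applied to. In particular, the step ``$u_{r(f),1}$ fixes $s_\mu$'' in part (3) relies on the range of $s_\mu$ sitting in the $r(f)$-component, so some care is needed to confirm that the initial projection of $t_f s_{\overline{f}}$ genuinely meets the range of $s_\mu$ in the configuration to which the lemma is applied. Once the partial-isometry identities above are used to convert orthogonality of range projections into vanishing of products, each of the three parts collapses to a one-line verification.
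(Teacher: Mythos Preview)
Your proof is correct and follows essentially the same route as the paper: expand $t_f = s_f + s_{\overline{f}}^*$ and use the (G3)/(G4) orthogonality of the range projections $\{s_\nu s_\nu^* : \nu \in x\GG^1\}$ to kill the unwanted terms, with part~(3) reduced to part~(2) exactly as you do. Your closing caveat is also well placed: the step ``$u_{r(f),1}$ fixes $s_\mu$'' in part~(3) indeed requires $r(f) = r(\mu) = r(e)$ (implicit in the comparison $ge \neq 1f$), which is how the lemma is actually applied in the paper.
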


\begin{proof}
For \eqref{lem: actionmoves2.1}, we note that since $ge \not= 1 \overline{f}$, it follows from (G4) that $(u_{r(e),g} s_e)(u_{r(e),g} s_e)^* \le s_f^* s_f$, and hence is orthogonal to $s_{\overline{f}} s_{\overline{f}}^*$.  Part \eqref{lem: actionmoves2.1} now follows from the fact that $t_f = s_f + s_{\overline{f}}^*$.  For \eqref{lem: actionmoves2.3}, we use that $s_fs_{\overline{f}}=0$ to get 
\[
t_f s_{\overline{f}} = (s_f + s_{\overline{f}}^*) s_{\overline{f}} = s_{\overline{f}}^* s_{\overline{f}} = u_{r(f),1} - s_f s_f^*.
\]
For \eqref{lem: actionmoves2.2}, first note that $ge\not=1f$ implies that $s_fs_f^*$ and $s_\mu s_\mu^*$ are orthogonal. We can then use (2) to get $t_f s_{\overline{f}} s_\mu = (u_{r(f),1} - s_f s_f^*)s_\mu = s_\mu$.
\end{proof}

\begin{proof}[Proof of Proposition~\ref{prop: the Psi map}]
Straightforward calculations show that $\{t_{[x,y]} : x,y \in \Gamma^0 \}$ is a family of matrix units in $C^*(\GG)$; we check the case when $[x,y] = 1 e_1 \dots 1 e_m \dots 1 e_{m+k}$ and $[y,z]=1 \overline{e}_{m+k} \dots 1 \overline{e}_{m+1} 1 f_1 \dots 1 f_n$, where $e_m \not=\overline{f_1}$. Then observe that $[x,z]=1 e_1 \dots 1 e_m 1 f_1\dots 1 f_n$, and we have
\begin{align*}
t_{[x,y]} t_{[y,z]} &= t_{1 e_1 \dots 1 e_m} t_{1 e_{m+1} \dots 1 e_{m+k}} t_{1 \overline{e}_{m+k} \dots 1 \overline{e}_m} t_{1 f_1 \dots 1 f_n}\\
&= t_{1 e_1 \dots 1 e_m} t_{1 e_{m+1} \dots 1 e_{m+k}} t_{1 e_{m+1} \dots 1 e_{m+k}}^* t_{1 f_1\dots 1 f_n}\\
&= t_{1 e_1 \dots 1 e_m} u_{s(e_m),1} t_{1 f_1 \dots 1 f_n}\\
&= t_{1 e_1 \dots 1 e_m 1 f_1 \dots 1 f_n}\\
&= t_{[x,z]}.
\end{align*}

\noindent
We denote by $j_{\KK} : \KK(\ell^2(\Gamma^0)) \to C^*(\GG)$ the homomorphism satisfying $j_{\KK}(\theta_{x,y}) = t_{[x,y]}$. 

We now want to build a covariant representation of $( C ( v\partial X_\GG ), \pi_1(\GG,v), \tau)$ in $C^*(\GG)$, which gives us a homomorphism $j_A\times j_\pi : C ( v\partial X_\GG ) \rtimes_\tau \pi_1(\GG,v) \to C^*(\GG)$ whose range commutes with the range of $j_{\KK}$. We do this by first building a covariant representation in the corner $u_{v,1} C^*(\GG) u_{v,1}$.  

We claim that $\{ s_\mu s_\mu^* : \mu \in v\GG^* \}$ is a collection of projections satisfying the hypotheses of Lemma~\ref{lem: treeboundary}.  To see this, first note that it is observed in Notation~\ref{ntn: S mu} that the $s_\mu$ are partial isometries, and hence that the $s_\mu s_\mu^*$ are projections.  We next check Lemma \ref{lem: treeboundary}\eqref{lem: treeboundary.2}.  For each $\mu=g_1 e_1 \dots g_n e_n \in v\GG^*$ we use (G4) to get
\begin{align*}
s_\mu s_\mu^*
&= u_{v,g_1} s_{e_1} \dots u_{v,g_n} s_{e_{n}} s_{e_{n}}^* u_{v,g_n}^* \dots s_{e_1}^* u_{v,g_1}^* \\
&= u_{v,g_1} s_{e_1} \dots u_{v,g_n} s_{e_{n}} (s_{e_n}^* s_{e_n}) s_{e_{n}}^* u_{v,g_n}^* \dots s_{e_1}^* u_{v,g_1}^* \\
&= u_{v,g_1} s_{e_1} \dots u_{v,g_n} s_{e_{n}} \left(\sum_{\substack{ r(f)=s(e_n),\, h\in \Sigma_f \\ hf \not= 1 \overline{e_n}}} u_{s(e_n),h} s_f s_f^* u_{s(e_n),h}^*\right) s_{e_{n}}^* u_{v,g_n}^* \dots s_{e_1}^* u_{v,g_1}^* \\
&= \sum_{\substack{ r(f)=s(e_n),\, h\in \Sigma_f \\ hf \not= 
1 \overline{e_n}}} s_{\mu h f} s_{\mu h f}^*.
\end{align*}
Note that it follows from this, and induction, that if $\mu$ and $\nu$ are \textit{comparable}, say if $\nu = \mu \nu'$, then $s_\nu s_\nu^* \le s_\mu s_\mu^*$, and hence $s_\nu s_\nu^*$ and $s_\mu s_\mu^*$ commute.  On the other hand, if $\mu$ and $\nu$ are not comparable, then we may write $\mu = \eta g e \mu'$ and $\nu = \eta h f \nu'$ with $g e \not= h f$.  Then by (G4),
\[
s_\nu^* s_\mu
= s_{\nu'}^* (u_{r(f),h} s_f)^* s_\eta^* s_\eta u_{r(e),g} s_e s_{\mu'}
= s_{\nu'}^* (u_{r(f),h} s_f)^* (u_{r(e),g} s_e) s_{\mu'}
= 0,
\]
and hence $s_\nu s_\nu^* s_\mu s_\mu^* = 0$, and again $s_\mu s_\mu^*$ and $s_\nu s_\nu^*$ commute.  Thus the claim holds, and we can apply Lemma~\ref{lem: treeboundary} to get a homomorphism $j_{A,v} : C ( v\partial X_\GG ) \to u_{v,1} C^*(\GG) u_{v,1}$ satisfying $j_{A,v} (\chi_{Z(\mu)}) = s_\mu s_\mu^*$ for all $\mu\in v\GG^*$.

For each $x\in\Gamma^0$, $g\in G_x$ and $e\in\Gamma^1$ we now define
\[
\tilde{u}_{x,g}:=t_{[v,x]} u_{x,g} t_{[x,v]} \quad \mbox{and} \quad\tilde{u}_e :=t_{[v,r(e)]} t_e t_{[s(e),v]}.
\]

\noindent
Note that $\tilde{u}_{x,g}$ and $\tilde{u}_e$ are elements of the corner $u_{v,1} C^*(\GG) u_{v,1}$. Straightforward calculations show that:
\begin{enumerate}
\item[(1)] $\tilde{u}_{x,g}\tilde{u}_{x,h}=\tilde{u}_{x,gh}$, for all $x \in \G^0$ and $g, h \in G_x$;
\item[(2)] $\tilde{u}_{x,g}^*=\tilde{u}_{x,g^{-1}}$, for all $x \in G^0$ and $g \in G_x$;
\item[(3)] $\tilde{u}_e^*\tilde{u}_e=\tilde{u}_e\tilde{u}_e^*=u_{v,1}$, for all $e \in \G^1$; 
\item[(4)] $\tilde{u}_{\overline{e}}=\tilde{u}_e^*$, for all $e \in \G^1$;
\item[(5)] $\tilde{u}_e=u_{v,1}$, for all $e \in T^1$; and 
\item[(6)] $\tilde{u}_{r(e),\alpha_e(g)}\tilde{u}_e=\tilde{u}_e\tilde{u}_{s(e),\alpha_{\overline{e}}(g)}$, for all $e \in \G^1$ and $g \in G_e$.
\end{enumerate}
For example, to see that (6) holds, first note that (G2) gives
\begin{align*}
u_{r(e),\alpha_e(g)} t_e = u_{r(e),\alpha_e(g)} (s_e+s_{\overline{e}}^*)&=u_{r(e),\alpha_e(g)} s_e + (s_{\overline{e}} u_{s(\overline{e}),\alpha_e(g^{-1})})^* \\
&=s_e u_{r(e),\alpha_{\overline{e}}(g)} + (u_{r(\overline{e}),\alpha_{\overline{e}}(g^{-1})} s_{\overline{e}})^*\\
&=t_e u_{s(e),\alpha_{\overline{e}}(g)}.
\end{align*}
Then
\begin{align*}
\tilde{u}_{r(e),\alpha_e(g)} \tilde{u}_e & =t_{[v,r(e)]} u_{r(e),\alpha_e(g)} t_{[r(e),v]} t_{[v,r(e)]} t_e t_{[s(e),v]} \\
&= t_{[v,r(e)]} u_{r(e),\alpha_e(g)} t_e t_{[s(e),v]}\\
&= t_{[v,r(e)]} t_e u_{s(e),\alpha_{\overline{e}}(g)}t_{[s(e),v]}\\
&= t_{[v,r(e)]} t_e t_{[s(e),v]} t_{[v,s(e)]} u_{s(e),\alpha_{\overline{e}}(g)} t_{[s(e),v]}\\
&= \tilde{u}_e \tilde{u}_{s(e),\alpha_{\overline{e}}(g)},
\end{align*}
which is (6). It follows from (1)--(6) that the $\tilde{u}_{x,g}$ and $\tilde{u}_e$ define a unitary representation of $\pi_1(\GG,v)$ in $u_{v,1} C^*(\GG) u_{v,1}$. We denote this unitary representation by $j_{\pi,v}$.  

We now claim that $(j_{A,v}, j_{\pi,v})$ is a covariant representation of $( C( v\partial X_\GG), \pi_1(\GG,v),\tau)$ in the corner $u_{v,1} C^*(\GG) u_{v,1}$. It suffices to prove that
\begin{equation} \label{eq: covariance g}
j_{\pi,v} (\varepsilon(x,g)) j_{A,v} (\chi_{Z(\mu)}) j_{\pi,v} (\varepsilon(x,g))^* = j_{A,v}(\tau_{\varepsilon(x,g)} (\chi_{Z(\mu)}))
\end{equation}
for all $x\in\Gamma^0$, $g\in G_x$ and $\mu \in v\GG^*$, and
\begin{equation} \label{eq: covariance e}
j_{\pi,v}(\varepsilon(e)) j_{A,v}(\chi_{Z(\mu)}) j_{\pi,v}(\varepsilon(e))^* = j_{A,v} (\tau_{\varepsilon(e)}(\chi_{Z(\mu)}))
\end{equation}
for all $e\in\Gamma^1$ and $\mu\in v\GG^*$.  For this, we use Lemmas \ref{lem: actionmoves1} and \ref{lem: actionmoves2}.  Notice that the first three parts of Lemma \ref{lem: actionmoves1}, and the three parts of Lemma \ref{lem: actionmoves2}, correspond to each other, one treating the action of $F(\GG)$ on $\partial W_\GG$ and the other treating calculations in $C^*(\GG)$ involving the elements $t_e$, $u_{x,g}$ and $s_e$.  We note that
\[
j_{\pi,v}(\varepsilon(x,g) j_{A,v}(\chi_{Z(\mu)}) j_{\pi,v}(\varepsilon(x,g))^*
= t_{[v,x]} u_{x,g} t_{[x,v]} s_\mu (t_{[v,x]} u_{x,g} t_{[x,v]} s_\mu)^* 
\]
and
\[
j_{A,v}(\tau_{\varepsilon(x,g)}(\chi_{Z(\mu)}))
= j_{A,v}(\chi_{[v,x] g [x,v] Z(\mu)}).
\]
Let $[v,x] g [x,v] = h_1 f_1 \dots h_n f_n$ and $\mu = g_1 e_1 \dots g_m e_m$.  Then we must compare
\begin{align*}
u_{r(f_1),h_1} t_{f_1} \dots u_{r(f_n),h_n} t_{f_n} \; & u_{r(e_1),g_1} s_{e_1} \dots u_{r(e_m),g_m} s_{e_m} \\
&\cdot (u_{r(f_1),h_1} t_{f_1} \dots u_{r(f_n),h_n} t_{f_n} \; u_{r(e_1),g_1} s_{e_1} \dots u_{r(e_m),g_m} s_{e_m})^*
\end{align*}
with 
\[
j_{A,v} (\chi_{h_1 f_1 \dots h_n f_n Z(g_1 e_1 \dots g_m e_m)} ).
\]
The result of applying Lemma \ref{lem: actionmoves1} to $h_n f_n Z(\mu)$ is to obtain either $Z(\mu_1)$, or the complement $r(\mu_1) \partial W_\GG \setminus Z(\mu_1)$, where $\mu_1$ is a $\GG$-path.  The result of applying Lemma \ref{lem: actionmoves2} to the element $u_{r(f_n),h_n} t_{f_n} s_\mu (u_{r(f_n),h_n} t_{f_n} s_\mu)^*$ is to obtain either $s_{\nu_1} s_{\nu_1}^*$, or $u_{r(\nu_1),1} - s_{\nu_1} s_{\nu_1}^*$, where $\nu_1$ is a $\GG$-path.  The parallel structures of these two lemmas ensures that $\mu_1 = \nu_1$, and that either both result in the difference, or neither results in the difference.  We may repeat this with $h_{n-1} f_{n-1} Z(\mu_1)$ or $h_{n-1} f_{n-1} (r(\mu_1) \partial W_\GG \setminus Z(\mu_1))$, and with $$u_{r(f_{n-1}),h_{n-1}} t_{f_{n-1}} s_{\mu_1} s_{\mu_1}^* (u_{r(f_{n-1}),h_{n-1}} t_{f_{n-1}})^*$$ or $$u_{r(f_{n-1}),h_{n-1}} t_{f_{n-1}} (u_{r(\mu_1),1} - s_{\mu_1} s_{\mu_1}^* (u_{r(f_{n-1}),h_{n-1}} t_{f_{n-1}})^*,$$ and so on.  After $n$ iterations, we have established \eqref{eq: covariance g}.  An analogous argument establishes \eqref{eq: covariance e}.  Hence $(j_{A,v}, j_{\pi,v})$ is a covariant representation of $( C( v\partial X_\GG), \pi_1(\GG,v),\tau)$ in the corner $u_{v,1} C^*(\GG) u_{v,1}$.

We may now define homomorphisms\footnote{Here, $M(C^*(\GG))$ denotes the multiplier algebra of $C^*(\GG)$. Recall that that the multiplier algebra $M(A)$ of a $C^*$-algebra $A$ is the largest unital $C^*$-algebra that contains $A$ as an essential ideal. See \cite{Mur} for more details.}  $j_A : C( v\partial X_\GG) \to M ( C^*(\GG) )$ and $j_\pi : \pi_1(\GG,v) \to \UU M ( C^*(\GG) )$ by
\[
j_A (f) = \sum_{x \in \Gamma^0} t_{[x,v]} j_{A,v} (f) t_{[v,x]}
\]
and
\[
j_\pi(\gamma) = \sum_{x \in \Gamma^0} t_{[x,v]} j_{\pi,v} (\gamma) t_{[v,x]},
\]
where $f \in C(v\partial X_\GG)$ and $\gamma \in \pi_1(\GG,v)$.  The sums we consider (here, and after) converge in the strict topology of $M(C^*(\GG))$, since the $t_{[x,y]}$ are matrix units.  We have 
\[
j_A(1)= \sum_{x \in \Gamma^0} t_{[x,v]} j_{A,v} (1) t_{[v,x]} =\sum_{x\in\Gamma^0} u_{x,1} = 1_{M ( C^*(\GG) )},
\]
so $j_A$ is nondegenerate. For each $f\in C( v\partial X_\GG)$ and $\gamma \in \pi_1(\GG,v)$ we have
\begin{align*}
j_\pi ( \gamma ) j_A (f) j_\pi(\gamma)^* &= \sum_{x \in \Gamma^0} t_{[x,v]} j_{\pi,v} (\gamma) j_{A,v}(f) j_{\pi,v} (\gamma)^* t_{[v,x]}\\
&=
\sum_{x \in \Gamma^0} t_{[x,v]} j_{A,v} (\tau_\gamma(f)) t_{[v,x]}\\
&=j_A (\tau_\gamma(f) ).
\end{align*}
So $(j_A, j_\pi)$ is a covariant representation.

The universal property thus gives a homomorphism $j_A \times j_\pi$ from $C ( v\partial X_\GG )\rtimes_\tau \pi_1 (\GG,v)$ to $M(C^*(\GG))$. Since for all $x,y \in \Gamma^0$ we have $j_A(f) t_{[x,y]} = t_{[x,v]} j_{A,v} (f) t_{[v,y]} = t_{[x,y]} j_A (f)$, and similarly $j_\pi ( \gamma ) t_{[x,y]} = t_{[x,y]} j_\pi ( \gamma )$, it follows that the range of $j_A \times j_\pi$ commutes with the range of $j_\KK$. We thus get the desired homomorphism $\Psi := j_\KK \otimes ( j_A \times j_\pi)$ from  $\KK( \ell^2 ( \Gamma^0) )\otimes \big(C( v\partial X_\GG) \rtimes_\tau \pi_1 ( \GG,v ) \big)$ to $C^*(\GG)$.
\end{proof}

\begin{lem}\label{lem: Psi surjective}
The homomorphism $\Psi : \KK( \ell^2 ( \Gamma^0 ) )\otimes 
\big( C( v\partial X_\GG ) \rtimes_\tau \pi_1 (\GG,v) \big) \to C^*(\GG)$ from Proposition~\ref{prop: the Psi map} is surjective.
\end{lem}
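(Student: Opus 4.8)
The image $B:=\operatorname{im}\Psi$ is automatically a $C^*$-subalgebra of $C^*(\GG)$, so it suffices to show that $B$ contains the generators $u_{x,g}$ and $s_e$ of $C^*(\GG)$. My plan is to first place four kinds of ``building blocks'' inside $B$ --- the matrix units $t_{[x,y]}$, the vertex partial unitaries $u_{x,g}$, the elements $t_e=s_e+s_{\overline e}^*$, and the cylinder projections $s_\mu s_\mu^*$ for $\mu\in v\GG^*$ --- and then to recover each $s_e$ from these. The recovery will be immediate once the final projections $s_fs_f^*$ are known to lie in $B$: relation (G3) gives $s_e^*s_e=u_{s(e),1}-s_{\overline e}s_{\overline e}^*$, and since $s_es_{\overline e}=0$ and $s_{\overline e}^*(s_e^*s_e)=0$ (the latter from the orthogonality in (G3)), a one-line computation yields $s_e=t_e(s_e^*s_e)=t_e\bigl(u_{s(e),1}-s_{\overline e}s_{\overline e}^*\bigr)$. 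Thus the whole proof reduces to loading the building blocks into $B$ and proving $s_fs_f^*\in B$ for every $f\in\Gamma^1$.

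The building blocks are obtained by conjugating the distinguished elements in the range of $\Psi$ by matrix units. Since $t_{[x,y]}=\Psi(\theta_{x,y}\otimes 1)\in B$ and the $t_{[x,y]}$ are matrix units (as shown in the proof of Proposition~\ref{prop: the Psi map}), the identities $t_{[x,v]}\bigl(t_{[v,x]}u_{x,g}t_{[x,v]}\bigr)t_{[v,x]}=u_{x,g}$ and $t_{[r(e),v]}\bigl(t_{[v,r(e)]}t_et_{[s(e),v]}\bigr)t_{[v,s(e)]}=t_e$, which use $t_{[x,y]}t_{[y,x]}=u_{x,1}$ together with $t_\mu^*t_\mu=u_{s(\mu),1}$ and $t_\mu t_\mu^*=u_{r(\mu),1}$ from Notation~\ref{notation: t's etc}, exhibit $u_{x,g}$ and $t_e$ as products of elements of $B$, hence in $B$. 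Finally $s_\mu s_\mu^*=\Psi(\theta_{v,v}\otimes i_A(\chi_{Z(\mu)}))\in B$ for every $\mu\in v\GG^*$ directly from the defining formulas for $\Psi$.

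It remains to show $s_fs_f^*\in B$ for all $f\in\Gamma^1$; this is the heart of the argument. When the $\GG$-path $[v,r(f)]\,1f$ is reduced --- equivalently $\underleftarrow{f}\neq\overline f$, which always holds if $r(f)=v$ --- I would take the cylinder projection $s_\nu s_\nu^*\in B$ for $\nu:=[v,r(f)]\,1f$ and conjugate it by $t_{[v,r(f)]}\in B$. The key computation is $t_{[v,r(f)]}^*\,s_{[v,r(f)]}=s_{\underleftarrow f}^*\,s_{\underleftarrow f}$, proved by induction on length using $t_g^*s_g=s_g^*s_g$ (valid since $s_{\overline g}s_g=0$) and $(s_g^*s_g)s_h=s_h$ for $gh$ reduced (an instance of (G4)). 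Combined with $s_\nu=s_{[v,r(f)]}s_f$ and $s_fs_f^*\le s_{\underleftarrow f}^*s_{\underleftarrow f}=s_{[v,r(f)]}^*s_{[v,r(f)]}$ (using $s_\mu^*s_\mu=s_{e_m}^*s_{e_m}$ exactly as in the proof of Theorem~\ref{thm: associated directed graph}), this gives $t_{[v,r(f)]}^*\,s_\nu s_\nu^*\,t_{[v,r(f)]}=s_fs_f^*\in B$. In the remaining case $\underleftarrow{f}=\overline f$, the $\GG$-path $[v,r(f)]1f$ back-tracks, but then the reversed edge $\overline f$ falls into the reduced case (either $r(\overline f)=v$, or $\underleftarrow{(\overline f)}\neq\overline{(\overline f)}=f$), so the argument just given applies to $\overline f$ and yields $s_{\overline f}s_{\overline f}^*\in B$; then (G3) for $f$ gives $s_f^*s_f=u_{s(f),1}-s_{\overline f}s_{\overline f}^*\in B$, and conjugating by $t_f\in B$ produces $s_fs_f^*=t_f(s_f^*s_f)t_f^*\in B$.

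I expect the main obstacle to be exactly this last step: the careful bookkeeping of the tree geodesics $[v,r(f)]_T$ and the reducedness conditions that determine which (G4)-summands survive, and in particular verifying the conjugation identity $t_{[v,r(f)]}^*s_{[v,r(f)]}=s_{\underleftarrow f}^*s_{\underleftarrow f}$ and the orthogonality relations underpinning it. By contrast, the reduction of the statement to ``$s_fs_f^*\in B$'' and the passage from the final projections back to the generators are formal. Once all $s_fs_f^*$ lie in $B$, the recovery $s_e=t_e\bigl(u_{s(e),1}-s_{\overline e}s_{\overline e}^*\bigr)$ shows every $s_e\in B$, so $B$ contains a generating set of $C^*(\GG)$ and therefore $B=C^*(\GG)$, i.e. $\Psi$ is surjective.
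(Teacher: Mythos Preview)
Your proof is correct and takes essentially the same approach as the paper: both arguments hinge on the reduction $t_{[v,r(e)]}^{\,*}\,s_{[v,r(e)]}\,s_e = s_e$ (the paper phrases it as $t_{[r(e),v]}s_{[v,r(e)]}s_e=s_e$, obtained via Lemma~\ref{lem: actionmoves2}\eqref{lem: actionmoves2.2}) together with the identity $s_e = t_e(s_e^*s_e)$ (equivalently $s_es_e^*t_e=s_e$), and both treat the case $\underleftarrow{e}=\overline{e}$ by first handling $\overline{e}$. The only difference is packaging: the paper writes down a single image element and simplifies it directly to $s_e$, whereas you first assemble the building blocks $t_{[x,y]}$, $u_{x,g}$, $t_e$, $s_\mu s_\mu^*$ in $B$, extract all final projections $s_fs_f^*$, and then recover the $s_e$ --- a slightly more modular route to the same destination.
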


\begin{proof}
It suffices to show that each $u_{x,g}$ and $s_e$ lie in the image.  For $x \in \Gamma^0$ and $g \in G_x$ we have
\[
u_{x,g}=t_{[x,v]} (t_{[v,x]} u_{x,g} t_{[x,v]}) t_{[v,x]} = j_\KK (\theta_{x,v}) j_{\pi,v} (\varepsilon(g) ) j_\KK( \theta_{v,x}) = j_\KK (\theta_{x,v} ) j_\pi(\varepsilon(g)) j_\KK(\theta_{v,x}),
\]
so each $u_{x,g}$ is in the image of $\Psi$.

For $e \in \Gamma^1$ we claim that
\[
s_e = \begin{cases}
j_\KK (\theta_{r(e),s(e)}) j_A(\chi_{Z([v,r(e)]1e)})  j_\pi(\varepsilon(e)) &
\text{if }
\underleftarrow{e} \ne \overline{e} \\
j_\KK (\theta_{r(e),s(e)}) j_A(\chi_{Z([v,r(e)])^c})
& \text{if }
\underleftarrow{e} = \overline{e}.
\end{cases}
\]
First suppose that  $\underleftarrow{e} \ne \overline{e}$, so that $[v,r(e)]1e$ is a reduced $\GG$-path. Using the description of the maps defined in Proposition~\ref{prop: the Psi map}, we compute
\begin{align*}
j_\KK (\theta_{r(e),s(e)}) & j_A(\chi_{Z([v,r(e)]1e)})  j_\pi(\varepsilon(e))\\
&=
t_{[r(e),s(e)]}  \Big( \sum_{x \in \Gamma^0} t_{[x,v]} j_{A,v} (\chi_{Z([v,r(e)]1e}) t_{[v,x]} \Big)\Big( \sum_{x \in \Gamma^0} t_{[x,v]} j_{\pi,v} (\varepsilon(e)) t_{[v,x]} \Big)\\
&=
t_{[r(e),s(e)]}  (t_{[s(e),v]} s_{[v,r(e)]}s_e s_e^* s_{[v,r(e)]}^* t_{[v,s(e)]})( t_{[s(e),v]} t_{[v,r(e)]} t_e t_{[s(e),v]} t_{[v,s(e)]})\\
&=
(t_{[r(e),v]}s_{[v,r(e)]}s_e)(t_{[r(e),v]}s_{[v,r(e)]}s_e)^* t_e.
\end{align*}
Write $[v,r(e)]_T = f_1 \dots f_k.$ Repeated applications of Lemma~\ref{lem: actionmoves2}\eqref{lem: actionmoves2.2} gives
\[
t_{[r(e),v]} s_{[v,r(e)]}s_e
= t_{\overline{f_k}} \dots t_{\overline{f_2}} t_{\overline{f_1}}  s_{f_1}s_{f_2} \dots s_{f_k} s_e
= t_{\overline{f_k}} \dots t_{\overline{f_2}} s_{f_2} \dots s_{f_k} s_e = \dots 
= s_e.
\]
Consequently, we have
\begin{align*}
j_\KK (\theta_{r(e),s(e)}) j_A(\chi_{Z([v,r(e)]1e)})  j_\pi(\varepsilon(e))
= s_e s_e^* t_e = s_e
\end{align*}
and so in this case $s_e$ is in the image of $\Psi$.

We now consider the case where $\underleftarrow{e}=\overline{e}$, so in particular $\underleftarrow{(\overline{e})} \ne e$. Since $e \in T^1$, it follows that $\varepsilon(\overline{e}) = 1$. The preceding calculations give \[s_{\overline{e}}= j_\KK (\theta_{s(e),r(e)}) j_A (\chi_{Z ( [v,s(e)]1\overline{e})})
= j_\KK (\theta_{s(e),r(e)}) j_A (\chi_{Z ( [v,r(e)])}),\]
so that
\[
s_e^*
= t_{\overline{e}} - s_{\overline{e}} = j_\KK(\theta_{s(e),r(e)}) - j_\KK(\theta_{s(e),r(e)}) j_A (\chi_{Z ( [v,r(e)])}) = j_\KK(\theta_{s(e),r(e)}) j_A (\chi_{Z ([v,r(e)])^c}).
\]
Hence,
$
s_e
= j_\KK(\theta_{r(e),s(e)}) j_A (\chi_{Z ( [v,r(e)])^c}),
$ and so $\Psi$ is surjective.
\end{proof}

We need one more result.

\begin{lem}\label{lem: key identities for PhiPsi}  Let $\{ U_x, S_e : x\in \G^0, e \in \G^1\}$ be the $\GG$-family from Proposition~\ref{prop: G family in tensor}.
	\begin{enumerate}
		\item For each $e\in\Gamma^1$ we have $T_e=\theta_{r(e),s(e)}\otimes 
		i_\pi(\varepsilon(e)).$ In particular, for $e\in T^1$ we have 
		$T_e=\theta_{r(e),s(e)}\otimes 1$.
		\item For each $x,y\in\Gamma^0$ we have 
		$T_{[x,y]}=\theta_{x,y}\otimes 
		1$.
		\item \label{eq: cap S mu take 2} For each $\mu=g_1e_1\dots g_ne_n\in v\GG^*$ we have
		\[
		S_\mu =\theta_{v,s(e_n)}\otimes 
		i_A(\chi_{Z(\mu)})i_{\pi}(\mu [s(\mu),v]).
		\]
	\end{enumerate}
\end{lem}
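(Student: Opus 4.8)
The plan is to prove the three identities in the order stated, since (2) is assembled from (1) and the induction in (3) reuses the algebra of the first two parts. Three facts are used throughout: the covariance relation $i_\pi(\varepsilon(\gamma))\,i_A(\chi_{Z(\mu)}) = i_A(\chi_{\varepsilon(\gamma)Z(\mu)})\,i_\pi(\varepsilon(\gamma))$ (exactly as in the proof of Proposition~\ref{prop: G family in tensor}); the matrix-unit rule $\theta_{a,b}\theta_{c,d} = \delta_{b,c}\,\theta_{a,d}$ on $\KK(\ell^2(\Gamma^0))$; and the identity $i_A(\chi_{Z}) + i_A(\chi_{Z^c}) = i_A(1) = 1$.

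For Part (1), I would compute $T_e = S_e + S_{\overline e}^{\,*}$ directly, distinguishing cases according to whether $e\in T^1$ and whether $\underleftarrow e=\overline e$. Using $i_\pi(\varepsilon(\overline e))^{*} = i_\pi(\varepsilon(\overline e)^{-1}) = i_\pi(\varepsilon(e))$ (since $\varepsilon$ is a homomorphism and $\overline e = e^{-1}$), together with covariance and Lemma~\ref{lem: actiondetails}\eqref{lem: actiondetails.4} to rewrite $\varepsilon(e)Z([v,s(e)]1\overline e)$ as the appropriate cylinder or its complement, one finds that $S_e$ and $S_{\overline e}^{\,*}$ carry complementary characteristic functions $\chi_Z$ and $\chi_{Z^c}$ while sharing the common factor $i_\pi(\varepsilon(e))$ (which is trivial when $e\in T^1$); the projections from Lemma~\ref{lem: initial and final projections} identify exactly which cylinders occur. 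Adding the summands collapses $\chi_Z+\chi_{Z^c}$ to $1$, yielding $T_e = \theta_{r(e),s(e)}\otimes i_\pi(\varepsilon(e))$, and $\varepsilon(e)=1$ for $e\in T^1$ gives the special case. For Part (2), I write $[x,y] = 1e_1\cdots 1e_n$ with each $e_i\in T^1$, so $T_{[x,y]} = U_{r(e_1),1}T_{e_1}\cdots U_{r(e_n),1}T_{e_n}$; each factor equals $\theta_{r(e_i),s(e_i)}\otimes 1$ by Part (1), and telescoping via the matrix-unit rule and $s(e_i)=r(e_{i+1})$ gives $\theta_{x,y}\otimes 1$, with $x=y$ covered by $T_{[x,x]}=U_{x,1}$.

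Part (3) I would prove by induction on $n=|\mu|$, peeling off the final block so that $\mu = \mu''\,g_ne_n$ with $\mu''=g_1e_1\cdots g_{n-1}e_{n-1}\in v\GG^{\,n-1}$; this keeps the range equal to $v$, which is essential since the formula is stated only for paths from $v$. The base case $n=1$ is direct: $S_{g_1e_1}=U_{v,g_1}S_{e_1}$, and after moving $i_\pi(\varepsilon(g_1))$ across $i_A(\chi_{Z(1e_1)})$ by covariance, Lemma~\ref{lem: actiondetails}\eqref{lem: actiondetails.1} gives $\varepsilon(g_1)Z(1e_1)=Z(g_1e_1)$ while $\varepsilon(g_1)\varepsilon(e_1)=g_1e_1[s(e_1),v]=\mu[s(\mu),v]$. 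For the inductive step I write $S_\mu = S_{\mu''}\,U_{r(e_n),g_n}S_{e_n}$, substitute the inductive hypothesis $S_{\mu''}=\theta_{v,r(e_n)}\otimes i_A(\chi_{Z(\mu'')})\,i_\pi(\gamma)$ with $\gamma:=\mu''[r(e_n),v]$, and compute $U_{r(e_n),g_n}S_{e_n}$ via covariance and Lemma~\ref{lem: actiondetails}\eqref{lem: actiondetails.1} (generically) or \eqref{lem: actiondetails.2} (when $\underleftarrow{e_n}=\overline{e_n}$). Moving $i_\pi(\gamma)$ to the right past the $i_A$ factor, the group-element factors telescope because the mutually reverse tree paths in $[r(e_n),v][v,r(e_n)]$ cancel, leaving $i_\pi(\mu''g_ne_n[s(e_n),v])=i_\pi(\mu[s(\mu),v])$; the characteristic functions then combine, via $Z(\mu)\subseteq Z(\mu'')$, to $i_A(\chi_{Z(\mu)})$.

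The hard part will be the cylinder-set identities driving the inductive step, which is where the concrete boundary action enters. Prepending the loop $\gamma=\mu''[r(e_n),v]$ and reducing the resulting infinite $\GG$-word, one must track the cancellation $[r(e_n),v][v,r(e_n)]=1_{r(e_n)}$ to verify $\gamma Z([v,r(e_n)]g_ne_n)=Z(\mu)$. The genuinely delicate case is $\underleftarrow{e_n}=\overline{e_n}$ with $g_n=1$: here $[v,r(e_n)]1e_n$ is not reduced, so Lemma~\ref{lem: initial and final projections} produces the complementary cylinder $Z([v,r(e_n)])^c$ rather than $Z([v,r(e_n)]1e_n)$, and instead one must show $Z(\mu'')\cap\gamma Z([v,r(e_n)])^c=Z(\mu)$. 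This rests on noting that reducedness of $\mu$ forces $e_{n-1}\neq\overline{e_n}$, and then checking that, after prepending $\gamma$ and deleting the backtracking block $1e_n1\overline{e_n}$, one is left with exactly those tails of $Z(\mu'')$ beginning with $1e_n$, namely $Z(\mu)$.
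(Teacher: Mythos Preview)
Your proposal is correct and follows essentially the same approach as the paper: the case analysis for Part~(1), the telescoping for Part~(2), and the induction on $|\mu|$ for Part~(3) are all as in the paper's proof. The one place where the paper is slightly more streamlined is the inductive step when $\underleftarrow{e_n}=\overline{e_n}$: rather than first simplifying $U_{r(e_n),g_n}S_{e_n}$ via Lemma~\ref{lem: actiondetails}\eqref{lem: actiondetails.2} (which forces the sub-split on $g_n\in\alpha_{e_n}(G_{e_n})$ that you flag), the paper combines everything and acts with the single groupoid element $\mu'' g_n[r(e_n),v]$, invoking Lemma~\ref{lem: actionmoves1}\eqref{lem: actionmoves1.3} to obtain $\mu'' g_n[r(e_n),v]\,Z([v,r(e_n)])^c = Z(\mu)$ uniformly for all $g_n\in\Sigma_{e_n}$, so no separate treatment of $g_n=1$ is needed.
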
 

\begin{proof}
	For (1), first suppose that $e\not\in T^1$. Then $\varepsilon(e) Z([v,s(e)]1\overline{e}) = Z([v,r(e)]1e)^c$, by Lemma \ref{lem: actiondetails}\eqref{lem: actiondetails.4}. Now covariance of $(i_A, i_\pi)$ and the previous identity gives
	\begin{align*}
		T_e&=S_e+S_{\overline{e}}^*\\
		&= \theta_{r(e),s(e)}\otimes 
		\Big(i_A(\chi_{Z([v,r(e)]1e)}) i_{\pi}(\varepsilon(e))
		+i_{\pi}(\varepsilon(\overline{e}))^*i_A(\chi_{Z([v,r(\overline{e})]1\overline{e})})\Big)\\
		&=\theta_{r(e),s(e)}\otimes 
		\Big(i_A(\chi_{Z([v,r(e)]1e)}) i_{\pi}(\varepsilon(e))
		+i_{\pi}(\varepsilon(e))i_A(\chi_{Z([v,s(e)]1\overline{e})}) \Big)\\
		&= \theta_{r(e),s(e)}\otimes 
		\Big(i_A(\chi_{Z([v,r(e)]1e)}) i_{\pi}(\varepsilon(e))+
		i_A(\chi_{Z([v,r(e)]1e)^c}) i_{\pi}(\varepsilon(e))\Big)\\
		&= \theta_{r(e),s(e)} \otimes i_\pi(\varepsilon(e)).
	\end{align*}   
	
	Now suppose that $e\in T^1$. Then either $\underleftarrow{e}=\overline{e}$ 
	and $\underleftarrow{(\overline{e})}\not=e$, or $\underleftarrow{e}\not=\overline{e}$ and 
	$\underleftarrow{(\overline{e})}=e$. Suppose 
	$\underleftarrow{e}\not=\overline{e}$. Then
	\begin{align*}
		T_e=S_e+S_{\overline{e}}^* &= \theta_{r(e),s(e)}\otimes \Big(
		i_A(\chi_{Z([v,r(e)]1e)})+i_A(\chi_{Z([v,r(\overline{e})])^c})\Big)\\
		&= \theta_{r(e),s(e)}\otimes \Big(
		i_A(\chi_{Z([v,s(e)])})+i_A(\chi_{Z([v,s(e)])^c})\Big)\\
		&=\theta_{r(e),s(e)}\otimes 1.
	\end{align*}
	
	\noindent
	A similar calculation gives $T_e=\theta_{r(e),s(e)}\otimes 1$ in the case 
	$\underleftarrow{e}=\overline{e}$.

	For (2), if $x,y\in\Gamma^0$ and $[x,y]=1e_1\dots 
	1e_n$, then
	\begin{align*}
		T_{[x,y]}&=U_{x,1}T_{e_1}\dots U_{r(e_n),1}T_{e_n}\\
		&= (\theta_{x,x}\otimes 1)(\theta_{x,s(e_1)}\otimes 1)\dots 
		(\theta_{r(e_n),y}\otimes 1)\\
		&= \theta_{x,y}\otimes 1.
	\end{align*}
	So (2) holds.
	
	We prove (\ref{eq: cap S mu take 2}) by induction on the length of $\mu$. 
	First 
	consider $\mu=ge\in v\GG^1$. Then $\underleftarrow{e}=v$ and $[v,r(e)]=v$. 
	Covariance of $(i_A, i_\pi)$ gives 
	$i_\pi(g) i_A(\chi_{Z(1e)}) = i_A(\chi_{Z(ge)}) i_\pi(g)$, and hence
	\begin{align*}
		S_\mu= U_{v,g}S_e &= \theta_{v,s(e)}\otimes 
		i_\pi(g) i_A(\chi_{Z(1e)}) i_\pi(\varepsilon(e))\\
		&= \theta_{v,s(e)}\otimes i_A(\chi_{Z(ge)})i_\pi(g\varepsilon(e))\\
		&= \theta_{v,s(e)}\otimes i_A(\chi_{Z(ge)})i_\pi(ge[s(e),v]),
	\end{align*}
	
	\noindent
	which is (\ref{eq: cap S mu take 2}) for the path $ge$ in this case.
	
	Now suppose (\ref{eq: cap S mu take 2}) holds for $\mu=g_1 e_1\dots 
	g_{n-1} e_{n-1}$, 
	and let $g_n e_n\in s(\mu)\GG^1$ with $g_ne_n \not= 1\overline{e_{n-1}}$. First 
	suppose that $\underleftarrow{e_n} \not=\overline{e_n}$. Then
	\begin{align*}
		S_{\mu g_ne_n} &= S_\mu U_{v,g_n} S_{e_n}\\
		&= \theta_{v,s(e_n)} \otimes 
		i_A(\chi_{Z(\mu)})i_\pi(\mu [s(\mu),v] ) i_\pi(\varepsilon(g_n))
		i_A(\chi_{Z([v,r(e_n)]1e_n)}) i_{\pi}(\varepsilon(e_n))
	\end{align*}
	
	\noindent
	The product of $\mu [s(\mu),v]$ and $\varepsilon(g_n)$ in $\pi_1(\GG,v)$ is 
	\[
	\mu [s(\mu),v] 1\varepsilon(g_n)=\mu [s(\mu),v] 1 [v,s(\mu)] g_n [s(\mu),v]
	=\mu g_n [s(\mu),v] .
	\]
	
	\noindent
	We now have
	\[
	\mu g_n [s(\mu),v] Z([v,r(e_n)]1e_n)=Z(\mu g_ne_n)
	\]
	
	\noindent
	and
	\[
	\mu g_n [s(\mu),v] \varepsilon(e_n)=\mu g_n e_n [s(e_n),v] .
	\]
	
	\noindent
	We use covariance of $(i_A, i_\pi)$ and these identities to get 
	\begin{align*}
		S_{\mu g_ne_n} &= \theta_{v,s(e_n)}\otimes 
		i_A(\chi_{Z(\mu)}) i_A(\chi_{Z(\mu g_ne_n)})i_\pi(\mu g_ne_n [s(e_n),v] )\\
		&= \theta_{v,s(e_n)}\otimes i_A(\chi_{Z(\mu g_n e_n)}) i_\pi( \mu 
		g_ne_n [s(e_n),v] ),
	\end{align*}
	which is (\ref{eq: cap S mu take 2}) for the path $\mu g_n e_n$ in this case.  
	
	Now suppose $\underleftarrow{e_n}=\overline{e_n}$. Then 
	$[s(\mu),v]=[r(e_n),v]$ has $e_n$ as its rangemost edge. Hence
	\[
	\mu g_n [s(\mu),v] Z( [v,r(e_n)])^c = Z(\mu g_ne_n),
	\]
	by Lemma~\ref{lem: actionmoves1}\eqref{lem: actionmoves1.3}.  So
	\begin{align*}
		S_{\mu g_n e_n} &= \theta_{v,s(e_n)}\otimes 
		i_A(\chi_{Z(\mu)})i_\pi(\mu g_n [s(\mu),v]) i_A(\chi_{Z([v,r(e)])^c})\\
		&= \theta_{v,s(e_n)}\otimes 
		i_A(\chi_{Z(\mu)}) i_A(\chi_{Z(\mu g_ne_n)}) i_\pi(\mu g_n [s(\mu),v] )\\
		&= \theta_{v,s(e_n)}\otimes i_A(\chi_{Z(\mu g_n e_n)}) i_\pi(\mu 
		g_n [s(\mu),v] )\\
		&= \theta_{v,s(e_n)}\otimes i_A(\chi_{Z(\mu g_ne_n)}) i_\pi(\mu 
		g_ne_n [s(e_n),v] ),
	\end{align*}
	which is (\ref{eq: cap S mu take 2}) for the path $\mu g_n e_n$ in this case. 
	This completes the 
	proof of \eqref{eq: cap S mu take 2} for all $\mu \in v\GG^*$.
\end{proof}

We can now finish the proof of our main theorem.

\begin{proof}[Proof of Theorem~\ref{thm: main theorem}]
We proved in Proposition~\ref{prop: G family in tensor} that the collections of
\[
U_{x,g}:=\theta_{x,x} \otimes i_{\pi}(\varepsilon(g))
\]

\noindent
and
\[
S_e:=
\begin{cases}
\theta_{r(e),s(e)}\otimes i_A ( \chi_{Z ( [v,r(e)]1e)} ) i_{\pi} ( \varepsilon(e) )
& \text{if $\underleftarrow{e}\not=\overline{e}$}\\
\theta_{r(e),s(e)}\otimes i_A (\chi_{Z ( [v,r(e)] )^c} )
& \text{if $\underleftarrow{e}=\overline{e}$}
\end{cases}
\] 
are a $\GG$-family in $\KK ( \ell^2 ( \Gamma^0 ) )\otimes 
\big( C ( v\partial X_\GG ) \rtimes_\tau \pi_1 ( \GG,v ) \big)$. The universal 
property of $C^*(\GG)$ now 
gives a homomorphism $\Phi : C^*(\GG) \to \KK ( \ell^2 ( \Gamma^0 ) )\otimes 
\big( C ( \partial( X_{\GG,v} ) \rtimes_\tau \pi_1 ( \GG,v ) \big)$ satisfying 
$\Phi(u_{x,g}) = U_{x,g}$ and 
$\Phi(s_e) = S_e$, for all $x \in \Gamma^0$, $g\in G_x$ and $e \in \Gamma^1$. We 
claim that $\Phi$ is an isomorphism, 
with inverse $\Psi$ as given in Proposition~\ref{prop: the Psi map}. 

We proved in Lemma~\ref{lem: Psi surjective} that $\Psi$ is surjective. Thus it suffices to prove that 
$\Phi\circ\Psi=\id$, and for this it suffices to prove that $\Phi\circ\Psi$ is 
the identity on the following elements: $\theta_{x,y}\otimes 1$ for all 
$x,y \in \Gamma^0$; $\theta_{v,v} \otimes 
i_A ( \chi_{Z( \mu )} )$ for all $\mu \in 
v\GG^*$; $\theta_{v,v} \otimes i_{\pi} ( \varepsilon(g) )$ for all $x\in\Gamma^0$, 
$g \in G_x$; and $\theta_{v,v} \otimes i_{\pi} (\varepsilon (e) )$ for all 
$e\in\Gamma^1$.  For this, we use the identities in Lemma~\ref{lem: key identities for PhiPsi}.

Fix $x,y \in\Gamma^0$. Then we know from Lemma~\ref{lem: key identities for 
PhiPsi}(2) that
\[
\Phi ( \Psi ( \theta_{x,y} \otimes 1) ) = \Phi ( t_{[x,y]})=T_{[x,y]} = \theta_{x,y} \otimes 1.
\]
Next, fix $\mu\in v\GG^*$. Then we know from Lemma~\ref{lem: key identities for 
PhiPsi}\eqref{eq: cap S mu take 2} that 
\begin{align*}
\Phi ( \Psi ( \theta_{v,v} \otimes i_A ( \chi_{Z(\mu)}) ) )&=\Phi( s_\mu s_\mu^* )\\
&=S_\mu  S_\mu^* \\
&=\theta_{v,v} \otimes 
i_A ( \chi_{Z( \mu )}) i_{\pi} (\mu [s(\mu),v] ) \big( 
i_A (\chi_{Z ( \mu)} ) i_{\pi} (\mu [s(\mu),v] )\big)^*\\
&= \theta_{v,v} \otimes i_A ( \chi_{Z ( \mu)} ).
\end{align*}
Now fix $e\in\Gamma^1$. Then we know from Lemma~\ref{lem: key identities for 
PhiPsi}(1) and (2) that
\begin{align*}
\Phi ( \Psi ( \theta_{v,v} \otimes i_\pi ( \varepsilon(e) ) ) ) &=T_{[v,r(e)]} U_{r(e),1} T_e T_{[s(e),v]} U_{v,1}\\
&= ( \theta_{v,r(e)} \otimes 1 ) ( \theta_{r(e),s(e)} \otimes 
i_\pi ( \varepsilon(e) ) )( \theta_{s(e),v} \otimes 1 )\\
&= \theta_{v,v} \otimes i_\pi ( \varepsilon(e) ).
\end{align*}
Finally, fix $x \in \Gamma^0$ and $g \in G_x$. Then we know from Lemma~\ref{lem: key 
identities for 
PhiPsi}(2) that
\[
\Phi ( \Psi ( \theta_{v,v} \otimes i_\pi ( \varepsilon(g) ) ) ) = T_{[v,x]} U_{x,g} T_{[x,v]} U_{v,1} = \theta_{v,v} \otimes  i_\pi ( \varepsilon(g) ).
\]

\noindent
Hence $\Phi \circ \Psi = \id$, and $\Phi$ is an isomorphism with inverse $\Psi$.
\end{proof}

\begin{rmk}\label{rem:groupoidCStar}
Recall from Section~\ref{subsec: groupoid} that the fibred product groupoid $F(\GG) * \partial W_\GG$ is a Hausdorff \'etale groupoid, with unit space $\partial W_\GG$. It is easily seen that $v\partial W_\GG = v \partial X_\GG$ is a compact-open transversal in the sense of \cite[Example 2.7]{MRW}.  By \cite[Theorem 2.8]{MRW}, it follows that the semidirect product groupoid $\pi_1(\GG,v) \ltimes v\partial X_\GG$ and $F(\GG) * \partial W_\GG$ are equivalent groupoids, and hence have stably isomorphic $C^*$-algebras.
\end{rmk}

\section{On the action $\pi_1(\GG,v)\curvearrowright v\partial X_{\GG}$}\label{sec: structural props}

In this section we examine properties of the action of the fundamental group $\pi_1(\GG,v)$ on the boundary $v\partial X_\GG$. In Section~\ref{subsec: minimality} we characterise when the action is minimal, and in Section~\ref{subsec: loc contractivity} we give a sufficient condition under which it is locally contractive. Characterising topological freeness turns out to be a harder problem, and in Section~\ref{subsec: top freeness} we discuss some specific examples. We finish in Section~\ref{subsec: amenability} with a short word on amenability of the action. Recall that we assume all our graphs of groups have countable vertex groups, and hence the fundamental group is countable (and will be equipped with the discrete topology).

\subsection{Minimality}\label{subsec: minimality}

Recall that the action of a discrete group $\Lambda$ on a locally compact Hausdorff space $Y$ is \textit{minimal} if every orbit of points of $Y$ is dense in $Y$.  A Hausdorff \'etale groupoid $G$ is \textit{minimal} if every orbit in $G^0$ is dense in $G^0$.  We typically check minimality by considering an arbitrary open set $U \subset Y$ and point $y \in Y$, and proving that there is $\lambda \in \Lambda$ such that $\lambda y \in U$.  

In this section, we will first give a condition on the boundary $\partial W_\GG$ which is equivalent to minimality of the action of $\pi_1(\GG,v)$ on $v\partial X_\GG$.  Then we will give an explicit, checkable condition on $\GG$ which is equivalent to the negation of the first condition. We begin with some definitions.

\begin{dfn}\label{def: no cancellation}
	Let $\mu_1$, $\ldots$, $\mu_k$ be reduced $\GG$-words so that $s(\mu_i) = r(\mu_{i+1})$ for $1 \leq i < k$. We say that the concatenation $\mu_1 \dots \mu_k$ \textit{has no cancellation} if after reduction, the length of $\mu_1 \dots \mu_k$ is $\sum_{i=1}^k|\mu_i|$. In other words, the process of putting $\mu_1 \dots \mu_k$ into reduced form does not result in any instance of the form $e 1 \overline{e}$ for an edge $e \in \Gamma^1$.
\end{dfn}

\begin{dfn}\label{def: lies on}  Let $e \in \G^1$ and $\xi \in \partial W_\GG$, with $\xi = h_1 f_1 h_2 f_2 \dots$.  We say that $e$~\emph{lies on}~$\xi$ if $e = f_i$ for some $i$.
\end{dfn}

\begin{dfn}\label{def: can flow to}
	
	Let $e,f \in \Gamma^1$.  We say that $f$ {\em can flow to} $e$ if $f$ lies on $\xi$ for some $\xi \in Z(1e)$, and $f$ is not the rangemost edge of $\xi$.  We say that $\xi \in\partial W_\GG$ {\em can flow to} $e\in\Gamma^1$ if $f$ can flow to $e$ for some $f$ that lies on $\xi$.
	
\end{dfn}

The following lemma describes the situation of Definition \ref{def: can flow to} more precisely.  The proof is elementary, and is omitted.

\begin{lem} \label{def: suitability} Let $e$, $f \in \Gamma^1$.  Then $f$ can flow to $e$ if and only if at least one of the following conditions holds:
	
	\begin{enumerate}
		
		\item \label{def: suitability.1}  there is a reduced $\GG$-word $\nu$ with $r(\nu) = s(e)$, $s(\nu) = r(f)$, $|\nu| \ge 1$, and such that $1e\nu 1f$ has no cancellation;
		
		\item \label{def: suitability.2} $s(e) = r(f)$ and $f \not= \overline{e}$; or
		
		\item \label{def: suitability.3}  $s(e) = r(f)$, $f = \overline{e}$ and $\alpha_{\overline{e}}$ is not surjective.
		
	\end{enumerate}
	
\end{lem}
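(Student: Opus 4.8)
We need to prove Lemma \ref{def: suitability}: for edges $e, f \in \Gamma^1$, the statement "$f$ can flow to $e$" (meaning $f$ lies on some $\xi \in Z(1e)$ and is not the rangemost edge of $\xi$) is equivalent to the disjunction of three conditions. Let me unpack the definitions. An element $\xi \in Z(1e)$ is an infinite reduced $\GG$-path beginning $1e\,h_2 f_2\,h_3 f_3\dots$ with range $r(e)$. The rangemost edge of $\xi$ is $e$ itself. So $f$ lies on $\xi$ and is not rangemost means $f = f_i$ for some $i \ge 2$ (where $f_1 = e$).

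**The plan.** The proof is a routine case analysis, so I would structure it as "if and only if" by establishing both directions, with the forward direction splitting according to where $f$ sits relative to $e$. First I would fix the picture: write $\xi = 1e\,h_2 f_2 \dots$ and suppose $f = f_i$ for the smallest such $i \ge 2$.

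(Forward direction.) Consider the initial segment $1e\,h_2 f_2\dots h_i f_i$, which is a reduced $\GG$-path. If $i = 2$, then $f = f_2$ with $r(f_2) = s(e)$, and reducedness of $1e\,h_2 f$ forces: either $f \neq \overline{e}$ (giving condition \eqref{def: suitability.2}), or $f = \overline{e}$ but $h_2 \neq 1$, i.e. $h_2 \in \Sigma_{\overline e} \setminus \{1\}$, which is nonempty precisely when $[G_{s(e)} : \alpha_{\overline e}(G_{\overline e})] > 1$, i.e. $\alpha_{\overline e}$ is not surjective (giving condition \eqref{def: suitability.3}). If $i \ge 3$, set $\nu := h_2 f_2 \dots h_i f_i$, a reduced $\GG$-word with $r(\nu) = s(e)$, $s(\nu) = s(f_i) = r(f)$... here I must be careful about source/range conventions, so I would verify that with $f = f_i$ the word $\nu$ has $s(\nu) = r(f)$ and length $\ge 1$, and that $1e\nu 1f$ has no cancellation because $1e\,h_2 f_2\dots h_i f_i$ is already reduced and appending $1f$ just repeats the last edge — so I should instead take $\nu = h_2 f_2 \dots h_{i-1} f_{i-1}$ and check $1e\,\nu\,1f$ reduces without cancellation, yielding condition \eqref{def: suitability.1}.

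(Reverse direction.) For each of the three conditions I would construct an explicit $\xi \in Z(1e)$ witnessing that $f$ can flow to $e$, invoking nonsingularity of $\GG$ to extend finite reduced $\GG$-paths to infinite ones (since $Z(\mu) \neq \emptyset$ for every finite reduced path $\mu$, as noted after Definition \ref{def: boundary}). Under \eqref{def: suitability.1}, the path $1e\,\nu\,1f$ is reduced and has $f$ as a non-rangemost edge; extend it to infinity. Under \eqref{def: suitability.2}, $1e\,1f$ is reduced since $f \neq \overline e$; extend it. Under \eqref{def: suitability.3}, pick $h \in \Sigma_{\overline e}$ with $h \neq 1$ (available since $\alpha_{\overline e}$ not surjective means $\Sigma_{\overline e}$ has more than one element), so that $1e\,h\overline{e}$ is a reduced $\GG$-path with non-rangemost edge $f = \overline e$; extend it.

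**Main obstacle.** The proof has no conceptual depth — the authors themselves call it elementary and omit it. The only genuine friction is bookkeeping: getting the source/range conventions exactly right (the paper uses the functorial "Australian" convention, so edges compose with $s(e_i) = r(e_{i+1})$), and correctly translating "$\alpha_{\overline e}$ is not surjective" into "$\Sigma_{\overline e}$ contains a nontrivial element," i.e. $[G_{s(e)} : \alpha_{\overline e}(G_e)] > 1$. The one subtle point requiring attention is the boundary between conditions \eqref{def: suitability.1} and \eqref{def: suitability.2}/\eqref{def: suitability.3}: the $i=2$ case ($f$ immediately follows $e$) must be handled separately from $i \ge 3$, and I would double-check that the "no cancellation" hypothesis in \eqref{def: suitability.1} is exactly what guarantees the concatenated word is reduced, so that extension to an element of $Z(1e)$ is legitimate.
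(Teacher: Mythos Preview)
Your approach is correct and is exactly the elementary case analysis the paper has in mind (the paper omits the proof entirely, calling it elementary). One small bookkeeping point: in the forward direction with $i \ge 3$, your choice $\nu = h_2 f_2 \dots h_{i-1} f_{i-1}$ can fail the ``no cancellation'' check precisely when $f_i = \overline{f_{i-1}}$ and $h_i \neq 1$, since then $1e\,\nu\,1f$ contains the cancelling segment $f_{i-1}\,1\,\overline{f_{i-1}}$. The fix is simply to take $\nu = h_2 f_2 \dots h_{i-1} f_{i-1} h_i$ (a reduced $\GG$-word ending in a group element, which the lemma allows), so that $1e\,\nu\,1f$ is literally the reduced initial segment $1e\,h_2 f_2 \dots h_i f_i$ of $\xi$. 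With that adjustment everything goes through.
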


We recall from Remark~\ref{rem:groupoidCStar} that $\pi_1(\GG,v) \ltimes v\partial X_\GG$ and $F(\GG) * \partial W_\GG$ are equivalent groupoids.  It follows that either both are minimal or neither is minimal.  Thus both groupoids are used in the statement of the next theorem, as the second will be used in the course of the proof.

\begin{thm}\label{thm: minimality}
	
	The action of $\pi_1(\GG,v)$ on $v\partial X_\GG$ is minimal if and only if $\xi$ can flow to $e$ for every  $\xi \in \partial W_\GG$ and $e \in \G^1$.
	
\end{thm}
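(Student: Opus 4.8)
The plan is to work throughout with the groupoid $F(\GG) \ast \partial W_\GG$ acting on $\partial W_\GG$ rather than with $\pi_1(\GG,v)$ acting on $v\partial X_\GG$. By Remark~\ref{rem:groupoidCStar} these groupoids are equivalent, so one is minimal exactly when the other is; thus it suffices to prove that $F(\GG) \ast \partial W_\GG$ is minimal if and only if every $\xi \in \partial W_\GG$ can flow to every $e \in \G^1$. Since the cylinder sets $\{Z(\mu) : \mu \in \GG^*\}$ form a basis for the topology on $\partial W_\GG$, minimality amounts to showing that the orbit of each $\xi$ meets every $Z(\mu)$. The one structural fact I would isolate first is the \emph{common tail principle}: for $\gamma \in F(\GG)$ with $s(\gamma) = r(\xi)$, the reduced form of $\gamma\xi$ agrees with a tail of $\xi = h_1 f_1 h_2 f_2 \cdots$ from some index onwards, because prepending the finite word $\gamma$ and reducing cancels at most $|\gamma|$ of the leading edges $f_1, f_2, \dots$ of $\xi$. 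In particular $\gamma\xi$ and $\xi$ share infinitely many of the edges $f_i$, all but finitely many of which occur non-rangemost in $\gamma\xi$.

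For the forward direction I would argue the contrapositive. Suppose $\xi$ cannot flow to some $e \in \G^1$. I claim the orbit of $\xi$ never meets $Z(1e)$, which is a nonempty (by nonsingularity) open set, so the orbit is not dense and the action is not minimal. Indeed, if $\gamma\xi \in Z(1e)$ for some $\gamma \in F(\GG)$, then by the common tail principle some edge $f$ lying on $\xi$ also lies on $\eta := \gamma\xi$ at a position other than the rangemost. Since $\eta \in Z(1e)$, Definition~\ref{def: can flow to} says exactly that $f$ can flow to $e$; as $f$ lies on $\xi$, this means $\xi$ can flow to $e$, contradicting our assumption.

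For the reverse direction, assume every $\xi$ can flow to every $e$, fix $\xi$ and a basic open set $Z(\mu)$, and produce a groupoid element carrying $\xi$ into $Z(\mu)$. If $\mu$ is a vertex $x$ then $Z(\mu) = x\partial W_\GG$ contains $Z(1e)$ for any edge $e$ with $r(e) = x$ (such $e$ exists by nonsingularity), so it suffices to treat cylinders $Z(\mu)$ with $\mu = g_1 e_1 \cdots g_n e_n$ of length $n \ge 1$. Using that $\xi$ can flow to $e_n$, I would choose an edge $f$ lying on $\xi$ and a witness $\eta \in Z(1 e_n)$ on which $f$ lies non-rangemost; write $\xi = \beta\xi''$ with $\beta$ a $\GG$-path ending in $f$, and let $\delta$ be the initial segment of $\eta$ that begins with $1 e_n$ and ends at the chosen occurrence of $f$. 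A short check of junction reducedness (the reducedness condition in $\xi$ between $f$ and the following edge transfers verbatim to $\delta\xi''$) shows $\delta\beta^{-1}\cdot\xi = \delta\xi'' \in Z(1 e_n)$. Finally I would left-multiply by the $\GG$-word $w = g_1 e_1 \cdots g_{n-1} e_{n-1} g_n$: since $\mu$ is reduced and $\delta\xi'' = 1 e_n \zeta'$ with $1 e_n \zeta'$ reduced, no cancellation occurs and $w\cdot(1 e_n \zeta') = \mu\zeta' \in Z(\mu)$. Thus $(w\,\delta\beta^{-1})\xi \in Z(\mu)$, and the orbit of $\xi$ is dense.

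The main obstacle I anticipate is not any single hard step but the careful bookkeeping of reducedness at the concatenation junctions in both directions: in particular making the common tail principle precise (tracking exactly how many leading edges of $\xi$ can be cancelled by $\gamma$), and verifying that the words $\delta\xi''$ and $w\cdot(1 e_n \zeta')$ built in the reverse direction are genuine reduced $\GG$-paths landing in the intended cylinders. Lemma~\ref{def: suitability} can be used to package the three shapes a witness $\eta$ may take, but the essential content is the elementary observation that the groupoid action alters only a finite prefix of each boundary point.
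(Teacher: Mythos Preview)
Your proposal is correct and follows essentially the same approach as the paper: both use the equivalence of the groupoids $\pi_1(\GG,v)\ltimes v\partial X_\GG$ and $F(\GG)*\partial W_\GG$, the observation that acting by a finite $\GG$-word changes only an initial segment of an infinite reduced path (your ``common tail principle''), and then construct explicit groupoid elements moving $\xi$ into a prescribed cylinder.

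The organizational difference is that the paper unpacks ``$f$ can flow to $e$'' via the three cases of Lemma~\ref{def: suitability} and builds a separate $\gamma$ in each case (and handles ``$e_n$ lies on $\xi$'' separately), whereas you work directly with the witness $\eta\in Z(1e_n)$ supplied by Definition~\ref{def: can flow to} and splice its initial segment $\delta$ onto the tail $\xi''$ of $\xi$. Your route is slightly slicker; the paper's route has the minor advantage of exhibiting the fundamental-group element $\gamma$ completely explicitly in each scenario, which is convenient for the subsequent Theorem~\ref{thm: nonminimality}. The reducedness checks you flag at the two junctions ($\delta\xi''$ and $\mu\zeta'$) are exactly the points requiring care, and they go through for the reasons you indicate: the condition $h_{k+1}f_{k+1}\ne 1\overline{f}$ comes from $\xi$, and the condition at $e_n$ comes from $\eta$.
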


\begin{proof}
	We start with the ``if'' direction. Suppose $\mu=g_1e_1\dots g_ne_n\in v\GG^*$ and $\xi=h_1f_1h_2f_2\dots\in v\partial X_\GG$ with $\xi\not\in Z(\mu)$. We need to find $\gamma\in\pi_1(\GG,v)$ with $\gamma \xi\in Z(\mu)$. In the case that $e_n=f_i$ for some $i$, the fundamental group element
	\[
	\gamma= g_1 e_1 \dots g_n e_n 1\overline{f_i}h_i^{-1}\dots h_2^{-1}\overline{f_1}h_1^{-1}
	\]
	satisfies $\gamma\xi=\mu h_{i+1}f_{i+1}\dots\in Z(\mu)$. 
	
	Now suppose $e_n$ does not lie on $\xi$. We know by assumption that there is some $f_i$ that can flow to $e_n$. If Lemma~\ref{def: suitability}\eqref{def: suitability.1} holds, then there is some reduced $\GG$-word $\nu$ with $r(\nu) = s(e_n)$, $s(\nu) = r(f_i)$, $|\nu| \ge 1$, and such that $1e_n\nu f_i$ has no cancellation. Then   
	\[
	\gamma= \mu\nu h_i^{-1}\overline{f_{i-1}}\dots h_2^{-1}\overline{f_1}h_1^{-1}
	\] 
	satisfies $\gamma\xi=\mu\nu f_i h_{i+1}f_{i+1}\dots\in Z(\mu)$. If Lemma~\ref{def: suitability}\eqref{def: suitability.2} holds, then  
	\[
	\gamma= \mu h_i^{-1}\overline{f_{i-1}}\dots h_2^{-1}\overline{f_1}h_1^{-1}
	\] 
	satisfies $\gamma\xi=\mu 1f_ih_{i+1}f_{i+1}\dots\in Z(\mu)$. If Lemma~\ref{def: suitability}\eqref{def: suitability.3} holds, then we choose nontrivial $h\in\Sigma_{f_i}$. Then
	\[
	\gamma=\mu h h_i^{-1}\overline{f_{i-1}}\dots h_2^{-1}\overline{f_1}h_1^{-1}
	\]
	satisfies $\gamma\xi= \mu h \overline{e_n}h_{i+1}f_{i+1}\dots\in Z(\mu)$.
	
	For the ``only if'' direction we suppose that $\xi=h_1f_1h_2f_2\dots\in\partial W_\GG$ and $e\in\Gamma^1$ have the property that $e$ does not lie on $\xi$, and no $f_i$ flows to $e\in\Gamma^1$. We claim that $F(\GG)\cdot\xi\cap Z(1e)=\emptyset$, and hence that the action is not minimal. Suppose for contradiction that $F(\GG)\cdot\xi\cap Z(1e)\not=\emptyset$, and $\gamma\in F(\GG)$ with $\gamma\xi\in Z(1e)$. We know that, after reduction, $\gamma\xi$ has the form $\beta f_ih_{i+1}f_{i+1}\dots$ for some reduced $\GG$-word $\beta$ and some $i$. We claim that the length of $\beta$ is at least one. For if not, then $\gamma \xi\in Z(1e)$ forces $e$ to lie on $\xi$, contradicting our assumption. So $|\beta|\ge 1$, and hence $\beta=1e\beta'$ for some reduced $\GG$-word $\beta'$ for which $1e\beta'$ has no cancellation. If $|\beta'|\ge 1$, then $f_i$ flows to $e$, contradicting our assumption. On the other hand, if $|\beta'| = 0$, then $\beta' \in G_{s(e)}$, and $s(e) = r(f_i)$. Now our assumptions imply that $f_i = \overline{e}$ and that $\alpha_{\overline{e}}$ is onto. Write $\beta'=\alpha_{\overline{e}}(h)$ for some $h\in G_{\overline{e}}$. Then
	\[
	\gamma\xi=\beta f_i h_{i+1}f_{i+1}\dots = 1e\beta'\overline{e}h_{i+1}f_{i+1}\dots = 1e\overline{e}\alpha_e(h)h_{i+1}f_{i+1}\dots = \alpha_e(h)h_{i+1}f_{i+1}\dots,
	\]
	which we know is not in $Z(1e)$ because $e\not= f_{i+1}$. So we get a contradiction, and hence we must have $F(\GG)\cdot\xi\cap Z(1e)=\emptyset$. This means the action is not minimal, and we are done.
\end{proof}

We now aim to give a characterisation of when the action is {\em not} minimal in terms of readily checkable conditions on the underlying graph of groups $\GG$.  For this, we begin by defining a number of subgraphs of $\Gamma$.  In each case, we give the edge and vertex sets, and the range and source maps are then the restriction of the range and source maps for $\G$.

\begin{dfn}  We define the graphs $E_{s,e}$ and $E_{s,\eta,f_j}$ as follows.
	\begin{enumerate}
		\item[(1)] Suppose $e\in\Gamma^1$ satisfies $s(e)\not=r(e)$. We define $E_{s,e}^0$ to be $\{s(e)\}$ together with the set of vertices $x\in\Gamma^0$ such that there exists a reduced path $e_1\dots e_n\in s(e)\Gamma^* x$ with $e_1\not=\overline{e}$. We define $E_{s,e}^1$ to be the set of edges appearing on these paths, together with their reversals.
		
		\item[(2)] Suppose $e\in\Gamma^1$ satisfies $s(e)=r(e)$. We define $E_{s,e}^0$ to be $\{s(e)\}$ together with the set of vertices $x\in\Gamma^0$ such that there exists a reduced path $e_1\dots e_n\in s(e)\Gamma^* x$ with $e_1\not\in\{e,\overline{e}\}$. We define $E_{s,e}^1$ to be the set of edges appearing on these paths, together with their reversals.
		
		\item[(3)] Suppose $\eta = f_1 f_2 \dots f_m$ is a minimal cycle with $m\ge 2$. We define $E_{s,\eta,f_j}^0$ to be $\{s(f_j)\}$ together with the set of vertices $x\in\Gamma^0$ such that there exists a reduced path $e_1\dots e_n\in s(f_j)\Gamma^* x$ with $e_1\not\in\{f_{j+1},\overline{f_j}\}$ (where we index mod $m$). We define $E_{s,\eta,f_j}^1$ to be the set of edges appearing on these paths, together with their reversals.
	\end{enumerate}
\end{dfn}
The graph $E_{s,e}$ might be termed the \emph{graph upstream from $e$}.

\begin{dfn}\label{def: treelike}
	
	We say that $\GG$ is \textit{treelike at the edge $e$} if the subgraph $E_{s,e}$ of $\Gamma$ is a tree, and if for each edge $f$ of $E_{s,e}$ that points towards $s(e)$, we have that $\alpha_{\overline{f}}$ is onto. If $\eta = e_1 e_2 \dots e_n$ is a minimal cycle of length greater than one, we say that $\GG$ is \textit{treelike at $\eta$} if for each $1 \le i \le n$, the graph $E_{s,\eta,e_i}$ is a tree, and if for each edge $f$ of $E_{s,\eta,e_i}$ that points towards $s(e_i)$ we have that $\alpha_{\overline{f}}$ is onto.
	
\end{dfn}

\noindent We note that if $E_{s,e}$ (or $E_{s,\eta,e_i}$) is larger than a single vertex, then it must be infinite, by nonsingularity of $\GG$.

\begin{dfn}\label{def:ConstantTree}
	
	Let $\GG$ be a locally finite nonsingular graph of groups, and let $e \in \Gamma^1$.  We say that $\GG$ has a \textit{constant tree at $e$} if $\GG$ is treelike at $e$, $E_{s,e}$ is nontrivial, and if for all $f \in E_{s,e}^1$ we have that $\alpha_f$ is onto. (We say that a tree is \emph{trivial} if it consists of just a single vertex; the tree $E_{s,e}$ might be trivial, in which case it consists of only the vertex $s(e)$.)
	
\end{dfn}

We are now ready to state our characterisation of non-minimality.  

\begin{thm} \label{thm: nonminimality}
	The action of $\pi_1(\GG,v)$ on $v\partial X_\GG$ is \emph{not} minimal if and only if there exists an edge $e \in \Gamma^1$ such that $\alpha_{\overline{e}}$ is surjective, and one of the following holds:
	\begin{enumerate}
		\item[(a)] \label{thm: treelike_one} $e$ is a loop i.e. $r(e) = s(e)$, and $\GG$ is treelike at $e$;
		\item[(b)] \label{thm: treelike_two} $e$ lies on a minimal cycle $\eta$ of length greater than one, $\alpha_{\overline{f}}$ is surjective for each edge $f$ that lies on $\eta$, and $\GG$ is treelike at $\eta$; or
		\item[(c)] \label{thm: treelike_three} $e$ does not lie on any minimal cycle of $\Gamma$, $\GG$ is treelike at $e$, and there exists $\xi \in r(e)\partial W_\GG$ such that $e$ does not lie on $\xi$.
	\end{enumerate}
\end{thm}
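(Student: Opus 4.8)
The plan is to work entirely through the flow criterion of Theorem~\ref{thm: minimality}: the action fails to be minimal precisely when there exist an edge $e\in\Gamma^1$ and a boundary point $\xi\in\partial W_\GG$ such that $e$ does not lie on $\xi$ and \emph{no} edge of $\xi$ flows to $e$. So both directions amount to translating the condition ``$\xi$ cannot flow to $e$'' into combinatorial data on $\GG$, using the explicit description of flow in Lemma~\ref{def: suitability}. The organising principle I would make precise is that $\xi$ fails to flow to $e$ exactly when $\xi$ is trapped in the region ``upstream'' of $e$ (the side from which one cannot return to $e$ without backtracking through $\overline e$), and that this region must be a tree with the relevant edge monomorphisms onto, so that neither part~\eqref{def: suitability.1} nor part~\eqref{def: suitability.3} of Lemma~\ref{def: suitability} can be activated. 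The alternatives (a)--(c) are then exactly the three shapes this trapping region can take, according to whether $e$ is a loop, lies on a longer minimal cycle, or lies on no minimal cycle (i.e. is a bridge).

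For the ``if'' direction I would, for each of (a)--(c), exhibit an explicit witness ray and verify directly from Lemma~\ref{def: suitability} that none of its edges flows to $e$. In case (a), with $e$ a loop at $x=s(e)=r(e)$ and $\alpha_{\overline e}$ onto, the ray $\xi=(1\overline e)^\infty$ should work: its only edge is $\overline e$, part~\eqref{def: suitability.2} is inapplicable since the edge in question is $\overline e$ itself, part~\eqref{def: suitability.3} fails because $\alpha_{\overline e}$ is onto, and part~\eqref{def: suitability.1} fails because treelikeness of $E_{s,e}$ forbids any reduced closed path at $x$ other than powers of the loop, so there is no reduced $\nu$ from $s(e)$ back to $x$ with admissible first and last edges. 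In case (b) the witness follows the minimal cycle $\eta$ repeatedly, and the surjectivity of the $\alpha_{\overline f}$ along $\eta$ is used to keep the concatenation reduced while never offering a flow back to $e$; in case (c) the witness is the ray $\xi\in r(e)\partial W_\GG$ supplied by hypothesis, and here treelikeness of $E_{s,e}$ is exactly what rules out a reduced detour from $s(e)$ around a cycle and then across $\overline e$ to reach an edge of $\xi$, which would otherwise realise part~\eqref{def: suitability.1}.

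For the ``only if'' direction I would start from a witness pair $(e_0,\xi_0)$ produced by Theorem~\ref{thm: minimality} and extract from it an edge $e$ with $\alpha_{\overline e}$ onto satisfying one of (a)--(c). The key step is to confine $\xi_0$ to a single upstream region and then to run Lemma~\ref{def: suitability} contrapositively: if the relevant $E_{s,e}$ (or $E_{s,\eta,f_j}$) contained a cycle, or if some edge pointing towards $s(e)$ had non-surjective $\alpha_{\overline f}$, or if $\alpha_{\overline e}$ were not onto, then one could prepend and append admissible edges to build a reduced $\GG$-word realising part~\eqref{def: suitability.1} or \eqref{def: suitability.3}, contradicting ``no edge of $\xi_0$ flows to $e$''. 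Choosing $e$ to be the edge at which $\xi_0$ turns into the trapping region, and splitting on whether $e$ is a loop, sits on a minimal cycle of length greater than one, or is a bridge, should produce precisely (a), (b), (c); the notions of being \emph{treelike at $e$} and of having a \emph{constant tree at $e$} (Definitions~\ref{def: treelike} and \ref{def:ConstantTree}) are tailored to package this bookkeeping.

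The main obstacle is this ``only if'' direction, and within it the derivation of treelikeness together with the surjectivity conditions from mere failure of flow. The delicate point is that part~\eqref{def: suitability.1} is purely graph-theoretic (existence of a reduced path with constrained endpoints) while part~\eqref{def: suitability.3} is group-theoretic (surjectivity of an edge map), and a single edge of $\xi_0$ can fail to flow for either reason; one must rule out \emph{all} reduced escape routes at once — in particular detours that traverse a cycle upstream and then cross back over $e$ — and show that blocking every such route forces exactly the stated tree structure and the surjectivity of every $\alpha_{\overline f}$ on edges pointing towards $s(e)$. Correctly designating the edge $e$ and reconciling the three cycle cases with the three subgraph definitions is where I expect the bulk of the careful argument to lie.
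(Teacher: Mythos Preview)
Your proposal is correct and follows essentially the same approach as the paper: both directions are reduced to the flow criterion of Theorem~\ref{thm: minimality}, the ``if'' direction exhibits exactly the witness rays you describe (the iterated $\overline e$ for (a), the reversed cycle $\overline\eta^{\,\infty}$ for (b), the given $\xi$ for (c)), and the ``only if'' direction derives surjectivity of $\alpha_{\overline e}$ and treelikeness by running Lemma~\ref{def: suitability} contrapositively, constructing explicit reduced $\GG$-words that would witness flow whenever a cycle or a non-surjective $\alpha_{\overline f}$ appears upstream. One normalisation the paper makes that you should incorporate: rather than re-choosing $e$, the paper keeps the witness edge $e$ fixed and replaces $\xi$ by its $F(\GG)$-translate into $Z(1\overline e)$, which cleanly forces the first edge of $\xi$ to be $\overline e$ and immediately yields $\alpha_{\overline e}$ onto via Lemma~\ref{def: suitability}\eqref{def: suitability.3}.
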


The proof of this result is fairly technical.  The basic idea is to describe how nonminimality can occur.  In cases (a) and (b) of the theorem, it is $\overline{e}^\infty$, respectively $\overline{\eta}^\infty$, whose orbit does not intersect $Z(1 e)$.  In case (c) it is the postulated path $\xi$ that has this property.

\begin{proof}  We show that there is $\xi\in\partial W_\GG$ and $e\in\Gamma^1$   such that $\xi$ cannot flow to $e$ if and only if $\alpha_{\overline{e}}$ is surjective and either (a), (b) or (c) holds. The result will then follow from Theorem~\ref{thm: minimality}.
	
	We start with the ``if'' direction.  Suppose that there exists an edge $e \in \G^1$ such that $\alpha_{\overline e}$ is surjective.  Assume first that (a) holds, so $e$ is a loop and $\GG$ is treelike at $e$.  We claim that $1\overline{e}1\overline{e}\dots\in r(e)\partial W_\GG$ cannot flow to $e$; i.e. that $\overline{e}$ cannot flow to $e$.  If it did, then as $\alpha_{\overline{e}}$ is surjective it must be Lemma~\ref{def: suitability}\eqref{def: suitability.1} that holds.  Let $\nu$ be  a reduced $\GG$-word with $r(\nu) = s(e) = r(e)$, $s(\nu) = r(\overline{e}) = r(e)$, $|\nu| \geq 1$ and such that $1e \nu 1\overline{e}$ is reduced.  Then $\nu = 1 \nu_1 g_2 \nu_2 \dots g_m \nu_m g_{m+1}$ with $m \ge 1$, and $\nu_1 \not= \overline{e}$.  If $\nu_1 = e$ we can delete $1 \nu_1$, and the resulting shortened $\GG$-word still works.  Thus we may assume that $\nu_1 \not= e$, $\overline{e}$.  Then $\nu_1 \dots \nu_m$ is a path in $E_{s,e}^*$.  Since $\GG$ is treelike at $e$, it must be the case that $s(\nu_m) \not= r(e)$, a contradiction. 
	
	Now assume that (b) holds, with $e$ lying on a minimal cycle $\eta = e_1\dots e_n$.  Say $e=e_1$.  We claim that $\overline{e_{n}}\dots \overline{e_1} \, \overline{e_{n}}\dots \overline{e_1}\dots\in r(e)\partial W_\GG$ cannot flow to $e$; i.e. that $\overline{e_i}$ cannot flow to $e$ for all $i$.  Suppose for contradiction that there is $i$ such that $\overline{e_i}$ can flow to $e$.  Since $\eta$ is minimal, it is possible that $s(e) = r(\overline{e_i})$ if and only if $i = 1$. Then Lemma \ref{def: suitability}\eqref{def: suitability.2} does not hold, and since $\alpha_{\overline{e}}$ is surjective we know that Lemma \ref{def: suitability}\eqref{def: suitability.3} does not hold.  Therefore it must be Lemma \ref{def: suitability}\eqref{def: suitability.1} that holds. Let $\nu \in s(e) \GG^* s(e_i)$ be a reduced $\GG$-word such that $1e \nu 1\overline{e_i}$ is reduced.  Then $\nu = g_1 \nu_1 \dots g_m \nu_m g_{m+1}$ with $m \ge 1$ and $\nu_1 \not= \overline{e}$.  Write $\nu_1 \dots \nu_m = \eta^p e_1 \dots e_j \beta$ for some path $\beta \in \Gamma^*$, where the rangemost edge of $\beta$ is not $e_{j+1}$.  Then $|\beta| > 0$ since $e_j \overline{e_i}$ cannot be reduced. Since $\alpha_{\overline{e_j}}$ is surjective, the rangemost edge of $\beta$ is not $\overline{e_j}$.  Therefore $\beta$ is a path in the tree $E_{s,\eta,e_j}$.  Since $\GG$ is treelike at $\eta$, all edges of $\beta$ must point towards $s(e_j)$.  But then $s(\nu)$ does not lie on $\eta$, a contradiction.
	
	We now assume that (c) holds. We first claim that if $\beta=\beta_1\dots\beta_k\in r(e)\Gamma^*s(e)$ is a reduced path, then $\beta_k=e$. Suppose not for contradiction; say $\beta_1\dots\beta_k\in r(e)\Gamma^*s(e)$ is a reduced path with $\beta_k\not=e$. Then each $\overline{\beta_i}$ is an edge in $E_{s,e}$ because $\overline{\beta_k}\dots\overline{\beta_1}\in s(e)\Gamma^*$ with $\overline{\beta_k}\not=\overline{e}$. Now $\overline{\beta_k}\dots\overline{\beta_1}e$ is a cycle in $E_{s,e}$, which contradicts that $E_{s,e}$ is a tree. So the claim holds.
	
	We now claim that $\xi\in r(e)\partial W_\GG$ from (c) cannot flow to $e$, which we prove by contradiction.  Let $\xi = h_1 f_1 h_2 f_2 \dots$.  First suppose that $\xi$ can flow to $e$ in the sense of Lemma~\ref{def: suitability}\eqref{def: suitability.1}: there are $i$ and a reduced $\GG$-word $\nu=g_1\nu_1\dots g_n\nu_n g_{n+1}$ with $r(\nu) = s(e)$, $s(\nu) = r(f_i)$, $|\nu| \ge 1$, and such that $1e\nu 1f_i$ has no cancellation. Since $\alpha_{\overline{e}}$ is surjective we know that $\nu_1 \not= \overline{e}$. Consider $f_1\dots f_{i-1}\overline{\nu_n}\dots\overline{\nu_1}\in r(e)\Gamma^* s(e)$.  Since $e$ does not lie on $\xi$, the reduction of this path is a reduced path in $r(e)\Gamma^* s(e)$ whose sourcemost edge is not $e$. But this contradicts the previous claim, so we must have that $\xi$ cannot flow to $e$ in the sense of Lemma~\ref{def: suitability}\eqref{def: suitability.1}.     
	
	Now suppose that $\xi$ can flow to $e$ in the sense of Lemma~\ref{def: suitability}\eqref{def: suitability.2}; say, $r(f_i)=s(e)$ and $f_i\not=\overline{e}$. Then the reduction of $f_1\dots f_{i-1}\in r(e)\Gamma^* s(e)$ is a reduced path in $r(e)\Gamma^* s(e)$ whose sourcemost edge is not $e$. But this again contradicts the previous claim, so we must have that $\xi$ cannot flow to $e$ in the sense of Lemma~\ref{def: suitability}\eqref{def: suitability.2}. Finally, we know that $\xi$ cannot flow to $e$ in the sense of Lemma~\ref{def: suitability}\eqref{def: suitability.3}, because $\alpha_{\overline{e}}$ is surjective. Hence $\xi\in r(e)v\partial X_\GG$ cannot flow to $e$. This completes the proof of the ``if'' direction.
	
	We now prove the ``only if'' direction. Suppose $\xi\in\partial W_\GG$, $e\in\Gamma^1$ and $\xi$ cannot flow to $e$. Let $\gamma_1 \dots \gamma_k$ be a path in $\Gamma^*$ with $r(\gamma_1) = s(e)$ and $s(\gamma_k) = r(\xi)$.  Let $\xi'$ be the reduction of $1 \gamma_1 \dots 1 \gamma_k \xi$ to an infinite $\GG$-path.  Then $r(\xi') = s(e)$.  Moreover, since $\xi$ cannot flow to $e$, it follows that $\xi'$ cannot flow to $e$.  It then follows that $e$ does not lie on $\xi'$, and that $\xi' \in Z(1 \overline{e})$.  Therefore, replacing $\xi$ with $\xi'$ if necessary, we may assume in addition that $\xi \in Z(1 \overline{e})$.  We let $\xi = h_1 f_1 h_2 f_2 \dots$, so that $h_1 f_1 = 1 \overline{e}$.
	
	It now follows that $\alpha_{\overline{e}}$ is surjective.  Suppose for contradiction that there is $1 \not= g \in \Sigma_{\overline{e}}$.  Then $\nu = g$ satisfies Lemma \ref{def: suitability}\eqref{def: suitability.1} for $e$ and $\xi$, a contradiction.
	
	Now we must prove that (a), (b) or (c) holds.  We will treat (a) and (b) together.  Suppose that $e$ lies on a minimal cycle $\eta = e_1 \dots e_n \in \Gamma^*$; say $e = e_1$. We will interpret the indices on the edges of $\eta$ modulo $n$.  We first show that $e_i$ does not lie on $\xi$ for all $i$.  We already know this for $i=1$.  If $e_k$ lies on $\xi$ with $k > 1$, say $e_k = f_j$, then $\nu = 1 e_2 \dots 1 e_k h_{j+1}$ satisfies Lemma \ref{def: suitability}\eqref{def: suitability.1} for $e$ and $f_{j+1}$, a contradiction.
	
	We next show that $\alpha_{\overline{e_i}}$ is surjective for all $i$.  Suppose for contradiction that there is $1 \not= g \in \Sigma_{e_j}$ for some $j$.  Then $j > 1$.  Now $\nu = 1 e_2 \dots 1 e_j g \overline{e_j} \dots 1 \overline{e_2}$ satisfies Lemma \ref{def: suitability}\eqref{def: suitability.1} for $e$ and $\xi$, a contradiction.
	
	Next, we claim that the only reduced paths in $r(e) \Gamma^* r(e)$ are multiplies of $\eta$ and $\overline{\eta}$.  Suppose for contradiction that this is not the case.  Then there are $1 \le i \le j$ with $j - i < n$, and a reduced path $\beta_1 \dots \beta_\ell \in s(e_i) \Gamma^* r(e_j)$ with $\beta_1 \not= \overline{e_i}$, $e_{i+1}$ and $\beta_\ell \not= e_{j-1}$, $\overline{e_j}$.  Then $\nu = 1 e_2 \dots 1 e_i 1 \beta_1 \dots 1 \beta_\ell 1 \overline{e_{j-1}} \dots 1 \overline{e_2}$ satisfies Lemma \ref{def: suitability}\eqref{def: suitability.1} for $e$ and $\xi$, a contradiction.  This establishes the claim.  It follows that each $E_{s,\eta,e_i}$ (or $E_{s,e}$ if $\eta$ is a loop) is a tree.  For convenience, we let $E_{s,\eta,e_i}$ denote $E_{s,e}$ in the case that $\eta$ is a loop.  For vertices $x$ and $y$ in $E_{s,\eta,e_i}^0$ we will let $[x,y] = 1 \gamma_1 \dots 1 \gamma_p$, where $\gamma_1 \dots \gamma_p$ is the unique reduced path in $E_{s,\eta,e_i}^*$ with $r(\gamma_1) = x$ and $s(\gamma_p) = y$.
	
	To complete the proof of (a) or (b) we must show that for each $i$, and for every edge $f$ in $E_{s,\eta,e_i}$ that points towards $s(e_i)$, we have that $\alpha_{\overline{f}}$ is surjective.  Suppose for contradiction that $f \in E_{s,\eta,e_j}^1$ points towards $s(e_j)$ and that there is $1 \not= g \in \Sigma_{\overline{f}}$.  Then $\nu = 1 e_2 \dots 1 e_j [s(e_j),r(f)] 1 f g \overline{f} [r(f),s(e_j)] 1 \overline{e_{j-1}} \dots 1 \overline{e_2}$ satisfies Lemma \ref{def: suitability}\eqref{def: suitability.1} for $e$ and $\xi$, a contradiction.
	
	Now suppose that $e$ does not lie on any minimal cycle in $\Gamma^*$.  In this case $e$ is not an edge of $E_{s,e}$.  We show that $E_{s,e}$ is a tree.  Suppose for contradiction that $E_{s,e}$ contains a reduced cycle $\beta_1 \dots \beta_m$.  Let $\delta_1 \dots \delta_p$ be a reduced path in $E_{s,e}^*$ with $r(\delta_1) = s(e)$, $s(\delta_p) = r(\beta_k)$ for some $k$, and such that $\delta_i \not= \beta_j$ for all $i$ and $j$.  Relabeling, we may assume $k = 1$.  Then $\nu = 1 \delta_1 \dots 1 \delta_p 1 \beta_1 \dots 1 \beta_m 1 \overline{\delta_p} \dots 1 \overline{\delta_1}$ satisfies Lemma \ref{def: suitability}\eqref{def: suitability.1} for $e$ and $\xi$, a contradiction.
	
	Finally, let $f$ be an edge of $E_{s,e}$ pointing towards $s(e)$.  We use the same notation as in cases (a) and (b) for paths in the tree $E_{s,e}$.  We show that $\alpha_{\overline{f}}$ is surjective.  Suppose for contradiction that there is $1 \not= g \in \Sigma_{\overline{f}}$.  Then $\nu = [s(e),r(f)] 1 f g \overline{f} [r(f),s(e)]$ satisfies Lemma \ref{def: suitability}\eqref{def: suitability.1} for $e$ and $\xi$, a contradiction. 
\end{proof}

\begin{rmk}
	
	The last condition (c) in the statement of Theorem \ref{thm: nonminimality} can be expressed without mentioning infinite paths.  Namely, there exists $\xi \in r(e) \partial W_\GG$ not containing $e$ if and only if $E_{s,\overline{e}}$ either is infinite, contains a cycle, or contains a path $\gamma$ such that $\alpha_\gamma$ and $\alpha_{\overline{\gamma}}$ are both not surjective (where $\alpha_\gamma = \alpha_e$ with $e$ the rangemost edge of $\gamma$).
	
\end{rmk}

The characterisation of nonminimality in Theorem \ref{thm: nonminimality} can be simplified.  

\begin{thm} \label{thm: nonminimality second take}
	
	The action of $\pi_1(\GG,v)$ on $v\partial X_\GG$ is \emph{not} minimal if and only if one of the following holds:
	\begin{enumerate}
		\item[(c')] \label{thm: treelike_three'} There is an edge $e \in \Gamma^1$ such that $\alpha_{\overline{e}}$ is surjective, $e$ does not lie on any minimal cycle of $\Gamma$, $\GG$ is treelike at $e$, and there exists $\xi \in r(e)\partial W_\GG$ such that $e$ does not lie on $\xi$; or
		\item[(d)] \label{thm: treelike_four} $\Gamma$ is a minimal cycle $e_1 e_2 \cdots e_n$ such that $\alpha_{\overline{e_i}}$ is surjective for all $i$. 
	\end{enumerate}

\end{thm}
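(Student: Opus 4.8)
The plan is to deduce Theorem~\ref{thm: nonminimality second take} from Theorem~\ref{thm: nonminimality} by showing that the two disjunctions describe exactly the same graphs of groups. The first observation is that condition (c') is literally condition (c) of Theorem~\ref{thm: nonminimality} bundled with the hypothesis that $\alpha_{\overline e}$ is surjective, so one implication is free in each direction: if (c') holds then Theorem~\ref{thm: nonminimality}(c) applies and the action is not minimal, and conversely, if the action fails to be minimal and the edge produced by Theorem~\ref{thm: nonminimality} falls under case (c), then (c') holds verbatim. Thus the whole content is to show that cases (a) and (b) of Theorem~\ref{thm: nonminimality} collapse into either (c') or (d), and that (d) in turn produces an edge satisfying (a) or (b).

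The key structural step is to analyse the subgraphs $E_{s,e}$ and $E_{s,\eta,e_i}$ under the treelike hypothesis. First I would show that when $e$ is a loop and $\GG$ is treelike at $e$, one has $E_{s,e}^0 = \Gamma^0$ and $E_{s,e}^1 = \Gamma^1 \setminus \{e,\overline e\}$: every vertex is reached from $s(e)$ by a reduced path not beginning with $e$ or $\overline e$ (delete a leading loop if present), and every non-loop edge lies on such a path. Since $E_{s,e}$ is a tree, $\Gamma$ is then this tree with the single loop $e$ attached at $s(e)$, so the only minimal cycles of $\Gamma$ are $e$ and $\overline e$. An analogous argument in case (b), using that each $E_{s,\eta,e_i}$ is a tree, shows that $\Gamma$ is the minimal cycle $\eta$ with the trees $E_{s,\eta,e_i}$ attached at its vertices, that these branches contain no cycle edges and never reconnect to $\eta$, and hence that the minimal cycles of $\Gamma$ are precisely the rotations of $\eta$ and of $\overline\eta$. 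In either case this yields a dichotomy: either all the relevant trees are trivial, forcing $\Gamma$ to be exactly the loop (case (a)) or the cycle $\eta$ (case (b))---which, since all $\alpha_{\overline{e_i}}$ are surjective, is precisely (d)---or some tree is nontrivial.

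When some tree is nontrivial I would construct an edge $e'$ witnessing (c'). In case (a) pick any edge $e'$ of $E_{s,e}$ with $r(e') = s(e)$, and in case (b) pick an edge $e'$ of a nontrivial branch $E_{s,\eta,e_j}$ with $r(e') = s(e_j)$. In both cases $e'$ points towards the root, so the treelike hypothesis forces $\alpha_{\overline{e'}}$ to be surjective; $e'$ is neither a loop nor a cycle edge, so by the structural analysis it lies on no minimal cycle; and one checks that $E_{s,e'}$ is the subtree hanging below $e'$, in which every edge pointing towards $s(e')$ also points towards the original root, so that $\GG$ is treelike at $e'$. Finally the infinite reduced path $\overline e\,\overline e\cdots$ (in case (a)), or the path running around $\eta$ forever from $s(e_j)$ (in case (b)), lies in $r(e')\partial W_\GG$ and does not contain $e'$, supplying the $\xi$ required by (c'). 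For the reverse implication, if (d) holds then taking $e=e_1$ gives (a) when $n=1$ (the trivial tree $E_{s,e_1}$ is vacuously treelike) and gives (b) when $n>1$ (all branches $E_{s,\eta,e_i}$ are trivial, hence treelike, and all $\alpha_{\overline{e_i}}$ are surjective by hypothesis), so Theorem~\ref{thm: nonminimality} applies.

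I expect the main obstacle to be the structural decomposition of the second paragraph: carefully proving that the treelike hypothesis forces $\Gamma$ to be a loop, respectively a cycle, with trees attached, and in particular that the nontrivial branches contain no cycle edges and do not reconnect to $\eta$, so that the family of minimal cycles of $\Gamma$ is completely understood. This is exactly what guarantees that the edge $e'$ built in the third paragraph genuinely lies on no minimal cycle, which is the delicate clause of (c'). Once the decomposition is in hand, the verifications of surjectivity of $\alpha_{\overline{e'}}$, of treelike-ness at $e'$, and of the existence of the escaping boundary path $\xi$ are routine consequences of the definitions and of Lemma~\ref{def: suitability}.
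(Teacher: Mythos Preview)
Your proposal is correct and follows essentially the same route as the paper: reduce to Theorem~\ref{thm: nonminimality}, observe that (c') is (c) plus surjectivity, that (d) gives (a) or (b) with trivial attached trees, and that (a) or (b) with a nontrivial tree yields an edge of that tree satisfying (c'). The paper's proof is considerably terser---it simply asserts that ``any edge in $E_{s,e}$, respectively $E_{s,\eta,e_i}$, satisfies (c')'' and that trivial trees force case (d)---whereas you spell out the structural decomposition of $\Gamma$ as a cycle with trees attached in order to justify that your chosen $e'$ lies on no minimal cycle; this extra care is warranted, and your choice of $e'$ adjacent to the root makes the remaining checks (surjectivity of $\alpha_{\overline{e'}}$, existence of $\xi$) particularly clean.
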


\begin{proof}
	We show that the hypotheses of the theorem are equivalent to those of Theorem \ref{thm: nonminimality}.  If (c') holds, then $\alpha_{\overline{e}}$ is surjective and (c) holds.  If (d) holds then either (a) or (b) holds (and $\alpha_{\overline{e}}$ is surjective, in case (a)).
	
	Conversely, if either (a) or (b) holds, and either $E_{s,e}$, or $E_{s,\eta,e_i}$ for some $i$, is nontrivial, then any edge in $E_{s,e}$, respectively $E_{s,\eta,e_i}$, satisfies (c').  If $E_{s,e}$, respectively $E_{s,\eta,e_i}$ for all $i$, is trivial, then we are in case (d).  If (c) holds, then since $\alpha_{\overline{e}}$ is assumed surjective we have that (c') holds.
\end{proof}

If the action is minimal, then we can say more about $\GG$ in the presence of nontrivial treelike behaviour.

\begin{prop} \label{prop: minimal and treelike}
	
	Suppose that the action of $\pi_1(\GG,v)$ on $v\partial X_\GG$ is minimal, there is $e \in \Gamma^1$ such that $\GG$ is treelike at $e$, and $E_{s,e}$ is nontrivial. Then $\Gamma$ is an infinite ray $e_1 e_2 \cdots$, and $\alpha_{\overline{e_i}}$ is surjective for all $i$.
	
\end{prop}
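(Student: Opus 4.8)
The plan is to work entirely on the upstream tree $E_{s,e}$ and to exploit the flow criterion of Theorem~\ref{thm: minimality}: the action is minimal if and only if every $\xi \in \partial W_\GG$ can flow to every edge. Throughout I would use the explicit description of ``can flow to'' in Lemma~\ref{def: suitability} in its contrapositive form, exactly as in the proof of Theorem~\ref{thm: nonminimality}: to exclude a configuration I exhibit a path $\xi$ and an edge $g$ for which no edge of $\xi$ flows to $g$, contradicting minimality.

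The key preliminary observation is that $E_{s,e}$ is \emph{leafless away from} $w:=s(e)$. Since $\GG$ is treelike at $e$ and $E_{s,e}$ is nontrivial, $E_{s,e}$ is an infinite, locally finite tree, and every edge incident to a vertex of $E_{s,e}$ already lies in $E_{s,e}^1$. If some vertex $z\neq w$ had valence one, its unique incident geometric edge, oriented as $d$ pointing towards $w$, would satisfy $\alpha_{\overline d}$ onto $G_z$ by treelike-ness (Definition~\ref{def: treelike}); but $\overline d$ is then the unique edge with range $z$, so nonsingularity would force $[G_z:\alpha_{\overline d}(G_{\overline d})]>1$, a contradiction. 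Hence every vertex of $E_{s,e}$ other than $w$ has valence at least two, so all its branches are infinite. This is the structural input that prevents ``bouncing'': a reduced path entering a proper subtree $S$ of $E_{s,e}$ across an edge $g$ oriented into $S$ can never leave $S$, since leaving would need either an immediate backtrack across $g$ (forbidden) or a $\GG$-word bounce $c\,g'\,\overline c$ at some vertex, which is impossible because every inward-pointing $\overline c$ has trivial transversal ($\alpha_{\overline c}$ onto).

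With this in hand I would run four contrapositive arguments. First, $e$ is not a loop: if $r(e)=s(e)=w$, then $e^\infty=1e1e\cdots$ is confined to $\{w\}$, and for an outward edge $a$ of $E_{s,e}$ at distance at least two from $w$ one checks via Lemma~\ref{def: suitability} that $e$ cannot flow to $a$ (alternative (1) fails by the forced $a\,\overline a$ cancellation just explained, and (2),(3) fail by a vertex mismatch), contradicting minimality; moreover, since $E_{s,e}$ is a tree, $e$ lies on no minimal cycle of length at least two, so $e$ is a bridge. Second, $r(e)$ has valence one: the bridge $e$ separates $r(e)$ from $w$, so any $\xi\in r(e)\partial W_\GG$ whose first edge is not $e$ stays on the $r(e)$-side and cannot flow to $e$ (again all three alternatives of Lemma~\ref{def: suitability} fail); minimality thus forces $r^{-1}(r(e))=\{e\}$, whence nonsingularity gives that $\alpha_e$ is not onto. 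Third, $E_{s,e}$ is a ray: if a vertex $x$ (possibly $w$) had two outward edges $b_1,b_2$ into disjoint infinite branches $S_1,S_2$, a path $\xi$ running out into $S_1$ is confined to $S_1$, while for an outward edge $g$ deep in $S_2$ the cylinder $Z(1g)$ is confined to the subtree of $S_2$ beyond $g$; Lemma~\ref{def: suitability} then shows no edge of $\xi$ flows to $g$, a contradiction. So $E_{s,e}$ has no branching and is a single ray from $w$; combined with the second step, $\Gamma$ is the infinite ray with finite endpoint $r(e)$, and we relabel its edges $e_1e_2\cdots$ with $e_1=e$. Fourth, $\alpha_{\overline e}$ is onto: if not, then combining the non-surjectivity of $\alpha_e$ with that of $\alpha_{\overline e}$ lets one build a path $\xi_{\mathrm{bounce}}$ oscillating forever between $r(e)$ and $w$ across $e$; it meets only $e$ and $\overline e$, and Lemma~\ref{def: suitability} shows neither flows to the first ray edge $e_2$ of $E_{s,e}$, contradicting minimality.

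Finally I would assemble the conclusion: surjectivity of $\alpha_{\overline{e_i}}$ for $i\ge 2$ is precisely the treelike hypothesis applied to the ray edges (all of which point towards $w$), and surjectivity of $\alpha_{\overline{e_1}}=\alpha_{\overline e}$ is the fourth step, so $\alpha_{\overline{e_i}}$ is onto for every $i$. The main obstacle is the no-branching argument (the third step): making the flow conditions of Lemma~\ref{def: suitability} genuinely confine a path to one branch relies simultaneously on the leafless-ness established above (no bounce-back at a finite branch) and on the treelike surjectivity of inward-pointing edges (ruling out $\GG$-word bounces), plus a careful verification that each of the three alternatives in Lemma~\ref{def: suitability} fails. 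The fourth step is similar in spirit and is the other place where the interplay between the endpoint non-surjectivity and the nontrivial transversal elements must be handled with care.
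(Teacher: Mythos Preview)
Your Step~2 is where the argument breaks, and the error is not merely a gap in the justification but in the statement itself. The claim ``$r(e)$ has valence one'' is false for a general $e$ satisfying the hypotheses. Indeed, take the very conclusion of the proposition: $\Gamma$ is a ray $e_1 e_2 \cdots$ with each $\alpha_{\overline{e_i}}$ surjective. Then for \emph{every} $i$, the graph of groups $\GG$ is treelike at $e_i$ and $E_{s,e_i}$ (namely the tail $e_{i+1} e_{i+2} \cdots$) is a nontrivial tree; yet $r(e_i)$ has valence two whenever $i>1$. So the hypothesis does not pin $e$ down as the terminal edge. The argument you give for Step~2 fails for the same reason: a path $\xi\in r(e)\partial W_\GG$ whose first edge is not $e$ need not ``stay on the $r(e)$-side'', since it can return to $r(e)$ (via a loop, a cycle on the $r(e)$-side, or a bounce using some nontrivial $g\in\Sigma_f$) and then cross $e$; and even for an edge $f_i$ genuinely on the $r(e)$-side, alternative~(1) of Lemma~\ref{def: suitability} can succeed via a $\GG$-word $\nu$ beginning $g\overline{e}$ with $1\neq g\in\Sigma_{\overline{e}}$, since you have not yet shown $\alpha_{\overline{e}}$ is onto. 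This error then propagates to Step~4, which uses ``$\alpha_e$ is not onto'' --- a fact you deduced from the incorrect valence-one claim.

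The paper's proof handles the range side of $e$ quite differently. Rather than proving valence one at $r(e)$, Lemma~\ref{lem: minimal and treelike} establishes (2) that $\alpha_{\overline{e}}$ is surjective directly, (3) that $r(e)$ has valence at most two, and crucially (4) an inheritance step: if a second edge $e'$ exists at $r(e)$, then $\GG$ is also treelike at $e'$ with $E_{s,e'}$ nontrivial. One then iterates, walking from $e$ toward the range side edge by edge; the walk must terminate at a valence-one vertex, for otherwise the infinite path $1\overline{e}\,1\overline{f_1}\,1\overline{f_2}\cdots$ together with an edge of $E_{s,e}$ violates minimality. Your confinement argument and your Step~3 (that $E_{s,e}$ itself is a ray) are correct and give the upstream half of the picture cleanly, but the downstream half genuinely needs this inductive walk rather than a single step.
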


To prove this result we use the following lemma.

\begin{lem} \label{lem: minimal and treelike}
	
	Suppose that the action of $\pi_1(\GG,v)$ on $v\partial X_\GG$ is minimal, there is $e \in \Gamma^1$ such that $\GG$ is treelike at $e$, and $E_{s,e}$ is nontrivial. Then the following hold.
	
	\begin{enumerate}[(1)]
		
		\item $e$ does not lie on any cycle of $\Gamma$.
		
		\item $\alpha_{\overline{e}}$ is surjective.
		
		\item $|\Gamma^1 r(e)| \le 2$.
		
		\item If $\Gamma^1 r(e) = \{ \overline{e}, e' \}$ with $e' \not= \overline{e}$, then $\GG$ is treelike at $e'$.
		
	\end{enumerate}
	
\end{lem}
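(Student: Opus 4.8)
The plan is to run everything through the flow criterion for minimality (Theorem~\ref{thm: minimality}) and the combinatorial description of flow in Lemma~\ref{def: suitability}. Write $A=r(e)$ and $B=s(e)$. Since $E_{s,e}$ is nontrivial it is infinite (by nonsingularity), so König's lemma yields an infinite reduced ray $e_1e_2\cdots$ in $E_{s,e}$ with $r(e_1)=B$; each $e_i$ points towards $B$, so treelikeness at $e$ forces $\alpha_{\overline{e_i}}$ to be surjective, i.e.\ $\Sigma_{\overline{e_i}}=\{1\}$, for all $i$. This gives the \emph{blocking principle}: if $\xi\in\partial W_\GG$ is an infinite reduced path all of whose edges lie within a bounded neighbourhood of $B$ in $E_{s,e}$, then $\xi$ cannot flow to an edge $e^\ast$ chosen deep in $E_{s,e}$ and pointing towards $B$. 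Indeed, for any edge $f$ on $\xi$, cases (2) and (3) of Lemma~\ref{def: suitability} fail for $(e^\ast,f)$ because $s(e^\ast)$ is far from $r(f)$, while case (1) needs a reduced word $1e^\ast\nu 1f$ with no cancellation; but the reduced path from $s(e^\ast)$ back towards $B$ must begin by reversing $e^\ast$, and surjectivity of $\alpha_{\overline{e^\ast}}$ forces $1e^\ast\overline{e^\ast}$ to cancel (a descent-and-return detour is impossible in the tree, and crossing into any other branch is blocked in the same way). So no $\nu$ exists and $\xi$ fails to flow to $e^\ast$, contradicting minimality. Each part is proved by producing such a trapped $\xi$, and I would prove them in the stated order.

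For (1), suppose $e$ lies on a cycle. If $e$ is a loop, take $\xi=(1\overline e)^\infty$. Otherwise $A\neq B$ and there is a reduced path $f_1\cdots f_k$ in $E_{s,e}$ from $B$ to $A$ avoiding $e,\overline e$, so that $c:=f_1\cdots f_k e$ is a reduced cycle based at $B$; take the reverse axis $\xi=\overline c^{\,\infty}=(1\overline e\,1\overline{f_k}\cdots 1\overline{f_1})^\infty$. In either case $\xi$ is reduced and infinite with range $B$, and all of its edges lie within distance $k$ of $B$, so the blocking principle gives a contradiction. Hence $e$ lies on no cycle; in particular $A\notin E_{s,e}^0$ and $E_{s,e}$ is the component of $B$ in $\Gamma\setminus\{e,\overline e\}$, so the ``$A$-side'' $E_{s,\overline e}$ is well defined and disjoint from $E_{s,e}$.

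For (2), suppose $\alpha_{\overline e}$ is not surjective and fix $1\neq k\in\Sigma_{\overline e}\subseteq G_B$; I build a path oscillating across $e$, where $k$ supplies the turnaround at the $B$-end. If $\alpha_e$ is also non-surjective, pick $1\neq g\in\Sigma_e$ and set $\xi=(1k\overline e\,1ge)^\infty$, whose nontrivial cosets $k,g$ prevent both back-trackings on $e$. If $\alpha_e$ is surjective, then nonsingularity forces $A$ to have valence at least two, so there is an edge at $A$ other than $e$; turning around in the $A$-side at a leaf (available by nonsingularity) or running off along an infinite $A$-side branch, and re-crossing at $B$ by means of $k$, again yields a reduced infinite $\xi$ whose edges all lie in $\{e,\overline e\}\cup E_{s,\overline e}^1$. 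In either case $\xi$ never enters the deep part of $E_{s,e}$, so the blocking principle applies and gives a contradiction; hence $\alpha_{\overline e}$ is surjective.

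For (3), if $r(e)$ had valence at least three then, besides $e$, there would be two edges at $A$ opening into two distinct branches of the $A$-side; oscillating between them (switching freely at $A$ since the branches use distinct edges, and turning around in each branch at an infinite end or a leaf) produces a reduced infinite path trapped in $E_{s,\overline e}$, contradicting minimality via the blocking principle, so $|\Gamma^1 r(e)|\le 2$. Finally, for (4) assume $\Gamma^1r(e)=\{\overline e,e'\}$; using (1) one checks directly that $E_{s,e'}$ is exactly $E_{s,e}$ with the single edge $e$ adjoined at the new root $A$, hence a tree, and that the edges of $E_{s,e'}$ pointing towards $A$ are $e$ together with the edges of $E_{s,e}$ pointing towards $B$, whose reversed monomorphisms are surjective by (2) and by treelikeness at $e$ respectively; thus $\GG$ is treelike at $e'$. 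The main obstacle throughout is the rigorous verification underlying the blocking principle: one must confirm each constructed $\xi$ is genuinely reduced and infinite, and must rule out all three alternatives of Lemma~\ref{def: suitability} for flow to the deep edge $e^\ast$, in particular tracking the ``no cancellation'' bookkeeping and confirming that no connector $\nu$ evades surjectivity by detouring or by crossing $e$; the case split in (2) according to whether $\alpha_e$ is surjective is the most delicate point.
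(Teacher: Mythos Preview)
Your approach is essentially the same as the paper's: in each part you construct an infinite reduced $\GG$-path $\xi$ that cannot reach a chosen edge in $E_{s,e}$, contradicting minimality. The only real difference is packaging. The paper fixes a single edge $f\in s(e)E_{s,e}^1$ and, for each constructed $\xi$, invokes Theorem~\ref{thm: nonminimality second take}(c') directly (checking that $\alpha_{\overline f}$ is surjective, $f$ lies on no cycle, $\GG$ is treelike at $f$, and $f$ does not lie on $\xi$). You instead unpack this into a ``blocking principle'' via the raw flow criterion of Theorem~\ref{thm: minimality} and Lemma~\ref{def: suitability}, using a deep edge $e^\ast$. Both are valid; the paper's route is a bit cleaner because the flow verification has already been absorbed into Theorem~\ref{thm: nonminimality second take}.

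Two small points of imprecision to tighten. First, your blocking principle is stated for paths lying ``within a bounded neighbourhood of $B$ in $E_{s,e}$'', but in parts (2) and (3) you apply it to paths whose edges lie in $\{e,\overline e\}\cup E_{s,\overline e}^1$, i.e.\ outside $E_{s,e}$ altogether. What you actually need (and what your justification supports) is the broader statement: any $\xi$ using only edges outside a deep subtree of $E_{s,e}$ cannot flow to an edge $e^\ast$ in that subtree pointing towards $B$, since any connector $\nu$ from $s(e^\ast)$ must start with an edge other than $\overline{e^\ast}$ (surjectivity of $\alpha_{\overline{e^\ast}}$) and then treelikeness forbids any return towards $B$. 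Second, in part (2) with $\alpha_e$ surjective, your phrase ``turning around at a leaf or running off along an infinite branch, and re-crossing at $B$ by means of $k$'' is correct but compressed; the paper makes this precise by an inductive construction (nonsingularity plus surjectivity of $\alpha_{e_i}$ at each step yields $e_2,e_3,\dots$ and hence $\xi_\infty=1\overline e\,1\overline{e_2}\,1\overline{e_3}\cdots$), which you might emulate for clarity.
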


\begin{proof}
	Let $f \in s(e) E_{s,e}^1$.
	
For (1), note that if $e$ lies on a cycle $\gamma$, then $\xi = \gamma^\infty$ and any edge of $E_{s,e}^1$ pointing towards $s(e)$ satisfies Theorem \ref{thm: nonminimality second take}(c'), contradicting minimality.
	
For (2), suppose for contradiction that $\alpha_{\overline{e}}$ is not surjective.  Let $1 \not= g \in \Sigma_{\overline{e}}$.  We claim that $\alpha_e$ is surjective.  For suppose not.  Let $1 \not= h \in \Sigma_e$.  Put $\xi = 1 \overline{e}(h e g \overline{e})^\infty \in r(e) \partial W_\GG$.  Then $\xi$ and $f$ satisfy Theorem \ref{thm: nonminimality second take}(c'), contradicting minimality.  Therefore $\alpha_e$ is surjective.  Now, nonsingularity of $\GG$ implies that there is $e_2 \in \Gamma^1 r(e)$ with $e_2 \not= \overline{e}$.  We claim that $\alpha_{e_2}$ is surjective.  For if not, let $1 \not= h \in \Sigma_{e_2}$.  Let $\xi_2 = 1 \overline{e} (1 \overline{e_2} h e_2 1 e g \overline{e})^\infty$.  Then $\xi_2$ and $f$ satisfy Theorem \ref{thm: nonminimality second take}(c'), contradicting minimality.  Repeat this process to obtain $e_3$, $e_4$, $\ldots$, and put $\xi_\infty = 1 \overline{e} 1 \overline{e_2} 1 \overline{e_3} \cdots \in r(e) \partial W_\GG$.  Then $\xi_\infty$ and $f$ satisfy Theorem \ref{thm: nonminimality second take}(c'), contradicting minimality.  Hence we must have $\alpha_{\overline{e}}$ surjective.
	
For (3), suppose for contradiction that $|\Gamma^1 r(e)| > 2$.  Then there are $f_1 \not= f_2 \in \Gamma^1 r(e) \setminus \{ \overline{e} \}$.  If either $\overline{f_1}$ or $\overline{f_2}$ is the rangemost edge of an infinite reduced path in $\Gamma$, say $\overline{f_1} d_1 d_2 \cdots \in \Gamma^\infty$, then $\xi = 1 \overline{e} 1 \overline{f_1} 1 d_1 1 d_2 \cdots$ and $f$ satisfy Theorem \ref{thm: nonminimality second take}(c'), contradicting minimality.  Therefore neither $\overline{f_1}$ nor $\overline{f_2}$ has this property.  It follows that $E_{s,\overline{f_1}}$ and $E_{s,\overline{f_2}}$ are finite trees.  We will write $[x,y]$ for the unique reduced path between vertices $x$ and $y$ in one of these trees.  For $i = 1$, 2 choose $w_i \in E_{s,\overline{f_i}}^0$ such that $|w_i E_{s,\overline{f_i}}| = 1$.  Let $f_i'$ be the sourcemost edge of $[r(e),w_i]$. By nonsingularity of $\GG$ we must have that $\alpha_{\overline{f_i'}}$ is not surjective.  Let $1 \not= g_i \in \Sigma_{\overline{f_i'}}$.  Put $\xi' = 1 \overline{e} ([r(e),w_1] g_1 [w_1,r(e)] [r(e),w_2] g_2 [w_2,r(e)])^\infty$.  Then $\xi'$ and $f$ satisfy Theorem \ref{thm: nonminimality second take}(c'), contradicting minimality. So $|\Gamma^1 r(e)| \le 2$.
	
Finally, for (4), let $\Gamma^1 r(e) = \{ \overline{e},e' \}$ with $e' \not= \overline{e}$.  By (3), and the definition of $E_{s,e'}$, we have that $E_{s,e'}^1 = E_{s,e}^1 \cup \{ e, \overline{e} \}$.  Then by (2) we have that $\GG$ is treelike at $e'$.
\end{proof}

\begin{proof}[Proof of Proposition~\ref{prop: minimal and treelike}]
	If $|\Gamma^1 r(e)| = 2$, let $\overline{e} \not= f_1 \in \Gamma^1 r(e)$.  By Lemma \ref{lem: minimal and treelike} it follows that $f_1$ also satisfies the hypotheses of Lemma \ref{lem: minimal and treelike}.  Repeating this argument with $f_1$, we have either that $\Gamma^1 r(f_1) = \{ \overline{f_1} \}$, or that $\Gamma^1 r(f_1) = \{ \overline{f_1}, f_2 \}$ with $f_2 \not= \overline{f_1}$, and that $f_2$ satisfies the hypotheses of Lemma \ref{lem: minimal and treelike}.  If this process repeats indefinitely, let $\xi = 1 \overline{e} 1 \overline{f_1} 1 \overline{f_2} \cdots$, and let $f$ be as in the proof of Lemma \ref{lem: minimal and treelike}.  Then $\xi$ and $f$ satisfy Theorem \ref{thm: nonminimality second take}(c'), contradicting minimality.  Therefore there is $n \ge 0$ such that $\Gamma^1 r(f_n) = \{ \overline{f_n} \}$.  By Lemma \ref{lem: minimal and treelike} we know that $\GG$ is treelike at $f_n$.  To finish the proof, we show that $\Gamma$ is an infinite ray $f_n f_{n-1} \cdots f_1 e \cdots$.  For this, it suffices to show that $\Gamma$ has no branching.  Suppose to the contrary that $d_1$, $d_2 \in \Gamma^1$ both point toward $r(f_n)$, and satisfy $r(d_1) = r(d_2)$.  Let $\xi \in Z(1 f_n 1 f_{n-1} \cdots 1 d_1)$.  Then $\xi$ and $d_2$ satisfy Theorem \ref{thm: nonminimality}(c'), contradicting minimality.
\end{proof}

\subsection{Local contractivity}\label{subsec: loc contractivity}

Recall that the action of a discrete group $\Lambda$ on a locally compact Hausdorff space $Y$ is \textit{locally contractive} (called {\em locally contracting} in \cite{A2}, and \textit{local boundary action} in \cite{LS}) if for every nonempty open set $U \subseteq Y$ there is an open set $V \subseteq U$ and $\lambda \in \Lambda$ such that $\lambda \overline{V} \subsetneq V$.  In this section, we give a sufficient condition for the action of $\pi_1(\GG,v)$ on $v\partial X_\GG$ to be locally contractive.  We begin with some definitions.

\begin{dfn}
	We say a $\GG$-path $g_1 e_1 \dots g_n e_n$ is {\em repeatable} if $r(e_1) = s(e_n)$ and $g_1 e_1 \not= 1 \overline{e_n}$. 
\end{dfn}

\noindent
So a repeatable $\GG$-path is a reduced $\GG$-loop which ends with an edge and is such that concatenation with itself has no cancellation.

\begin{dfn}
	We say that a reduced $\GG$-loop of the form $\eta = g_1 f_1 \dots g_m f_m$ has an \textit{entrance at $s(f_i)$} if
	\[
	\sum_{r(f) = s(f_i)} |\Sigma_f| \ge 3.
	\]
\end{dfn}

\noindent 
Note that this means that there exists $f \in \G^1$ with $r(f) = s(f_i)$ and $h \in \Sigma_f$ such that $h f \not= g_{i+1} f_{i+1}$, $1 \overline{f_i}$.

\begin{thm}\label{thm: loc cont}
	
	The action of $\pi_1(\GG,v)$ on $v\partial X_\GG$ is locally contractive if for every edge $e \in \Gamma^1$ there is a repeatable $\GG$-path $\eta$ with an entrance, so that $\eta$ contains an edge that can flow to $e$.
	
\end{thm}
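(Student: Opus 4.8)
The plan is to work directly with the identification $v\partial X_\GG = v\partial W_\GG$ and with the restriction to $\pi_1(\GG,v) = \pi[v,v]$ of the groupoid action of $F(\GG)$, so that every computation can be carried out with reduced $\GG$-words as in Lemmas~\ref{lem: actionmoves1} and~\ref{lem: actionmoves2}. Since the cylinder sets $Z(\mu)$, $\mu \in v\GG^*$, form a base of \emph{clopen} sets, it suffices to treat $U = Z(\mu)$, and we may assume $|\mu| \ge 1$; write $e$ for the sourcemost edge of $\mu$. Because the sets $Z(\omega)$ are clopen, $\overline{Z(\omega)} = Z(\omega)$, so local contractivity reduces to producing a reduced $\GG$-path $\omega_0$ extending $\mu$ (so $Z(\omega_0)\subseteq Z(\mu)$) and an element $\gamma \in \pi_1(\GG,v)$ with $\gamma Z(\omega_0) \subsetneq Z(\omega_0)$. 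By hypothesis there is a repeatable $\GG$-path $\eta$ with an entrance and an edge $f$ lying on $\eta$ that can flow to $e$.

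First I would build the contracting data. Using the trichotomy of Lemma~\ref{def: suitability} for ``$f$ can flow to $e$'', I produce $\omega_0$ extending $\mu$ whose sourcemost edge is $f$: in case~\eqref{def: suitability.1} take $\omega_0 = \mu\,\nu\,1f$, in case~\eqref{def: suitability.2} take $\omega_0 = \mu\,1f$, and in case~\eqref{def: suitability.3}, where $f = \overline e$, use a nontrivial $h \in \Sigma_{\overline e}$ (available since $\alpha_{\overline e}$ is not surjective) and take $\omega_0 = \mu\,h\,\overline e$; in each case the no-cancellation clause makes $\omega_0$ reduced. Next, cyclically rotating $\eta$ to begin immediately after its occurrence of $f$, I obtain a repeatable $\GG$-loop $\zeta$ based at $s(f)$ which ends in the edge $f$, still has an entrance (rotation preserves all consecutive edge-pairs, including the witnessing one), and for which $\zeta^\infty$ is the tail of $\eta^\infty$ past $f$. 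Since $\omega_0$ ends in $f$ and $\zeta$ begins with the edge following $f$ in $\eta$, the junction is internal to $\eta$, so $\omega_0\zeta$ (indeed $\omega_0\zeta^\infty$) is reduced. Finally set $\gamma := \omega_0\,\zeta\,\omega_0^{-1}$, a $\GG$-loop based at $v$ and hence an element of $\pi[v,v] = \pi_1(\GG,v)$.

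The third step is the action computation. For $\xi \in Z(\omega_0)$, write $\xi = \omega_0\psi$; in the groupoid $\omega_0^{-1}\omega_0 = 1_{s(\omega_0)}$, so $\gamma\cdot\xi = \omega_0\,\zeta\,\psi$, and this is already reduced because the requirement that $\omega_0\psi$ be reduced is exactly the non-backtrack condition over the edge $f$ needed for $\zeta\psi$ to be reduced — here I use that $\omega_0$ and $\zeta$ share the same sourcemost edge $f$. Thus $\gamma Z(\omega_0) = Z(\omega_0\zeta) \subseteq Z(\omega_0)$. To see the containment is strict I use the entrance: traversing $\omega_0\zeta$ reaches the entrance vertex $p$ with a prescribed incoming and outgoing edge, and the entrance supplies a coset $h'f'$ at $p$ distinct from both the outgoing edge of $\omega_0\zeta$ and the backtrack; by nonsingularity $h'f'$ extends to a boundary point lying in $Z(\omega_0)$ but diverging from $\omega_0\zeta$ at $p$, hence in $Z(\omega_0)\setminus Z(\omega_0\zeta)$. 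Therefore $\gamma\,\overline{Z(\omega_0)} = \gamma Z(\omega_0) = Z(\omega_0\zeta) \subsetneq Z(\omega_0) \subseteq U$, which is exactly local contractivity.

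The main obstacle is not conceptual but bookkeeping: one must verify all the reducedness and no-cancellation conditions simultaneously (the junctions $\mu$-to-$\omega_0$, $\omega_0$-to-$\zeta$, and within $\zeta^\infty$), confirm that the cyclic rotation genuinely transports the entrance to a vertex actually visited by $\omega_0\zeta$ with the correct local edge data, and — most delicately — justify the action identity $\gamma\cdot(\omega_0\psi) = \omega_0\zeta\psi$ even though the reduced normal form of $\gamma$ itself involves cancellation between $\zeta$ and $\omega_0^{-1}$ (the point being that when $\gamma$ acts, the factor $\omega_0^{-1}$ cancels against the prefix $\omega_0$ of $\xi$ first). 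All of these are routine given the parallel structure of Lemmas~\ref{lem: actionmoves1},~\ref{lem: actionmoves2} and~\ref{def: suitability} together with the normal-form description of $F(\GG)$.
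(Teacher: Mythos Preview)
Your proof is correct and follows essentially the same strategy as the paper's: conjugate the repeatable loop by a path reaching it from $\mu$, and use the entrance to witness strict containment. The only cosmetic difference is that you cyclically rotate $\eta$ to a loop $\zeta$ ending in the flowing edge $f$, whereas the paper relabels so that the flowing edge is the first edge $f_1$ of $\eta$ and then takes $V=Z(\mu\nu\eta)$ one loop further in; both amount to the same contraction $Z(\cdot)\supsetneq Z(\cdot\,\text{loop})$.
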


The idea of the proof is as follows.  A repeatable $\GG$-path $\eta$ defines both an element of $\pi_1(\GG,v)$ and a cylinder set $Z(\eta)$.  Moreover, it is clear that $\eta Z(\eta) \subseteq Z(\eta)$.  If in addition $\eta$ has an entrance, then it can be shown that the containment is proper.  The trick in the proof is to access this phenomenon inside an arbitrary cylinder set.  (This technique is fairly standard in $C^*$-theory; see, for example, \cite{KPR}, although that paper was written with the pre-Australian convention on paths in a graph.)

\begin{proof}
	Let $U$ be a nonempty open subset of $v\partial X_\GG$. Let $\mu$ be a $\GG$-path such that $Z(\mu) \subseteq U$, and let $e$ be the source-most edge of $\mu$.  Let $\eta = g_1 f_1 \dots g_m f_m$ be a repeatable $\GG$-path as in the statement.  Without loss of generality assume that $f_1$ can flow to $e$, and that $\eta$ has an entrance at $s(f_i)$. Let $f \in \Gamma^1$ with $r(f) = s(f_i)$ and $h \in \Sigma_f$ be such that $h f \not= g_{i+1} f_{i+1}$, $1 \overline{f_i}$. 
	
	Suppose first that (1) of Lemma~\ref{def: suitability} holds. Then there is a reduced $\GG$-word $\nu$ such that $r(\nu) = s(e)$, $s(\nu) = r(f_1)$, $|\nu| \ge 1$, and the concatenation $1e \nu 1f_1$ has no cancellation. Upon replacing $\nu$ by $\nu g$ for an appropriate choice of $g$, we may assume that the concatenation $\mu \nu \eta$ has no cancellation. Let $\gamma = \mu \nu \eta \nu^{-1} \mu^{-1} \in \pi_1(\GG)$. Let $V = Z(\mu \nu \eta)$, where by $\mu \nu \eta$ we mean the reduced form of the concatenation of $\mu$, $\nu$ and $\eta$, and with the last group element removed. Then $V$ is a compact-open subset of $U$.  We have that $\gamma V = Z(\mu \nu \eta \eta) \subseteq V$.  We show that the containment is proper.  Let $\eta_1 = g_1 f_1 \dots g_{i-1} f_{i-1}$, $\eta_2 = g_i f_i \dots g_k f_k$ (and if $i = 1$ we set $\eta_1 = 1_{s(\eta)}$ and $\eta_2 = \eta$).  Then $Z(\mu \nu \eta \eta_1 hf) \subseteq V$, since the concatenation $\eta \eta_1 hf$ has no cancellation.  Moreover, since $\mu \nu \eta \eta_1 hf$ and $\mu \nu \eta \eta$ differ, $\gamma V \cap Z(\mu \nu \eta \eta_1 hf) = \emptyset$.  Hence $\gamma V \not= V$.
	
	If $f_1$ can flow to $e$ in the sense of Lemma~\ref{def: suitability}\eqref{def: suitability.2}, then we apply the argument above to $\nu=1_{s(e)}$. If $f_1$ can flow to $e$ in the sense of Lemma~\ref{def: suitability}\eqref{def: suitability.3}, then we choose $1\not=g\in\Sigma_{f_1}$ and we apply the argument above to $\nu=g$.
\end{proof}

There is a strong connection between minimality and local contractivity. 

\begin{prop}\label{prop: min and lc}
	Let $\GG=(G,\Gamma)$ be a locally finite nonsingular graph of countable groups. If the action of $\pi_1(\GG)$ on $v\partial X_\GG$ is minimal, then exactly one of the following holds:
	\begin{enumerate}
		\item[(1)] the action is locally contractive;
		\item[(2)] $\Gamma$ is an infinite ray $e_1e_2e_3\dots$ and each $\alpha_{\overline{e_i}}$ is surjective; or
		\item[(3)] $\Gamma$ is a finite ray $e_1e_2\dots e_n$, $|\Sigma_{e_1}|=|\Sigma_{\overline{e_n}}|=2$ and $|\Sigma_{f}|=1$ for all $f\in\Gamma^1\setminus\{e_1,\overline{e_n}\}$. 
	\end{enumerate}	
\end{prop}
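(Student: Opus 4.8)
The plan is to use the sufficient condition for local contractivity in Theorem~\ref{thm: loc cont} as the dividing line, and to treat the exclusivity of the three alternatives separately. Throughout we assume the action of $\pi_1(\GG,v)$ on $v\partial X_\GG$ is minimal, so that by Theorem~\ref{thm: minimality} every $\xi \in \partial W_\GG$ can flow to every edge $e \in \Gamma^1$. The one book-keeping fact I would isolate first is that an \emph{entrance} corresponds exactly to a Bass--Serre-tree vertex of valence at least three: by the valence computations in Examples~\ref{egs: graphs of groups}, a vertex $x$ of $\Gamma$ lifts to vertices of valence $\sum_{r(f)=x}|\Sigma_f|$ in $X_{\GG,v}$, so a repeatable $\GG$-loop passing through such an $x$ with $\sum_{r(f)=x}|\Sigma_f|\ge 3$ has an entrance.

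First I would dispose of exclusivity. The graphs in (2) and (3) are an infinite ray and a finite ray, so these cannot hold together. To see that neither is compatible with (1), I would exhibit in each case a $\pi_1(\GG,v)$-invariant probability measure $\mu$ of full support on $v\partial X_\GG$, which rules out local contractivity: if $\lambda \overline V \subsetneq V$ with $V$ open and nonempty, then $V \setminus \lambda\overline V$ is open and nonempty, hence $\mu(V) > \mu(\lambda\overline V) = \mu(\overline V) \ge \mu(V)$, a contradiction. In case (3) every vertex of $X_{\GG,v}$ has valence $2$ because $|\Sigma_{e_1}| = |\Sigma_{\overline{e_n}}| = 2$ and all other transversals are trivial, so $X_{\GG,v}$ is a line and $v\partial X_\GG$ is a two-point space, which carries no local contraction at all. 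In case (2), surjectivity of every $\alpha_{\overline{e_i}}$ forbids each backtrack $e_i g \overline{e_i}$ (it would require $g \in \Sigma_{\overline{e_i}}\setminus\{1\}=\emptyset$), so taking the base vertex to be the endpoint of the ray, each infinite reduced path marches outward along $e_1 e_2 \cdots$; thus $v\partial X_\GG \cong \prod_{i\ge1}\Sigma_{e_i}$, the action is an odometer, and the product of the uniform measures on the finite sets $\Sigma_{e_i}$ is invariant of full support.

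For completeness I would assume the action is not locally contractive and deduce (2) or (3). By Theorem~\ref{thm: loc cont} its hypothesis fails, so there is an edge $e_0$ such that no repeatable $\GG$-path bearing an entrance contains an edge that can flow to $e_0$. If $\Gamma$ had a vertex of graph-valence $\ge 3$, or a minimal cycle along which some $\alpha$ is not surjective, then by the entrance criterion above one could build a repeatable $\GG$-path carrying an entrance, and using minimality (every $\xi$ flows to $e_0$, via the trichotomy of Lemma~\ref{def: suitability}) route it so that one of its edges flows to $e_0$, a contradiction. This reduces $\Gamma$ to the thin situation of valence at most two everywhere, namely a line, a ray, or a cycle. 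A cycle with all $\alpha_{\overline{e_i}}$ surjective is non-minimal by Theorem~\ref{thm: nonminimality second take}(d), hence excluded, while any other cycle yields an entrance; so $\Gamma$ is a line or a ray. At this point I would invoke Proposition~\ref{prop: minimal and treelike}: if some edge $e$ has $\GG$ treelike at $e$ with $E_{s,e}$ nontrivial, that proposition forces $\Gamma$ to be the infinite ray of (2) (in particular ruling out a bi-infinite line). In the residual case, where all such $E_{s,e}$ are trivial, the valence-two constraint together with nonsingularity at the two endpoints pins $\Gamma$ down to a finite ray with $|\Sigma_{e_1}| = |\Sigma_{\overline{e_n}}| = 2$ and all interior transversals trivial, which is (3).

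The main obstacle is the combinatorial bookkeeping of the completeness step: converting ``no repeatable path with an entrance flows to $e_0$'' into rigid constraints on $\Gamma$ and on the transversals, and in particular separating the infinite-ray case (2) from the finite-ray case (3) and extracting the exact transversal sizes. This requires repeated, careful use of the flow trichotomy of Lemma~\ref{def: suitability} to exclude branching, spurious cycles, and non-surjective backward maps at interior edges, much as in the proofs of Lemma~\ref{lem: minimal and treelike} and Proposition~\ref{prop: minimal and treelike}. A secondary technical point, underpinning the exclusion of (1) by (2), is verifying $\pi_1(\GG,v)$-invariance of the product measure for the odometer action in case (2).
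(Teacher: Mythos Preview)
Your exclusivity argument via an invariant probability measure is correct and is a pleasant alternative to the paper's more direct reasoning (which for~(2) simply observes that $\pi_1(\GG,v)=G_v$ and that a vertex group cannot map a cylinder properly into itself, and for~(3) that the boundary has two points).

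The completeness step, however, is organised differently from the paper and contains a genuine gap. You correctly observe that failure of the hypothesis of Theorem~\ref{thm: loc cont}, combined with minimality and the iterate $\eta^\infty$, forces that \emph{no} repeatable $\GG$-path with an entrance exists anywhere. But the next assertion---``if $\Gamma$ had a vertex of graph-valence $\ge 3$ one could build a repeatable $\GG$-path carrying an entrance''---is not justified. A trivalent vertex $x$ does guarantee $\sum_{r(f)=x}|\Sigma_f|\ge 3$, so any repeatable $\GG$-loop through $x$ would have an entrance there; the problem is producing the loop itself. When $\Gamma$ is a tree this requires a turn-around point, i.e.\ an edge $f$ reachable from $x$ with $|\Sigma_{\overline f}|\ge 2$, and ruling out the absence of such points is precisely where treelike behaviour enters. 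You invoke Proposition~\ref{prop: minimal and treelike} only \emph{after} the valence reduction, which is too late: the valence reduction already needs it.

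The paper avoids this by splitting on treelike behaviour \emph{first}. It isolates an intermediate result, Lemma~\ref{lem: no treelike bits}: if $\GG$ has no edge $e$ with $\GG$ treelike at $e$ and $E_{s,e}$ nontrivial, then exactly one of local contractivity, a minimal cycle with all $\alpha_e$ surjective, or case~(3) holds. The careful routing argument lives in the proof of that lemma, and the ``no nontrivial treelike'' hypothesis is exactly what supplies the missing turn-around points. The proposition then follows in two lines: if some edge has nontrivial treelike behaviour, Proposition~\ref{prop: minimal and treelike} gives~(2); otherwise Lemma~\ref{lem: no treelike bits} gives~(1) or~(3), the minimal-cycle alternative being excluded by minimality and Theorem~\ref{thm: nonminimality second take}(d). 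Your acknowledged bookkeeping difficulties largely dissolve if you adopt this order of cases.
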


We already know from Proposition~\ref{prop: minimal and treelike} that if the action is minimal and there is an edge $e$ such that $\GG$ is treelike at $e$ with $E_{s,e}$ a nontrivial tree, then $\GG$ must be the infinite ray from (2) above. To prove Proposition~\ref{prop: min and lc} we separate out the graphs of groups with no nontrivial treelike behaviour at any edge. (Note that minimality is not assumed in the next result.)

\begin{lem}\label{lem: no treelike bits}
	Let $\GG=(G,\Gamma)$ be a locally finite nonsingular graph of countable groups. If there is no edge $e\in\Gamma^1$ with $\GG$ treelike at $e$ and $E_{s,e}$ a nontrivial tree, then exactly one of the following holds: 
	\begin{enumerate}
		\item[(1)] the action is locally contractive;
		\item[(2')] $\Gamma$ is a minimal cycle and $\alpha_e$ is surjective for all edges $e$; or
		\item[(3)] $\Gamma$ is a finite ray $e_1e_2\dots e_n$, $|\Sigma_{e_1}|=|\Sigma_{\overline{e_n}}|=2$ and $|\Sigma_{f}|=1$ for all $f\in\Gamma^1\setminus\{e_1,\overline{e_n}\}$. 
	\end{enumerate}	
\end{lem}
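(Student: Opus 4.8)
The plan is to derive local contractivity from the sufficient condition of Theorem~\ref{thm: loc cont}, and to show that the two exceptional configurations (2$'$) and (3) are exactly the places where that condition cannot be met, while being themselves incompatible with local contractivity. Throughout I abbreviate $\sigma(w) := \sum_{r(f)=w}|\Sigma_f|$, the quantity governing entrances; note that nonsingularity forces $\sigma(w)\ge 2$ at every vertex $w$, and that the valence of a vertex of $X_\GG$ equals $\sigma$ at the corresponding vertex of $\Gamma$.

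First I would settle mutual exclusivity together with the easy implications. In both (2$'$) and (3) one has $\sigma(w)=2$ at every vertex: in (2$'$) each cycle vertex meets two edges with surjective (hence trivial-transversal) maps, and in (3) the two leaves carry the weight-two transversals while each interior vertex meets two surjective edges. Hence $X_\GG$ is $2$-regular, i.e. a line, so $v\partial X_\GG$ consists of exactly two points, which carry the discrete topology. A homeomorphic action of a group on a two-point discrete space cannot properly contract a nonempty open set (any $V$ has $\overline V=V$ and $\lambda V$ has the same cardinality, so $\lambda\overline V\subsetneq V$ is impossible), so neither (2$'$) nor (3) is locally contractive. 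Since (2$'$) and (3) are distinguished by $\Gamma$ being a cycle versus a ray, the ``exactly one'' assertion will follow once (1) is proved in the remaining cases.

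The substance is to show that if $\GG$ has no edge $e$ with $\GG$ treelike at $e$ and $E_{s,e}$ a nontrivial tree, and $\GG$ is in neither (2$'$) nor (3), then the hypothesis of Theorem~\ref{thm: loc cont} holds. The first reduction is that an \emph{entrance} must exist, i.e. some vertex $w$ has $\sigma(w)\ge 3$. Indeed, if $\sigma\equiv 2$ then $X_\GG$ is again a line and $\Gamma$ is one of: a cycle, a finite path, an infinite ray, or a bi-infinite line. The infinite ray and the bi-infinite line are excluded by the hypothesis, since in each there is an edge $e$ (the one directed out of the leaf, respectively any edge) for which $E_{s,e}$ is an infinite, hence nontrivial, tree and every edge pointing toward $s(e)$ has surjective reverse map, so that $\GG$ is treelike at $e$ (Definition~\ref{def: treelike}). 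The cycle with all maps surjective is exactly (2$'$), and the finite path with weight-two leaf transversals is exactly (3). Thus, outside (2$'$) and (3), an entrance exists. It then remains, for each edge $e$, to produce a repeatable $\GG$-path $\eta$ possessing an entrance and containing an edge that can flow to $e$. I would split on whether $\Gamma$ contains a cycle: if it does, traversing a minimal cycle gives a repeatable loop, which I would enlarge using connectivity so that it passes through a fixed entrance vertex while remaining repeatable; if $\Gamma$ is a tree, a repeatable loop must backtrack, and the turning points require group elements outside the transversals, which is exactly what the \emph{failure} of the treelike condition supplies in the form of an edge $f$ pointing toward the base with $\alpha_{\overline f}$ non-surjective, off which the loop can bounce. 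For the chosen $e$, I would then connect $s(e)$ to a vertex of $\eta$ by a reduced word $\nu$ and invoke Lemma~\ref{def: suitability} to conclude that an edge of $\eta$ flows to $e$.

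The main obstacle is precisely this last construction: arranging, uniformly in $e$, a single repeatable path that simultaneously carries an entrance and flows to $e$. The two delicate points are the tree case, where repeatability forces careful placement of the non-surjective bounces furnished only by the negated treelike hypothesis, and the verification of the ``no cancellation'' clause of Lemma~\ref{def: suitability}\eqref{def: suitability.1} at the splices of $\nu$ against both $e$ and $\eta$, which must be arranged by exploiting the freedom in $\nu$ and the available branchings. As in the proof of Theorem~\ref{thm: loc cont}, the three alternatives of Lemma~\ref{def: suitability} each have to be accommodated separately, with the degenerate flows (cases \eqref{def: suitability.2} and \eqref{def: suitability.3}) handled by taking $\nu$ trivial or a single nontrivial transversal element.
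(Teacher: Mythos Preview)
Your overall plan is sound and matches the paper's strategy: invoke Theorem~\ref{thm: loc cont} when possible, and show that the exceptional cases are precisely (2$'$) and (3). Your mutual-exclusivity argument via the two-point boundary is the same as the paper's.

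The organisational difference is this. You first isolate the dichotomy ``some vertex has $\sigma\ge 3$'' versus ``$\sigma\equiv 2$'', dispose of the latter, and then for each $e$ try to \emph{build} a repeatable $\GG$-path with an entrance that flows to $e$, splitting on whether $\Gamma$ contains a cycle. The paper instead splits on whether \emph{some} repeatable $\GG$-path $\eta$ with an entrance exists at all. If one does, it fixes that single $\eta$ and shows, by a short case analysis on the position of $e$ relative to $\eta$ (including passing to the reversed loop $\eta'$ when $e=\overline{f_j}$), that $\eta$ or $\eta'$ flows to every $e$; the no-treelike hypothesis enters exactly once, to manufacture a detour $1f\beta 1\overline f$ when $|\Sigma_{\overline e}|=1$ and the connecting word $\nu$ would otherwise begin with $1\overline e$. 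If no such $\eta$ exists, the paper reads off (2$'$) or (3) directly.

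The payoff of the paper's split is that you never have to \emph{construct} a repeatable loop through a prescribed entrance vertex or to enlarge a cycle while preserving repeatability --- the step you correctly flag as the main obstacle simply disappears, replaced by the much lighter task of routing a connecting word $\nu$ from a fixed $\eta$ to $s(e)$ and fixing up its endpoints with Lemma~\ref{def: suitability}. Your plan would work if carried through, but the bookkeeping in the tree case (placing the non-surjective bounces so that the loop is repeatable, carries the entrance, \emph{and} admits a $\nu$ with no cancellation at both ends) is genuinely heavier than what the paper does. If you rework it, I would recommend adopting the paper's split: assume a repeatable $\eta$ with entrance exists and prove it flows everywhere; then show that nonexistence forces (2$'$) or (3).
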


\begin{proof}

We first note that the boundary in both cases (2') and (3) only consists of two points, and hence the action is not locally contractive. So (1), (2') and (3) are mutually exclusive.

We first assume that $\GG$ contains a repeatable $\GG$-path $\eta=h_1f_1\dots h_nf_n$ with an entrance. Fix an edge $e\in\Gamma^1$.  We claim that there is a repeatable $\GG$-path with an entrance that flows to $e$. It is obvious that $\eta$ flows to $e$ in the case that $s(e)=s(f_j)$ for some $j$, and $e\not= \overline{f_{j+1}}$. Now suppose $e=\overline{f_j}$ for some $j$. Consider the path $\eta':=h_n'\overline{f_n}h_{n-1}'\overline{f_{n-1}}\dots h_1'\overline{f_1}$, where $h_i':=h_{i+1}$ (index mod n) if $f_{i+1}=\overline{f_i}$, or $h_i':=1$ otherwise. Then $\eta'$ is a repeatable $\GG$-path with an entrance which obviously flows to $e=\overline{f_j}$. The last case to consider on $e$ is when $s(e)$ is not a vertex on $\eta$. Choose $j$ and a reduced $\GG$-word $\nu = g_1e_1\dots g_me_m\in s(e)\GG^*r(f_j)$ such that $\nu 1f_j$ has no cancellation. If $g_1e_1\not=1\overline{e}$, then $\eta$ flows to $e$ via $\nu$. If $g_1e_1=1\overline{e}$ and $|\Sigma_{\overline{e}}|\ge 2$, then choose $1\not= g\in\Sigma_{\overline{e}}$ and use $g\nu$ to see that $\eta$ flows to $e$. If $|\Sigma_{\overline{e}}|=1$, then the nonsingularity of $\GG$ means that there is an edge $f\in\Gamma^1\setminus\{\overline{e}\}$ with $r(f)=s(e)$. Since $\GG$ is not treelike at $f$ with $E_{s,f}$ nontrivial, we can choose a reduced $\GG$-word $\beta$ with range and source $s(f)$. (Note that $\beta$ can be chosen to be a nontrivial element in $\Sigma_{\overline{f}}$ if $\GG$ is treelike at $f$ and $E_{s,f}=\{s(f)\}$.) We see that $\eta$ flows to $e$ via $1f\beta 1\overline{f}\nu$. This completes the proof of the claim. We can now use Theorem~\ref{thm: loc cont} to see that the action is locally contractive.

Now suppose that $\GG$ does not contain a repeatable $\GG$-path with an entrance. We first suppose $\Gamma$ contains a minimal cycle $e_1\dots e_n$. Then $1e_1\dots 1e_n$ is a repeatable $\GG$-path. Since this $\GG$-path cannot have an entrance, we must have $\Gamma^1=\{e_i,\overline{e_i}:1\le i\le n\}$, and each $\alpha_{e_i}$ and $\alpha_{\overline{e_i}}$ surjective. So we are in case (2'). Now suppose that $\Gamma$ does not contain any minimal cycles; that is, $\Gamma$ is a tree. Because of the assumptions on $\GG$, there must exist a ray $f_1\dots f_m$ in $\Gamma$ with $|\Sigma_{f_1}|,|\Sigma_{\overline{f_m}}|\ge 2$. With $1\not=g\in\Sigma_{f_1}$ and $1\not=h\in \Sigma_{\overline{f_m}}$, the $\GG$-path $gf_11f_2\dots 1f_mh\overline{f_m}1\overline{f_{m-1}}\dots1\overline{f_1}$ is repeatable. Since this $\GG$-path cannot have an entrance, we must have $\Gamma^1=\{f_i,\overline{f_i}:1\le i\le m\}$, $|\Sigma_{f_1}|,|\Sigma_{\overline{f_m}}|= 2$, and $\alpha_f$ surjective for all $f\in\Gamma^1\setminus\{e_1,\overline{e_m}\}$. So we are in case (3). 
\end{proof}

\begin{proof}[Proof of Proposition~\ref{prop: min and lc}]
We know from the proof of Lemma~\ref{lem: no treelike bits} that the finite ray in (3) is not locally contractive. In the case of (2) we take $v$ to be the range of the infinite ray, and then since each $\alpha_{\overline{e_i}}$ is surjective, the fundamental group can be identified with the vertex group $G_v$. The action of this vertex group does not map cylinder sets properly inside themselves, and hence the action is not locally contractive. So (1), (2) and (3) are mutually exclusive.  
	
We assume that the action is minimal. We know immediately from Theorem~\ref{thm: nonminimality second take} that case (2') of Lemma~\ref{lem: no treelike bits} cannot hold. We know from Proposition~\ref{prop: minimal and treelike} that if $\GG$ is treelike at an edge $e$ with $E_{s,e}$ nontrivial, then (2) holds. We know from Lemma~\ref{lem: no treelike bits} that if there is no such edge, then (1) or (3) must hold.
\end{proof}

\subsection{Topological freeness}\label{subsec: top freeness}

We recall the following definition from \cite{AS}. 

\begin{dfn}
	The action of a discrete group $\Lambda$ on a compact Hausdorff space $Y$ is \textit{topologically free} if for each $t \in \Lambda \setminus \{ 1 \}$ we have that $\{ y \in Y : t y \not= y \}$ is dense in $Y$.
\end{dfn}

The importance of topological freeness can be seen from the Corollary to Theorem 2 in \cite{AS}:  the reduced $C^*$-algebra $C(Y) \rtimes_r \Lambda$ is simple if and only if the action of $\Lambda$ on $Y$ is minimal and topologically free.  

It follows that if $\Lambda$ is countable then the action is topologically free if the set of points in $Y$ having trivial isotropy is dense in $Y$.  It seems to be a difficult problem to characterise graphs of groups whose fundamental group acts topologically freely on the boundary of the Bass--Serre tree.  We are able to give definitive results in two special cases, that of graphs of groups with trivial vertex groups (Section~\ref{sec:undirected graphs}) and {\em generalised Baumslag--Solitar groups} (Section~\ref{sec: GBS}). 

We now describe a family of actions called odometers; these actions are {\em free}, in the sense that every point has trivial isotropy. After our discussion on odometers, we present an example to illustrate that topological freeness is independent of minimality.

\begin{example} \label{example: subodometers}
	
	Let $G_0$ be a discrete group, and let $G_0 \supseteq G_1 \supseteq G_2 \supseteq \cdots$ be a decreasing sequence of subgroups of finite index.  We insist that $[G_0 : G_1] > 1$, but allow $[G_{i-1} : G_i] = 1$ for $i > 1$.  We consider the graph $\Gamma$ that is an infinite ray: $\Gamma^1 = \{ e_i, \overline{e_i} : i \ge 1 \}$ with $s(e_i) = r(e_{i+1})$ for all $i$, and $r(e_i) \not= r(e_j)$ when $i \not= j$. For $i \ge 1$ we set $G_{s(e_i)} = G_{e_i} := G_i$, and $G_{r(e_1)} := G_0$.  We let $\alpha_{e_i}$ be the inclusion of $G_i$ into $G_{i-1}$, and $\alpha_{\overline{e_i}}$ be the identity map.
The boundary $v\partial X_\GG$ can then be realised as the inverse limit $$\underleftarrow{\lim}\; G_0 / G_i = \{ (x_i  G_i)_{i=0}^\infty : x_i \in G_0, x_i^{-1} x_{i+1} \in G_i, i \ge 0 \}.$$  

Let $v = r(e_1)$. Since $\alpha_{\overline{e_i}}$ is surjective for all $i$, the only reduced $\GG$-words based at $v$ are those of length zero.  Thus $\pi_1(\GG,v) = G_0$.  The action of $G_0$ on $v\partial X_\GG$ is given by $t \cdot (x_i G_i)_{i=0}^\infty = (tx_i G_i)_{i=0}^\infty$.  Actions of this type are called \textit{subodometers} (see \cite{CP}).  We will refer to such graphs of groups as \textit{subodometer graphs of groups}.  There is a map of $G_0 \to v\partial X_{\GG}$ having dense range, given by $t \in G_0 \mapsto (tG_i)_{i=0}^\infty$.  Thus the action of $G_0$ is effective if $\bigcap_{i=0}^\infty G_i = \{ 1 \}$ (i.e. if $G_0$ is residually finite; see \cite{CP} Definition 1).  We give a simple characterisation of effective subodometer graphs of groups.
	
	\begin{prop} \label{prop: effective subodometer}
		
		Let $G_0 \supseteq G_1 \supseteq \cdots$ define a subodometer graph of groups $\GG$.  Let $H = \bigcap_{i=0}^\infty G_i$.  Then $\GG$ is effective if and only if $\bigcap_{x \in G_0} x H x^{-1} = \{1\}$.  (This last intersection is called the {\em normal core} of $H$.)
		
	\end{prop}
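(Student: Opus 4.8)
The plan is to read ``effective'' in its usual sense of \emph{faithful}, so that the assertion becomes the computation that the kernel of the action $G_0 \curvearrowright v\partial X_\GG$ is precisely the normal core $\bigcap_{x \in G_0} xHx^{-1}$; the stated equivalence then follows at once, since an action is faithful exactly when its kernel is trivial. Recall from the discussion preceding the statement that $v\partial X_\GG$ is identified with the inverse limit $\underleftarrow{\lim}\, G_0/G_i$ of coherent sequences $(x_iG_i)_{i\ge 0}$, and that the action is $t\cdot(x_iG_i)_i = (tx_iG_i)_i$. Thus $t$ fixes the point $(x_iG_i)_i$ if and only if $tx_iG_i = x_iG_i$, i.e. $x_i^{-1}tx_i \in G_i$, for every $i$. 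Writing $K$ for the kernel of the action, the whole proof reduces to showing $K = \bigcap_{x\in G_0} xHx^{-1}$.

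For the inclusion $K \subseteq \bigcap_{x} xHx^{-1}$, I would test $t \in K$ against the simplest boundary points, namely the \emph{constant sequences} $y_c := (cG_i)_{i\ge 0}$ for $c \in G_0$. Each $y_c$ is a genuine coherent sequence (the natural maps $G_0/G_{i+1}\to G_0/G_i$ send $cG_{i+1}$ to $cG_i$), so $y_c \in v\partial X_\GG$; indeed these are exactly the images under the dense orbit map $c \mapsto (cG_i)_i$ mentioned in the example. Since $t \in K$ fixes $y_c$, we get $c^{-1}tc \in G_i$ for all $i$, hence $c^{-1}tc \in H$, i.e. $t \in cHc^{-1}$. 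Letting $c$ range over $G_0$ gives $t \in \bigcap_{c\in G_0}cHc^{-1}$. For the reverse inclusion, suppose $t \in \bigcap_x xHx^{-1}$ and let $(x_iG_i)_i$ be any boundary point. Because $x_i \in G_0$ we have $t \in x_iHx_i^{-1} \subseteq x_iG_ix_i^{-1}$ (as $H\subseteq G_i$), so $x_i^{-1}tx_i \in G_i$ for every $i$; thus $t$ fixes $(x_iG_i)_i$. As the point was arbitrary, $t \in K$.

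Combining the two inclusions gives $K = \bigcap_{x\in G_0}xHx^{-1}$, where the only bookkeeping is the interchange $\bigcap_{x}\bigcap_i xG_ix^{-1} = \bigcap_x x\big(\bigcap_i G_i\big)x^{-1} = \bigcap_x xHx^{-1}$. I do not expect a serious obstacle here; the one point deserving care is the (elementary) observation that every coset of every $G_i$ is realised by an honest boundary point, which is exactly what the constant sequences provide and which ultimately rests on nonsingularity of $\GG$ ensuring that all cylinder sets are nonempty. With $K$ identified as the normal core of $H$, faithfulness of the action is equivalent to $\bigcap_{x\in G_0}xHx^{-1}=\{1\}$, which proves the proposition.
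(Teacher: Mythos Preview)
Your proof is correct and follows essentially the same approach as the paper's: both arguments test a candidate kernel element $t$ against the constant sequences $(cG_i)_i$ to deduce $c^{-1}tc\in H$, and both verify that elements of the normal core fix every coherent sequence. Your organisation is slightly cleaner in that you compute the kernel directly rather than splitting the ``if'' direction into the cases $t\in H$ and $t\notin H$, but the substance is identical. One small comment: your aside about nonsingularity ensuring nonempty cylinder sets is unnecessary here, since the constant sequences $(cG_i)_i$ live in the inverse limit automatically (the projection maps $G_0/G_{i+1}\to G_0/G_i$ are surjective), independently of any nonsingularity hypothesis.
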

	
	\begin{proof}
		We first prove the \textit{if} direction.  Let $t \in G \setminus \{1\}$.  If $t \not\in H$ then  $t \cdot (G_i)_{i=0}^\infty = (t G_i)_{i=0}^\infty \not= (G_i)_{i=0}^\infty$.  Hence $t$ does not act as the identity.  If $t \in H$ then by hypothesis there is $x \in G_0$ such that $x^{-1} t x \not\in H$.  Then there is $j$ such that $x^{-1} t x \not\in G_j$, i.e. such that $t x G_j \not= x G_j$.  Then $t \cdot (x G_i)_{i=0}^\infty \not= (x G_i)_{i=0}^\infty$, and hence $t$ does not act as the identity.  For the \textit{only if} direction, suppose that the normal core of $H$ is nontrivial.  Let $1 \not= t \in \bigcap_{x \in G_0} x H x^{-1}$.  Then $tx \in x H \subseteq x G_i$ for all $x \in G_0$ and all $i$.  But then $t x G_i = x G_i$ for all $x$ and $i$.  It follows that $t$ acts trivially on $v\partial X_\GG$.
	\end{proof}

\begin{rmk}\label{rmk: normal and odometer}
We also note that by Theorem \ref{thm: nonminimality second take} the action of $G_0$ on $v\partial X_\GG$ is minimal (see also \cite{CP} section 3.1).  If the $G_i$ are normal subgroups of $G_0$, the dynamical system $(v\partial X_\GG, G_0)$ is called an {\em odometer} (and $\GG$ is an {\em odometer graph of groups}). In this case, the map $G_0\to v\partial X_{\GG}$ from Example~\ref{example: subodometers} is a homomorphism. It follows that if the $G_i$ are normal subgroups then the action is topologically free if and only if the action is free, if and only if $G_0\to v\partial X_{\GG}$ is injective, if and only if $\bigcap_{i=0}^\infty G_i = \{1\}$.
\end{rmk}

	\begin{prop} \label{prop: subodometers are tracial}
		
		Let $\GG$ be a subodometer graph of groups.  Then $C(v\partial X_\GG) \rtimes_r \pi_1(\GG,v)$ has a faithful tracial state.
		
	\end{prop}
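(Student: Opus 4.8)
The plan is to obtain the faithful tracial state as $\omega = \nu \circ E$, where $E$ is the canonical conditional expectation of the reduced crossed product onto the coefficient algebra and $\nu$ is integration against a $\pi_1(\GG,v)$-invariant Borel probability measure $\mu$ of full support on $v\partial X_\GG$. The only real content is the construction of $\mu$; the trace property and faithfulness then follow from standard crossed-product arguments.

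First I would build $\mu$. Using the identification $v\partial X_\GG = \underleftarrow{\lim}\, G_0/G_i$ from Example~\ref{example: subodometers}, equip each finite quotient $G_0/G_i$ with its uniform (normalised counting) probability measure $\mu_i$. The bonding map $G_0/G_{i+1} \to G_0/G_i$, $xG_{i+1} \mapsto xG_i$, is surjective with every fibre of the same cardinality $[G_i:G_{i+1}]$, so the pushforward of $\mu_{i+1}$ is $\mu_i$. Hence the $\mu_i$ are compatible and determine a Borel probability measure $\mu$ on the inverse limit, characterised on cylinders by $\mu\big(Z(g_1 e_1 \cdots g_n e_n)\big) = [G_0:G_n]^{-1}$ (here one uses that, since each $\alpha_{\overline{e_i}}$ is surjective, the reduced $\GG$-paths from $v$ run monotonically along the ray and are indexed by the transversals $\Sigma_{e_j}$, so $v\GG^n \cong G_0/G_n$). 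Because the subodometer action is $t\cdot(x_i G_i) = (t x_i G_i)$, each $t \in \pi_1(\GG,v) = G_0$ acts on level $i$ as left translation on the finite set $G_0/G_i$, preserving $\mu_i$; therefore $\mu$ is invariant. As every nonempty open set contains a cylinder set of positive measure, $\mu$ has full support.

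Next I would assemble the trace. The reduced crossed product carries a canonical conditional expectation $E : C(v\partial X_\GG) \rtimes_r \pi_1(\GG,v) \to C(v\partial X_\GG)$, which is faithful, and I set $\omega(a) = \int_{v\partial X_\GG} E(a)\, d\mu$. Then $\omega$ is a positive functional with $\omega(1) = \mu(1) = 1$, so it is a state. On the dense span of the elements $f u_s$ (with $f \in C(v\partial X_\GG)$, $s \in \pi_1(\GG,v)$, and $u_s$ the implementing unitary) one has $\omega(f u_s) = \delta_{s,1}\int f\, d\mu$. A short computation using $(f u_s)(g u_t) = f\,\tau_s(g)\, u_{st}$ shows that $\omega\big((f u_s)(g u_t)\big)$ and $\omega\big((g u_t)(f u_s)\big)$ both vanish unless $t = s^{-1}$, in which case equality $\int f\,\tau_s(g)\, d\mu = \int g\, \tau_{s^{-1}}(f)\, d\mu$ is exactly a change of variables justified by invariance of $\mu$; by continuity $\omega$ is a trace. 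Faithfulness is immediate: if $a \ge 0$ and $\omega(a) = \mu(E(a)) = 0$, then $E(a) = 0$ since $\mu$ has full support, and hence $a = 0$ since $E$ is faithful.

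I do not expect a serious obstacle here. The two points needing care are verifying that the fibres of the bonding maps $G_0/G_{i+1} \to G_0/G_i$ all have equal cardinality—so that the uniform measures are genuinely compatible, which uses only that the $G_i$ are nested finite-index subgroups and \emph{not} that they are normal—and the change-of-variables step in the trace identity, where invariance of $\mu$ is precisely what is required. In particular, since normality of the $G_i$ is not assumed, $v\partial X_\GG$ need not be a group and so one cannot simply invoke Haar measure; nevertheless the compatible uniform measures on the finite homogeneous spaces $G_0/G_i$ still furnish the invariant measure of full support that the argument needs.
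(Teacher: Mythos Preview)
Your proposal is correct and follows essentially the same approach as the paper: both construct the invariant full-support probability measure $\mu$ as the inverse limit of normalised counting measures on the finite quotients $G_0/G_i$, and then define the tracial state as $\mu \circ E$ where $E$ is the canonical faithful conditional expectation onto $C(v\partial X_\GG)$. Your write-up actually supplies more detail than the paper does---in particular, the explicit verification of the trace identity via change of variables and the remark that equal fibre cardinality (rather than normality of the $G_i$) is what makes the uniform measures compatible.
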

	
	\begin{proof}
		For each $i$ we denote normalised counting measure on $G_0 / G_i$ by $\mu_i$.  The quotient maps $q_i : G_0 / G_{i+1} \to G_0 / G_i$  satisfy ${q_i}_*(\mu_{i+1}) = \mu_i$ (where ${q_i}_*$ is the push-forward map on measures:  ${q_i}_*(\nu) = \nu \circ q_i^{-1}$).  Then $\mu = \varprojlim \mu_i$ is a probability measure on $v\partial X_\GG$ with full support and invariant for the action of $G_0$, and hence integration against $\mu$ is a faithful state on $C(v\partial X_\GG)$.  We compose with the faithful conditional expectation $E : C(v\partial X_\GG) \rtimes_r G_0 \to C(v\partial X_\GG)$ to obtain a faithful tracial state $\tau = \int E(\cdot) \, d\mu$.
	\end{proof}
	
	\begin{cor} \label{cor: amenable subodometer}
		
		Let $\GG$ be a subodometer graph of groups.  Then $G_0$ acts amenably on $v\partial X_\GG$ if and only if $G_0$ is amenable.
		
	\end{cor}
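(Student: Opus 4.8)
The plan is to prove the two implications separately, with the forward direction being essentially formal and the converse carrying the real content.

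First I would dispose of the direction ``$G_0$ amenable $\Rightarrow$ the action is amenable''. This requires no special feature of the subodometer: any action of an amenable discrete group on a compact Hausdorff space is (topologically) amenable. Concretely, if $(F_n)$ is a F\o lner sequence for $G_0$ and $m_n^y$ denotes the normalised counting measure on $F_n$ (chosen independent of $y \in v\partial X_\GG$), then $\sup_y \| g \cdot m_n^y - m_n^{gy} \|_1 = \| g \cdot m_n - m_n \|_1 \to 0$ for each $g \in G_0$, which is exactly the defining approximation property of an amenable action. Alternatively one simply cites the standard fact (see~\cite{A} or~\cite{BO}).

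For the converse I would exploit the $G_0$-invariant probability measure produced in the proof of Proposition~\ref{prop: subodometers are tracial}, namely $\mu = \varprojlim \mu_i$ on $v\partial X_\GG$. Topological amenability of the action $G_0 \curvearrowright v\partial X_\GG$ supplies a net of continuous maps $m_i : v\partial X_\GG \to \operatorname{Prob}(G_0)$ with $\sup_{y} \| g \cdot m_i^y - m_i^{gy} \|_1 \to 0$ for every $g \in G_0$, where $g \cdot \nu$ denotes the push-forward of $\nu \in \operatorname{Prob}(G_0)$ under left translation by $g$. Integrating this field against the invariant measure, I would set
\[
\overline{m}_i := \int_{v\partial X_\GG} m_i^y \, d\mu(y) \in \operatorname{Prob}(G_0).
\]
Using the $G_0$-invariance of $\mu$ in the form $\int m_i^{gy}\, d\mu(y) = \int m_i^{y}\, d\mu(y) = \overline{m}_i$, one obtains
\[
g \cdot \overline{m}_i - \overline{m}_i = \int_{v\partial X_\GG} \bigl( g \cdot m_i^y - m_i^{gy} \bigr) \, d\mu(y),
\]
so that $\| g \cdot \overline{m}_i - \overline{m}_i \|_1 \le \sup_y \| g \cdot m_i^y - m_i^{gy} \|_1 \to 0$ for each $g \in G_0$. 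Since the existence of such an approximately translation-invariant net in $\operatorname{Prob}(G_0)$ is one of the standard equivalent formulations (Reiter's condition) of amenability of the discrete group $G_0$, this completes the converse.

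The main obstacle, or rather the only point requiring care, is the bookkeeping in the integration step: one must check that the barycentre $\overline{m}_i$ is genuinely a probability measure in $\ell^1(G_0)$ (it is, as the $\mu$-average of a continuous $\operatorname{Prob}(G_0)$-valued field over a probability space) and that the $\ell^1$-norm estimate may be pulled inside the integral. No analytic subtlety beyond this arises, since the invariant probability measure is handed to us by Proposition~\ref{prop: subodometers are tracial}; the essential content of the corollary is precisely the general principle that an amenable action preserving a probability measure forces the acting group to be amenable.
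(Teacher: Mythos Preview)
Your proof is correct and follows the same strategy as the paper: the forward direction is the standard fact that amenable groups act amenably, and the converse uses the $G_0$-invariant probability measure $\mu$ from Proposition~\ref{prop: subodometers are tracial} to deduce amenability of $G_0$. The only difference is cosmetic---the paper cites Proposition~4.3.3 of Zimmer~\cite{Z} for the principle ``amenable action with an invariant probability measure $\Rightarrow$ amenable group'', whereas you unpack this by explicitly integrating the approximate means against $\mu$ to produce an approximately invariant net in $\operatorname{Prob}(G_0)$.
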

	
	\begin{proof}
		If $G_0$ is amenable then any action of $G_0$ is amenable (\cite{Ren}, II.3.10).  Conversely, suppose that the action is amenable.  Since the action admits a finite invariant measure, by the proof of Proposition \ref{prop: subodometers are tracial}, Proposition 4.3.3 of  \cite{Z} implies that $G_0$ is amenable.
	\end{proof}
	
	\begin{cor} \label{cor: subodometers stably finite}
		
		Let $\GG$ be a subodometer graph of groups with $\pi_1(\GG,v) \ (= G_0)$ amenable.  Then $C^*(\GG)$ is stably finite.
		
	\end{cor}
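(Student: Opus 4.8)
The plan is to combine the stable isomorphism of Theorem~\ref{thm: main theorem} with the faithful tracial state produced in Proposition~\ref{prop: subodometers are tracial}, using amenability of $G_0$ to bridge the full and reduced crossed products. First I would record that, since $\GG$ is a subodometer graph of groups, its underlying graph is an infinite ray, so $\GG$ is locally finite and nonsingular and the Bass--Serre tree $X_{\GG,v}$ is locally finite and nonsingular. Hence the boundary $v\partial X_\GG$ is compact, so $C(v\partial X_\GG)$ is unital and the full crossed product $B := C(v\partial X_\GG)\rtimes_\tau \pi_1(\GG,v)$ appearing in Theorem~\ref{thm: main theorem} is a \emph{unital} $C^*$-algebra.

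Next, because $\pi_1(\GG,v) = G_0$ is amenable, the fact recalled in Section~\ref{subsubsec:CrossedProducts} (amenable groups act amenably; see also Corollary~\ref{cor: amenable subodometer}) shows that $G_0$ acts amenably on $v\partial X_\GG$. By the theorem of \cite{A} quoted in Section~\ref{subsubsec:CrossedProducts}, the canonical surjection from the full to the reduced crossed product is then an isomorphism, so $B \cong C(v\partial X_\GG)\rtimes_r G_0$. Proposition~\ref{prop: subodometers are tracial} now supplies a faithful tracial state $\tau_0$ on $B$.

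The next step is the standard fact that a unital $C^*$-algebra with a faithful tracial state is stably finite. For each $n$ the amplification $M_n(B)$ carries the faithful tracial state $\operatorname{tr}_n \otimes \tau_0$, and this obstructs proper isometries: if $v \in M_n(B)$ satisfies $v^*v = 1$ and $p := vv^*$, then $(\operatorname{tr}_n\otimes\tau_0)(1-p) = (\operatorname{tr}_n\otimes\tau_0)(v^*v) - (\operatorname{tr}_n\otimes\tau_0)(vv^*) = 0$, so faithfulness forces $p = 1$. Thus every $M_n(B)$ is finite, i.e. $B$ is stably finite. Finally, Theorem~\ref{thm: main theorem} gives $C^*(\GG) \cong \KK(\ell^2(\Gamma^0)) \otimes B$; since $\KK \otimes \KK \cong \KK$, we have $C^*(\GG)\otimes\KK \cong B \otimes \KK$, which is finite because $B$ is stably finite, and therefore $C^*(\GG)$ is stably finite. (Equivalently, stable finiteness is invariant under stable isomorphism, as used freely following Section~\ref{subsubsec:FundamentalExamples}.)

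I do not expect a serious obstacle here, as the corollary is essentially an assembly of already-established results. The one point requiring care is that the faithful trace of Proposition~\ref{prop: subodometers are tracial} lives on the \emph{reduced} crossed product, whereas Theorem~\ref{thm: main theorem} involves the \emph{full} crossed product; identifying these two via amenability of $G_0$ is exactly where the hypothesis $\pi_1(\GG,v)$ amenable is used, and is the only subtle step.
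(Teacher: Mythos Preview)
Your proposal is correct and is precisely the argument the paper has in mind; the corollary is stated without proof in the paper because it follows immediately from combining Proposition~\ref{prop: subodometers are tracial}, amenability of $G_0$ (to identify full and reduced crossed products), Theorem~\ref{thm: main theorem}, and the standard fact that a faithful tracial state forces stable finiteness. Your identification of the one subtle point---that the trace lives on the reduced crossed product while Theorem~\ref{thm: main theorem} concerns the full one---is exactly right, and the amenability hypothesis is indeed there to close that gap.
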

	
	Finally, we consider the special case of odometers where $G_0 \cong \Z$ (and hence $G_i \cong \Z$ for all $i$); these are the classic odometers (see \cite{D}, VIII.4, although we allow $[G_i : G_{i+1}] = 1$).  It is clear that $\bigcap_{i=0}^\infty G_i$ is trivial if and only if $[G_i : G_{i+1}] > 1$ for infinitely many $i$, and in this case the action is free (and minimal).  Moreover since $\Z$ is an amenable group, the action on the boundary is amenable.  In fact, the $C^*$-algebras obtained from such graphs of groups are well-known.
	
	\begin{prop} \label{prop: classic odometer}
		
		Let $\GG$ be an odometer graph of groups with $G_i \cong \Z$ for all $i$ and $[G_i : G_{i+1}] > 1$ for infinitely many $i$.  Then $C(v\partial X_\GG) \rtimes \pi_1(\GG,v)$ is a Bunce--Deddens algebra.
		
	\end{prop}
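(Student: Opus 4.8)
The plan is to identify the crossed product $C(v\partial X_\GG) \rtimes \pi_1(\GG,v)$ with a Bunce--Deddens algebra by showing it is isomorphic to a classical odometer crossed product and then invoking the known identification of such crossed products with Bunce--Deddens algebras. First I would use the setup of Example~\ref{example: subodometers} with $G_0 = \Z$: since each $\alpha_{\overline{e_i}}$ is surjective we have $\pi_1(\GG,v) = G_0 = \Z$, and the boundary $v\partial X_\GG$ is the inverse limit $\varprojlim \Z / G_i$, where $G_i = n_i \Z$ for a sequence of integers with $n_i \mid n_{i+1}$ determined by the indices $[G_{i-1}:G_i]$. The action of $\Z$ on this inverse limit is by translation $t \cdot (x_iG_i) = (tx_i G_i)$, which is precisely the classical odometer action on the profinite completion $\widehat{\Z}_{(n_i)} = \varprojlim \Z/n_i\Z$. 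Because $[G_i:G_{i+1}] > 1$ for infinitely many $i$, the orders $n_i$ strictly increase infinitely often, so the associated supernatural number is infinite.

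The key step is then to recall (and cite) the classical fact that the crossed product of the odometer action of $\Z$ on $\varprojlim \Z/n_i\Z$ is a Bunce--Deddens algebra. I would structure this as follows: realize $C(v\partial X_\GG) \rtimes \Z$ concretely as the inductive limit of the building blocks $C(\Z/n_i\Z) \rtimes \Z$, using that the minimal odometer is an inverse limit of finite cyclic rotations. Each finite-level system, together with the connecting maps induced by the quotient maps $\Z/n_{i+1}\Z \to \Z/n_i\Z$, gives an inductive system of circle algebras $M_{n_i}(C(\T))$ with the standard unital embeddings of multiplicity $n_{i+1}/n_i$; the inductive limit of such a system is by definition the Bunce--Deddens algebra of type $(n_i)$. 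This is exactly the content of the standard reference (e.g. Davidson \cite{D}, VIII.4), and since the action is free and minimal (as noted just before the statement), the crossed product is simple, matching the simplicity of Bunce--Deddens algebras.

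The main obstacle, and the part requiring the most care, is making precise the identification of the transformation-group $C^*$-algebra of the odometer with the inductive limit of circle algebras, rather than simply asserting it. I would either invoke a cited theorem directly, or sketch the inductive-limit structure by exhibiting the generating unitaries: the image of the generator $1 \in \Z$ in $j_\pi(\Z)$ together with the spectral projections coming from the partition of $v\partial X_\GG$ into the clopen sets indexed by $\Z/n_i\Z$ generate a copy of $M_{n_i}(C(\T))$ at each finite stage, and the connecting maps are the canonical inclusions. Since $\Z$ is amenable the full and reduced crossed products coincide (see Section~\ref{subsec: cstar background}), so there is no ambiguity in which crossed product is meant. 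Given the well-documented nature of this identification, I expect the cleanest presentation to be a short argument reducing to the cited classical result rather than a from-scratch construction of the inductive system.
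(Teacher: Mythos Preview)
Your proposal is correct and follows essentially the same route as the paper: identify the dynamical system with the classical $\Z$-odometer on $\varprojlim \Z/n_i\Z$ and invoke the standard result (Davidson \cite{D}) that its crossed product is a Bunce--Deddens algebra. The paper's proof is even terser---it simply cites \cite{D} after remarking that one may delete the edges $e_i$ with $\alpha_{e_i}$ surjective (i.e., those with $[G_{i-1}:G_i]=1$)---whereas you absorb this step by noting that repeated terms in the inverse system do not change the limit, which is an equally valid way to handle it.
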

	
	\begin{proof}
		This follows from \cite{D}, section VII.4, after we observe that the $C^*$-algebras are unchanged if we delete edges $e_i$ of $\Gamma$ for which $\alpha_{e_i}$ is surjective.
	\end{proof}
	
	We remark also that the Bunce--Deddens algebras are simple, nuclear, stably finite, tracial $C^*$-algebras.
\end{example}

\begin{example} \label{example: not top free}
	
	We give an example of an effective graph of groups whose associated action on the boundary of its Bass--Serre tree is minimal, but not topologically free. We first present the tree, $X$, and an action of a group, $\Lambda$.  Let
	\begin{align*}
		X^0 &= \{ v \} \cup \{ x_{i,m} : i \in \Z / 2\Z,\ m \in \bigcup_{n \ge 0} \Z / 2^{n+1} \Z \} \\
		X^1 &= \{ e_{i,m} : i \in \Z / 2\Z,\ m \in \bigcup_{n \ge 0} \Z / 2^{n+1} \Z \} \\
		s(e_{i,m}) &= x_{i,m} \\
		r(e_{i,m}) &= 
		\begin{cases}
			x_{i, m + 2^n \Z}, &\text{ if } m \in \Z / 2^{n+1}\Z \\
			v, &\text{ if } m = \Z.
		\end{cases}
	\end{align*}
	The portion of the tree with $n \le 2$ is pictured here:
	
	\[
	\begin{tikzpicture}[xscale=2][yscale=.75]
	
	\node (v) at (0,0) [circle] {$v$};
	\node (u10) at (1,0) [circle] {$x_{0,\Z}$};
	\node (u22) at (2,2) [circle] {$x_{0,2\Z}$};
	\node (u2m2) at (2,-2) [circle] {$x_{0,1+2\Z}$};
	\node (u33) at (3,3) [circle] {$x_{0,4\Z}$};
	\node (u31) at (3,1) [circle] {$x_{0,2+4\Z}$};
	
	\node (u3m1) at (3,-1) [circle] {$x_{0,1+4\Z}$};
	\node (u3m3) at (3,-3) [circle] {$x_{0,3+4\Z}$};
	\node (um10) at (-1,0) [circle] {$x_{1,\Z}$};
	\node (um22) at (-2,2) [circle] {$x_{1,2\Z}$};
	\node (um2m2) at (-2,-2) [circle] {$x_{1,1+2\Z}$};
	\node (um33) at (-3,3) [circle] {$x_{1,4\Z}$};
	\node (um31) at (-3,1) [circle] {$x_{1,2+4\Z}$};
	\node (um3m1) at (-3,-1) [circle] {$x_{1,1+4\Z}$};
	\node (um3m3) at (-3,-3) [circle] {$x_{1,3+4\Z}$};
	
	\draw[-latex,thick] (u10) -- (v) node[pos=0.5, inner sep=0.5pt, anchor=south] {$e_{0,\Z}$};
	\draw[-latex,thick] (u22) -- (u10) node[pos=0.5, inner sep=0.5pt, anchor=south east] {$e_{0,2\Z}$};
	\draw[-latex,thick] (u2m2) -- (u10) node[pos=0.5, inner sep=0.5pt, anchor=north east] {$e_{0,1+2\Z}$};
	\draw[-latex,thick] (u33) -- (u22) node[pos=0.5, inner sep=0.5pt, anchor=south east] {$e_{0,4\Z}$};
	\draw[-latex,thick] (u31) -- (u22) node[pos=0.5, inner sep=0.5pt, anchor=south west] {$e_{0,2+4\Z}$};
	\draw[-latex,thick] (u3m1) -- (u2m2) node[pos=0.5, inner sep=0.5pt, anchor=south east] {$e_{0,1+4\Z}$};
	\draw[-latex,thick] (u3m3) -- (u2m2) node[pos=0.5, inner sep=0.5pt, anchor=south west] {$e_{0,3+4\Z}$};
	
	\draw[-latex,thick] (um10) -- (v) node[pos=0.5, inner sep=0.5pt, anchor=south] {$e_{1,\Z}$};
	\draw[-latex,thick] (um22) -- (um10) node[pos=0.5, inner sep=0.5pt, anchor=south west] {$e_{1,2\Z}$};
	\draw[-latex,thick] (um2m2) -- (um10) node[pos=0.5, inner sep=0.5pt, anchor=north west] {$e_{1,1+2\Z}$};
	\draw[-latex,thick] (um33) -- (um22) node[pos=0.5, inner sep=0.5pt, anchor=south west] {$e_{1,4\Z}$};
	\draw[-latex,thick] (um31) -- (um22) node[pos=0.5, inner sep=0.5pt, anchor=south east] {$e_{1,2+4\Z}$};
	\draw[-latex,thick] (um3m1) -- (um2m2) node[pos=0.5, inner sep=0.5pt, anchor=south west] {$e_{1,1+4\Z}$};
	\draw[-latex,thick] (um3m3) -- (um2m2) node[pos=0.5, inner sep=0.5pt, anchor=south east] {$e_{1,3+4\Z}$};
	
	\end{tikzpicture}
	\]
	
	Define automorphisms $a$ and $b$ of $X$ by $a v = v$, $a x_{i,m} = x_{i+1,m}$, $b v = v$, $b x_{0,m} = x_{0,m + 1}$, and $b x_{1,m} = x_{1,m}$.  We let $\Lambda$ be the subgroup of Aut$(X)$ generated by $a$ and $b$.  Let $X_i$, $i \in \Z / 2\Z$ be the subtree with vertices $\{ x_{i,m} : m \text{ arbitrary} \}$.  It is clear that $\partial X = \partial X_0 \sqcup \partial X_1$, that $b$ acts trivially on $X_1$, that $a$ interchanges $X_0$ and $X_1$, and that $aba$ acts trivially on $X_0$.  We may identify a finite path $e_{0,\Z} e_{0,m_1} e_{0,m_2} \cdots$ in $X_0$ with the nested sequence $\Z \supseteq m_1 \supseteq m_2 \supseteq \cdots$ of cosets.  Then $\partial X_0$ is identified with the $2$-adic integers $\Z_2$.  It is clear that $b$ acts on $\partial X_0$ as $\cdot + 1$; this is an example of an odometer as in Proposition \ref{prop: classic odometer}. Thus the $C^*$-crossed product $C(\Z_2) \rtimes \Z$ is isomorphic to the Bunce--Deddens algebra denoted BD$(2^\infty)$, a simple $C^*$-algebra (\cite{D}).  Similarly, $aba$ acts as $\cdot + 1$ on $\partial X_1 \cong \Z_2$.  Thus we may write $\partial X = \Z / 2\Z \times \Z_2$, and then $b \cdot (i,\xi) = (i,\xi + 1)$ if $i = 0$, $b \cdot (i,\xi) = (i,\xi)$ if $i = 1$, and $a \cdot (i,\xi) = (i + 1,\xi)$.  Since the element $b$ acts freely and minimally on $\partial X_0$, the element $aba$ acts freely and minimally on $\partial X_1$, and the element $a$ interchanges the two parts of the boundary, it is immediate that $\Lambda$ acts minimally and effectively on $\partial X$.  However, each point of $\partial X_0$ is fixed by $aba$, and each point of $\partial X_1$ is fixed by $b$.  Thus the action is not topologically free.
	
	Now we identify the graph of groups corresponding to this action.  It is easily seen that the stabiliser groups of the vertices are given as follows.  The group $\Lambda_{x_{0,m}}$ is free abelian on generators $aba$ and $b^{2^n}$, if $m \in \Z / 2^n\Z$, while $\Lambda_{x_{1,m}}$ is free abelian on generators $b$ and $(aba)^{2^n}$, if $m \in \Z / 2^n\Z$, and $\Lambda_v = \Lambda \cong \Z^2 \rtimes \Z / 2\Z$ (since $v$ is a global fixed point of $\Lambda$).  We describe the quotient graph by letting $e_n = \Lambda \cdot e_{i,m}$ for any $i$ and any $m \in \Z / 2^n \Z$, $u_n = s(e_n)$, $v = \Lambda \cdot v = r(e_0)$, and $r(e_n) = u_{n+1}$.  The vertex groups are given by $G_v = \Lambda$ and $G_{u_n} = \Z^2$.  The edge groups are given by $G_{e_n} = \Z^2$, with edge maps $\alpha_{e_0} =$ inclusion, $\alpha_{\overline{e_n}} =$ identity for $n \ge 0$, $\alpha_{e_n} = ( \begin{smallmatrix} 2&0 \\ 0&1 \end{smallmatrix} )$ for $n > 0$.  Here is a sketch of the graph of groups (it is an odometer in the sense of Example \ref{example: subodometers}):
	\[
	\begin{tikzpicture}[xscale=3]
	
	\node (00) at (0,0) [circle] {$\Z^2 \rtimes \frac{\Z}{2\Z}$};
	\node (10) at (1,0) [circle] {$\Z^2$};
	\node (20) at (2,0) [circle] {$\Z^2$};
	\node (30) at (3,0) [circle] {$\Z^2$};
	
	\draw[-latex,thick] (10) -- (00)
	node[pos=0.8, inner sep=0.5pt, below=2pt] {$i$}
	node[pos=0.5, inner sep=0.5pt, above=2pt] {$\Z^2$}
	node[pos=0.2, inner sep=0.5pt, below=2pt] {$(\begin{smallmatrix}1&0 \\ 0&1 \end{smallmatrix} )$};
	\draw[-latex,thick] (20) -- (10)
	node[pos=0.8, inner sep=0.5pt, below=2pt] {$(\begin{smallmatrix}2&0 \\ 0&1 \end{smallmatrix} )$}
	node[pos=0.5, inner sep=0.5pt, above=2pt] {$\Z^2$}
	node[pos=0.2, inner sep=0.5pt, below=2pt] {$(\begin{smallmatrix}1&0 \\ 0&1 \end{smallmatrix} )$};
	\draw[-latex,thick] (30) -- (20)
	node[pos=0.8, inner sep=0.5pt, below=2pt] {$(\begin{smallmatrix}2&0 \\ 0&1 \end{smallmatrix} )$}
	node[pos=0.5, inner sep=0.5pt, above=2pt] {$\Z^2$}
	node[pos=0.2, inner sep=0.5pt, below=2pt] {$(\begin{smallmatrix}1&0 \\ 0&1 \end{smallmatrix} )$};
	
	\draw[-latex,dashed] (3.5,0) -- (30);
	
	\end{tikzpicture}
	\]
	We note that the effectiveness of $\GG$ can be easily seen using Propositon \ref{prop: effective subodometer}.  Finally, we identify the $C^*$-algebra 
	\[
	C(\partial X) \rtimes \Lambda = C(\partial X) \rtimes (\Z^2 \rtimes \Z / 2\Z) = \big(C(\partial X) \rtimes \Z (\begin{smallmatrix}1\\0\end{smallmatrix}) \rtimes \Z (\begin{smallmatrix}0\\1\end{smallmatrix})\big) \rtimes \Z / 2\Z. 
	\]
	We have already observed that $C(\Z_2) \rtimes \Z \cong BD(2^\infty)$.  On the other hand, for any $C^*$-algebra $A$, if $\Z$ acts trivially on $A$ then $A \rtimes \Z \cong A \otimes C^*(\Z) \cong A \otimes C(\T)$.  Then
	\[
	C(\partial X) \rtimes \Z (\begin{smallmatrix}1\\0\end{smallmatrix}) \cong ( C(\Z_2) \oplus C(\Z_2) ) \rtimes \Z  (\begin{smallmatrix}1\\0\end{smallmatrix})\cong BD(2^\infty) \oplus ( C(\Z_2) \otimes C(\T) ).
	\]
	Similarly,
	\begin{align*}
		C(\partial X) \rtimes \Z (\begin{smallmatrix}1\\0\end{smallmatrix}) \rtimes \Z (\begin{smallmatrix}0\\1\end{smallmatrix})
		&\cong ( BD(2^\infty) \oplus ( C(\Z_2) \otimes C(\T) ) ) \rtimes \Z (\begin{smallmatrix}0\\1\end{smallmatrix}) \\
		&\cong (BD(2^\infty) \otimes C(\T) ) \oplus ( BD(2^\infty) \otimes C(\T) ).
	\end{align*}
	Since $\Z / 2\Z$ acts by interchanging the two summands, we have
	\[
	C(\partial X) \rtimes \Lambda
	\cong M_2 \otimes BD(2^\infty) \otimes C(\T).
	\]
	
\end{example}

\begin{rmk}\label{rem:BroiseStuff}
In Section 4 of~\cite{BAP}, Broise-Alamichel and Paulin fix a locally finite tree $T$ and a discrete subgroup $\Lambda$ of $\Aut(T)$ which admits a Patterson--Sullivan measure $(\mu_v)_{v \in T^0}$ of positive finite dimension, so that the fixed point sets of nontrivial elliptic elements of $\Lambda$ in the boundary of $T$ have $\mu_v$-measure zero.  In Remarque 4.1, they prove that this zero measure condition is true when $T$ is a uniform tree, $\Lambda$ is a (uniform or nonuniform) lattice in $\Aut(T)$, and an additional hypothesis on the elliptic elements of $\Lambda$ holds.  (See Remark~\ref{rem:LatticeStuff} for the characterisation of lattices in $\Aut(T)$).

Let $\GG = (\G,G)$ be a locally finite graph of groups with fundamental group $\Lambda:=\pi_1(\GG,v)$ and Bass--Serre tree $T$.  We suppose $\GG$ is a graph of trivial groups.  Then $\Lambda$ is a free subgroup of $\Aut(T)$, of rank equal to the first Betti number of the graph $\G$.  Now a necessary condition for the existence of Patterson--Sullivan measure on $\Lambda$ is that the critical exponent of $\Lambda$ is finite and positive (see~\cite{BAP}); under our assumption that $\GG$ is a graph of trivial groups, this condition holds if and only if the first Betti number of $\G$ is finite and greater than one.  Free groups have no nontrivial elliptic elements, hence if $\Lambda$ admits a Patterson--Sullivan measure then the zero measure condition of~\cite{BAP} is vacuously true.  Note that $\Lambda$ will be a lattice in $\Aut(T)$ if and only if the underlying graph $\G$ is finite, in which case $\Lambda$ is a uniform lattice.  On the other hand, a GBS graph of groups (see Section~\ref{sec: GBS}) has infinite vertex groups, so its fundamental group is not discrete in $\Aut(T)$.  Since~\cite{BAP} establishes topological freeness for certain discrete subgroups of $\Aut(T)$, including certain lattices containing torsion, the overlap of our topological freeness results with theirs is the case of graphs of trivial groups where the underlying graph $\G$ has finite Betti number greater than one. (Recall that the Betti number of $\G$ is the cardinality of the set of edges in the complement of any maximal subtree in $\Gamma$.) Such a $\G$ could be a finite or an infinite graph.
\end{rmk}

\subsection{Amenability}\label{subsec: amenability}

For minimality, topological freeness and local contractivity of an action to be useful properties for the corresponding full crossed product, we need the action to be amenable, and hence the crossed product nuclear. In our setting we can use results of \cite{BO} to find a large class of graphs of groups whose fundamental group acts amenably on the boundary.

\begin{thm}\label{thm: nuclearity of C*(GG)}
Let $\GG=(\Gamma,\GG)$ be a locally finite nonsingular graph of countable groups. If each vertex group $G_v$ is amenable, then the action of the fundamental group $\pi_1(\GG,v)$ on the boundary $v\partial X_\GG$ is amenable, and hence $C^*(\GG)$ is nuclear. 
\end{thm}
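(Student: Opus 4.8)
The plan is to reduce the statement to topological amenability of the boundary action and then import a standard fact about groups acting on trees. First I would invoke Theorem~\ref{thm: main theorem}, which gives a stable isomorphism $C^*(\GG)\cong \KK(\ell^2(\Gamma^0))\otimes\big(C(v\partial X_\GG)\rtimes_\tau\pi_1(\GG,v)\big)$. Since nuclearity is preserved under stable isomorphism and under tensoring with $\KK$ (Section~\ref{subsec: cstar background}), it suffices to show the crossed product $C(v\partial X_\GG)\rtimes_\tau\pi_1(\GG,v)$ is nuclear. As recalled in Section~\ref{subsec: cstar background}, an amenable action of a countable discrete group on a compact Hausdorff space yields a nuclear (full $=$ reduced) crossed product; here $v\partial X_\GG$ is compact and $\pi_1(\GG,v)$ is countable under our standing assumptions. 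Thus everything reduces to proving that the action $\pi_1(\GG,v)\curvearrowright v\partial X_\GG$ is topologically amenable.

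Next I would identify the point stabilizers of the action on the tree. Using the description of the $\pi_1(\GG,v)$-action on $X_{\GG,v}$ from Section~\ref{subsec: Bass--Serre Theory} (namely $\gamma'\cdot\gamma G_x=\gamma'\gamma G_x$), the stabilizer of a vertex $\gamma G_x$ is $\gamma G_x\gamma^{-1}$, which is isomorphic to the vertex group $G_x$ and hence amenable by hypothesis. So the action of $\pi_1(\GG,v)$ on the vertex set $X_{\GG,v}^0$ has amenable stabilizers, and $X_{\GG,v}$ is locally finite since $\GG$ is locally finite.

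The heart of the argument is then the general principle that a countable group acting on a locally finite tree with amenable vertex stabilizers acts amenably on the tree boundary. I would either cite this directly from \cite{BO}, or prove it by the classical geodesic-measure construction: fixing the base vertex $o=1G_v$ and writing $\xi(k)$ for the $k$-th vertex of the geodesic ray from $o$ to $\xi\in v\partial X_\GG$, the locally constant maps
\[
\nu_n:v\partial X_\GG\to\mathrm{Prob}(X_{\GG,v}^0),\qquad \nu_n(\xi)=\frac1n\sum_{k=n}^{2n-1}\delta_{\xi(k)},
\]
form an asymptotically equivariant net, because for fixed $\gamma$ the geodesics $[o,\gamma\xi]$ and $\gamma[o,\xi]=[\gamma o,\gamma\xi]$ share a common tail beyond the ball of radius $d(o,\gamma o)$, so that $\|\nu_n(\gamma\xi)-\gamma\cdot\nu_n(\xi)\|_1\le 2d(o,\gamma o)/n\to0$ uniformly in $\xi$. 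One then transfers these vertex-supported measures to measures on $\pi_1(\GG,v)$ using amenability of the stabilizers, obtaining an asymptotically equivariant net $v\partial X_\GG\to\mathrm{Prob}(\pi_1(\GG,v))$, which is precisely topological amenability. Combined with the first paragraph, this yields nuclearity of $C^*(\GG)$.

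The main obstacle is this last step: establishing topological amenability of the boundary action, and in particular the transfer from measures on vertices to measures on the group via the amenable stabilizers, with the continuity and asymptotic-invariance estimates holding uniformly. If a ready-made statement of the required form is available in \cite{BO} (for groups acting on trees with amenable stabilizers, or via the equivalent groupoid $F(\GG)*\partial W_\GG$ of Remark~\ref{rem:groupoidCStar}), I would cite it and keep the proof short; otherwise the geodesic-measure construction above, together with the standard permanence result that an amenable system of measures composed with an action having amenable stabilizers is amenable, fills the gap.
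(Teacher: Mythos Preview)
Your proposal is correct and follows essentially the same route as the paper. The paper's proof is just the one-line citation ``The result follows directly from Theorem~\ref{thm: main theorem}, \cite[Proposition~5.2.1]{BO} and \cite[Lemma~5.2.6]{BO}'', which are precisely the Brown--Ozawa results encoding the principle you sketch (a group acting on a tree with amenable vertex stabilisers acts amenably on the boundary); your geodesic-measure argument is exactly how those results are proved, so you have simply unpacked what the paper cites.
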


\begin{proof}
The result follows directly from Theorem~\ref{thm: main theorem}, \cite[Proposition~5.2.1]{BO} and \cite[Lemma~5.2.6]{BO}.
\end{proof}

In fact, the locally finite and nonsingular assumptions are not needed to apply the results of \cite{BO} to see that the action is amenable (although the definition of $v\partial X_\GG$ must be adjusted for the non-locally finite case).

\section{Graphs of trivial groups}\label{sec:undirected graphs}

In this section we consider graphs of trivial groups, and the action of their fundamental group, which will always be a free group, on the boundary of the Bass--Serre tree. For $\GG=(G,\G)$ a graph of trivial groups, we use $\Gamma$ in place of $\GG$ in our notation; so the fundamental group, boundary and graph of groups algebra are denoted $\pi_1(\Gamma,v)$, $v\partial X_\Gamma$ and $C^*(\Gamma)$, respectively. We start by characterising when the action of $\pi_1(\Gamma,v)$ on $v\partial X_\Gamma$ is topologically free. We then prove that all simple $C^*(\Gamma)$ are necessarily UCT Kirchberg algebras.  

\subsection{Topological freeness for graphs}\label{subsec:TopFreeForUnG}

We have the following characterisation of topological freeness:

\begin{thm} \label{thm: top free graph}
	
	Let $\Gamma$ be a nonsingular locally finite graph, and let $v \in \Gamma^0$.  The action of $\pi_1(\Gamma,v)$ on $v \partial X_\Gamma$ is topologically free if and only if $\Gamma$ is not a minimal cycle.
	
\end{thm}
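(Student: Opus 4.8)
The plan is to translate topological freeness into a statement about fixed points of tree isometries in the end boundary, and then to identify exactly when these fixed points can be isolated. Since all vertex and edge groups are trivial, the Fundamental Theorem (Theorem~\ref{thm: BS theorem}) identifies $\pi_1(\Gamma,v)$ with a group acting on the tree $X_\Gamma$ with trivial vertex and edge stabilisers; thus the action on $X_\Gamma$ is free, and (after noting the action is without inversions) every nontrivial $\gamma \in \pi_1(\Gamma,v)$ is a hyperbolic isometry. Such a $\gamma$ has a unique axis and fixes exactly two ends $\gamma^+$, $\gamma^-$ of $X_\Gamma$ and no other boundary point. Identifying $v\partial X_\Gamma$ with the space of ends as in Remark~\ref{rmk: visualboundary}, the fixed-point set of $\gamma$ is therefore the two-point set $\{\gamma^+,\gamma^-\}$.

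First I would record the elementary topological fact that, in the totally disconnected space $v\partial X_\Gamma$, a two-point set has dense complement if and only if it has empty interior, which happens if and only if neither of its points is isolated. Hence topological freeness is equivalent to the assertion that no endpoint of the axis of a nontrivial element is isolated in $v\partial X_\Gamma$. Next I would characterise isolated points: a point $\xi$ is isolated if and only if some cylinder $Z(\mu)$ equals $\{\xi\}$, which by nonsingularity occurs exactly when every vertex on the ray representing $\xi$ beyond $s(\mu)$ has valence precisely $2$, so that the reduced continuation is unique at each step. Because the Bass--Serre tree of a graph of trivial groups is the genuine universal cover of $\Gamma$, the valence of a vertex of $X_\Gamma$ equals the valence of its image in $\Gamma$.

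The heart of the argument is the following. The geodesic ray from $v$ to $\gamma^+$ eventually runs along the axis, whose projection to $\Gamma$ is a periodic reduced bi-infinite path $\cdots w\,w\,w \cdots$ for some primitive reduced closed path $w$. If $\gamma^+$ were isolated, then every vertex of that tail would have valence $2$, so every vertex visited by $w$ would have valence $2$ in $\Gamma$. A short combinatorial lemma then shows that a reduced closed path through only valence-$2$ vertices in a connected nonsingular graph forces $\Gamma$ to be $2$-regular and connected, that is, a single cycle, and hence a minimal cycle. Consequently, if $\Gamma$ is \emph{not} a minimal cycle then neither $\gamma^+$ nor $\gamma^-$ can be isolated, so $\{\gamma^+,\gamma^-\}$ has empty interior and the action is topologically free.

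For the converse I would observe that if $\Gamma$ is a minimal cycle then $\pi_1(\Gamma,v)\cong\Z$ and $X_\Gamma$ is the bi-infinite line, whose end space $v\partial X_\Gamma$ consists of exactly two points; the generator acts by translation and fixes both ends, so $\{\xi : \gamma\xi\neq\xi\}=\emptyset$ is not dense and the action fails to be topologically free. The main obstacle is the combinatorial lemma relating isolated axis-endpoints to minimal cycles, together with the bookkeeping needed to pass between the ends picture and the cylinder-set description of the boundary; by contrast, the freeness and hyperbolicity of the action are immediate from triviality of the vertex groups, and the remaining point-set topology is routine.
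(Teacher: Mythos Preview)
Your argument is correct and takes a genuinely different route from the paper. The paper proceeds by directly constructing, inside every cylinder set $Z(\mu)$, a boundary point with trivial isotropy; it does this by a case split on the rank of the free group $\pi_1(\Gamma,v)$ (trivial, infinite cyclic, or nonabelian) and in each nontrivial case manufactures an infinite reduced path whose edge sequence is eventually aperiodic, using either edges in an infinite treelike piece $E_{s,e}$ or aperiodic words in two free generators $\varepsilon(e_1),\varepsilon(e_2)$. Your approach instead invokes the elliptic/hyperbolic dichotomy for tree isometries: freeness of the action on $X_\Gamma$ forces every nontrivial element to be hyperbolic, so its boundary fixed-point set is exactly the two axis endpoints, and topological freeness reduces to the purely combinatorial statement that no such endpoint is isolated. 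Your valence argument (isolated end $\Rightarrow$ tail through valence-$2$ vertices $\Rightarrow$ the cyclically reduced loop $w$ carrying the axis visits only valence-$2$ vertices $\Rightarrow$ $\Gamma$ is a single cycle) is clean and avoids the paper's case analysis entirely. One small point to make explicit when you write it up: the period $w$ of the projected axis is \emph{cyclically} reduced (since the axis is a geodesic), which is what guarantees that at every vertex of $w$ the incoming and outgoing edges are distinct and therefore exhaust the two edges at that vertex. What the paper's approach buys is slightly more: it produces a dense set of points with trivial isotropy (not merely empty interior of each fixed-point set), and its explicit constructions feed into the later GBS arguments; what your approach buys is a shorter, more conceptual proof that makes transparent why the minimal cycle is the unique obstruction.
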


To prove this result we need a lemma, whose proof is omitted, since it follows from the definition of the fundamental group in the case of a graph of trivial groups together with a basic result in algebraic topology (see, for instance, Proposition 1A.2 of the reference~\cite{hatcher}). First recall that if $T$ is a maximal subtree of the graph $\Gamma$, and $e \in \Gamma^1$, then $\varepsilon(e) = [v, r(e)] e [s(e), v]$. (Since all vertex groups are trivial, there can be no confusion from our use of a path in $\Gamma$ to denote an element of the fundamental group.) If $e_1$, $\ldots$, $e_k \in \Gamma^1$ then
\[
\varepsilon(e_1) \cdots \varepsilon(e_k) = [v,r(e_1)] e_1 [s(e_1), r(e_2)] e_2 \cdots [r(e_{k-1}), s(e_k)] e_k [s(e_k), v].
\]
Also recall from, say, \cite{Se}, that an {\em orientation} of a graph $\Gamma$ is a set $\Gamma_+ \subseteq \Gamma^1$ such that for each $e \in \Gamma^1$ exactly one of $e$ and $\overline{e}$ belongs to $\Gamma_+$.

\begin{lem} \label{lem: free basis}
	
	Let $\Gamma$ be a graph, $T$ a maximal subtree, $B = \Gamma^1 \setminus T^1$, and $\Gamma_+$ an orientation for $\Gamma$. Fix $v \in \Gamma^0$.  Let $G = \pi_1(\Gamma,v) = v \Gamma^* v$.  Then $G$ is freely generated by $\{ \varepsilon(e) : e \in B \}$.
	
\end{lem}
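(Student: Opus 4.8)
The plan is to avoid the based-loop combinatorics entirely and instead read the result off the presentation of Definition~\ref{def: fund group tree}, then transport the resulting free basis across the isomorphism $\varepsilon$ of Notation~\ref{ntn: general notation}.

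First I would specialise Definition~\ref{def: fund group tree} to trivial vertex (hence edge) groups. The generating set $\Gamma^1 \sqcup \big(\bigsqcup_{x} G_x\big)$ then collapses to the edge set $\Gamma^1$, since every vertex group contributes only the group identity. The surviving defining relations are (R1) $\overline e = e^{-1}$, relation (R2) (which now reads $e\overline e = 1$ and is therefore already a consequence of (R1)), and (R3) $e = 1$ for $e \in T^1$. Thus
\[
\pi_1(\Gamma,T) \cong \big\langle \Gamma^1 \ \big|\ \overline e = e^{-1} \ (e \in \Gamma^1),\ e = 1 \ (e \in T^1) \big\rangle.
\]
Using the orientation $\Gamma_+$, relation (R1) lets me eliminate, by Tietze transformations, each generator $\overline e$ for $e \in \Gamma_+$, leaving the generating set $\{e : e \in \Gamma_+\}$ with the only remaining relations being $e = 1$ for $e \in \Gamma_+ \cap T^1$. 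Deleting those trivial generators yields a presentation with free generating set $\{e : e \in \Gamma_+ \cap B\}$ and no relations, so $\pi_1(\Gamma,T)$ is free on $\{e : e \in \Gamma_+ \cap B\}$.

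Next I would invoke the isomorphism $\varepsilon \colon \pi_1(\Gamma,T) \to \pi_1(\Gamma,v)$, which by \eqref{E: ep(e)} sends the generator $e$ to $\varepsilon(e) = [v,r(e)]\,e\,[s(e),v]$. Since an isomorphism carries a free basis to a free basis, the set $\{\varepsilon(e) : e \in \Gamma_+ \cap B\}$ freely generates $\pi_1(\Gamma,v) = v\Gamma^* v$. Because $B$ is closed under edge reversal and $\varepsilon(\overline e) = \varepsilon(e)^{-1}$ (again from \eqref{E: ep(e)}), the set $\{\varepsilon(e) : e \in B\}$ is exactly this basis together with its inverses, which is the assertion of the lemma.

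The only real care needed—and the main bookkeeping obstacle—is the interaction between $B$ being reversal-closed and the identity $\varepsilon(\overline e) = \varepsilon(e)^{-1}$, so that ``freely generated by $\{\varepsilon(e): e \in B\}$'' is understood as freely generated by one representative per geometric edge $e \in B \cap \Gamma_+$. An alternative route, the one hinted at in the statement, is purely topological: realise $\Gamma$ as a one-dimensional CW complex, observe that collapsing the contractible subcomplex $T$ is a homotopy equivalence onto a wedge of circles indexed by $B \cap \Gamma_+$, and apply the standard computation that $\pi_1$ of such a wedge is free on the generating loops (Proposition~1A.2 of~\cite{hatcher}); one then identifies the loop corresponding to $e$ with $\varepsilon(e)$.
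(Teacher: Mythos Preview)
Your argument is correct. The paper in fact omits the proof entirely, saying only that it follows from the definition of the fundamental group for trivial groups together with Proposition~1A.2 of~\cite{hatcher}; this is precisely the topological alternative you sketch at the end. Your primary argument, reading the free presentation directly off Definition~\ref{def: fund group tree} and then transporting via the isomorphism $\varepsilon$, is a genuinely different and more self-contained route: it stays within the paper's own combinatorial framework and avoids invoking CW-complex homotopy equivalences. The trade-off is that the Hatcher reference is shorter to state, while your Tietze-transformation argument makes the freeness transparent without any external input. Your remark about the reversal-closure of $B$ and the need to interpret the free basis as one $\varepsilon(e)$ per orientation class is well taken; the paper's statement is slightly loose on this point (it introduces $\Gamma_+$ but then writes $\{\varepsilon(e) : e \in B\}$ rather than $\{\varepsilon(e) : e \in B \cap \Gamma_+\}$), and you have resolved it correctly.
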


%
%

\begin{proof}[Proof of Theorem~\ref{thm: top free graph}]
	If $\Gamma$ is a minimal cycle, $\gamma$, then $v \partial X_\Gamma = \{ \gamma^\infty, \overline{\gamma}^\infty \}$, and $\pi_1(\Gamma,v) \cong \Z$ acts trivially.  Conversely, suppose that $\Gamma$ is not a minimal cycle.  The proof depends on the fundamental group of $\Gamma$.  Let $T$ and $B$ be as in Lemma \ref{lem: free basis}. If $\pi_1(\Gamma,v)$ is trivial, then all points of $v \partial X_\Gamma$ have trivial isotropy.  Suppose next that $\pi_1(\Gamma,v)$ is nonabelian.  Then there exist $e_1$, $e_2 \in B$ with $e_2 \not= e_1$, $\overline{e_1}$.  Let $\mu \in v \Gamma^*$.  Write $\mu = [v,r(f_1)] f_1 [s(f_1),r(f_2)] f_2 \cdots [s(f_{k-1}),r(f_k)] f_k [s(f_k), s(\mu)]$, with $f_i \in B$ and $f_{i+1} \not= \overline{f_i}$.  Let $d$ be the sourcemost edge of $\mu$.  If $\Gamma$ is treelike at $d$, choose $\xi_0 \in s(d)E_{s,d}^\infty$, and set $\xi = \mu \xi_0$.  Then $\xi \in Z(\mu)$.  Since $\xi_0$ is an infinite path in a tree, its edges are all distinct, hence the sequence of edges is aperiodic. (Recall that a sequence $y_1y_2\dots$ is {\em aperiodic} if $y_my_{m+1}\dots=y_ny_{n+1}\dots \implies m=n$.)  Therefore $\xi$ has trivial isotropy.  If $\Gamma$ is not treelike at $d$, then $E_{s,d}$ contains a cycle, hence an edge $f \in B$.  Let $\beta \in s(d) E_{s,d}^* r(f)$ be a shortest path.  Choose an aperiodic sequence $(i_1,i_2, \ldots) \in \{ 1,2 \}^\infty$ such that $e_{i_1} \not= \overline{f}$.  Put $\xi = \mu \beta f \varepsilon(e_{i_1}) \varepsilon(e_{i_2}) \cdots$.  Then $\xi \in Z(\mu)$, and again, since the sequence of edges in $\xi$ is eventually aperiodic, the isotropy at $\xi$ is trivial.
	
	Finally we suppose that $\pi_1(\Gamma,v)$ is infinite cyclic.  Then there is $e \in \Gamma^1$ such that $B = \{ e, \overline{e} \}$.  Choose $v = r(e)$.  Since $\varepsilon(e)$ is a minimal cycle there must be an edge $f$ not lying on $\varepsilon(e)$ or $\varepsilon(\overline{e})$.  Note that if $f$ can flow to $e$, then also $f$ can flow to $\overline{e}$ (if $\eta$ is a minimal path from $r(f)$ to a vertex in $\varepsilon(e)$, then $e [s(e),r(\eta)]\eta f$ and $\overline{e} [v,r(\eta)] \eta f$ are both reduced paths).  
	
	We claim that $f$ and $\overline{f}$ cannot both flow to $e$.  For if they do, we see from Lemma \ref{def: suitability} that there are $\nu_1$, $\nu_2 \in \Gamma^*$ such that $e \nu_1 f \overline{\nu_2}\, \overline{e}$ has no cancellation.  Choosing $\nu_1$ and $\nu_2$ as short as possible, we may assume that $e$ does not lie on $\nu_1$ or on $\nu_2$.  If $\overline{e}$ lies on $\nu_1$, we may write $\nu_1 = \nu_1' \overline{e} \nu_1''$, where $\overline{e}$ does not lie on $\nu_1'$.  Then $\nu_1'$ is a cycle, hence must contain an edge from $B$ different from $e$ and $\overline{e}$, a contradiction.  Similarly, $\overline{e}$ does not lie on $\nu_2$.  Then $\nu_1 f \overline{\nu_2}$ is a cycle, hence contains an edge from $B$ different from $e$ and $\overline{e}$, again a contradiction.  This establishes the claim.  For definiteness suppose that $\overline{f}$ cannot flow to $e$ (and hence, nor to $\overline{e}$).  Then $\Gamma$ is treelike at $f$, since a cycle in $E_{s,f}$ would contain an element of $B$ different from $e$ and $\overline{e}$.  
	
	Now we prove that the action is topologically free.  Let $\mu = e_1 \cdots e_n \in v \Gamma^*$.  First suppose that $e_n$ does not lie on $\varepsilon(e)$ or $\varepsilon(\overline{e})$.  Since $r(\mu) = v$, it is clear that $e_n$ can flow to $e$.  Therefore $\overline{e_n}$ cannot flow to $e$, hence $\Gamma$ is treelike at $e_n$.  Let $\xi_0 \in s(e_n) E_{s,e_n}^\infty$, and set $\xi = \mu \xi_0$.  Then $\xi \in Z(\mu)$, and since the edges of $\xi_0$ are all distinct, the isotropy at $\xi$ is trivial.  Next suppose that $e_n$ lies on $\varepsilon(e)$ (for definiteness).  Let $f$ be an edge not lying on $\varepsilon(e)$ or $\varepsilon(\overline{e})$ such that $\overline{f}$ cannot flow to $e$.  Then $f$ flows to $e$.  Choose a shortest path $\beta$ with $s(\beta) = r(f)$ and $r(\beta)$ on $\varepsilon(e)$.  We may write $\varepsilon(e) = \gamma_1 \gamma_2 \gamma_3$, where $e_n$ is the sourcemost edge of $\gamma_1$, and $s(\gamma_2) = r(\beta)$.  Let $\xi_0 \in s(f) E_{s,f}^\infty$ and put $\xi = \mu \gamma_2 \beta f \xi_0$.  Then $\xi \in Z(\mu)$ and the isotropy at $\xi$ is trivial.
\end{proof}

\begin{cor} \label{cor: min top free graph}
	
	Let $\Gamma$ be a nonsingular locally finite graph.  If the action of $\pi_1(\Gamma,v)$ on $v \partial X_\Gamma$ is minimal, then it is also topologically free.
	
\end{cor}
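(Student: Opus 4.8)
The plan is to prove the contrapositive: I will show that if the action fails to be topologically free, then it cannot be minimal. The key is that topological freeness has already been completely characterised for graphs of trivial groups in Theorem~\ref{thm: top free graph}, which says the action of $\pi_1(\Gamma,v)$ on $v\partial X_\Gamma$ is topologically free if and only if $\Gamma$ is not a minimal cycle. So the only way the action can fail to be topologically free is if $\Gamma$ is a minimal cycle, and it suffices to rule this case out under the hypothesis of minimality.

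First I would assume, for contradiction, that the action is minimal but not topologically free. By Theorem~\ref{thm: top free graph} this forces $\Gamma$ to be a minimal cycle, say $\Gamma = e_1 e_2 \cdots e_n$. Next I would observe that, since we are in the setting of trivial groups, every edge group is trivial, and hence each monomorphism $\alpha_{\overline{e_i}}$ is the (necessarily surjective) homomorphism between trivial groups. Thus condition (d) of Theorem~\ref{thm: nonminimality second take} is satisfied: $\Gamma$ is a minimal cycle with $\alpha_{\overline{e_i}}$ surjective for all $i$. Theorem~\ref{thm: nonminimality second take} then tells us the action is \emph{not} minimal, contradicting our assumption. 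This contradiction shows the action must be topologically free.

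Alternatively, and perhaps more transparently, I could avoid invoking Theorem~\ref{thm: nonminimality second take} and argue directly in the minimal-cycle case: as recorded in the opening lines of the proof of Theorem~\ref{thm: top free graph}, when $\Gamma$ is a minimal cycle $\gamma$ one has $v\partial X_\Gamma = \{\gamma^\infty, \overline{\gamma}^\infty\}$ with $\pi_1(\Gamma,v)\cong\Z$ acting trivially, so each orbit is a single point in a two-point space and is therefore not dense; hence the action is not minimal. Either route gives the contradiction.

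There is no genuine obstacle here, as the corollary is essentially a bookkeeping consequence of the two preceding theorems; the only point that requires a moment's care is verifying that the minimal-cycle case is indeed excluded by minimality rather than merely being the sole exception to topological freeness. Making the logical direction explicit---that non-topological-freeness implies $\Gamma$ is a minimal cycle, which in turn implies non-minimality---is the entire content of the argument.
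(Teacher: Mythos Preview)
Your proof is correct and follows essentially the same approach as the paper: the paper's proof is the one-line observation that the result follows from Theorem~\ref{thm: nonminimality second take}(d) and Theorem~\ref{thm: top free graph}, which is exactly the argument you unpack in your first route.
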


\begin{proof}
This follows from Theorem~\ref{thm: nonminimality second take}(d) and Theorem~\ref{thm: top free graph}.
\end{proof}

\subsection{$C^*$-algebras associated to graphs}\label{subsec:SimpleCStarGamma}

Let $\Gamma$ be a nonsingular locally finite graph.  As observed in Section \ref{subsec: top freeness}, $C^*(\Gamma)$ is simple if and only if the action of $\pi_1(\Gamma,v)$ on $v \partial X_\Gamma$ is minimal and topologically free. It follows from Corollary \ref{cor: min top free graph} that $C^*(\Gamma)$ is simple if and only if the action is minimal. For a nonsingular graph, conditions (2) and (3) of Proposition \ref{prop: min and lc} cannot occur, so if the action is minimal, then it is also locally contractive.  By Theorem \ref{thm: nuclearity of C*(GG)} we know that $C^*(\Gamma)$ is nuclear.  Moreover, since $C^*(\Gamma)$ is the $C^*$-algebra of an \'etale groupoid, it satisfies the UCT (\cite{T}).  We have established the following.

\begin{thm} \label{thm: graph Kirchberg}
	
	Let $\Gamma$ be a nonsingular locally finite graph. If $C^*(\Gamma)$ is simple it is a UCT Kirchberg algebra.
	
\end{thm}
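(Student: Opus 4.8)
The plan is to assemble the conclusion from the dynamical criteria recalled in the introduction together with the structural results proved earlier in the paper, transporting everything across the stable isomorphism of Theorem~\ref{thm: main theorem}. Since $C^*(\Gamma)$ is assumed simple, the Archbold--Spielberg criterion (the Corollary to Theorem~2 of \cite{AS}, applied as in Section~\ref{subsec: top freeness}) tells us that the action of $\pi_1(\Gamma,v)$ on $v\partial X_\Gamma$ is minimal and topologically free. By Corollary~\ref{cor: min top free graph}, minimality already forces topological freeness here, so the standing hypothesis is in fact equivalent to minimality of the action, and this is the property I would work with throughout.

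The first substantive step is to establish pure infiniteness. By Proposition~\ref{prop: min and lc}, a minimal action falls into exactly one of three cases, and the crucial observation is that for a nonsingular graph of trivial groups the two exceptional cases cannot arise: cases~(2) and~(3) both describe a ray, whose range end is a vertex of valence one and hence singular, contradicting nonsingularity; moreover case~(3) additionally requires $|\Sigma_{e_1}| = 2$, which is impossible since every transversal $\Sigma_e$ for $G_{r(e)}/\alpha_e(G_e)$ is trivial when all groups are trivial. Hence the action is locally contractive. Being simultaneously minimal, topologically free and locally contractive, the reduced crossed product $C(v\partial X_\Gamma)\rtimes_r\pi_1(\Gamma,v)$ is simple and purely infinite, by the results of \cite{LS,A2} quoted in the introduction.

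Next I would settle nuclearity, the coincidence of the full and reduced crossed products, and the UCT. Since all vertex groups are trivial and hence amenable, Theorem~\ref{thm: nuclearity of C*(GG)} shows the action is amenable and that $C^*(\Gamma)$ is nuclear; amenability of the action also makes the canonical surjection $C(v\partial X_\Gamma)\rtimes\pi_1(\Gamma,v)\to C(v\partial X_\Gamma)\rtimes_r\pi_1(\Gamma,v)$ an isomorphism (\cite{A}), so the pure infiniteness just obtained passes to the full crossed product appearing in Theorem~\ref{thm: main theorem}. Separability follows from countability of $\pi_1(\Gamma,v)$ together with second countability of $v\partial X_\Gamma$, and the UCT holds because $C^*(\Gamma)$ is the $C^*$-algebra of the amenable \'etale groupoid of Remark~\ref{rem:groupoidCStar} (\cite{T}). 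Finally I would transport simplicity, separability, nuclearity, pure infiniteness and the UCT across the stable isomorphism $C^*(\Gamma)\cong\KK\otimes\big(C(v\partial X_\Gamma)\rtimes_\tau\pi_1(\Gamma,v)\big)$ of Theorem~\ref{thm: main theorem}, using that all of these properties are invariant under stable isomorphism, to conclude that $C^*(\Gamma)$ is a UCT Kirchberg algebra.

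The main obstacle is the verification of pure infiniteness, and in particular the exclusion of the two non-locally-contractive cases of Proposition~\ref{prop: min and lc}; once local contractivity is secured, the remaining properties follow essentially mechanically from amenability of the action and the invariance of the relevant properties under stable isomorphism.
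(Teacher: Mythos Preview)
Your proof is correct and follows essentially the same route as the paper's own argument: simplicity forces minimality (via \cite{AS} and Corollary~\ref{cor: min top free graph}), nonsingularity rules out cases~(2) and~(3) of Proposition~\ref{prop: min and lc} so the action is locally contractive, and nuclearity and the UCT come from Theorem~\ref{thm: nuclearity of C*(GG)} and \cite{T}. Your write-up is in fact slightly more careful than the paper's, making explicit the full/reduced identification, separability, and the transport of properties across the stable isomorphism of Theorem~\ref{thm: main theorem}.
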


Recall, as we did in Section~\ref{subsec: top freeness}, that the \textit{first Betti number} of $\Gamma$ is the cardinality of the set of edges (not including reverse edges) in the complement of any maximal subtree in $\Gamma$; alternatively, the cardinality of any free basis of the fundamental group.

\begin{lem} \label{lem: simple undirected graph}
	
	Let $\Gamma$ be a nonsingular locally finite graph. The following are equivalent.
	
	\begin{enumerate}
		
		\item \label{lem: simple undirected graph.1} $C^*(\Gamma)$ is simple.
		
		\item \label{lem: simple undirected graph.2} The action of $\pi_1(\Gamma,v)$ on $v \partial X_\Gamma$ is minimal.
		
		\item \label{lem: simple undirected graph.3} There are no edges at which $\GG$ has a constant tree (in the sense of Definition~\ref{def:ConstantTree}), and the fundamental group of $\Gamma$ is nonabelian.
		
	\end{enumerate}
	In this case $C^*(\Gamma)$ is a Kirchberg algebra.
	
\end{lem}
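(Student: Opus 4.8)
The plan is to prove the two equivalences separately. For \eqref{lem: simple undirected graph.1}$\Leftrightarrow$\eqref{lem: simple undirected graph.2} there is almost nothing to do beyond assembling what precedes the lemma: Theorem~\ref{thm: main theorem} makes $C^*(\Gamma)$ stably isomorphic to $C(v\partial X_\Gamma)\rtimes_\tau\pi_1(\Gamma,v)$, and since the (trivial) vertex groups are amenable, Theorem~\ref{thm: nuclearity of C*(GG)} shows the action is amenable, so the full and reduced crossed products agree. The result of \cite{AS} characterises simplicity of the reduced crossed product as minimality together with topological freeness, and Corollary~\ref{cor: min top free graph} shows that minimality already implies topological freeness for graphs. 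As stable isomorphism preserves simplicity, $C^*(\Gamma)$ is simple precisely when the action is minimal.

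The real content is \eqref{lem: simple undirected graph.2}$\Leftrightarrow$\eqref{lem: simple undirected graph.3}, which I would establish in the contrapositive: the action is \emph{not} minimal if and only if $\GG$ has a constant tree at some edge or $\pi_1(\Gamma,v)$ is abelian. The key preliminary observations, all special to the trivial-groups case, are that every edge map $\alpha_f$ is vacuously surjective, so the surjectivity conditions appearing in Theorems~\ref{thm: nonminimality} and \ref{thm: nonminimality second take} and in Definitions~\ref{def: treelike} and \ref{def:ConstantTree} are automatic; that nonsingularity forces $E_{s,e}$ to be nontrivial for every $e$ (as $s(e)$ carries an edge other than $\overline e$), so that ``treelike at $e$'' and ``constant tree at $e$'' coincide here; and that a constant tree at $e$ forces $e$ to lie on no minimal cycle, since traversing the rest of such a cycle and returning through $e$ (or the loop $e$ itself) would embed a cycle in the tree $E_{s,e}$.

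With these in hand, the forward implication is immediate from Theorem~\ref{thm: nonminimality second take}: nonminimality yields either case~(d), in which $\Gamma$ is a minimal cycle and $\pi_1(\Gamma,v)\cong\Z$ is abelian, or case~(c'), which supplies an edge where $\GG$ is treelike, hence a constant tree. For the converse I would argue that each of the two possibilities produces nonminimality, the crux being that a constant tree at $e$ verifies hypothesis~(c'). I have already placed $e$ off every minimal cycle; it remains to produce $\xi\in r(e)\partial W_\GG$ not containing $e$, and for this I would use the remark following Theorem~\ref{thm: nonminimality}, by which it suffices that $E_{s,\overline e}$ be infinite or contain a cycle. Since $e$ is then a bridge, $E_{s,\overline e}$ is nontrivial, and a short leaf-counting argument (a non-root leaf of a finite subtree would have valence one, violating nonsingularity) shows it cannot be a finite tree. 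If instead $\pi_1(\Gamma,v)$ is abelian, its first Betti number is at most one: when it is zero, $\Gamma$ is a tree and every edge carries a nontrivial constant tree; when it is one, either $\Gamma$ is a minimal cycle (case~(d)) or an edge pointing off the unique cycle into an attached subtree again carries a constant tree. Both subcases reduce to the constant-tree argument or to case~(d).

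The final assertion that $C^*(\Gamma)$ is then a Kirchberg algebra is exactly Theorem~\ref{thm: graph Kirchberg}. I expect the main obstacle to be the careful matching of hypotheses in the constant-tree$\,\Rightarrow\,$(c') step, especially extracting the boundary path $\xi$ through the $E_{s,\overline e}$ criterion rather than constructing it by hand. A convenient cross-check for the implication \eqref{lem: simple undirected graph.2}$\Rightarrow$\eqref{lem: simple undirected graph.3} is Proposition~\ref{prop: minimal and treelike}: if the action were minimal with a constant (hence nontrivial treelike) tree at some edge, that proposition would force $\Gamma$ to be an infinite ray, which is impossible for a nonsingular graph of trivial groups because the initial vertex of a ray is singular.
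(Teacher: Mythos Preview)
Your argument for \eqref{lem: simple undirected graph.1}$\Leftrightarrow$\eqref{lem: simple undirected graph.2} and for the direction ``not minimal $\Rightarrow$ constant tree or abelian'' matches the paper's exactly. For the converse ``constant tree or abelian $\Rightarrow$ not minimal'' you take a more hands-on route, verifying hypothesis~(c') of Theorem~\ref{thm: nonminimality second take} at the given edge, whereas the paper simply invokes Proposition~\ref{prop: minimal and treelike}: if the action were minimal in the presence of a constant tree, $\Gamma$ would be an infinite ray, which is singular. You do mention this as a cross-check, and it is in fact the cleaner argument.

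Your direct route has a small gap. The assertion that a constant tree at $e$ forces $e$ to lie on no minimal cycle fails when $e$ is a loop: by Definition~5.4(2), paths defining $E_{s,e}$ for a loop $e$ must begin with an edge different from both $e$ and $\overline{e}$, so the loop $e$ need not appear in $E_{s,e}^1$ at all and your ``embed the loop as a cycle'' step does not go through. A concrete instance is a single loop $e$ at a vertex $x$ together with an infinite ray emanating from $x$; here $E_{s,e}$ is the ray, a tree, so $\GG$ has a constant tree at the loop $e$. The fix is immediate: for a loop $e$ with a constant tree, case~(a) of Theorem~\ref{thm: nonminimality} applies directly (or, equivalently, any edge $f\in E_{s,e}^1$ with $r(f)=s(e)$ is a non-loop carrying its own constant tree, to which your (c') argument does apply). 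Relatedly, your blanket claim that nonsingularity makes $E_{s,e}$ nontrivial for \emph{every} $e$ is false for a loop at a valence-two vertex; this does not affect the (c') step, since there $e$ is assumed off all minimal cycles and is therefore not a loop, but the preliminary observation should be stated only for non-loop edges.
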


\begin{proof}
	$(1)\iff(2)$ follows from \cite{AS} and Corollary \ref{cor: min top free graph}. $(2)\implies (3)$ follows from Proposition \ref{prop: minimal and treelike} because if $\Gamma$ has a constant tree, then $\Gamma$ is a ray; but a ray is a singular graph. For $(3)\implies (2)$, suppose the action is not minimal.  Theorem \ref{thm: nonminimality second take} gives two alternatives.  If (c') holds, then there is an edge $e$ not lying on a cycle such that $\Gamma$ is treelike at $e$.  Then $\{e, \overline{e} \} \cup \Gamma_{s,e}$ is a constant tree.  If (d) holds then the first Betti number of $\Gamma$ equals one.
	
	The final statement follows from Theorem \ref{thm: graph Kirchberg}.
\end{proof}

\begin{cor} \label{cor: infinite betti number}
	
	Let $\Gamma$ be a nonsingular locally finite graph with $C^*(\Gamma)$ a simple $C^*$-algebra.  The first Betti number of $\Gamma$ is finite if and only if $\Gamma$ is finite.
	
\end{cor}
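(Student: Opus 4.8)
The plan is to prove the two implications separately. The reverse implication is immediate: if $\Gamma$ is finite then it has only finitely many edges, so the complement of any maximal subtree is a finite set and the first Betti number is finite. All the work lies in the forward implication, which I would prove by contradiction using the characterisation of simplicity in Lemma~\ref{lem: simple undirected graph}. Concretely, I would assume that $\Gamma$ has finite first Betti number but is infinite, and then construct an edge at which $\GG$ has a constant tree; since $C^*(\Gamma)$ is simple, Lemma~\ref{lem: simple undirected graph}\eqref{lem: simple undirected graph.3} forbids any such edge, giving the contradiction.

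To set up the combinatorics, fix a maximal subtree $T$ of $\Gamma$ and let $B = \Gamma^1 \setminus T^1$. Finiteness of the Betti number means $B$ is finite, so the set $V_B \subseteq \Gamma^0$ of all ranges and sources of edges in $B$ is finite, and hence the convex hull $T_0$ of $V_B$ in the tree $T$ (the union of the geodesics in $T$ between points of $V_B$) is a \emph{finite} subtree. Since $\Gamma$, and therefore $T$, is infinite while $T_0$ is finite, we have $T_0 \subsetneq T$; as a connected spanning subgraph of a tree that is not the whole tree, $T_0$ omits some vertex, and connectedness then yields an edge $f \in T^1$ with $r(f) = w \in T_0^0$ but $f, \overline f \notin T_0^1$. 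Let $T_f$ be the connected component containing $s(f)$ of the forest obtained from $T$ by deleting the geometric edge $\{f,\overline f\}$; since $T_0$ is connected, contains $w$, and does not meet $f$, all of $T_0$ lies in the other component, so $T_f^0 \cap T_0^0 = \emptyset$ and in particular $T_f^0 \cap V_B = \emptyset$.

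The key step, and the main obstacle, is to verify that $\GG$ has a constant tree at $f$. For a graph of trivial groups every $\alpha_e$ is automatically surjective, so the surjectivity conditions in Definitions~\ref{def: treelike} and~\ref{def:ConstantTree} are vacuous and \textbf{``constant tree at $f$'' reduces exactly to ``$E_{s,f}$ is a nontrivial tree''}. The crucial observation is that, because $T_f^0$ is disjoint from $V_B$, no edge of $B$ is incident to $T_f$, so the only edge joining $T_f$ to the rest of $\Gamma$ is $f$ itself. Consequently any reduced path with range $s(f)$ whose first edge is not $\overline f$ must stay inside the tree $T_f$: it cannot reach $w$, as the only exit from $T_f$ is through $\overline f$, which would force backtracking. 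This shows $E_{s,f}$ is a subgraph of the tree $T_f$, hence is itself a tree; and nonsingularity of $\Gamma$ guarantees that $s(f)$ has a neighbour inside $T_f$ other than $w$ (all edges at $s(f)$ lie in $T$, since $s(f) \notin V_B$), so $E_{s,f}$ is nontrivial. Thus $\GG$ has a constant tree at $f$, contradicting Lemma~\ref{lem: simple undirected graph}\eqref{lem: simple undirected graph.3}. Therefore $\Gamma$ must be finite, completing the proof.
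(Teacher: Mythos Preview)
Your proof is correct and is precisely the argument the paper leaves implicit: the corollary is stated without proof, as an immediate consequence of Lemma~\ref{lem: simple undirected graph}, and your construction of an edge $f$ with $E_{s,f}$ a nontrivial subtree of $T$ is exactly the way to extract the contradiction from condition~\eqref{lem: simple undirected graph.3}. One small point you use silently: the hypothesis that $C^*(\Gamma)$ is simple forces, via Lemma~\ref{lem: simple undirected graph}\eqref{lem: simple undirected graph.3}, the first Betti number to be at least two, so $B$ and hence $V_B$ and $T_0$ are nonempty --- this is needed for your choice of $f$ to make sense.
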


The following theorem follows from calculations in \cite{CLM} and \cite{II}.    

\begin{thm} \label{thm: simple undirected graph}
	
	Let $\Gamma$ be a nonsingular locally finite graph with $C^*(\Gamma)$ a simple $C^*$-algebra.  Let $2 \le n \le \infty$ be the first Betti number of $\Gamma$.  Then $C^*(\Gamma)$ is the UCT Kirchberg algebra with $K_0 = \Z^n \oplus \Z/(n-1)\Z$ if $n < \infty$, $K_0 = \Z^n$ if $n = \infty$, and $K^1 = \Z^n$.  If $\Gamma$ is finite, then the class of the identity is the generator of the torsion subgroup of $K_0$, while if $\Gamma$ is infinite, then $C^*(\Gamma)$ is stable.
	
\end{thm}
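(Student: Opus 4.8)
The plan is to invoke the classification theory for Kirchberg algebras and reduce everything to the invariants computed in \cite{CLM} and \cite{II}. By Theorem~\ref{thm: graph Kirchberg} we already know that $C^*(\Gamma)$ is a UCT Kirchberg algebra, so by the Kirchberg--Phillips theorem (\cite{Ror}) it is determined up to stable isomorphism by the pair $(K_0,K_1)$, and, when unital, up to isomorphism by the triple $(K_0,[1_{C^*(\Gamma)}],K_1)$. Thus it suffices to (i) decide whether $C^*(\Gamma)$ is unital or stable, (ii) compute $K_0$ and $K_1$, and (iii) identify the class of the unit in the unital case. For (i) I would use the stable isomorphism $C^*(\Gamma)\cong \KK(\ell^2(\Gamma^0))\otimes\big(C(v\partial X_\Gamma)\rtimes_\tau\pi_1(\Gamma,v)\big)$ of Theorem~\ref{thm: main theorem}, while for (ii) and (iii) I would use that, since all edge groups are trivial, Theorem~\ref{thm: associated directed graph} (see also Remark~\ref{rmk: known constructions}(1)) gives a genuine isomorphism of $C^*(\Gamma)$ with the directed graph algebra $C^*(E_\Gamma)$, which is the Cuntz--Krieger algebra assigned to $\Gamma$ in \cite{CLM}; under this isomorphism the unit corresponds to the unit, so the unit class may be tracked through $C^*(E_\Gamma)$.

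For (i): by Corollary~\ref{cor: infinite betti number}, $\Gamma$ is finite if and only if $n<\infty$. If $\Gamma$ is finite then $\Gamma^0$ is finite, so $\KK(\ell^2(\Gamma^0))=M_{|\Gamma^0|}(\C)$; since $v\partial X_\Gamma$ is compact the crossed product is unital, and hence $C^*(\Gamma)$ is unital. If $\Gamma$ is infinite then, being connected and locally finite, it has countably infinitely many vertices, so $\KK(\ell^2(\Gamma^0))\cong\KK$ and $C^*(\Gamma)\cong\KK\otimes\big(C(v\partial X_\Gamma)\rtimes_\tau\pi_1(\Gamma,v)\big)$ is stable.

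For (ii) and (iii): the directed graph $E_\Gamma$ has vertex set $\Gamma^1$, with an edge from $f$ to $e$ precisely when $ef$ is a reduced path, that is, when $s(e)=r(f)$ and $f\neq\overline{e}$; thus its adjacency matrix $A$ is exactly the non-backtracking edge matrix of $\Gamma$. Since $E_\Gamma$ is row-finite with no sources (and, by nonsingularity of $\Gamma$, no sinks), the $K$-theory of $C^*(E_\Gamma)$ is computed from the exact sequence $0\to K_1\to \Z\Gamma^1 \xrightarrow{\,I-A^t\,} \Z\Gamma^1\to K_0\to 0$, where $\Z\Gamma^1$ is the free abelian group on $\Gamma^1$ (completed appropriately when $\Gamma^1$ is infinite), and the unit, when present, is the image of $(1,\dots,1)\in\Z\Gamma^1$. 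For finite $\Gamma$ this is the calculation of \cite{CLM}: via the Bass--Ihara factorization of the edge zeta determinant one extracts $K_1=\ker(I-A^t)=\Z^n$ and $K_0=\operatorname{coker}(I-A^t)=\Z^n\oplus\Z/(n-1)\Z$, with the class of the unit generating the torsion subgroup $\Z/(n-1)\Z$. For infinite $\Gamma$ the corresponding computation in \cite{II} gives $K_0=K_1=\Z^n$ with $n=\infty$ and no torsion. Combining these with (i) yields the theorem.

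The main obstacle is precisely the $K$-theory computation in the previous paragraph: extracting the free rank $n$ of both kernel and cokernel of $I-A^t$, the torsion order $n-1$, and the position of the unit class directly from the non-backtracking matrix. This is the genuinely nontrivial step, and it is exactly the content of \cite{CLM} in the finite case and \cite{II} in the infinite case; once these are invoked, the remaining steps are the routine structural reductions above.
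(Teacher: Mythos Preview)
Your proposal is correct and follows essentially the same approach as the paper, which simply states that the theorem ``follows from calculations in \cite{CLM} and \cite{II}''; you have fleshed out the structural reductions (unitality/stability via Theorem~\ref{thm: main theorem} and Corollary~\ref{cor: infinite betti number}, identification with $C^*(E_\Gamma)$ via Theorem~\ref{thm: associated directed graph}) that the paper leaves implicit. One minor quibble: for infinite $\Gamma^1$ the relevant abelian group in the $K$-theory exact sequence is the direct sum $\bigoplus_{e\in\Gamma^1}\Z$, not a completion.
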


We remark on the contrast with $C^*$-algebras of directed graphs, that include all simple AF algebras (up to stable isomorphism). It follows from the Kirchberg-Phillips classification theorem (\cite{Bla2}) that $C^*(\Gamma)$ is characterised up to isomorphism by $K$-theory (and the position of the class of the unit). It follows from Theorem \ref{thm: simple undirected graph} that there are relatively few Kirchberg algebras obtained from undirected graphs.  (Again, this is a sharp contrast to the case of directed graphs. Any pair of abelian groups can be realised as the $K$-groups of a Kirchberg algebra.  If the $K_1$ group is free abelian, the pair can be realised as the $K$-theory of a Kirchberg algebra obtained from a directed graph \cite{Sz}.)

\section{Generalised Baumslag--Solitar graphs of groups}\label{sec: GBS}

A \textit{generalised Baumslag--Solitar graph of groups}, or \textit{GBS graph of groups}, is a graph of groups $\GG = (G,\G)$ in which all edge and vertex groups are infinite cyclic.  In contrast with some of the literature, we allow the graph $\G$ to be infinite. In this section we characterise the GBS graphs of groups for which the action of $\pi_1(\GG,v)$ on $v\partial X_\GG$ is topologically free.  A precise statement is given in Theorem~\ref{thm: gbs top free}. We finish with some results on the $C^*$-algebras associated to GBS graphs of groups.

\subsection{Topological freeness for GBS graph of groups}\label{subsec:TopFreeForGBS}

Note that a GBS graph of groups $\GG = (G,\G)$ is locally finite if and only if the graph $\G$ is locally finite.  Throughout this section we assume that $\GG$ is a locally finite nonsingular GBS graph of groups.  

\begin{ntn}\label{ntn: gbs}
	We use the following notation:
	
	\begin{enumerate}
		
		\item We use additive notation for the vertex and edge groups.
		
		\item Suppose that a generator has been chosen in each vertex and edge group.  For $e \in \Gamma^1$ we let $\omega_e$ denote the nonzero integer such that $\alpha_e$ is given by multiplication with $\omega_e$.  Then $|\omega_e|$ is independent of the choices of generators.
		
		\item For each $e \in \Gamma^1$ we choose $\Sigma_e = \{0,1,\ldots,|\omega_e| - 1\}$.
		
		\item For $q \in \Q$ we let $\langle q \rangle$ denote the smallest positive denominator that can be used to express $q$ as a fraction.
		\end{enumerate}
\end{ntn}
		
\begin{rmk}  The positive integer $|\omega_e|$ is equal to the index $[G_{r(e)}:\alpha_e(G_e)]$, which is denoted $i(e)$ in some other works using graphs of groups, such as Bass--Kulkarni~\cite{BK} (note that in~\cite{BK}, $\alpha_e$ maps $G_e$ into the initial vertex of the edge $e$). 
\end{rmk} 		

We also need the following definition.
		
\begin{dfn} For a $\GG$-word $\gamma = g_1 e_1 \dots g_n e_n g_{n+1}$, we define the {\em signed index ratio} $q(\gamma) \in \Q^\times$ by
		\[
		q(\gamma) = \prod_{i=1}^n \frac{\omega_{\overline{e_i}}}{\omega_{e_i}}.
		\]
		Note that $q(\gamma_1 \gamma_2) = q(\gamma_1) q(\gamma_2)$ when the $\GG$-paths $\gamma_1$ and $\gamma_2$ can be concatenated i.e. when $s(\gamma_1) = r(\gamma_2)$.  In particular, $q$ restricts to a homomorphism $\pi_1(\GG,v) \to \Q^\times$.	
\end{dfn}

\begin{rmk}
The signed index ratio $q$ is related to maps appearing in other works on graphs of groups and (generalised) Baumslag--Solitar groups.   In Definition 6.3 of Forester~\cite{F} and Section 2.3 of Clay--Forester~\cite{CF}, the restriction of $q$ to the generalised Baumslag--Solitar group $\pi_1(\GG,v)$ is termed the {\em signed modular homomorphism}.  The analogous map on Baumslag--Solitar groups is denoted by $\varphi$ in Kropholler~\cite{K}.  In Section 1 of Bass--Kulkarni~\cite{BK}, which considers general locally finite graphs of groups, for $e$ an edge  $q(e)$ is defined to equal the positive rational $i(e)/i(\overline{e})$.  This map $q$ is then extended to edge-paths and pre-composed with the projection from $\pi_1(\GG,v)$ to the fundamental group of the graph $\G$ to obtain a homomorphism $\pi_1(\GG,v) \to \Q^\times_{>0}$.   
\end{rmk}

We can now state our characterisation of topological freeness. Recall the notion that $\GG$ has a constant tree at an edge from Definition~\ref{def:ConstantTree}.

\begin{thm} \label{thm: gbs top free}
	
	Let $\GG$ be a locally finite nonsingular GBS graph of groups.  The action of $\pi_1(\GG,v)$ on $v\partial X_\GG$ is topologically free if and only if the following conditions hold:
	
	\begin{enumerate}
		
		\item \label{thm: gbs top free.1} $\GG$ has no constant trees; and
		
		\item \label{thm: gbs top free.2} there exists $\xi = g_1 e_1 g_2 e_2 \dots \in \partial W_\GG$ such that $\limsup_{n \to \infty} \langle q(\gamma_n) \rangle = \infty$, where $\gamma_n = g_1 e_1 \dots g_n e_n$.
		
	\end{enumerate}
	
\end{thm}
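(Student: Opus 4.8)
The plan is to prove the two directions separately, using the criterion that the countable action is topologically free if and only if the set of boundary points with trivial isotropy is dense, i.e. if and only if for every $\GG$-path $\mu\in v\GG^*$ the cylinder $Z(\mu)$ contains a point with trivial isotropy. Throughout I would work with the action of the fundamental groupoid $F(\GG)$ on $\partial W_\GG$ (as in Section~\ref{subsec: groupoid}), since isotropy of a boundary point $\xi$ for the $\pi_1(\GG,v)$-action corresponds to the isotropy group $\Stab_{F(\GG)}(\xi)$, and the signed index ratio $q:\pi_1(\GG,v)\to\Q^\times$ extends to reduced $\GG$-loops. The key structural fact I would extract first is how a nontrivial element $\gamma$ of the isotropy at $\xi=g_1e_1g_2e_2\dots$ must act: since $\gamma\xi=\xi$, the reduced form of $\gamma$ must eventually agree with $\xi$ and feed back into it, forcing $\xi$ to be eventually periodic in its edge-sequence. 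Moreover, writing $\gamma$ as a reduced $\GG$-loop and tracking what $\gamma$ does to a tail of $\xi$, the relation (R2) shows that $\gamma$ acts on the infinite tail by an affine map of the relevant $\Q_p$-adic type whose multiplicative part is governed by $q(\gamma)$. This affine-action description is the technical heart, and I expect establishing it cleanly to be the main obstacle, since one must show that $\langle q(\gamma_n)\rangle$ staying bounded is exactly the obstruction to an affine map having a unique (hence fixable) solution in the tail.

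For the \emph{only if} direction I would argue contrapositively. If condition~\eqref{thm: gbs top free.1} fails, so $\GG$ has a constant tree at some edge $e$, then on the corresponding constant-tree region the surjectivity of every $\alpha_f$ makes $\pi_1$ act through a single abelian (infinite cyclic) vertex group in an odometer-like fashion; one checks directly that an open set of points is fixed by a nontrivial element, exactly as in Example~\ref{example: not top free} and Remark~\ref{rmk: normal and odometer}, so the action is not topologically free. If condition~\eqref{thm: gbs top free.2} fails, so that $\limsup_n\langle q(\gamma_n)\rangle<\infty$ for \emph{every} $\xi\in\partial W_\GG$, then I would show that every boundary point has nontrivial isotropy: the bound on denominators means the affine maps coming from loops have a bounded arithmetic complexity along every ray, which forces the existence of a nontrivial periodic return, i.e. a nontrivial $\gamma\in\pi_1(\GG,v)$ with $\gamma\xi=\xi$. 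The uniform boundedness across all $\xi$ is what lets one conclude the fixed-point locus has nonempty interior, contradicting topological freeness.

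For the \emph{if} direction, assume~\eqref{thm: gbs top free.1} and~\eqref{thm: gbs top free.2} hold and fix an arbitrary $\GG$-path $\mu$; I must produce $\xi\in Z(\mu)$ with trivial isotropy. Using~\eqref{thm: gbs top free.2}, select a boundary point $\zeta=h_1f_1h_2f_2\dots$ along which $\langle q(\zeta_n)\rangle\to\infty$ on a subsequence, where $\zeta_n=h_1f_1\dots h_nf_n$. The idea is to graft a suitable tail of $\zeta$ onto $\mu$: since~\eqref{thm: gbs top free.1} rules out constant trees, the minimality-type flow arguments of Section~\ref{subsec: minimality} (in particular Lemma~\ref{def: suitability}) let me connect the source of $\mu$ to $r(\zeta)$ by a reduced $\GG$-word with no cancellation, producing $\xi=\mu\nu\zeta'\in Z(\mu)$ whose index ratios $\langle q(\xi_n)\rangle$ are still unbounded. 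Then I would verify that unboundedness of $\langle q(\xi_n)\rangle$ forces trivial isotropy: if some nontrivial $\gamma$ fixed $\xi$, its action on every sufficiently deep tail would be the \emph{same} fixed affine map, so $q(\gamma)$ and the associated denominators would have to be compatible with infinitely many values $\langle q(\xi_n)\rangle$ of unbounded size, which is impossible for a single fixed rational $q(\gamma)$. Hence $\xi$ has trivial isotropy, and since $\mu$ was arbitrary the points of trivial isotropy are dense, giving topological freeness.
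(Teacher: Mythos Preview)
Your ``if'' direction has a genuine gap: you assert that unboundedness of $\langle q(\xi_n)\rangle$ along $\xi$ forces the isotropy at $\xi$ to be trivial, but this is false. Take the loop of groups with $\omega_e=2$, $\omega_{\overline e}=1$, and let $\xi=(0e)^\infty$. Then $\langle q(\gamma_n)\rangle=2^n\to\infty$, yet the stable letter $t=e\in\pi_1(\GG,x)$ satisfies $t\xi=\xi$. The denominator criterion, via Lemma~\ref{lem: pass through}, only excludes isotropy coming from the vertex group $G_{r(\xi)}$; it says nothing about \emph{shift-type} isotropy along an eventually edge-periodic $\xi$. The paper's Lemma~\ref{lem: trivial isotropy} confronts this head-on: it does not claim that $\xi$ itself has trivial isotropy, only that every cylinder $Z(\gamma_\ell)$ contains \emph{some} such point. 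In the eventually periodic case a new point is manufactured, either by choosing an aperiodic coefficient sequence in the odometer subcase $|\omega_{\overline{e_{m+i}}}|=1$ (using Lemma~\ref{lem: odometer}), or, when some $|\omega_{\overline{e_{m+i}}}|>1$, by interleaving the period $\delta$ with a palindromic detour $\beta$ in an aperiodic pattern. Your affine-map heuristic does not supply this.

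You also gloss over how condition~\eqref{thm: gbs top free.1} enters the ``if'' direction. Grafting the witness $\zeta$ onto an arbitrary $\mu$ requires that $\zeta$ can flow to the sourcemost edge $e$ of $\mu$, which need not hold. The paper treats the failure case by showing $\zeta$ must then flow to $\overline e$, and splits again: if $\GG$ is not treelike at $e$ one routes through a loop at $s(e)$; if $\GG$ is treelike at $e$, it is exactly the absence of constant trees that supplies an infinite path in $E_{s,e}$ with $|\omega_f|>1$ infinitely often, to which Lemma~\ref{lem: trivial isotropy} then applies.

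Your ``only if'' direction reaches the right conclusion but by a more complicated route than necessary. When \eqref{thm: gbs top free.2} fails for a given $\xi$, no ``periodic return'' is needed: the vertex-group element $C=\operatorname{lcm}\{\langle q(\gamma_n)\rangle:n\ge1\}\in G_{r(\xi)}$ fixes $\xi$ directly by Lemma~\ref{lem: pass through}. This works pointwise for every $\xi$, so no point has trivial isotropy; there is no need to argue about a fixed-point locus with nonempty interior.
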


%

To prove Theorem~\ref{thm: gbs top free} we need a series of results. We start with the effect of a group element $g$ ``passing through" a $\GG$-word $\gamma$.

\begin{lem} \label{lem: pass through}
	
	Let $\gamma = g_1 e_1 \ldots g_n e_n g_{n+1}$ be a $\GG$-word.  Let $g \in G_{r(\gamma)}$ and $g' \in G_{s(\gamma)}$ be such that $g \gamma = \gamma g'$.  Then $g' = q(\gamma) g$, and hence $\langle q(\gamma) \rangle$ divides $g$.
	
\end{lem}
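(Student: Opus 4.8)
The plan is to reduce the identity $g'=q(\gamma)g$ to the single-edge case and then iterate, since $q$ is multiplicative over concatenation (as noted just before the lemma). First I would observe that the group elements $g_1,\ldots,g_{n+1}$ sitting in the $\GG$-word $\gamma$ do not affect the computation: in the relation $g\gamma=\gamma g'$, conjugation by the vertex-group elements is trivial because the vertex groups are abelian (we are in the GBS setting, so every $G_x\cong\Z$). Concretely, if I write $\gamma=g_1e_1g_2e_2\cdots g_ne_ng_{n+1}$, then I can push $g$ from the left through $g_1$ freely (they lie in the same abelian group $G_{r(e_1)}$), so the whole problem is governed by how a vertex-group element passes through a single edge.

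The key computation is therefore the single-edge relation coming from (R2). Using additive notation (Notation~\ref{ntn: gbs}), the relation $e\,\alpha_{\overline e}(h)\,\overline e=\alpha_e(h)$ rearranges to $\alpha_e(h)\,e = e\,\alpha_{\overline e}(h)$, i.e. $(\omega_e h)\,e=e\,(\omega_{\overline e}h)$ for all $h\in G_e$. I want instead to see what a \emph{general} element $k\in G_{r(e)}$ does when passed through $e$: the relation only lets me commute elements of the image $\alpha_e(G_e)=\omega_e\Z$ past $e$. So the precise statement I would prove at the single-edge level is: if $k\,e=e\,k'$ with $k\in G_{r(e)}$ and $k'\in G_{s(e)}$, then $\omega_e$ must divide $k$, say $k=\omega_e h$, and then $k'=\omega_{\overline e}h=(\omega_{\overline e}/\omega_e)k=q(e)k$. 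This follows directly from the normal-form/reduced-word uniqueness (Corollary~1.13 of~\cite{Bass}, quoted in the excerpt): the only way $ke$ can equal a $\GG$-word beginning with $e$ (with no group element wedged between the range vertex and $e$) is if $k\in\alpha_e(G_e)$.

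Having established the single-edge case, I would induct on $n$. Write $\gamma=(g_1e_1)\gamma''$ where $\gamma''=g_2e_2\cdots g_ne_ng_{n+1}$. From $g\gamma=\gamma g'$ I first absorb $g_1$ (abelian) and apply the single-edge case to $e_1$: this forces $\omega_{e_1}\mid g$ and produces an intermediate element $g^{(1)}=q(e_1)g\in G_{s(e_1)}=G_{r(e_2)}$ with $g\,g_1e_1=g_1e_1\,g^{(1)}$. Then $g^{(1)}\gamma''=\gamma''g'$, and by the inductive hypothesis $g'=q(\gamma'')g^{(1)}=q(\gamma'')q(e_1)g=q(\gamma)g$, using multiplicativity $q(\gamma)=q(e_1)q(\gamma'')$. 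The divisibility conclusion $\langle q(\gamma)\rangle\mid g$ then falls out: since $g'=q(\gamma)g$ is an integer and $g$ is an integer, $q(\gamma)g\in\Z$ forces the reduced denominator $\langle q(\gamma)\rangle$ to divide $g$.

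The main obstacle, I expect, is being careful about exactly which elements are permitted to ``pass through'' an edge and in what order, since $\alpha_e$ and $\alpha_{\overline e}$ have different images and the relation (R2) is asymmetric between $e$ and $\overline e$. The whole content is that commuting past an edge is only allowed for elements in the edge-group image, and the ratio of the two scaling factors $\omega_{\overline e}/\omega_e$ is precisely $q(e)$; getting the direction of the ratio right (and confirming it matches the definition of $q(\gamma)$ as $\prod \omega_{\overline{e_i}}/\omega_{e_i}$) is where sign and indexing conventions must be checked. Once the single-edge statement is pinned down against (R2) and the uniqueness of reduced words, the induction is routine.
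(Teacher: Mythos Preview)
Your single-edge lemma is correct, and the overall shape (reduce to one edge, then iterate) is sound. But the inductive step has a genuine gap. To ``apply the single-edge case to $e_1$'' you need an equation of the form $g\,e_1 = e_1\,g^{(1)}$ with $g^{(1)} \in G_{s(e_1)}$. What you actually have, after absorbing $g_1$, is $g\,e_1\gamma'' = e_1\gamma''g'$, i.e.\ $g\,e_1 = e_1\bigl(\gamma'' g'(\gamma'')^{-1}\bigr)$, and $\gamma'' g'(\gamma'')^{-1}$ lies in the isotropy group $\pi_1(\GG,s(e_1))$, not a priori in the vertex group $G_{s(e_1)}$. Your single-edge lemma does not apply to this situation: if the right-hand factor is allowed to be an arbitrary loop at $s(e_1)$, then $\overline{e_1}\,g\,e_1$ can perfectly well be a reduced word of length two (namely $0\,\overline{e_1}\,r\,e_1\,(\omega_{\overline{e_1}}h)$ when $g = r + \omega_{e_1}h$ with $r\neq 0$), and nothing forces $\omega_{e_1}\mid g$. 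So the divisibility you need at each step is precisely the thing that has not yet been proved. (It \emph{can} be recovered, at least when $\gamma$ is reduced, by comparing the rangemost edge of the reduced form of $\overline{e_1}g e_1$ with that of $\gamma'' g'(\gamma'')^{-1}$, but you have not supplied that argument.)

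The paper's proof sidesteps this by never assuming divisibility in advance. It pushes $g$ through the whole word \emph{with remainder}: at each edge it writes the current carry as $\tau_i + \omega_{e_i}\theta_i$ with $\tau_i\in\Sigma_{e_i}$, passes $\omega_{e_i}\theta_i$ through via (R2), and leaves $\tau_i$ behind. This always succeeds and yields $g\gamma = \tau_1 e_1\cdots\tau_n e_n(\omega_{\overline{e_n}}\theta_n + g_{n+1})$. Only then is the hypothesis $g\gamma=\gamma g'$ invoked, and uniqueness of normal form gives $\tau_i = g_i$, which is exactly the divisibility statement $\omega_{e_i}\mid(\text{carry at step }i)$. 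The advantage is that the comparison happens once, at the end, rather than needing to be justified edge by edge.
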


\begin{proof}
	Let $\theta_1 \in G_{e_1}$ and $0 \le \tau_1 < |\omega_{e_1}|$ be such that $g + g_1 = \tau_1 + \omega_{e_1} \theta_1$.  Then
	\[
	(g + g_1) e_1 = (\tau_1 + \alpha_{e_1}(\theta_1)) e_1
	= \tau_1 e_1 \alpha_{\overline{e_1}}(\theta_1)
	= \tau_1 e_1 (\omega_{\overline{e_1}} \theta_1).
	\]
	Let $\theta_2 \in G_{e_2}$ and $0 \le \tau_2 < |\omega_{e_2}|$ be such that $\omega_{\overline{e_1}} \theta_1 + g_2 = \tau_2 + \omega_{e_2} \theta_2$.  Then
	\[
	(\omega_{\overline{e_1}} \theta_1 + g_2) e_2
	= (\tau_2 + \alpha_{e_2}(\theta_2)) e_2
	= \tau_2 e_2 \alpha_{\overline{e_2}}(\theta_2)
	= \tau_2 e_2 (\omega_{\overline{e_2}} \theta_2).
	\]
	Inductively we find integers $\theta_i \in G_{e_i}$ and $0 \le \tau_i < |\omega_{e_i}|$ for $1 \le i < n$ such that $\omega_{\overline{e_i}} \theta_i + g_{i+1} = \tau_{i+1} + \omega_{e_{i+1}} \theta_{i+1}$.  Then
	\begin{align*}
		g \gamma
		&= (g + g_1) e_1 g_2 e_2 \dots g_n e_n g_{n+1} \\
		&= \tau_1 e_1 (\omega_{\overline{e_1}} \theta_1 + g_2) e_2 \dots \\
		&= \tau_1 e_1 \tau_2 e_2 (\omega_{\overline{e_2}} \theta_2 + g_3) e_3 \dots \\
		&= \dots \\
		&= \tau_1 e_1 \tau_2 e_2 \dots \tau_n e_n (\omega_{\overline{e_n}} \theta_n + g_{n+1}).
	\end{align*}
	It follows that $\tau_i = g_i$ for $i \le n$.  Therefore
	\begin{align*}
		g &= \omega_{e_1} \theta_1 \\
		\omega_{\overline{e_1}} \theta_1 &= \omega_{e_2} \theta_2 \\
		\omega_{\overline{e_2}} \theta_2 &= \omega_{e_3} \theta_3 \\
		&\dots \\
		\omega_{\overline{e_{n-1}}} \theta_{n-1} &= \omega_{e_n} \theta_n.
	\end{align*}
	Thus we have
	\[
	\omega_{\overline{e_n}} \theta_n
	= \frac{\omega_{\overline{e_n}}}{\omega_{e_n}} \omega_{\overline{e_{n-1}}} \theta_{n-1}
	= \frac{\omega_{\overline{e_n}}}{\omega_{e_n}} \frac{\omega_{\overline{e_{n-1}}}} {\omega_{e_{n-1}}} \omega_{\overline{e_{n-2}}} \theta_{n-2}
	= \dots
	= g \prod_{i=1}^n \frac{\omega_{\overline{e_i}}}{\omega_{e_i}}
	= g q(\gamma).
	\]
	We conclude that $g' = g q(\gamma)$.
\end{proof}

In the next lemma we describe certain (rooted) subtrees that might be present in $X_{\GG,v}$.  These are instances where the boundary of the subtree can be identified with an odometer, as in Proposition \ref{prop: classic odometer}.

\begin{lem} \label{lem: odometer}
	
	Let $f_1 f_2 \dots$ be a sequence in $\Gamma^1$ such that $s(f_i) = r(f_{i+1})$ and $f_{i+1} \not= \overline{f_i}$ for all $i$.
	
	\begin{enumerate}
		
		\item \label{lem: odometer.1} There is a choice of generators of the $G_{f_i}$ and $G_{r(f_i)}$ such that $\omega_{f_i}$ and $\omega_{\overline{f_i}}$ are all positive.
		
	\end{enumerate}
	
	Suppose further that $\omega_{\overline{f_i}} = 1$ for all $i$, and that $\omega_{f_i} > 1$ for infinitely many $i$.  Let $S$ be the set of all infinite $\GG$-paths with edge sequence $f_1 f_2 \dots$.  (Thus $S$ is homeomorphic to the Cantor set $\prod_{i=1}^\infty \frac{\Z}{\omega_{f_i}\Z}$.)
	
	\begin{enumerate}
		\addtocounter{enumi}{1}
		
		\item \label{lem: odometer.2} Let $\gamma = h_1 f_1 \dots h_n f_n$ be a $\GG$-path, and let $g$, $g' \in \Z$.  Then $g \gamma = \gamma g'$ if and only if $\prod_{i=1}^n \omega_{f_i}$ divides $g$, and in this case $g' = g (\prod_{i=1}^n \omega_{f_i})^{-1}$.
		
		\item \label{lem: odometer.3} $G_{r(f_1)}$ acts freely on $S$.
		
		\item \label{lem: odometer.4} If $\xi = h_1 f_1 h_2 f_2 \dots \in S$ is such that $h_i \not= 0$ for infinitely many $i$, and $h_i \not= \omega_{f_i} - 1$ for infinitely many $i$, then for any $g$, $g \xi$ and $\xi$ differ in at most finitely many coefficients.
		
	\end{enumerate}
	
\end{lem}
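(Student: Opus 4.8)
The plan is to realise the action of $g$ on $\xi$ explicitly as an odometer (adding-machine) computation with carries, and then to show that the carry sequence eventually vanishes under the two stated genericity hypotheses on $\xi$.

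First I would compute $g\xi$. By \eqref{lem: odometer.1} we may take all $\omega_{f_i}>0$, so $\Sigma_{f_i}=\{0,1,\dots,\omega_{f_i}-1\}$ and $\omega_{\overline{f_i}}=1$. Setting $\theta_0=g$, I would reduce $(\theta_0+h_1)f_1$ exactly as in the proof of Lemma~\ref{lem: pass through}: writing $\theta_{i-1}+h_i=\tau_i+\omega_{f_i}\theta_i$ with $0\le\tau_i<\omega_{f_i}$ and pushing the integer $\theta_i$ past $f_i$ via (R2) (which costs nothing because $\omega_{\overline{f_i}}=1$), one obtains
\[
g\xi=\tau_1 f_1\,\tau_2 f_2\,\tau_3 f_3\cdots,
\]
where $(\theta_i)_{i\ge0}$ and $(\tau_i)_{i\ge1}$ are determined by $\theta_0=g$ and $\theta_{i-1}+h_i=\tau_i+\omega_{f_i}\theta_i$. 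This is precisely the mixed-radix odometer ``add $g$'' map. The key elementary observation is that if $\theta_{i-1}=0$ then, since $0\le h_i<\omega_{f_i}$, we get $\tau_i=h_i$ and $\theta_i=0$; hence once a carry vanishes it stays zero and all later coefficients of $g\xi$ agree with those of $\xi$. So it suffices to prove that the carry sequence $(\theta_i)$ is eventually $0$.

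Next I would control the carries. The case $g=0$ is trivial, and I would reduce $g<0$ to $g>0$ by the digit-complement symmetry $h_i\mapsto\omega_{f_i}-1-h_i$, which conjugates the action of $g$ into that of $-g$ and interchanges the two hypotheses, so it is enough to treat $g>0$; then all $\theta_i\ge0$. From $\theta_i=(\theta_{i-1}+h_i-\tau_i)/\omega_{f_i}$ and $0\le h_i,\tau_i<\omega_{f_i}$ I would derive $\theta_i\le\theta_{i-1}/\omega_{f_i}+1-1/\omega_{f_i}$. This shows the carry never grows, and that at each of the infinitely many indices with $\omega_{f_i}\ge2$ it strictly decreases whenever $\theta_{i-1}\ge2$ (and drops to $\le1$ when $\theta_{i-1}=2$). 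Hence $\theta_i$ eventually lies in $\{0,1\}$ and remains there. Finally I would eliminate a residual carry of $1$: while $\theta_{i-1}=1$, at an index with $\omega_{f_i}=1$ (forcing $h_i=0$) or with $h_i=\omega_{f_i}-1$ the carry persists, since then $\tau_i=0$ and $\theta_i=1$; but at any index with $\omega_{f_i}\ge2$ and $h_i\neq\omega_{f_i}-1$ we have $1+h_i<\omega_{f_i}$, so $\tau_i=1+h_i$ and $\theta_i=0$. Since by hypothesis $h_i\neq\omega_{f_i}-1$ for infinitely many $i$ (all of which satisfy $\omega_{f_i}\ge2$), such an index occurs, the carry drops to $0$, and thereafter $\tau_i=h_i$. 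This yields the conclusion.

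I expect the main obstacle to be the bookkeeping in this last step: showing that the carry, once reduced to magnitude at most $1$, cannot persist indefinitely. This is exactly where the two genericity hypotheses are needed, as they exclude the all-zero and all-maximal tails --- the two ``fixed points at infinity'' of the odometer where carrying propagates forever --- with the hypothesis $h_i\neq\omega_{f_i}-1$ infinitely often handling $g>0$ and the hypothesis $h_i\neq0$ infinitely often handling $g<0$ via the complement symmetry.
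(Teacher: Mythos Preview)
Your argument is correct. The paper takes a different, slightly more conceptual route: it reduces to the case $g=\pm1$ by observing (via Example~\ref{example: subodometers}) that the generator of $G_{r(f_1)}$ acts on $S$ as the odometer ``$+1$'' map, and then explicitly computes that adding $1$ to $\xi$ changes only the coefficients $h_1,\dots,h_k$ where $k$ is the first index with $h_k<\omega_{f_k}-1$ (and similarly for subtracting $1$, using the first index with $h_k>0$). The general $g$ then follows by iterating $|g|$ times, noting that after each step only finitely many coefficients have changed so the two infinitude hypotheses persist. Your approach instead adds all of $g$ at once and tracks the full carry sequence $(\theta_i)$, showing it is non-increasing, eventually enters $\{0,1\}$, and then hits $0$ at the first subsequent index with $h_i\neq\omega_{f_i}-1$; the digit-complement conjugacy $c(x)=-1-x$ cleanly swaps the roles of the two hypotheses and reduces $g<0$ to $g>0$. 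Your route is a bit more explicit and self-contained (no implicit induction on $|g|$), while the paper's is shorter once the odometer identification is in hand.
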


\begin{proof}
	For part \eqref{lem: odometer.1}, choose arbitrarily generators $a_i$ for $G_{r(f_i)}$ and $b_i$ for $G_{f_i}$.  Replacing $b_1$ by $-b_1$ if necessary, we may assume that $\omega_{f_1} > 0$.  Then replacing $a_2$ by $-a_2$ if necessary, we may assume that $\omega_{\overline{f_1}} > 0$. This process may be continued with $b_2$, then $a_3$, and so on.
	
	Part \eqref{lem: odometer.2} follows from Lemma \ref{lem: pass through}.  
	
	For part \eqref{lem: odometer.3}, it was shown in Example \ref{example: subodometers} that $S$ is a compact group (abelian in this case), and that $G_{r(f_1)} \subseteq S$ acts by translation.  This implies part \eqref{lem: odometer.3}.
	
	For part \eqref{lem: odometer.4}, recall from Example \ref{example: subodometers} that when $S$ is realised as an inverse limit of finite cyclic groups, the generator of $G_{r(f_1)}$ acts as $+ 1$.  Then if $h_i = \omega_{f_i} - 1$ for $i < k$, and $h_k < \omega_{f_k} - 1$, then adding 1 to $\xi$ results in $\xi' = h_1' f_1 h_2' f_2 \cdots$, where $h_i' = 0$ for $i < k$, $h_k' = h_k + 1$, and $h_i' = h_i$ for $i > k$.  Similarly, if $h_i = 0$ for $i < k$ and $h_k > 0$, then subtracting 1 from $\xi$ results in $\xi' = h_1' f_1 h_2' f_2 \cdots$, where $h_i' = \omega_{f_i} - 1$ for $i < k$, $h_k' = h_k - 1$, and $h_i' = h_i$ for $i > k$.  Part \eqref{lem: odometer.4} follows from these observations.
\end{proof}

In the next result, we give a sufficient condition for certain cylinder sets to contain a point with trivial isotropy.  Recall that for $q \in \Q$ we let $\langle q \rangle$ denote the smallest positive denominator that can be used to express $q$ as a fraction.

\begin{lem} \label{lem: trivial isotropy}
	
	Let $\xi = g_1 e_1 g_2 e_2 \dots \in \partial W_\GG$.  Suppose that $\limsup_{n \to \infty} \langle q(g_1 e_1 \dots g_n e_n) \rangle = \infty$.  Then for each $\ell$, $Z(g_1 e_1 \dots g_\ell e_\ell)$ contains a point with trivial isotropy.
	
\end{lem}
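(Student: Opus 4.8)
The plan is to produce, inside $Z(g_1e_1\cdots g_\ell e_\ell)$, a single infinite reduced $\GG$-path $\eta$ with trivial isotropy. Since every nontrivial element of the isotropy group of a boundary point acts on the Bass--Serre tree either elliptically (fixing a vertex) or hyperbolically (translating along an axis), it suffices to build $\eta$ so that neither type of element can fix it. I will arrange two features of $\eta$: first, that the denominators $\langle q(\cdot)\rangle$ of its prefixes are unbounded, which will rule out elliptic isotropy via Lemma~\ref{lem: pass through}; and second, that its labelled edge sequence is aperiodic, which will rule out hyperbolic isotropy.

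The construction rests on the observation that $q(\gamma)$ depends only on the edge sequence of a $\GG$-word and not on its group-element labels. Hence I keep the edge sequence $(e_i)$ of $\xi$ and am free to replace each label $g_i$ (for $i>\ell$) by any admissible $g_i'\in\Sigma_{e_i}$, subject only to $g_i'\neq 0$ when $e_i=\overline{e_{i-1}}$. Any such $\eta=g_1e_1\cdots g_\ell e_\ell g_{\ell+1}'e_{\ell+1}\cdots$ lies in the same cylinder and has exactly the same prefix index-ratios as $\xi$, so it inherits $\limsup_n\langle q(g_1e_1\cdots g_ne_n)\rangle=\infty$. Moreover the hypothesis itself forces infinitely many $i$ with $|\Sigma_{e_i}|=|\omega_{e_i}|\ge 2$: if $|\omega_{e_i}|=1$ for all large $i$, then every prefix ratio differs from a fixed one by an integer factor, so the denominators $\langle q(\cdot)\rangle$ would be bounded, contradicting the hypothesis. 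At these infinitely many positions I have at least two choices of label; since the admissible choices of labels form an uncountable set, only countably many of which yield an eventually periodic path, I may choose the $g_i'$ so that $\eta$ is aperiodic.

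For the elliptic case, suppose $1\neq\gamma$ is elliptic and fixes the end $\eta=h_1f_1h_2f_2\cdots$. Then $\gamma$ fixes the ray from any of its fixed vertices to $\eta$ pointwise, so it fixes every prefix vertex $\eta_n$ for $n\ge N$; writing $c_n\in G_{s(f_n)}\cong\Z$ for the resulting vertex-group element, relation (R2) yields the recursion $c_n=(\omega_{f_{n+1}}/\omega_{\overline{f_{n+1}}})\,c_{n+1}$, whence $c_n=q(\sigma_{N,n})\,c_N$ for the subpath $\sigma_{N,n}$ from level $N$ to level $n$. Since $\gamma\neq 1$ we have $c_N\neq 0$, and by Lemma~\ref{lem: pass through} (or directly from this identity) each $\langle q(\sigma_{N,n})\rangle$ divides $c_N$, so these denominators are bounded by $|c_N|$; but $\langle q(\gamma_n)\rangle$ divides $\langle q(\gamma_N)\rangle\langle q(\sigma_{N,n})\rangle$ forces $\langle q(\sigma_{N,n})\rangle\to\infty$, a contradiction. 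Hence no nontrivial elliptic element fixes $\eta$.

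For the hyperbolic case, a hyperbolic isometry of a tree fixes precisely the two endpoints of its axis, so if $\gamma$ fixes $\eta$ then $\eta$ is an endpoint of that axis and a tail of $\eta$ coincides with a forward ray of it; translation by $\gamma$ then shows the labelled sequence $(h_i,f_i)$ is eventually periodic, contradicting the aperiodicity of $\eta$. Thus $\eta$ has trivial isotropy, proving the lemma. The main obstacle is the simultaneous control of the two features: the enabling insight is that $q$ ignores labels, which lets me decouple the arithmetic condition (unbounded denominators, guaranteed by fixing the edge sequence of $\xi$) from the dynamical condition (aperiodicity, arranged using the infinitely many genuine label choices that the hypothesis forces to exist).
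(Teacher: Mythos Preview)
Your approach---fix the edge sequence of $\xi$, vary only the labels to get an aperiodic $\eta$, then rule out elliptic and hyperbolic isotropy separately---is attractive, and the elliptic part is fine. The gap is in the hyperbolic step. You assert that if a hyperbolic $\gamma$ fixes $\eta$ then the \emph{labelled} sequence $(h_i,f_i)$ is eventually periodic, but ``translation by $\gamma$'' only yields this for the edge sequence. Writing $\gamma=\eta_m\,g\,\eta_k^{-1}$ without cancellation (say $m>k$), the equation $\gamma\eta=\eta$ becomes $g\cdot(h_{k+1}f_{k+1}h_{k+2}f_{k+2}\cdots)=h_{m+1}f_{m+1}h_{m+2}f_{m+2}\cdots$. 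This forces $f_{k+i}=f_{m+i}$, but the labels $h_{m+i}$ are obtained from the $h_{k+i}$ by the carry process of Lemma~\ref{lem: pass through} with initial carry $g$, and when $|q(\delta)|>1$ (for $\delta$ one period) these carries can grow without bound, so there is no finite-state recursion forcing the labels to repeat. For instance, with a single loop $e$, $\omega_e=2$, $\omega_{\overline e}=3$ (so $q(e)=3/2$ and $\langle q(\gamma_n)\rangle=2^n$), the hyperbolic element $\gamma=0e7$ has attracting fixed point beginning $0,1,0,1,0,1,1,1,1,1,0,0,1,1,0,\ldots$; whether this is eventually periodic is not at all evident, and your argument supplies no reason why it must be.

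The paper sidesteps exactly this difficulty by a three-way split on the edge sequence of $\xi$. If the edge sequence is aperiodic, $\xi$ itself has trivial isotropy. If it is eventually periodic with $|\omega_{\overline{e_{m+i}}}|=1$ along the period (the odometer case), then carries die out after finitely many steps (Lemma~\ref{lem: odometer}\eqref{lem: odometer.4}), and label-aperiodicity does rule out hyperbolic isotropy. If some $|\omega_{\overline{e_{m+i}}}|>1$, the paper \emph{abandons} the edge sequence of $\xi$ and manufactures a point with aperiodic edge sequence by interleaving the period $\delta$ with a back-and-forth loop $\beta$, reducing to the first case. Your argument could plausibly be salvaged by a counting step (there are only countably many hyperbolic fixed points, while you have uncountably many aperiodic label choices), but as written the hyperbolic case is not closed. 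A smaller slip: at positions with $e_i=\overline{e_{i-1}}$ and $|\omega_{e_i}|=2$ only one label is admissible, so ``at least two choices of label'' is not literally true at every $i$ with $|\omega_{e_i}|\ge2$; one still gets infinitely many positions with genuine freedom, but this needs a word of justification.
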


\begin{proof}
	For each $n$ we will let $\gamma_n = g_1 e_1 \dots g_n e_n$.  We consider first the case that the sequence of edges appearing in $\xi$ is aperiodic.  We claim that $G_{r(\xi)}$ does not contain nontrivial elements of the isotropy at $\xi$.  For this, let $1 \le n_1 < n_2 < \dots$ with $\langle q(\gamma_{n_i}) \rangle > i$ for all $i$.  If $g \in G_{r(\xi)}$ with $g \xi = \xi$, then by Lemma \ref{lem: pass through} we must have that $\langle q(\gamma_{n_i}) \rangle$ divides $g$ for all $i$, and hence we must have $g = 0$.
	
	Now suppose that $\beta \xi = \xi$ for some $\beta \in \pi_1(\GG,r(\xi))$ with $|\beta| > 0$.  We may write $\beta = \beta_1 \gamma_k^{-1}$ without cancellation, for some reduced $\GG$-word $\beta_1$, where $\beta_1 g_{k+1} e_{k+1}$ also has no cancellation (where these calculations are performed in the groupoid $W_\GG$).  By the assumed aperiodicity it must be the case that $\beta_1 = \gamma_k g$, for some $g \in G_{s(e_k)}$.  It follows that $g \xi' = \xi'$, where $\xi' = g_{k+1} e_{k+1} \dots$.  Since $\xi'$ satisfies the same hypotheses as $\xi$, the previous argument shows that $g = 0$, and hence that $\beta = 0$, a contradiction.  We now have that for each $\ell$, $\xi \in Z(\gamma_\ell)$ is a point with trivial isotropy.
	
	Now we consider the case that the sequence of edges in $\xi$ is eventually periodic.  Fix $\ell$, and let $m \ge \ell$ and $p>0$ be such that $e_{m+1} \dots e_{m+p}$ is a period of the edge sequence of $\xi$.  Put $\delta = g_{m+1} e_{m+1} \dots g_{m+p} e_{m+p}$.  For $n > m$ write $n = m + kp + r$, where $k \ge 0$ and $0 \le r < p$.  Note that $q(\gamma_p) = q(\gamma_m) q(\delta)^k q(\delta')$, where $\delta' = h_1 e_{m+1} \dots h_r e_{m+r}$ for some $h_1$, $\ldots$, $h_{r}$.  The set of $q(\delta')$ for all such $\delta'$ is a finite set.  Therefore if $q(\delta) \in \Z$ then $\langle q(\gamma_n) \rangle$ is bounded, in contradiction to the hypothesis of the lemma.  Therefore $\langle q(\delta) \rangle > 1$.
	
	There are two subcases.  First, suppose that $|\omega_{\overline{e_{m+i}}}| = 1$ for all $1 \le i \le p$.  Since $\langle q(\delta) \rangle > 1$ there exists $1 \le j \le p$ such that $|\omega_{e_{m+j}}| > 1$.  Then we may use Lemma \ref{lem: odometer}.  Let $S$ be as in that lemma.  Since $G_{r(e_1)}$ acts freely on $S$, there is no isotropy in $G_{r(e_1)}$.  Choose $\zeta = h_1 e_1 h_2 e_2 \dots \in S \cap Z(\gamma_\ell)$ such that the sequence $(h_{n + kp + j})_{k=1}^\infty$ is aperiodic, is nonzero infinitely often, and is not equal to $|\omega_{e_{m+j}}| - 1$ infinitely often.  Suppose that $\beta$ is a $\GG$-word such that $\beta \zeta = \zeta$. We may again write $\beta = \beta_1 g \gamma_k^{-1}$, where $\beta_1$ is a $\GG$-path, $g \in G_{s(\beta_1)}$, and $\beta_1 g g_{k+1} e_{k+1}$ has no cancellation. By Lemma \ref{lem: odometer}\eqref{lem: odometer.4}, the element $g$ changes at most finitely many coefficients in the tail of coefficients of $\zeta$. Since the sequence of those coefficients is aperiodic, we must have that $\beta_1 = \gamma_k$.  But then $g$ must be trivial to achieve $\beta \zeta = \zeta$.  Thus the isotropy at $\zeta$ cannot contain an element of positive length, and so the point $\zeta \in Z(\gamma_\ell)$ has trivial isotropy.
	
	Finally, suppose that $|\omega_{\overline{e_{m+i}}}| > 1$ for some $i$, $1 \le i \le p$.  Replacing $m$ by $m+i$, if necessary, we may assume that $i = p$.  Choose $d_1$, $d_1'$, $\ldots$ $d_p$, $d_p' \in \Z$ such that $d_1 \not= 0$, $d_p' \not= 0$, and
	\[
	\beta := d_p' \overline{e_{m+p}} \dots d_1' \overline{e_{m+1}} d_1 e_{m+1} \dots d_p e_{m+p}
	\]
	is a $\GG$-path.  By Lemma \ref{lem: pass through} we know that if $g$, $g' \in \Z$ are such that $g \beta = \beta g'$ then $g = g'$.  We will construct a point $\eta \in Z(\gamma_\ell)$ having trivial isotropy.  Choose an aperiodic sequence $\sigma_1$, $\sigma_2$, $\ldots$ in the set $\{\delta,\beta\}$.  Since $d_p' \not= 0$ and $g_{m+1} e_{m+1} \not= 0 \overline{e_{m+p}}$, we may define the infinite $\GG$-path $\eta = \gamma_m \sigma_1 \sigma_2 \dots \in Z(\gamma_\ell)$.  Note that the sequence of edges appearing in $\eta$ is aperiodic.  This is because the sequence of edges in $\beta$ is unchanged by reversal, whereas this is not true for $\delta$ since $\langle q(\delta) \rangle > 1$.  As before, this forces the isotropy at $\eta$ to be a subgroup of $G_{r(\eta)}$.  Since $\langle q(\delta)^k \rangle \to \infty$, Lemma \ref{lem: pass through} implies that the isotropy at $\eta$ is trivial.
\end{proof}

We need one more result to prove Theorem~\ref{thm: gbs top free}.

\begin{lem} \label{lem: constant tree}
	
	Let $\GG$ be a locally finite nonsingular GBS graph of groups.  Suppose that $\GG$ has a constant tree at $e \in \Gamma^1$.  Let $f \in E_{s,e}^1$ with $r(f) = s(e)$.  Then every point in $Z(0 f)$ has nontrivial isotropy.
	
\end{lem}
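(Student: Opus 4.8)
The plan is to prove something stronger than nontrivial isotropy: that the whole vertex group $G_{s(e)} \cong \Z$ fixes \emph{every} point of $Z(0f)$. This rests on two observations, namely a description of what points of $Z(0f)$ look like, and the pass-through computation of Lemma~\ref{lem: pass through}.

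First I would record the consequences of $\GG$ having a constant tree at $e$ (Definition~\ref{def:ConstantTree}). Since $\GG$ is treelike at $e$, the graph $E_{s,e}$ is a tree, and since every $\alpha_g$ with $g \in E_{s,e}^1$ is onto, we have $|\omega_g| = 1$ and hence $\Sigma_g = \{0\}$ for each such $g$ (Notation~\ref{ntn: gbs}). I would also check that neither $e$ nor $\overline{e}$ lies in $E_{s,e}^1$: were $\overline{e}$ (respectively $e$) to appear on one of the defining paths $e_1 \dots e_n$ with $e_1 \neq \overline{e}$, tracing the path up to the first such occurrence would produce a nontrivial reduced closed path based at $s(e)$ whose edges all lie in $E_{s,e}^1$, contradicting that $E_{s,e}$ is a tree. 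In particular $f \neq \overline{e}$.

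The key step is to show that every $\xi = 0 f\, g_2 e_2\, g_3 e_3 \cdots \in Z(0f)$ has all of its edges $f, e_2, e_3, \dots$ in $E_{s,e}^1$. I would argue by contradiction: let $e_k$ be the first edge not in $E_{s,e}^1$, so that $f e_2 \cdots e_{k-1}$ is a reduced path inside the tree $E_{s,e}$ and $r(e_k) = s(e_{k-1}) \in E_{s,e}^0$. The path $f e_2 \cdots e_{k-1} e_k$ is reduced (as $\xi$ is) and starts with $f \neq \overline{e}$, so if $e_k \notin \{e, \overline{e}\}$ the very definition of $E_{s,e}^1$ forces $e_k \in E_{s,e}^1$, a contradiction. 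The alternative $e_k \in \{e,\overline{e}\}$ is excluded because it would make $f e_2 \cdots e_{k-1}$ (when $e_k = \overline e$) or $f e_2 \cdots e_{k-1} e_k$ (when $e_k = e$) a nontrivial reduced closed path at $s(e)$ lying in $E_{s,e}$, again contradicting the tree property (and forcing $e \in E_{s,e}^1$ in the latter case). Thus every $e_i \in E_{s,e}^1$, so $|\omega_{e_i}| = 1$ and $g_i \in \Sigma_{e_i} = \{0\}$; that is, $\xi = 0 f\, 0 e_2\, 0 e_3 \cdots$ has all coefficients zero.

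Finally I would verify that every $g \in G_{s(e)} = G_{r(f)}$ fixes such a $\xi$. Writing $\gamma_n = 0 f\, 0 e_2 \cdots 0 e_n$ and applying Lemma~\ref{lem: pass through}, each residue $\tau_i$ lands in $\Sigma_{e_i} = \{0\}$ because $|\omega_{e_i}| = 1$, so $g\gamma_n = \gamma_n\, g'$ with $g' = q(\gamma_n) g = \pm g$; hence the reduction of $g\xi$ begins with $\gamma_n$ for \emph{every} $n$. Since $g\xi$ therefore extends every finite initial segment of $\xi$, we get $g\xi = \xi$. Thus $G_{s(e)} \cong \Z$ sits inside the isotropy at $\xi$, and every point of $Z(0f)$ has nontrivial isotropy. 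I expect the main obstacle to be the middle step---confining $\xi$ to $E_{s,e}$---since it requires carefully combining the tree structure with the absence of the ``bridge'' edges $e, \overline{e}$ from $E_{s,e}^1$; once this is in place, the pass-through computation is routine.
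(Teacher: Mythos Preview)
Your proof is correct and follows the same approach as the paper's, namely showing that the entire vertex group $G_{s(e)}\cong\Z$ fixes each point of $Z(0f)$ by first confining such points to the subtree $E_{s,e}$ and then invoking the pass-through computation. The paper's version is a two-line sketch that simply asserts ``since $\alpha_{f_i}$ is onto for all $i$, we have that all $h_i$ equal $0$, and $G_{r(f)}$ fixes $\xi$''; you have carefully filled in exactly the details the paper suppresses, in particular the argument that every edge of $\xi$ must lie in $E_{s,e}^1$. One minor remark: your separate treatment of the cases $e_k\in\{e,\overline{e}\}$ in the middle step is not strictly necessary, since once you know $f\notin\{e,\overline{e}\}$ the path $f e_2\cdots e_k$ is already a defining path for $E_{s,e}$ and hence $e_k\in E_{s,e}^1$ directly---but the case analysis does no harm.
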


\begin{proof}
	Let $\xi = 0 f h_2 f_2 h_3 f_3 \dots \in Z(0f)$.  Since $\alpha_{f_i}$ is onto for all $i$, we have that all $h_i$ equal 0, and $G_{r(f)}$ fixes $\xi$.
\end{proof}

We can now prove our characterisation of topological freeness.

\begin{proof}[Proof of Theorem~\ref{thm: gbs top free}]
	
	For the ``if'' direction, we let $\mu \in \GG^*$ define a cylinder set in $\partial W_\GG$; we show that $Z(\mu)$ contains a point of trivial isotropy. Now let $e$ denote the sourcemost edge of $\mu$; so $\mu = \mu' e$ (with no cancellation) for some $\GG$-word $\mu'$. Let $\xi = g_1 e_1g_2e_2 ...$.  Assume that $\xi$ can flow to $e$, and that $\xi$ is as in (2). There is $n$ such that for some $\GG$-word $\nu \in s(e) \GG^* r(e_n)$, the concatenation $e\nu e_n$ has no cancellation. By (2) and Lemma \ref{lem: trivial isotropy} we know that there is $\eta \in Z(g_1 e_1 ... g_n e_n)$ with trivial isotropy.  Write $\eta = g_1 e_1 ... g_n e_n \eta'$.  (So $\eta'$ also has trivial isotropy.)  Let $\gamma = 0 e \nu \overline{e_n} g_n^{-1} ... \overline{e_1} g_1^{-1}$.  Then $\mu' \gamma \eta = \mu \nu \eta'$ (without cancellation) also has trivial isotropy, and is in the cylinder set $Z(\mu)$.
	
	Suppose now that $\xi$ cannot flow to $e$.  We claim that $\xi$ can flow to $\overline{e}$.  To see this, suppose first that there is a $\Gamma$-path $f_1 \dots f_k$, $k \ge 1$, which is of minimal length such that $r(f_1) = s(e)$ and $s(f_k)$ lies on $\xi$; say, $s(f_k) = r(e_\ell)$.  Since $f_1\dots f_k$ is of minimal length, we have $f_k \not= \overline{e_\ell}$.  If $f_1 \not= \overline{e}$, then the $\GG$-path $0 e 0 f_1 \dots 0 f_k 0 e_\ell$ has no cancellation, contradicting the assumption that $\xi$ cannot flow to $e$.  Therefore $f_1 = \overline{e}$.  It follows that $\xi$ can flow to $\overline{e}$.  On the other hand, suppose that no such minimal $\Gamma$-path exists.  Then it must be the case that $s(e)$ lies on $\xi$; say, $s(e) = r(e_\ell)$.  If $e_\ell \not= \overline{e}$, then $\xi$ can flow to $e$, a contradiction.  Therefore $e_\ell = \overline{e}$, and so again we have that $\xi$ can flow to $\overline{e}$.
	
	There are now two cases.  First, suppose that $\GG$ is not treelike at $e$.  Then there is a reduced $\GG$-loop $\gamma$ at $s(e)$ such that $0e \gamma \overline{e}$ is reduced.  Since $\xi$ can flow to $\overline{e}$, there is a reduced $\GG$-word $\gamma'$, and $\ell \in \N$, such that $0\overline{e} \gamma' e_\ell$ is reduced.  Then $\mu \gamma \overline{e} \gamma' e_\ell g_{\ell+1} f_{\ell+1} \dots \in Z(\mu)$ has trivial isotropy.
	
	Second, suppose that $\GG$ is treelike at $e$.  Since $\GG$ has no constant trees, there must exist $\eta \in s(e) \partial E_{s,e}$ such that $|\omega_f| > 1$ for infinitely many edges $f$ lying on $\eta$.  It follows that $\eta$ satisfies the hypothesis of Lemma \ref{lem: trivial isotropy}, and hence there is $\eta'$ having the same edge sequence as $\eta$ and having trivial isotropy.  Then $\mu \eta' \in Z(\mu)$ has trivial isotropy.  Thus in all cases, $Z(\mu)$ contains a point with trivial isotropy, and hence the action is topologically free. 
	
	For the ``only if'' direction, suppose that the action is topologically free.  Lemma \ref{lem: constant tree} implies that \eqref{thm: gbs top free.1} holds.  We suppose that \eqref{thm: gbs top free.2} is false, and derive a contradiction by showing that every point of $\partial W_\GG$ has nontrivial isotropy.  Let $\xi = g_1 e_1 g_2 e_2 \dots \in \partial W_\GG$, and let $\gamma_n = g_1 e_1 \dots g_n e_n$.  By assumption, $\{ \langle q(\gamma_n) \rangle : n = 1,\, 2,\, \ldots \}$ is bounded.  Let $C$ be the least common multiple of $\{ \langle q(\gamma_n) \rangle : n = 1,\,2,\, \ldots \}$.  Then for each $n$ we have $C \cdot \gamma_n = \gamma_n C q(\gamma_n)$, by Lemma \ref{lem: pass through}.  It follows that $C \cdot Z(\gamma_n) \subseteq Z(\gamma_n)$ for all $n$.  Since $\bigcap_{n=1}^\infty Z(\gamma_n) = \{ \xi \}$, we have that $C \xi = \xi$.
\end{proof}

\begin{rmk}\label{rmk: unimodular}
One instance where condition \eqref{thm: gbs top free.2} holds is when there is a $\GG$-loop $\gamma$ such that $|q(\gamma)| \not= 1$.  The reason is that in this case, either $|\langle q(\gamma) \rangle| > 1$ or $|\langle q(\gamma^{-1}) \rangle| > 1$.  Then the infinite iteration of $\gamma$, or of $\gamma^{-1}$, will give an infinite reduced path satisfying \eqref{thm: gbs top free.2}.  Of course, if $\GG$ is finite, this is the only way that \eqref{thm: gbs top free.2} can hold.  We recall the notion of a {\em unimodular} graph of groups from \cite{BK}.  In the context of a locally finite GBS graph of groups, this means that $q(\gamma) = \pm 1$ for every $\GG$-loop $\gamma$.  In particular, if $\Gamma$ is finite then $\GG$ is unimodular if and only if \eqref{thm: gbs top free.2} does not hold.  We obtain the following corollary of Theorem \ref{thm: gbs top free}.
\end{rmk}

\begin{cor}\label{cor: t.f. and unimodular}
	
	Let $\GG$ be a finite nonsingular GBS graph of groups.  Then the action of $\pi_1(\GG,v)$ on $v\partial X_\GG$ is topologically free if and only if $\GG$ is not unimodular.
	
\end{cor}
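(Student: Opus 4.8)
The plan is to derive Corollary~\ref{cor: t.f. and unimodular} directly from Theorem~\ref{thm: gbs top free} by showing that, in the finite case, the two conditions \eqref{thm: gbs top free.1} and \eqref{thm: gbs top free.2} of that theorem together reduce to the single condition that $\GG$ is not unimodular. Since $\GG$ is assumed to be a finite nonsingular GBS graph of groups, I would first observe (as already recorded in Remark~\ref{rmk: unimodular}) that condition~\eqref{thm: gbs top free.2} is equivalent to the non-unimodularity of $\GG$: if some $\GG$-loop $\gamma$ has $|q(\gamma)| \neq 1$ then one of $\langle q(\gamma) \rangle$, $\langle q(\gamma^{-1}) \rangle$ exceeds $1$, and iterating the corresponding loop produces an infinite reduced path along which $\limsup_n \langle q(\gamma_n) \rangle = \infty$; conversely, when $\Gamma$ is finite every infinite reduced $\GG$-path is eventually periodic with period a $\GG$-loop, so if all loops satisfy $q(\gamma) = \pm 1$ then $\langle q(\gamma_n) \rangle$ stays bounded along any path, and \eqref{thm: gbs top free.2} fails. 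This gives the equivalence ``$\GG$ not unimodular $\iff$ \eqref{thm: gbs top free.2} holds''.

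The second and main step is to show that, for a finite GBS graph of groups, condition~\eqref{thm: gbs top free.2} already implies condition~\eqref{thm: gbs top free.1}, so that the conjunction of the two conditions collapses to \eqref{thm: gbs top free.2} alone. Equivalently, I would argue the contrapositive: if $\GG$ has a constant tree at some edge $e$, then $\GG$ must be unimodular. Recall from Definition~\ref{def:ConstantTree} that a constant tree at $e$ means $\GG$ is treelike at $e$, the upstream graph $E_{s,e}$ is a nontrivial tree, and $\alpha_f$ is onto for \emph{every} $f \in E_{s,e}^1$. The key point is that finiteness of $\Gamma$ forces a strong structural conclusion: by the remark following Definition~\ref{def: treelike}, a nontrivial $E_{s,e}$ must be infinite by nonsingularity, which is impossible when $\Gamma$ is finite. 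Thus in the finite case a \emph{nontrivial} constant tree cannot occur at all, so condition~\eqref{thm: gbs top free.1} is automatically satisfied and contributes nothing. I expect this observation to be the crux of the argument, and the place where finiteness of $\Gamma$ is genuinely used.

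Putting these together, when $\Gamma$ is finite condition~\eqref{thm: gbs top free.1} holds vacuously (there are no constant trees, nontrivial ones being ruled out by finiteness and trivial ones not qualifying under Definition~\ref{def:ConstantTree}), so the action is topologically free if and only if \eqref{thm: gbs top free.2} holds, which by the first step is if and only if $\GG$ is not unimodular. I would write this as a short chain of equivalences invoking Theorem~\ref{thm: gbs top free} and Remark~\ref{rmk: unimodular}. The main obstacle I anticipate is making the reduction of \eqref{thm: gbs top free.2} to boundedness of $\langle q(\gamma_n) \rangle$ fully rigorous: I need the precise statement that, for finite $\Gamma$, every infinite reduced path has an eventually periodic edge sequence and that $\langle q(\gamma_n) \rangle$ along such a path is controlled by $q$ of the periodic loop together with finitely many ``transient'' prefix contributions. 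This is exactly the computation carried out in the eventually-periodic case of the proof of Lemma~\ref{lem: trivial isotropy} (the decomposition $q(\gamma_n) = q(\gamma_m) q(\delta)^k q(\delta')$ with $q(\delta')$ ranging over a finite set), so I would cite that computation rather than repeat it, and conclude that unimodularity---$q(\gamma) = \pm 1$ for all loops---forces the $\limsup$ to be finite, completing the proof.
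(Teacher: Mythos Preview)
Your proposal is correct and follows essentially the same approach as the paper: the paper's proof observes in one line that finiteness of $\Gamma$ together with nonsingularity rules out constant trees (your argument via the remark after Definition~\ref{def: treelike} that a nontrivial $E_{s,e}$ would have to be infinite), and then invokes Theorem~\ref{thm: gbs top free} and Remark~\ref{rmk: unimodular} to finish. Your write-up is more detailed---in particular you spell out the eventually-periodic argument for why unimodularity forces $\langle q(\gamma_n)\rangle$ to stay bounded---but the structure and ideas coincide with the paper's proof.
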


\begin{proof}
	The finiteness of $\Gamma$, together with the nonsingularity of $\GG$, imply that $\GG$ has no constant trees.  The corollary now follows from Theorem \ref{thm: gbs top free} and Remark~\ref{rmk: unimodular}.
\end{proof}

In fact, the proof of Theorem \ref{thm: gbs top free} can be used to get a bit more.

\begin{prop} \label{prop: ineffective gbs}
	Let $\GG$ be a locally finite GBS graph of groups.  Suppose that condition \eqref{thm: gbs top free.1} of Theorem \ref{thm: gbs top free} holds, but that condition \eqref{thm: gbs top free.2} of Theorem \ref{thm: gbs top free} does not hold.  Then $\pi_1(\GG,v)$ does not act effectively on $v\partial X_\GG$ (and hence the action is not topologically free).
\end{prop}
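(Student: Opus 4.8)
The goal is to produce a single nontrivial element of $\pi_1(\GG,v)$ that fixes \emph{every} point of $v\partial X_\GG$; this immediately gives non-effectiveness, and hence failure of topological freeness, since such an element has empty, and therefore non-dense, set of non-fixed points. The plan is to adapt the ``only if'' argument of Theorem~\ref{thm: gbs top free}, which, assuming the negation of \eqref{thm: gbs top free.2}, already shows that each individual $\xi=g_1e_1g_2e_2\cdots\in v\partial X_\GG$ is fixed by a vertex-group element: writing $\gamma_n=g_1e_1\cdots g_ne_n$, the integer $C_\xi=\operatorname{lcm}_n\langle q(\gamma_n)\rangle$ is finite, and by Lemma~\ref{lem: pass through} it passes through each $\gamma_n$ without disturbing it, so $C_\xi\,Z(\gamma_n)\subseteq Z(\gamma_n)$ for all $n$ and hence $C_\xi\xi=\xi$ because $\bigcap_nZ(\gamma_n)=\{\xi\}$. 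The task is therefore to make this bound uniform in $\xi$.

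The key step I would isolate as a lemma is that the negation of \eqref{thm: gbs top free.2} forces $\langle q(\mu)\rangle$ to be bounded over \emph{all} finite reduced $\GG$-paths $\mu\in v\GG^*$, not merely along each single infinite path. I would prove this by contradiction: if no such uniform bound existed, there would be paths $\mu_1,\mu_2,\dots\in v\GG^*$ with $\langle q(\mu_k)\rangle\to\infty$, and I would splice these into one infinite reduced $\GG$-path $\xi\in v\partial X_\GG$ by following each $\mu_k$ and then a reduced return word, whose underlying edge-path is the reverse of that of $\mu_k$, back to $v$ before beginning $\mu_{k+1}$. Since reversing a $\GG$-word inverts its signed index ratio, every completed excursion out to the end of $\mu_k$ and back to $v$ contributes a factor of $1$ to $q$; consequently the prefix of $\xi$ ending at the far end of $\mu_k$ has the same signed index ratio, and hence the same denominator, as $\mu_k$ itself. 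This produces a single $\xi$ witnessing $\limsup_n\langle q(\gamma_n)\rangle=\infty$, contradicting the assumed failure of \eqref{thm: gbs top free.2}. Here condition~\eqref{thm: gbs top free.1} (no constant trees, Definition~\ref{def:ConstantTree}) is used to guarantee that these excursions are not forced into regions where the relevant denominators collapse; the constant-tree regions themselves are harmless, since by Lemma~\ref{lem: constant tree} every point lying over such a region is already fixed by its incident vertex group.

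With the uniform bound in hand, I would take $C$ to be the least common multiple of the finitely many values of $\langle q(\mu)\rangle$ over $v\GG^*$, regarded as a nonzero element of the vertex group $G_v\le\pi_1(\GG,v)$. For every $\xi\in v\partial X_\GG$ and every prefix $\gamma_n$, the modulus $\langle q(\gamma_n)\rangle$ divides $C$, so the computation recalled in the first paragraph gives $C\,Z(\gamma_n)\subseteq Z(\gamma_n)$ for all $n$ and therefore $C\xi=\xi$. Thus $C$ acts as the identity homeomorphism on all of $v\partial X_\GG$, so the action is not effective. I expect the splicing construction to be the main obstacle: one must arrange the return words and the junctions between consecutive $\mu_k$ to be genuinely reduced $\GG$-words (using that the vertex groups are infinite cyclic to choose the connecting group elements, in particular supplying the nonzero element needed at each turnaround to avoid forbidden back-tracking), and then verify the signed-index-ratio bookkeeping that makes each closed excursion $q$-neutral.
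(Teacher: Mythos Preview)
Your approach is genuinely different from the paper's. Both start from the observation (the ``only if'' part of Theorem~\ref{thm: gbs top free}) that every $\xi\in v\partial X_\GG$ is fixed by some nonzero $C_\xi\in G_v$, and both then seek a single $C$ working for all $\xi$. The paper does this topologically: it partitions $v\partial X_\GG$ into the sets $S_n=\{\xi:\text{the }G_v\text{-isotropy at }\xi\text{ is exactly }n\Z\}$, asserts these are open, and invokes compactness to conclude that only finitely many $S_n$ are nonempty, taking $C$ to be the lcm of the surviving $n$'s. Your route is combinatorial: you argue directly that $\langle q(\mu)\rangle$ is uniformly bounded over $\mu\in v\GG^*$, by splicing paths with growing denominators into a single infinite $\GG$-path that would witness condition~\eqref{thm: gbs top free.2}. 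Your approach is more explicit and would give an effective bound on $C$; the paper's is shorter and avoids all path-manipulation.

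The risk in your plan lies exactly where you flag it. At the turnaround after $\mu_k=g_1e_1\cdots g_me_m$, a reduced continuation along the reversed edge-path requires a nonzero element of $\Sigma_{\overline{e_m}}$, i.e.\ $|\omega_{\overline{e_m}}|\ge2$; when $|\omega_{\overline{e_m}}|=1$ you must either extend $\mu_k$ (altering $q(\mu_k)$, possibly destroying the large denominator) or detour through other edges (breaking the exact $q$-neutrality of the excursion). A similar issue arises at $v$ between the return word and $\mu_{k+1}$ when their initial edges coincide and $|\omega_{e_1}|=1$. These obstacles can be handled with care, and condition~\eqref{thm: gbs top free.1} does rule out the worst degenerations, but the argument is more delicate than your sketch indicates. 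The paper's compactness argument bypasses this combinatorics entirely; note that condition~\eqref{thm: gbs top free.1} is precisely what makes the $S_n$ open (cf.\ Remark~\ref{rem:CondOf7.5Necessary}, where \eqref{thm: gbs top free.1} fails and infinitely many $S_n$ are nonempty).
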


\begin{proof}
	The last part of the proof of Theorem \ref{thm: gbs top free} shows that for every $\xi \in \partial W_\GG$, there is $C_\xi \in G_{r_\xi} \setminus \{0\} \cong \Z^*$ such that $C_\xi \xi = \xi$. Fix $v \in \Gamma^0$.  For $n \in \Z_+^*$ let $$S_n = \{ \xi \in v \partial W_\GG : n \text{ generates the isotropy at } \xi \}.$$  Then $v \partial W_\GG = \bigsqcup_n S_n$.  Since each $S_n$ is an open set, compactness of $v \partial W_\GG$ implies that only finitely many $S_n$ are nonempty.  Let $C$ be the least common multiple of $\{n : S_n \not= \emptyset\}$.  Then $C \xi = \xi$ for all $\xi \in v \partial W_\GG$.  Then $C \Z \subseteq G_v \subseteq \pi_1(\GG,v)$ acts trivially on $v\partial X_\GG$.  Thus $\pi_1(\GG,v)$ does not act effectively on $v\partial X_\GG$. 
\end{proof}

\begin{rmk}\label{rem:CondOf7.5Necessary}
The assumption that Theorem \ref{thm: gbs top free}\eqref{thm: gbs top free.1} holds is necessary for the proof of Proposition \ref{prop: ineffective gbs}. Let $\Gamma$ be a tree with $\Gamma^0 = \N \times \N$.  The edges $\Gamma^1$ are $\{ e_n : n \geq 0\} \cup \{ f_{ij} : i,j \ge 0\}$.  Let $r(e_n) = (n,0)$ and $s(e_n) = (n+1,0)$ for $n \ge 0$, and let $r(f_{ij}) = (i,j)$ and $s(f_{ij}) = (i,j+1)$ for $i$, $j \ge 0$.  Let $\omega_{e_n} = \omega_{\overline{e_n}} = \omega_{\overline{f_{ij}}} = 1$ for all $n$, $i$, $j$.  Let $\omega_{f_{ij}} = 1$ if $j > 0$, and $\omega_{f_{i0}} = i + 2$ for $i \ge 0$.  We show that the fundamental group acts effectively on the boundary. Let $v = (0,0)$.  Note that there are no reduced $\GG$-loops at $v$ of length greater than zero; i.e. $\pi_1(\GG,v) = G_v \cong \Z$.  For $n \ge 0$ and $m \in \Sigma_{f_{n0}} \cong \Z / (n+2)\Z$, let $\xi_{n,m} \in v\partial X_\GG$ be defined by $\xi_{n,m} = 0 e_0 \cdots 0 e_{n-1} m f_{n0} 0 f_{n1} 0 f_{n2} \cdots$.  Then it is easily seen that $1 \in \pi_1(\GG,v)$ acts on $\xi_{n,m}$ as $1 \cdot \xi_{n,m} = \xi_{n,m+1}$.  Thus no nonzero integer can fix all $\xi_{n,m}$.  Therefore $\pi_1(\GG,v)$ acts effectively on $v\partial X_\GG$, but Theorem \ref{thm: gbs top free}\eqref{thm: gbs top free.2} fails.
\end{rmk}

\subsection{$C^*$-algebras associated to GBS graphs of groups}\label{subsec: gbs C*s}

Through our work in Sections~\ref{sec: structural props}~and~\ref{sec: GBS} we have a characterisation of when the action of the fundamental group of a GBS graph of groups on its boundary is minimal and topologically free. Since we know from Theorem~\ref{thm: nuclearity of C*(GG)} that the action is amenable, this is equivalent to a characterisation of when a GBS graph of groups $C^*$-algebra is simple and nuclear. It further follows from our results that we have the following dichotomy for simple GBS graph of groups $C^*$-algebras: 

\begin{thm}\label{thm: gbs C* dichotomy}
A simple GBS graph of groups $C^*$-algebra is either a stable Bunce--Deddens algebra, or a Kirchberg algebra.
\end{thm}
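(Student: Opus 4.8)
The plan is to combine the stable isomorphism of Theorem~\ref{thm: main theorem} with the trichotomy of Proposition~\ref{prop: min and lc}. Suppose $C^*(\GG)$ is simple for a GBS graph of groups $\GG$. Since every vertex group is $\Z$ and hence amenable, Theorem~\ref{thm: nuclearity of C*(GG)} shows the boundary action is amenable, so the full and reduced crossed products coincide; together with Theorem~\ref{thm: main theorem} and the simplicity criterion of \cite{AS} recalled in Section~\ref{subsec: top freeness}, simplicity of $C^*(\GG)$ is equivalent to the action of $\pi_1(\GG,v)$ on $v\partial X_\GG$ being both minimal and topologically free. Minimality then places us in exactly one of cases (1), (2), (3) of Proposition~\ref{prop: min and lc}, and I would treat these in turn.

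In case (1) the action is locally contractive. Then $C(v\partial X_\GG) \rtimes_\tau \pi_1(\GG,v)$ is simple (minimal plus topologically free), separable (the boundary is second countable and the group is countable), nuclear (amenable action), and purely infinite by \cite{LS,A2}, since a simple $C^*$-algebra arising from a locally contractive action is purely infinite; it is thus a Kirchberg algebra (and in fact satisfies the UCT, being the algebra of an \'etale groupoid, by \cite{T}). By Theorem~\ref{thm: main theorem} the algebra $C^*(\GG)$ is stably isomorphic to this crossed product, and since simplicity, separability, nuclearity and pure infiniteness are all invariant under stable isomorphism, $C^*(\GG)$ is itself a Kirchberg algebra.

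In case (2) the graph $\Gamma$ is an infinite ray $e_1 e_2 \cdots$ with each $\alpha_{\overline{e_i}}$ surjective. Taking $v = r(e_1)$ as in the proof of Proposition~\ref{prop: min and lc}, surjectivity of the $\alpha_{\overline{e_i}}$ forces $\pi_1(\GG,v) = G_v \cong \Z$, and since all groups are infinite cyclic this is precisely an odometer graph of groups in the sense of Example~\ref{example: subodometers}, with the $\alpha_{e_i}$-images forming a nested sequence $G_0 \supseteq G_1 \supseteq \cdots$ of finite-index subgroups of $\Z$. These subgroups are automatically normal, so by Remark~\ref{rmk: normal and odometer} topological freeness forces $\bigcap_i G_i = \{1\}$, equivalently $[G_i:G_{i+1}] > 1$ for infinitely many $i$. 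Proposition~\ref{prop: classic odometer} then identifies the crossed product as a Bunce--Deddens algebra, and since $\Gamma^0$ is infinite, Theorem~\ref{thm: main theorem} gives $C^*(\GG) \cong \KK \otimes (\text{Bunce--Deddens})$, a stable Bunce--Deddens algebra.

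Finally I would rule out case (3) using topological freeness. There $\Gamma$ is a finite ray and, as observed in the proof of Lemma~\ref{lem: no treelike bits}, $v\partial X_\GG$ consists of only two points. But $\pi_1(\GG,v)$ is infinite, as it contains the vertex group $G_v \cong \Z$, so the induced homomorphism $\pi_1(\GG,v) \to \operatorname{Sym}(v\partial X_\GG) = \Z/2\Z$ has kernel of index at most $2$, hence infinite; any nontrivial element of this kernel fixes every boundary point, so its set of moved points is empty and the action cannot be topologically free. Thus case (3) does not arise, and the dichotomy follows. I expect the only genuine subtlety to lie in case (2): verifying that the ``infinite ray with surjective $\alpha_{\overline{e_i}}$'' data reduces, after the correct choice of base vertex and after deleting the edges on which $\alpha_{e_i}$ is surjective (which leaves $C^*(\GG)$ unchanged, as in the proof of Proposition~\ref{prop: classic odometer}), exactly to the classic odometer hypotheses, including the bookkeeping for trivial indices.
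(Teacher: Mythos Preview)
Your proposal is correct and follows the same overall structure as the paper's proof: invoke nuclearity from Theorem~\ref{thm: nuclearity of C*(GG)}, use simplicity to deduce minimality and topological freeness, apply the trichotomy of Proposition~\ref{prop: min and lc}, and handle cases (1) and (2) exactly as you describe.

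The one genuine difference is in how case~(3) is eliminated. The paper appeals to Theorem~\ref{thm: gbs top free}: for the finite ray of case~(3) one checks that every $\GG$-loop has $|q(\gamma)| = 1$, so condition~\eqref{thm: gbs top free.2} fails and the action is not topologically free. You instead argue directly that the boundary has only two points while $\pi_1(\GG,v) \supseteq G_v \cong \Z$ is infinite, so the kernel of the map to $\operatorname{Sym}(v\partial X_\GG)$ is nontrivial and the action is not even effective. Your route is more elementary and entirely self-contained --- it avoids invoking the full characterisation of topological freeness for GBS graphs of groups --- while the paper's route has the virtue of illustrating how Theorem~\ref{thm: gbs top free} is used in practice. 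Both are valid; yours is arguably the cleaner way to dispose of case~(3).
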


\begin{proof}
Let $\GG$ be the GBS graph of groups. We know from Theorem~\ref{thm: nuclearity of C*(GG)} that $C^*(\GG)$ is nuclear. Since the action is minimal, we know that we either have (1), (2) or (3) from Proposition~\ref{prop: min and lc}. Since the action is topologically free, we know from Theorem~\ref{thm: gbs top free} that $\GG$ is not the finite ray in (3). In case (1) we have the crossed product purely infinite. Since $v\partial X_\GG$ is second countable and $\pi_1(\GG,v)$ is countable, we know that the crossed product is separable (see the discussion in Section~\ref{subsubsec:CrossedProducts}). Hence in case (1) we have $C^*(\GG)$ a Kirchberg algebra. We see from Example~\ref{example: subodometers} and Remark~\ref{rmk: normal and odometer} that the infinite ray in case (2) is an example of an odometer of groups. We see from Remark~\ref{rmk: normal and odometer} that topological freeness forces $\cap_{i=1}^\infty G_{r(e_i)}=\{0\}$ which forces $[G_{r(e_i)}:G_{r(e_{i-1})}]>1$ for infinitely many $i$. So we can apply Proposition~\ref{prop: classic odometer} to see that $C^*(\GG)$ in case (2) is a stable Bunce--Deddens algebra.
\end{proof}
  
\begin{cor}\label{cor: bs C*s}
Suppose $\GG$ is a GBS loop of groups, whose two monomorphisms are given by multiplication by integers $m$ and $n$. Then $C^*(\GG)$ is a Kirchberg algebra if and only if $|m|\not =|n|$ and $|m|,|n|\ge 2$. 
\end{cor}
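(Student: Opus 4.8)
The plan is to reduce everything to the simplicity criterion together with the dichotomy of Theorem~\ref{thm: gbs C* dichotomy}. Since $C^*(\GG)$ is nuclear by Theorem~\ref{thm: nuclearity of C*(GG)}, and since a Kirchberg algebra is by definition simple, the corollary is equivalent to two assertions: that $C^*(\GG)$ is simple exactly when $|m| \neq |n|$ and $|m|, |n| \geq 2$, and that whenever $C^*(\GG)$ is simple it is automatically a Kirchberg algebra rather than a stable Bunce--Deddens algebra. Throughout I would write $\omega_e = m$ and $\omega_{\overline e} = n$; note that $\GG$ is automatically locally finite and nonsingular, since the single vertex has valence two, and that $\alpha_e$ (respectively $\alpha_{\overline e}$) is surjective precisely when $|m| = 1$ (respectively $|n| = 1$).

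First I would settle minimality. Because both $e$ and $\overline e$ lie on the length-one minimal cycle given by the loop, condition (c$'$) of Theorem~\ref{thm: nonminimality second take} can never hold, so non-minimality is governed entirely by condition (d). Presenting $\Gamma$ as the minimal cycle $e$ requires $\alpha_{\overline e}$ surjective, while presenting it as $\overline e$ requires $\alpha_e$ surjective; hence (d) holds if and only if $|m| = 1$ or $|n| = 1$, and the action is minimal if and only if $|m|, |n| \geq 2$. As an independent check one can run the flow criterion of Theorem~\ref{thm: minimality} directly: Lemma~\ref{def: suitability} gives that $\overline e$ can flow to $e$ iff $|n| \geq 2$ and that $e$ can flow to $\overline e$ iff $|m| \geq 2$, while each edge always flows to itself, the only obstructions being the boundary points $\overline e^{\,\infty}$ and $e^{\infty}$.

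Next I would settle topological freeness. As $\GG$ is finite and nonsingular, Corollary~\ref{cor: t.f. and unimodular} applies and says the action is topologically free if and only if $\GG$ is not unimodular. Evaluating the signed index ratio on the basic loop gives $q(e) = \omega_{\overline e}/\omega_e = n/m$, and since $q$ is a homomorphism every $\GG$-loop has $|q| = |n/m|^{k}$ for some $k \in \Z$; thus $\GG$ is unimodular if and only if $|m| = |n|$, so the action is topologically free if and only if $|m| \neq |n|$. Combining this with the previous paragraph, and using that simplicity of $C^*(\GG)$ is equivalent to minimality together with topological freeness of the boundary action (by Theorem~\ref{thm: main theorem}, Theorem~\ref{thm: nuclearity of C*(GG)}, and the Archbold--Spielberg criterion recalled in Section~\ref{subsec: top freeness}), one obtains that $C^*(\GG)$ is simple if and only if $|m|, |n| \geq 2$ and $|m| \neq |n|$.

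Finally I would invoke the dichotomy. When $C^*(\GG)$ is simple the action is minimal, so Proposition~\ref{prop: min and lc} places us in exactly one of its three cases; but cases (2) and (3) both require $\Gamma$ to be a ray, whereas our $\Gamma$ is a single loop and hence contains a cycle. We are therefore in case (1), the action is locally contractive, and the argument in the proof of Theorem~\ref{thm: gbs C* dichotomy} shows $C^*(\GG)$ is a separable, nuclear, simple, purely infinite, and hence Kirchberg, algebra. Since Kirchberg algebras are simple, this yields the full equivalence. I expect the main obstacle to lie in the bookkeeping of the minimality step---in particular, treating the loop correctly as a minimal cycle of length one and checking both of its orientations against condition (d)---rather than in the $C^*$-theory, which is handled wholesale by the dichotomy.
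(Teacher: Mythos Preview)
Your proposal is correct and follows essentially the same route as the paper: minimality via Theorem~\ref{thm: nonminimality second take}(d) giving $|m|,|n|\ge 2$, topological freeness via Corollary~\ref{cor: t.f. and unimodular} giving $|m|\ne|n|$, and then the dichotomy of Theorem~\ref{thm: gbs C* dichotomy} to conclude Kirchberg. The only cosmetic difference is that the paper dismisses the Bunce--Deddens alternative by observing directly that a loop is not an odometer graph of groups, whereas you route through Proposition~\ref{prop: min and lc} to rule out the ray cases; both arguments are fine.
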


\begin{proof}
Theorem~\ref{thm: gbs C* dichotomy} says that it suffices to characterise when the action is minimal and topologically free, for then, since $\GG$ is obviously not an odometer of groups, we must have $C^*(\GG)$ a Kirchberg algebra. We know from Theorem~\ref{thm: nonminimality second take} that the action will be minimal if and only if neither of the monomorphisms are surjective, which is exactly when $|m|,|n|\ge 2$. Recall from Remark~\ref{rmk: unimodular} that $\GG$ is unimodular if $q(\gamma) = \pm 1$ for every $\GG$-loop $\gamma$. Since this happens precisely when $|m|=|n|$, we see from Corollary~\ref{cor: t.f. and unimodular} that the action is topologically free if and only if $|m|\not=|n|$.
\end{proof}

\end{document}